\newcommand{\E}{\mathbb{E}}
\newcommand{\N}{\mathbb{N}}
\renewcommand{\P}{\mathbb{P}}
\newcommand{\F}{\mathscr{F}}
\newcommand{\cA}{\mathcal{A}}
\newcommand{\cL}{\mathcal{L}}
\newcommand{\cM}{\mathcal{M}}
\newcommand{\cP}{\mathcal{P}}
\newcommand{\cR}{\mathcal{R}}
\newcommand{\cS}{\mathcal{S}}
\newcommand{\whp}{whp}
\newcommand{\whpdot}{whp.} % for use at end of sentence; allows \whp to be replaced by whp if desired
\newcommand{\condparentheses}[2]{\left(\left.#1\,\right\vert#2\right)}
\newcommand{\condparenthesesreversed}[2]{\left(#1\left\vert\,#2\right.\right)}
\newcommand{\condP}[2]{\mathbb{P}\condparentheses{#1}{#2}}
\newcommand{\condE}[2]{\mathbb{E}\condparentheses{#1}{#2}}
\newcommand{\boundary}{\partial}
\newcommand{\set}[1]{\left\{#1\right\}}
\newcommand{\shortset}[1]{\{#1\}}
\newcommand{\abs}[1]{\left\vert#1\right\vert}
\newcommand{\shortabs}[1]{\vert#1\vert}
\newcommand{\bigabs}[1]{\big\vert#1\big\vert}
\newcommand{\floor}[1]{\lfloor#1\rfloor}
\newcommand{\ceiling}[1]{\lceil#1\rceil}
\newcommand{\ocinterval}[1]{\left(#1\right]} % to help with bracket matching
\newcommand{\cointerval}[1]{\left[#1\right)}
\newcommand{\indicatorofset}[1]{{\mathbbm{1}}_{#1}}
\newcommand{\indicator}[1]{\indicatorofset{\set{#1}}}
\newcommand{\union}{\cup}
\newcommand{\bigunion}{\bigcup}
\newcommand{\intersect}{\cap}
\newcommand{\equalsd}{\overset{d}{=}}
\newcommand{\decreasesto}{\downarrow}
\newcommand{\st}{\text{st}}
\renewcommand{\th}{\text{th}}
\newcommand{\graphG}{\mathcal{G}}
\renewcommand{\emptyset}{\varnothing}
\newcommand{\basicparent}{p}
\newcommand{\parent}[1]{\basicparent\left(#1\right)}
\newcommand{\ancestor}[2]{\basicparent^{#1}\!\left(#2\right)}
\newcommand{\tree}{\mathcal{T}}
\newcommand{\twoPWITs}{\tree}
\newcommand{\SWT}{{\sf SWT}}
\newcommand{\BP}{{\sf BP}}
\newcommand{\fr}{{\rm fr}}
\newcommand{\unfr}{{\rm unfr}}
\newcommand{\lucky}{{\rm lucky}}
\newcommand{\BB}{{\sf BB}}
\newcommand{\cluster}{\mathcal{B}}
\newcommand{\thinnedcluster}{\widetilde{\cluster}}
\newcommand{\thinnedBP}{\widetilde{\BP}}
\newcommand{\fstpond}{\mathscr{V}}
\newcommand{\explore}{\mathcal{E}}
\newcommand{\thinnedExplore}{\widetilde{\explore}}
\newcommand{\epsilonCondition}{\epsilon_0} % in case we decide to change this notation
\newcommand{\deltaCondition}{\delta_0} % in case we decide to change this notation
\newcommand{\epsilonConditiona}{\epsilon_1}% in case we decide to change this notation it will be easier
\newtheorem{theorem}{Theorem}[section]
\newtheorem{prop}[theorem]{Proposition}
\newtheorem{lemma}[theorem]{Lemma}
\newtheorem{coro}[theorem]{Corollary}
\newtheorem{cond}[theorem]{Condition}
\theoremstyle{definition}
\newtheorem{defn}[theorem]{Definition}
\newtheorem{example}[theorem]{Example}
\newcommand{\textandreference}[2]{\texorpdfstring{\hyperref[PartI:#2]{#1\refPartI{#2}}}{#1\refstarPartI{#2}}}
\newcommand{\labelPartI}[1]{\label{PartI:#1}}
\newcommand{\refPartI}[1]{\ref{PartI:#1}}
\newcommand{\refstarPartI}[1]{\ref*{PartI:#1}}
\newcommand{\eqrefPartI}[1]{\eqref{PartI:#1}}
\newcommand{\lbsect}[1]{\labelPartI{s:#1}}
\newcommand{\refsect}[1]{\textandreference{Section~}{s:#1}}
\newcommand{\lbsubsect}[1]{\labelPartI{ss:#1}}
\newcommand{\refsubsect}[1]{\textandreference{Section~}{ss:#1}}
\newcommand{\lbthm}[1]{\labelPartI{t:#1}}
\newcommand{\refthm}[1]{\textandreference{Theorem~}{t:#1}}
\newcommand{\lbprop}[1]{\labelPartI{p:#1}}
\newcommand{\refprop}[1]{\textandreference{Proposition~}{p:#1}}
\newcommand{\lblemma}[1]{\labelPartI{l:#1}}
\newcommand{\reflemma}[1]{\textandreference{Lemma~}{l:#1}}
\newcommand{\lbcoro}[1]{\labelPartI{c:#1}}
\newcommand{\refcoro}[1]{\textandreference{Corollary~}{c:#1}}
\newcommand{\lbcond}[1]{\labelPartI{cond:#1}}
\newcommand{\refcond}[1]{\textandreference{Condition~}{cond:#1}}
\newcommand{\lbdefn}[1]{\labelPartI{d:#1}}
\newcommand{\refdefn}[1]{\textandreference{Definition~}{d:#1}}
\newcommand{\lbexample}[1]{\labelPartI{ex:#1}}
\newcommand{\refexample}[1]{\textandreference{Example~}{ex:#1}}
\newcommand{\lbitem}[1]{\labelPartI{item:#1}}
\newcommand{\refitem}[1]{\refPartI{item:#1}}
\newcommand{\textandreferencePartII}[2]{#1\ref*{PartII:#2}} % Note: no hyperlink created
\newcommand{\refother}[1]{[Part II, #1]}
\newcommand{\refstarPartII}[1]{\ref*{PartII:#1}}
\newcommand{\refsectPartII}[1]{\textandreferencePartII{Section~}{s:#1}}
\newcommand{\refthmPartII}[1]{\textandreferencePartII{Theorem~}{t:#1}}
\definecolor{MyDarkBlue}{rgb}{0,0.08,0.50}
\definecolor{BrickRed}{rgb}{0.65,0.08,0}
\newcommand{\blank}[1]{}
\numberwithin{equation}{section}
\renewcommand{\epsilon}{\varepsilon}
\newcommand{\e}{{\mathrm e}}
\newcommand{\sss}{\scriptscriptstyle}
\newcommand {\convd}{\stackrel{d}{\longrightarrow}}
\newcommand {\convp}{\stackrel{\sss {\mathbb P}}{\longrightarrow}}
\newcommand {\convas}{\stackrel{a.s.}{\longrightarrow}}
\newcommand{\eqn}[1]{\begin{equation} #1 \end{equation}}
\newcommand{\prob}{{\mathbb P}}
\newcommand{\Poi}{{\rm Poi}}
\newcommand{\IP}{{\rm IP}}
\newcommand{\FPP}{\mathrm{FPP}}
\newcommand{\Op}{O_{\sss \prob}}
\newcommand{\sop}{o_{\sss \prob}}
\newcommand{\FY}{F_{\sss Y}}
\newcommand{\eps}{\epsilon}
\title{Long paths in first passage percolation\\
on the complete graph I. Local PWIT dynamics}
\author{
Maren Eckhoff\thanks{Department of Mathematical Sciences, University of Bath, Bath, BA2 7AY, United Kingdom. Email: {\tt eckhoff.maren@gmail.com}}
\and
Jesse Goodman\thanks{Department of Statistics, University of Auckland, Private Bag 92019, Auckland 1142, New Zealand. Email: {\tt jesse.goodman@auckland.ac.nz}}
\and
Remco van der Hofstad\thanks{Department of Mathematics and Computer Science,
Eindhoven University of Technology, P.O.\ Box 513,
5600~MB Eindhoven, The Netherlands. Email:
{\tt rhofstad@win.tue.nl, f.r.nardi@tue.nl}}
\and
Francesca R.\ Nardi\footnotemark[3]% should give same mark as third distinct institution
}
\begin{document}

\maketitle

\begin{abstract}
We study the random geometry of first passage percolation on the complete graph equipped with independent and identically distributed edge weights, continuing the program initiated by Bhamidi and van der Hofstad~\cite{BhaHof12}.
We describe our results in terms of a sequence of parameters $(s_n)_{n\geq 1}$ that quantifies the extreme-value behavior of small weights, and that describes different universality classes for first passage percolation on the complete graph.
We consider both $n$-independent as well as $n$-dependent edge weights.
The simplest example consists of edge weights of the form $E^{s_n}$, where $E$ is an exponential random variable with mean 1.

In this paper, we investigate the case where $s_n\rightarrow \infty$, and focus on the local neighborhood of a vertex. We establish that the smallest-weight tree of a vertex locally converges to the invasion percolation cluster on the Poisson weighted infinite tree.
In addition, we identify the scaling limit of the weight of the smallest-weight path between two uniform vertices.
\end{abstract}

\section{Model and results}
\lbsect{IntroFPP}
In this paper, we continue the program of studying first passage percolation on the complete graph initiated in \cite{BhaHof12}.
We start by introducing first passage percolation (FPP).
Given a graph $\graphG=(V(\graphG),E(\graphG))$, let $(Y_e^{\sss (\graphG)})_{e\in E(\graphG)}$ denote a collection of positive edge weights.
Thinking of $Y_e^{\sss (\graphG)}$ as the cost of crossing an edge $e$, we can define a metric on $V(\graphG)$ by setting
	\begin{equation}\labelPartI{FPPdistance}
	d_{\graphG,Y^{(\graphG)}}(i,j)=\inf_{\pi \colon i\to j} \sum_{e\in\pi} Y_e^{\sss (\graphG)},
	\end{equation}
where the infimum is over all paths $\pi$ in $\graphG$ that join $i$ to $j$, and $Y^{\sss(\graphG)}$ represents the edge weights $(Y_e^{\sss (\graphG)})_{e\in E(\graphG)}$.
We will always assume that the infimum in \eqrefPartI{FPPdistance} is attained uniquely, by some (finite) path $\pi_{i,j}$.
We are interested in the situation where the edge weights $Y_e^{\sss (\graphG)}$ are \emph{random}, so that $d_{\graphG,Y^{(\graphG)}}$ is a random metric.
In particular, when the graph $\graphG$ is very large, with $\abs{V(\graphG)}=n$ say, we wish to understand the scaling behavior of the following quantities for fixed $i,j \in V(\graphG)$:
\begin{enumerate}
\item
The \emph{distance} $W_n=d_{\graphG,Y^{(\graphG)}}(i,j)$ -- the total edge cost of the optimal path $\pi_{i,j}$;
\item
The \emph{hopcount} $H_n$ -- the number of edges in the optimal path $\pi_{i,j}$;
\item
The \emph{topological structure} -- the shape of the random neighborhood of a point.
\end{enumerate}

In this paper, we consider FPP on the complete graph and focus on problem (c).
In the companion paper \cite{EckGooHofNar14b}, we will use these results to investigate problems (a) and (b). We will often refer to results in \cite{EckGooHofNar14b}, and write, e.g., \refother{Section~\refstarPartII{ss:ConvRWThm}} to refer to \cite[Section \refstarPartII{ss:ConvRWThm}]{EckGooHofNar14b}. We also refer to  \refother{\refsectPartII{DiscExt}} for an extended discussion of the results in these two papers and their relations to the literature.

In \cite{BhaHof12}, the question was raised what the {\it universality classes} are for this model.
We bring the discussion substantially further by describing a way to distinguish several universality classes and by identifying the limiting behavior of first passage percolation in one of these classes.
The cost regime introduced in \eqrefPartI{FPPdistance} uses the information from all edges along the path and is known as the {\it weak disorder} regime.
By contrast, in the {\it strong disorder} regime the cost of a path $\pi$ is given by $\max_{e \in \pi} Y_e^{\sss (\graphG)}$.
We establish a firm connection between the weak and strong disorder regimes in first passage percolation.
Interestingly, this connection also establishes a strong relation to invasion percolation (IP) on the Poisson-weighted infinite tree (PWIT), which is the scaling limit of IP on the complete graph.
This process also arises as the scaling limit of the minimal spanning tree on $K_n$.

Our main interest is in the case $\graphG=K_n$, the complete graph on $n$ vertices $V(K_n)=[n]:=\set{1,\ldots,n}$, equipped with independent and identically distributed (i.i.d.) edge weights $(Y_e^{\sss(K_n)})_{e \in E(K_n)}$.
We write $Y$ for a random variable with $Y\overset{d}{=}Y_e^{\sss(\graphG)}$, and
assume that the distribution function $\FY$ of $Y$ is continuous.
For definiteness, we study the optimal path $\pi_{1,2}$ between vertices $1$ and $2$; by exchangeability, $\pi_{1,2}$ has the same law as $\pi_{u,v}$ for any other $u,v \in [n]$, $u\neq v$.
In \cite{BhaHof12} and \cite{EckGooHofNar13} this setup was studied for the case that $Y_e^{\sss(K_n)}\equalsd E^{s}$ where $E$ is an exponential mean $1$ random variable, and $s>0$ constant and $s=s_n>0$ a null-sequence, respectively.
We start by stating our main theorem for this situation where $s=s_n$ tends to infinity.
First, we introduce some notation:

\paragraph{Notation.}
All limits in this paper are taken as $n$ tends to infinity unless stated otherwise.
A sequence of events $(\mathcal{A}_n)_n$ happens \emph{with high probability (\whp)} if $\P(\mathcal{A}_n) \to 1$.
For random variables $(X_n)_n, X$, we write $X_n \convd X$, $X_n \convp X$ and $X_n \convas X$ to denote convergence in distribution, in probability and almost surely, respectively.
For real-valued sequences $(a_n)_n$, $(b_n)_n$, we write $a_n=O(b_n)$ if the sequence $(a_n/b_n)_n$ is bounded; $a_n=o(b_n)$ if $a_n/b_n \to 0$; $a_n =\Theta(b_n)$ if the sequences $(a_n/b_n)_n$ and $(b_n/a_n)_n$ are both bounded; and $a_n \sim b_n$ if $a_n/b_n \to 1$.
Similarly, for sequences $(X_n)_n$, $(Y_n)_n$ of random variables, we write $X_n=\Op(Y_n)$ if the sequence $(X_n/Y_n)_n$ is tight; $X_n=\sop(Y_n)$ if $X_n/ Y_n \convp 0$; and $X_n =\Theta_{\P}(Y_n)$ if the sequences $(X_n/Y_n)_n$ and $(Y_n/X_n)_n$ are both tight.
Moreover, $E$ always denotes an exponentially distributed random variable with mean $1$.

\subsection{\texorpdfstring{First passage percolation with $n$-dependent edge weights}{First passage percolation with n-dependent edge weights}}
\lbsubsect{FPPn-dependent}

We start by investigating the case where $Y=E^{s_n}$ where $s_n\rightarrow \infty$:

\begin{theorem}[Weight -- $n$-dependent edge weights]\lbthm{IPWeight_Exp}
 Let $Y_e^{\sss(K_n)}\equalsd E^{s_n}$, where $(s_n)_n$ is a positive sequence with $s_n/\log\log n\to \infty$.
Let $M^{\sss(1)},M^{\sss(2)}$ be i.i.d.\ random variables for which $\P(M^{\sss(j)}\leq x)$ is the survival probability of a Poisson Galton--Watson branching process with mean $x$.
Then
	\begin{equation}\labelPartI{WeightRescalesToIP_Exp}
	nW_n^{1/s_n} \convd M^{\sss(1)} \vee M^{\sss(2)}.
	\end{equation}
\end{theorem}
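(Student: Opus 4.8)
\emph{Proof strategy.}
The idea is to pass from this weak-disorder (summing) problem to the strong-disorder (minimax) problem, exploiting that $z\mapsto z^{1/s_n}$ turns a sum of $s_n$-th powers into a maximum as $s_n\to\infty$, and then to read off the limiting minimax value from the local convergence of the smallest-weight tree to invasion percolation on the PWIT. Write $Y_e^{\sss(K_n)}=E_e^{s_n}$ with $(E_e)_e$ i.i.d.\ exponential and let $E^\star=\min_{\pi\colon 1\to 2}\max_{e\in\pi}E_e$ be the minimax value in the $E$-weights. Since any path $\pi$ from $1$ to $2$ has total $Y$-weight at least $\max_{e\in\pi}Y_e^{\sss(K_n)}=(\max_{e\in\pi}E_e)^{s_n}$, we get $W_n\ge(E^\star)^{s_n}$, hence $nW_n^{1/s_n}\ge nE^\star$. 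Conversely, fix $\eps>0$; if one can exhibit whp a path from $1$ to $2$ with at most $L_n$ edges, all of $E$-weight at most $(1+\eps)E^\star$, then $W_n\le L_n\big((1+\eps)E^\star\big)^{s_n}$, so $nW_n^{1/s_n}\le L_n^{1/s_n}(1+\eps)\,nE^\star$. Thus the proof reduces to: (i) $nE^\star\convd M^{\sss(1)}\vee M^{\sss(2)}$; and (ii) such a path can be found with $L_n=\Op(\log n)$, since then $\log L_n=\log\log n+\Op(1)$ and the hypothesis $s_n/\log\log n\to\infty$ forces $L_n^{1/s_n}=\e^{\log L_n/s_n}\convp 1$; letting $\eps\downarrow 0$ and comparing distribution functions at each (automatically continuous) point of the limit law then yields the theorem.

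For (i), note that for fixed $x>0$, $\P(nE^\star\le x)$ is the probability that $1$ and $2$ are connected using only the edges $e$ with $E_e\le x/n$, and this subgraph differs from the Erd\H os--R\'enyi graph $G(n,x/n)$ only through the $O(n^{-2})$ error in $1-\e^{-x/n}$. By exchangeability the probability equals $\E\big[(|\mathcal{C}(1)|-1)/(n-1)\big]$, where $\mathcal{C}(1)$ is the component of vertex $1$; since $|\mathcal{C}(1)|/n$ is bounded and, by the law of large numbers for component sizes in $G(n,c_n/n)$ with $c_n\to x$, converges in probability to $\rho(x)$ when vertex $1$ lies in the giant component and to $0$ otherwise, while vertex $1$ lies in the giant with probability tending to $\rho(x)$, this expectation converges to $\rho(x)^2$. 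Here $\rho(x)$ is the survival probability of a Poisson Galton--Watson branching process with mean $x$, so $\rho(x)=\P(M^{\sss(j)}\le x)$ and $\rho(x)^2=\P(M^{\sss(1)}\vee M^{\sss(2)}\le x)$; continuity of $x\mapsto\rho(x)^2$ then gives $nE^\star\convd M^{\sss(1)}\vee M^{\sss(2)}$. This is also the conceptual content of the local-convergence result established earlier in the paper: the rescaled smallest-weight trees of vertices $1$ and $2$ converge to two \emph{independent} invasion percolation clusters on the PWIT, and $M^{\sss(j)}$ is the supremum of the weights ever invaded from the $j$-th root, which coincides with the infimum of the levels $x$ at which that root's cluster in the PWIT percolated at level $x$ is infinite --- an event of probability $\rho(x)$; connecting $1$ to $2$ at level $x/n$ asymptotically requires both roots' clusters to be infinite, producing the product $\rho(x)^2$.

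For (ii), once $x>1$ the giant component of $G(n,x/n)$ has diameter $\Theta(\log n)$, so conditionally on $1\leftrightarrow 2$ the graph distance between them at level $x/n$ is $O(\log n)$ whp; by a union bound this holds simultaneously for every $x$ on a fixed finite grid of a compact subinterval $[1+\eta,K]\subset(1,\infty)$. By (i) the variable $nE^\star$ is tight and (since $\rho(1)=0$) exceeds $1$ with probability tending to $1$, so for suitable $\eta,K$ it lies in $[1+\eta,K]$ whp; taking $x^{+}$ to be the least grid point $\ge nE^\star$, the level-$x^{+}/n$ subgraph contains the level-$E^\star$ subgraph, in which $1\leftrightarrow 2$ by definition of $E^\star$, so a shortest path at level $x^{+}/n$ has $O(\log n)$ edges, each of $E$-weight at most $x^{+}/n\le E^\star+(\text{spacing})/n\le(1+\eps)E^\star$ provided the grid spacing is at most $\eps(1+\eta)$. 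This is the path required in (ii).

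The main obstacle is (ii): the minimax-optimal path has no obvious a priori length bound, and the level $E^\star$ defining the relevant subgraph is \emph{random} and measurable with respect to the edge weights, so the $O(\log n)$ bound on distances in the near-critical Erd\H os--R\'enyi regime must be established uniformly over the (random but tight) density and in a form that tolerates conditioning on the connectivity event; the grid-plus-union-bound device above is one way to do this. This polylogarithmic --- rather than polynomial --- control on the relevant path length is precisely what the quantitative assumption $s_n/\log\log n\to\infty$ is tailored to absorb. Step (i), though a short calculation once rephrased through $G(n,x/n)$, is where the paper's local-convergence-to-IP-on-the-PWIT machinery carries the genuine content, namely the appearance of $M^{\sss(1)}\vee M^{\sss(2)}$ with \emph{independent} coordinates.
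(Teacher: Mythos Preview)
Your argument is correct and more direct than the paper's, which deduces this result as the special case $f_n(x)=(x/n)^{s_n}$ of \refthm{IPWeightForFPP}. There the lower bound uses the explicit PWIT coupling: any path from $1$ to $2$ must exit the images in $K_n$ of the two \emph{first ponds} of IP on $\tree^{\sss(1)},\tree^{\sss(2)}$, forcing $W_n\ge f_n(M_0^{\sss(1)}\vee M_0^{\sss(2)})$ whp, with $M_0^{\sss(1)},M_0^{\sss(2)}$ the actual maximal invaded weights on two independent PWITs. You instead use the trivial bound $W_n\ge (E^\star)^{s_n}$ and identify the limit of $nE^\star$ via the classical formula $\P(1\leftrightarrow 2\text{ in }G(n,x/n))\to\theta(x)^2$; the PWIT never enters, and the independence of $M^{\sss(1)},M^{\sss(2)}$ emerges from the product structure rather than from two disjoint trees. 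The upper bounds are close in spirit --- both route through the Erd\H{o}s--R\'enyi giant and both rely on $s_n/\log\log n\to\infty$ to absorb the $\log n$ path length --- but yours can take a single shortest path at level $(1+\eps)nE^\star/n$, since for $Y_e=E_e^{s_n}$ only the maximal edge survives the $s_n$-th root; the paper's construction (a bounded number of IP steps near each root, a boundary edge of weight at most $f_n(M_0^{\sss(j)}+\eps)$, then a crossing at level $(1+\eps)/n$) is built for general $f_n$, where edges near the roots may be much cheaper than $f_n(1+\eps)$ and must be accounted for separately. Your route is cleaner for the $E^{s_n}$ case; the paper's buys the generalization to \refthm{IPWeightForFPP} and makes the IP interpretation of the limit explicit.
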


When $s_n\rightarrow \infty$ the values of the random weights $E_e^{s_n}$ depend strongly on the disorder $(E_e)_{e\in E(\graphG)}$, making small values increasingly more favorable and large values increasingly less favorable, thus the weak disorder problem with weights $E_e^{s_n}$ approaches the strong disorder problem.
Mathematically, the elementary limit
	\begin{equation}\labelPartI{PowerToMax}
	\lim_{s\to\infty}(x_1^s+x_2^s)^{1/s}=x_1\vee x_2
	\end{equation}
expresses the convergence of the $\ell^s$ norm towards the $\ell^\infty$ norm and establishes a relationship between the weak disorder regime and the strong disorder regime of FPP.

We continue by discussing the result in \refthm{IPWeight_Exp} in more detail.
Any sequence $(u_n(x))_n$ for which $n\FY(u_n(x))\rightarrow x$ is such that, for i.i.d.\ random variables $(Y_i)_{i \in \N}$ with distribution function $\FY$,
	\eqn{
	\labelPartI{unx-res}
	\prob\Bigl( \min_{i\in [n]} Y_{i}\leq u_n(x) \Bigr) \rightarrow 1-\e^{-x}.
	}
As the distribution function $\FY$ is continuous, we will choose $u_n(x)=\FY^{-1}(x/n)$, so that $n\FY(u_n(x))$  is $x$.
The value $u_n(1)$ is denoted by $u_n$.
In view of \eqrefPartI{unx-res}, the family $(u_n(x))_{x\in(0,\infty)}$ are the \emph{characteristic values} for $\min_{i\in [n]} Y_i$.
See \cite{EmbKluMik97} for a detailed discussion of extreme value theory.
(In the strong disorder regime, $u_n(x)$ varies heavily in $x$ such that the phrase characteristic values can be misleading.)
In the setting of \refthm{IPWeight_Exp}, $\FY(y)=1-\e^{-y^{1/s_n}} \approx y^{1/s_n}$ when $y=y_n$ tends to zero fast enough, so that $u_n(x)$ can be taken as $u_n(x)\approx(x/n)^{s_n}$ (where $\approx$ indicates approximation with uncontrolled error).
Then we see in \eqrefPartI{WeightRescalesToIP_Exp} that $W_n \approx u_n(M^{\sss(1)}\vee M^{\sss(2)})$, which means that the random fluctuations of the weight of the smallest-weight path are of the same order of magnitude as some of the typical values for the minimal edge weight adjacent to vertices $1$ and $2$.
\smallskip

To explain the appearance of the random variables $M^{\sss(1)}$ and $M^{\sss(2)}$, we now informally state that the local neighborhoods of the smallest-weight tree for FPP from a single source converges to invasion percolation on the so-called Poisson-weighted infinite tree.
We start by introducing these notions informally; for more details see \refsubsect{PWITFPPIP}.
In \emph{invasion percolation} (IP) on a weighted graph, we grow the invasion percolation cluster by starting from a single vertex and sequentially adding the edge attached to the cluster with minimal edge weight.
The {\it Poisson-weighted infinite tree} (PWIT), which serves as the large-$n$ limit of the complete graph with i.i.d.\ exponential edge weights, is the infinite weighted tree for
which the edge weight between a vertex and its $i$th child, jointly in $i$, has the same distribution as $E_1+\cdots+E_i$, where $(E_i)_{i\geq 1}$ are i.i.d.\ exponential random variables with mean 1.
The weights of edges between different vertices and their children in the tree are independent.
When performing IP on the PWIT, the largest edge weight ever to be accepted has distribution $M^{\sss(1)}$ as in \refthm{IPWeight_Exp}.
When performing IP on the complete graph started from the two sources $1$ and $2$, the largest edge weight accepted converges in distribution to $M^{\sss(1)} \vee M^{\sss(2)}$.
This explains how \refthm{IPWeight_Exp} can be interpreted in terms of IP on the PWIT and on the complete graph.
The next theorem shows that this relation also holds for local neighborhoods of vertices:

\begin{theorem}\lbthm{IPLocConvForFPP_exp}
Let $Y_e^{\sss(K_n)}\equalsd E^{s_n}$, where $(s_n)_n$ is a positive sequence with $s_n\to \infty$.
For each fixed $m\in \N$, the topology of the FPP smallest-weight tree  to the nearest $m$ vertices, as well as the weights along its edges, converge in distribution to invasion percolation on the Poisson-weighted infinite tree.
\end{theorem}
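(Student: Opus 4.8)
The plan is to deduce the theorem from two facts: (i) because $s_n\to\infty$, the FPP smallest-weight tree (SWT) exploration from vertex~$1$, run to the nearest $m$ vertices, coincides \whp\ with the invasion percolation (IP) exploration run on the \emph{same} realisation of the underlying exponential weights; and (ii) this IP exploration on $K_n$, suitably rescaled, converges to IP on the PWIT. Throughout I would realise the edge weights as $Y_e^{\sss(K_n)}=E_e^{s_n}$ with $(E_e)_e$ i.i.d.\ copies of $E$, and record the SWT to the nearest $m$ vertices as a \emph{rooted weighted tree} in which each edge weight $w$ is rescaled to $n\,w^{1/s_n}$; this is the natural scaling, consistent with the quantity $nW_n^{1/s_n}$ of \refthm{IPWeight_Exp}, and under it $Y_e^{\sss(K_n)}$ becomes $nE_e$, whose smallest order statistics at a vertex converge to the successive PWIT edge weights.

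For step (i), observe that Dijkstra's algorithm with edge costs $E_e^{s_n}$ and IP (equivalently, Prim's algorithm) with edge weights $E_e$ are both \emph{local} explorations: run for $m$ steps, each step of Dijkstra compares $\ell^{s_n}$-lengths of paths with at most $m$ edges, and each step of IP compares only the minimal unexplored edge at each of the $\le m$ explored vertices. By \eqrefPartI{PowerToMax}, applied uniformly over paths of length $\le m$, the $\ell^{s_n}$-length of such a path converges as $s_n\to\infty$ to its bottleneck ($\ell^\infty$) length; since the minimal-bottleneck shortest-path tree from a root equals the tree produced by Prim's algorithm (a standard consequence of the fact that minimal-bottleneck paths are minimal-spanning-tree paths), it follows that on a fixed finite graph with pairwise distinct relevant edge weights and bottleneck values, the two explorations make identical choices once $s_n$ exceeds some threshold. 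To make this uniform in $n$ --- the subtlety being that $(s_n)_n$ is prescribed, so $s$ cannot be chosen after the realisation --- I would, for each fixed $K$, introduce the event $\mathcal{A}_n(K)$ that the $m$-step SWT and IP explorations from vertex~$1$ use only edges inside the first $K$ vertices revealed in a breadth-first exploration, and that all finitely many rescaled edge weights and bottleneck values relevant to these $m$ steps are pairwise separated by at least $1/K$. Local weak convergence of $K_n$ to the PWIT, together with the fact that the $m$ nearest vertices lie within $O(1)$ hops \whp, yields $\lim_{K\to\infty}\liminf_{n\to\infty}\P(\mathcal{A}_n(K))=1$; on $\mathcal{A}_n(K)$ the two explorations coincide as soon as $s_n\ge s_0(K,m)$ for a threshold depending only on $K$ and $m$; and since $s_n\to\infty$, for each $K$ one has $s_n\ge s_0(K,m)$ for all large $n$. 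Chaining these estimates shows that \whp\ the rescaled weighted SWT to the nearest $m$ vertices equals the rescaled weighted tree produced by the first $m$ steps of IP on $K_n$.

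It remains to identify the limit of the first $m$ steps of IP on $\bigl(K_n,(nE_e)_e\bigr)$. Revealing the weights incident to each explored vertex in increasing order as a marked point process, this rescaled weighted graph explored from vertex~$1$ converges in distribution, in the local weak sense, to the PWIT $\tree$ rooted at $\emptyset$ --- the standard coupling underlying \cite{BhaHof12}; see also \refsubsect{PWITFPPIP}. The $m$-step IP exploration is a measurable functional of this marked point process that is continuous at every configuration with distinct marks (each step compares only the minimal unexplored edge at each explored vertex), and the PWIT has a.s.\ distinct edge weights; hence the first $m$ steps of IP on $\bigl(K_n,(nE_e)_e\bigr)$, jointly as a rooted weighted tree, converge in distribution to the first $m$ steps of IP on $\tree$, i.e.\ to IP on the PWIT. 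Together with step~(i) this proves the theorem. I expect the main obstacle to be the uniformity in step~(i): one must pair the purely deterministic $\ell^{s_n}\to\ell^\infty$ comparison with a tightness statement for the random threshold at which it becomes exact, and must control that the SWT exploration to the nearest $m$ vertices does not leave a neighbourhood of bounded size --- this is precisely where the hypothesis $s_n\to\infty$ enters, and why it is so much weaker here than the $s_n/\log\log n\to\infty$ needed for \refthm{IPWeight_Exp}, which must instead control the whole graph.
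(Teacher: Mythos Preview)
Your overall strategy—compare FPP with IP, then pass to the PWIT—is close in spirit to the paper's, which reverses the order: it first couples FPP on $K_n$ to FPP on the PWIT (\refthm{Coupling1Source}) and then compares FPP with IP directly on the PWIT (\refthm{CoupIP-PWIT}). Working on the PWIT side has the advantage that the first $m$ explored vertices automatically lie in the fixed finite set $B_m$ of vertices within $m$ generations whose every ancestor is among the first $m$ children of its parent, so the bounded-neighbourhood issue you flag disappears and no separate local-weak-convergence step is needed.

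There is, however, a genuine gap in your step~(i). Your event $\mathcal{A}_n(K)$ requires the relevant \emph{bottleneck values} to be pairwise separated by at least $1/K$, but this does not hold with high probability: whenever the maximal-weight edge on the path to $w$ already lies on the path to some earlier vertex $v$---as happens generically, e.g.\ for any two vertices lying below a common edge whose weight exceeds all subsequent ones---the bottlenecks to $v$ and $w$ coincide exactly. Thus the $\ell^{s_n}\to\ell^\infty$ comparison alone cannot separate them, and you cannot conclude that Dijkstra and Prim make identical choices at every one of the first $m$ steps. The paper repairs this by observing that IP is equally determined by the rule ``$v\prec w$ iff the decreasingly-sorted vector $\mathcal{O}(v)$ of edge weights along the path to $v$ is lexicographically smaller than $\mathcal{O}(w)$.'' On the high-probability event that the finitely many relevant \emph{edge weights} (not bottlenecks) in $B_m$ are pairwise $\delta$-separated and bounded, the hypothesis $f_n(x+\delta)/f_n(x)>m$ gives $\sum_{i\ge j} f_n(X_{(v,i)}) \le m\,f_n(X_{(v,j)}) < f_n(X_{(w,j)})$ at the first index $j$ where the sorted vectors differ, so the FPP and lexicographic (hence IP) orderings agree. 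Replacing your bottleneck comparison by this lexicographic one would close the gap.
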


In \refsubsect{RelToIP}, all  notions needed in \refthm{IPLocConvForFPP_exp} are formally defined. Moreover, the evolution of two smallest-weight trees started from vertices $1$ and $2$ is explored in detail.

\subsection{The universal picture}
\lbsubsect{univ-class}

Our results are applicable not only to the edge-weight distribution $Y_e^{\sss(K_n)}\overset{d}{=}E^{s_n}$ with $s_n\rightarrow \infty$, but to a large family of distributions which we now characterize.
Interestingly, this family includes edge weights whose distribution is independent of $n$.
For fixed $n$, the edge weights $(Y_e^{\sss(K_n)})_{e\in E(K_n)}$ are independent for different $e$.
However, there is no requirement that they are independent over $n$, and in fact in \refsect{Coupling}, we will produce $Y_e^{\sss(K_n)}$ using a fixed source of randomness not depending on $n$.
Therefore, it will be useful to describe the randomness on the edge weights $((Y_e^{\sss(K_n)})_{e\in E(K_n)}\colon n \in \N)$ uniformly across the sequence.
It will be most useful to give this description in terms of exponential random variables.
\begin{description}
\item[Distribution function:]
Choose a distribution function $\FY(y)$ having no atoms, and draw the edge weights $Y_e^{\sss(K_n)}$ independently according to $\FY(y)$:
	\begin{equation}\labelPartI{EdgesByDistrFunct}
	\P(Y_e^{\sss(K_n)} \leq y)= \FY(y).
	\end{equation}

\item[Parametrization by Exponential variables:]
Fix independent exponential mean $1$ variables $(E_e^{\sss (K_n)})_{e\in E(K_n)}$, choose a strictly increasing function $g\colon(0,\infty)\to(0,\infty)$, and define
	\begin{equation}\labelPartI{EdgesByIncrFunct}
	Y_e^{\sss(K_n)}=g(E_e^{\sss (K_n)}).
	\end{equation}
\end{description}
The relations between these parametrizations are given by
\begin{equation}\labelPartI{FYandg}
	\FY(y)=1-\e^{-g^{-1}(y)}\quad \text{and}
	\quad
	g(x)=\FY^{-1} \left( 1-\e^{-x} \right)
	.
\end{equation}
We define
	\eqn{
	\labelPartI{fnFromParamEdges}
	f_n(x)=g(x/n)=\FY^{-1} \left( 1-\e^{-x/n} \right).
	}
Let $Y_1,\dotsc,Y_n$ be i.i.d.\ with $Y_i=g(E_i)$ as in \eqrefPartI{EdgesByIncrFunct}.
Since $g$ is increasing,
	\begin{equation}\labelPartI{minYifn}
	\min_{i\in[n]}Y_i=g\bigl( \min_{i\in[n]} E_i \bigr) \equalsd g(E/n)=f_n(E).
	\end{equation}
Because of this convenient relation between the edge weights $Y_e^{\sss(K_n)}$ and exponential random variables, we will express our hypotheses about the distribution of the edge weights in terms of conditions on the functions $f_n(x)$ as $n\to\infty$.
In \refsubsect{UniversalityClassExamples}, we use this formulation for increasing $g$ or analogous for decreasing function, to explore the universality class in which our results hold in more detail.

Consider first the case $Y_e^{\sss(K_n)} \equalsd E^{s_n}$ from \refthm{IPWeight_Exp}.
From \eqrefPartI{EdgesByIncrFunct}, we have $g(x)=g_n(x)=x^{s_n}$, so that \eqrefPartI{fnFromParamEdges} yields
	\begin{equation}\labelPartI{fnEsnCase}
	f_n(x)=(x/n)^{s_n}=f_n(1)x^{s_n}, \qquad \text{for }\quad Y_e^{\sss(K_n)} \equalsd E^{s_n}.
	\end{equation}
Thus, \eqrefPartI{minYifn}--\eqrefPartI{fnEsnCase} show that the parameter $s_n$ measures the relative \emph{sensitivity} of $\min_{i\in[n]}Y_i$ to fluctuations in the variable $E$.
In general, we will have $f_n(x)\approx f_n(1)x^{s_n}$ if $x$ is appropriately close to 1 and $s_n\approx f_n'(1)/f_n(1)$.
These observations motivate the following conditions on the functions $(f_n)_n$, which we will use to relate the distributions of the edge weights $Y_e^{\sss(K_n)}$, $n\in\N$, to a sequence $(s_n)_n$:

\begin{cond}[Scaling of $f_n$]
\lbcond{scalingfn}
For every $x\geq 0$,
	\eqn{\labelPartI{fnx1oversn}
	\frac{f_n(x^{1/s_n})}{f_n(1)} \to x.
	}
\end{cond}

\begin{cond}[Density bound for small weights]\lbcond{LowerBoundfn}
There exist $\epsilonCondition>0$, $\deltaCondition \in \ocinterval{0,1}$ and $n_0 \in \N$ such that
	\begin{equation}\labelPartI{BoundfnSmall}
	\epsilonCondition s_n \leq x \frac{d}{dx} \log f_n(x) \leq s_n/\epsilonCondition,
	\qquad \text{whenever } \quad 1-\deltaCondition\leq x\leq 1,\text{ and }n\geq n_0.
	\end{equation}
\end{cond}
\begin{cond}[Density bound for large weights]
\lbcond{boundfn}\hfill
\begin{enumerate}
\item \lbitem{boundfnR}
For all $R>1$, there exist $\epsilonConditiona>0$ and $n_1\in \N$ such that for every $1\leq x\leq R$ and $n \ge n_1$,
	\eqn{
	\labelPartI{fn-lb}
	x\frac{d}{dx} \log f_n(x)\geq {\epsilonConditiona} s_n.
	}
\item \lbitem{boundfnlog}
For all $C>1$, there exist $\epsilonConditiona>0$ and $n_1\in\N$ such that \eqrefPartI{fn-lb} holds for every $n\geq n_1$ and every $x\geq 1$ satisfying $f_n(x)\leq C f_n(1) \log n$.
\end{enumerate}
\end{cond}

Notice that \refcond{scalingfn} implies that $f_n(1) \sim u_n$ whenever $s_n=o(n)$.
Indeed, by \eqrefPartI{fnFromParamEdges} we can write $u_n=f_n(x_n^{1/s_n})$ for $x_n = (-n \log(1-1/n))^{s_n}$.
Since $s_n=o(n)$, we have $x_n=1-o(1)$ and the monotonicity of $f_n$ implies that $f_n(x_n^{1/s_n})/f_n(1)\to 1$.
We remark also that \eqrefPartI{unx-res} remains valid if $u_n(x)$ is replaced by $f_n(x)$.

In \refsubsect{UniversalityClassExamples} the universality class is explored in more detail, and  several examples that satisfy Conditions~\refPartI{cond:scalingfn}--\refPartI{cond:boundfn} are discussed.
The results from \refsubsect{FPPn-dependent} generalize as follows:

\begin{theorem}\lbthm{IPWeightForFPP}
Suppose that \refcond{boundfn}~\refitem{boundfnR} is satisfied for a positive sequence $(s_n)_n$ with $s_n/\log\log n\to \infty$.
Let $M^{\sss(1)},M^{\sss(2)}$ be i.i.d.\ random variables for which $\P(M^{\sss(j)}\leq x)$ is the survival probability of a Poisson Galton--Watson branching process with mean $x$.
Then
	\begin{equation}\labelPartI{WeightRescalesToIP_gen}
	f_n^{-1}(W_n) \convd M^{\sss(1)} \vee M^{\sss(2)}.
	\end{equation}
\end{theorem}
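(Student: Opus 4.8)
The plan is to bypass the invasion-percolation machinery for this particular statement and instead reduce it to the classical first-order theory of the near-critical Erd\H{o}s--R\'enyi random graph, exploiting that the hypothesis $s_n/\log\log n\to\infty$ together with \refcond{boundfn}~\refitem{boundfnR} renders $W_n$ asymptotically indistinguishable, after applying $f_n^{-1}$, from the strong-disorder (min-max) weight between $1$ and $2$. I would first record the coupling coming from \eqrefPartI{EdgesByIncrFunct}--\eqrefPartI{fnFromParamEdges}: writing $Y_e^{\sss(K_n)}=g(E_e^{\sss(K_n)})$ with $g$ strictly increasing and $f_n(x)=g(x/n)$, the subgraph $G_n(\lambda):=\{e\colon Y_e^{\sss(K_n)}\leq f_n(\lambda)\}=\{e\colon E_e^{\sss(K_n)}\leq\lambda/n\}$ is an Erd\H{o}s--R\'enyi random graph $G(n,1-\e^{-\lambda/n})$ whose $n$-scaled edge probability tends to $\lambda$. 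Let $\rho(\lambda)$ be the survival probability of a Poisson Galton--Watson process with mean $\lambda$, so that $\P(M^{\sss(j)}\leq\lambda)=\rho(\lambda)$ (in particular $\rho\equiv0$ on $(0,1]$, while $\rho$ is continuous and strictly increasing to $1$ on $[1,\infty)$) and $\P(M^{\sss(1)}\vee M^{\sss(2)}\leq\lambda)=\rho(\lambda)^2$. Since $\lambda\mapsto\rho(\lambda)^2$ is continuous, it suffices to show $\P(f_n^{-1}(W_n)\leq x)\to\rho(x)^2$ for every $x>1$ and $\P(f_n^{-1}(W_n)\leq x)\to0$ for $x\leq1$.

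For the lower bound on $W_n$, I would note that, edge weights being positive, every edge of the (assumed unique) optimal path $\pi_{1,2}$ has weight at most $W_n$, whence $\{W_n\leq f_n(x)\}\subseteq\{1\leftrightarrow2\text{ in }G_n(x)\}$. Classical Erd\H{o}s--R\'enyi theory then gives $\P(1\leftrightarrow2\text{ in }G_n(x))\to\rho(x)^2$ for $x>1$ -- for instance because $(n-1)\,\P(1\leftrightarrow2\text{ in }G_n(x))=\E[\,|\mathcal{C}(1)|-1\,]$ and $\E[\,|\mathcal{C}(1)|\,]/n\to\rho(x)^2$ in the supercritical regime -- and $\P(1\leftrightarrow2\text{ in }G_n(x))\to0$ for $x\leq1$. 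Hence $\limsup_n\P(f_n^{-1}(W_n)\leq x)\leq\rho(x)^2$ for $x>1$ and $\P(f_n^{-1}(W_n)\leq x)\to0$ for $x\leq1$.

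For the upper bound on $W_n$, fix $x>1$ and $\epsilon\in(0,x-1)$ and work on the event $\cA_n$ that vertices $1$ and $2$ both lie in the giant component of the supercritical graph $G_n(x-\epsilon)$, so that $\P(\cA_n)\to\rho(x-\epsilon)^2$. Since the giant of $G(n,c/n)$ for fixed $c>1$ has diameter at most $C_\epsilon\log n$ \whp, on $\cA_n$ there is \whp\ a simple path $\pi$ from $1$ to $2$ inside $G_n(x-\epsilon)$ with at most $C_\epsilon\log n$ edges, each of weight at most $f_n(x-\epsilon)$; therefore $W_n\leq C_\epsilon(\log n)\,f_n(x-\epsilon)$ \whp\ on $\cA_n$. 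To absorb the factor $C_\epsilon\log n$ I would invoke \refcond{boundfn}~\refitem{boundfnR} with $R=x$, which for large $n$ gives $\frac{d}{dt}\log f_n(t)\geq\epsilonConditiona s_n/t$ on $[x-\epsilon,x]$ and hence, on integrating, $f_n(x)/f_n(x-\epsilon)\geq\exp(\epsilonConditiona\epsilon s_n/x)$; the right-hand side exceeds $C_\epsilon\log n$ as soon as $s_n>(x/(\epsilonConditiona\epsilon))\log(C_\epsilon\log n)$, which holds for all large $n$ exactly because $s_n/\log\log n\to\infty$. Thus $W_n\leq f_n(x)$ \whp\ on $\cA_n$, so $\liminf_n\P(f_n^{-1}(W_n)\leq x)\geq\rho(x-\epsilon)^2$; letting $\epsilon\downarrow0$ and using continuity of $\rho$ yields $\liminf_n\P(f_n^{-1}(W_n)\leq x)\geq\rho(x)^2$, which together with the previous paragraph establishes \eqrefPartI{WeightRescalesToIP_gen}.

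I expect the crux to be this last estimate rather than its random-graph inputs -- the existence, size and $O(\log n)$ diameter of the giant of a supercritical Erd\H{o}s--R\'enyi graph, and $\E[\,|\mathcal{C}(1)|\,]/n\to\rho(x)^2$, are all standard -- since the giant of $G_n(x-\epsilon)$ joins $1$ and $2$ only through a path of $\Theta(\log n)$ edges, so one genuinely needs \refcond{boundfn}~\refitem{boundfnR} to force $f_n$ to climb so steeply near $x$ that a multiplicative $\log n$ is dwarfed by an arbitrarily small increase of its argument; this is precisely where the sharp hypothesis $s_n/\log\log n\to\infty$ is consumed, the weaker assumption $s_n\to\infty$ alone being insufficient. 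Note that this route does not invoke the PWIT/invasion-percolation convergence of \refthm{IPLocConvForFPP_exp}, which provides the interpretation of the limit $M^{\sss(1)}\vee M^{\sss(2)}$ in terms of invasion percolation but is not logically needed here. Finally, \refthm{IPWeight_Exp} is recovered as the special case $f_n(x)=(x/n)^{s_n}$, for which $x\frac{d}{dx}\log f_n(x)\equiv s_n$ makes \refcond{boundfn}~\refitem{boundfnR} immediate.
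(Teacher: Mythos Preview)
Your argument is correct and genuinely different from the paper's proof. Both proofs share the same endgame: use the $O(\log n)$ diameter of the supercritical Erd\H{o}s--R\'enyi giant to produce a path whose multiplicative $\log n$ overhead is swallowed by the growth bound from \refcond{boundfn}~\refitem{boundfnR} together with $s_n/\log\log n\to\infty$. But they differ substantially in how the two bounds are set up.

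For the \emph{lower} bound, the paper constructs an explicit coupling via the alternating-IP exploration process on two PWITs (\refthm{CouplingExpl}), identifies the ``first pond'' $\fstpond^{\sss(j)}$, and shows that under the coupling every path from $1$ to $2$ must cross an edge of weight at least $f_n(M_0^{\sss(1)}\vee M_0^{\sss(2)})$. You instead use the elementary inclusion $\{W_n\leq f_n(x)\}\subseteq\{1\leftrightarrow 2\text{ in }G_n(x)\}$ and the classical limit $\P(1\leftrightarrow 2)\to\rho(x)^2$, avoiding the PWIT machinery entirely.

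For the \emph{upper} bound, the paper again invokes a (different) IP-based coupling: it explores $N$ IP steps from each side, locates Poisson-many boundary vertices with weights in $(X_{\max}^{\sss(j)},X_{\max}^{\sss(j)}+\epsilon)$, and connects those through the giant of the induced subgraph on unexplored vertices. You bypass this by working directly on the event that $1$ and $2$ both lie in the giant of $G_n(x-\epsilon)$, whose probability already converges to $\rho(x-\epsilon)^2$.

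What each buys: your route is shorter, uses only standard random-graph facts, and makes transparent that no invasion-percolation input is logically required for this particular theorem. The paper's route yields something stronger, namely an explicit coupling under which $f_n^{-1}(W_n)\geq M_0^{\sss(1)}\vee M_0^{\sss(2)}$ with high probability (a pointwise rather than distributional lower bound), and it exhibits concretely the link between the limit law and the IP structure that drives the rest of the paper.
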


\refthm{IPWeightForFPP} is proved in \refsect{IPWeightProof}.

\begin{theorem}\lbthm{IPLocConvForFPP_gen}
Let $(s_n)_n$ be a positive sequence with $s_n\to \infty$.
Suppose that \refcond{boundfn}~\refitem{boundfnR} holds and in addition \refcond{LowerBoundfn} holds for any value $\deltaCondition\in(0,1)$.
Then, for each fixed $m\in \N$, the topology of the FPP smallest-weight tree to the nearest $m$ vertices, as well as the weights along its edges, converges in distribution to invasion percolation on the Poisson-weighted infinite tree.
\end{theorem}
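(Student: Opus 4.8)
The plan is to couple the first-passage exploration from vertex~$1$ with invasion percolation (IP) on the Poisson-weighted infinite tree (PWIT), and to show that, up to the first $m$ explored vertices, the two processes coincide with high probability; the weights are then read off through the exact identity $f_n^{-1}(Y_e^{\sss(K_n)})=nE_e^{\sss(K_n)}$ and converge by local weak convergence. Using the construction of \refsect{Coupling} I take $Y_e^{\sss(K_n)}=g(E_e^{\sss(K_n)})$ for a single family $(E_e^{\sss(K_n)})_e$ of i.i.d.\ unit exponentials, so that the rescaled weights $nE_e^{\sss(K_n)}$ seen from vertex~$1$ converge locally to the PWIT edge weights as in \refsubsect{RelToIP}. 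Recall that the FPP smallest-weight tree to the nearest $m$ vertices has exactly $m$ edges and is grown by repeatedly attaching the unexplored vertex $v$ of smallest FPP weight $W_n(v)=\min_{u\in T_{k-1}}\bigl(\sum_{e\in P(1,u)}Y_e^{\sss(K_n)}+Y^{\sss(K_n)}_{uv}\bigr)$, where $P(1,u)$ is the path inside the current tree $T_{k-1}$, whereas IP attaches the endpoint of the globally lightest boundary edge.

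The engine is a \emph{sum-is-max} estimate. Fixing $R>1$, \refcond{boundfn}~\refitem{boundfnR} on $[1,R]$ together with \refcond{LowerBoundfn} taken for $\deltaCondition$ close to $1$ on $[1/R,1]$ give a constant $c=c(R)>0$ and $n_0$ with $x\frac{d}{dx}\log f_n(x)\ge c\,s_n$ for all $x\in[1/R,R]$, $n\ge n_0$, whence $f_n(\beta)/f_n(\alpha)\ge(\beta/\alpha)^{cs_n}$ for $1/R\le\alpha\le\beta\le R$. Since every path in $T_{k-1}\cup\{v\}$ has at most $m$ edges, $f_n(\max_e nE_e)\le\sum_e f_n(nE_e)\le m\,f_n(\max_e nE_e)$ with outer terms differing by a factor $m^{1/(cs_n)}\to1$; hence, provided the relevant weights lie in $[1/R,R]$, $f_n^{-1}(W_n(v))=(1+o(1))\min_{u\in T_{k-1}}\max\bigl(\beta_{k-1}(1,u),nE_{uv}\bigr)$, with $\beta_{k-1}(1,u)=\max_{e\in P(1,u)}nE_e$ the within-$T_{k-1}$ bottleneck, and two such bottleneck quantities differing by a factor bounded away from~$1$ produce $W_n$-values ordered the same way once $s_n$ is large. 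Because $s_n\to\infty$, one moreover controls the subleading behaviour of $f_n^{-1}(W_n(v))$, which settles the degenerate case in which several candidates share the same leading bottleneck: there the dominant sub-sum is governed by the largest edge after the bottleneck, i.e.\ essentially by the lightest boundary edge reaching $v$ -- precisely the quantity IP compares.

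I would then induct on $k=1,\dots,m$. Condition on a good event $G_K$ for the PWIT/IP process: up to the first $m$ invaded vertices all Poisson points encountered are distinct, pairwise in general position and in $[1/R,R]$, and IP has used only the first $K$ children of each vertex and reached depth at most $K$; one checks $\P(G_K)\to1$ as $K,R\to\infty$. By local weak convergence, the radius-$K$, width-$K$ neighbourhood of vertex~$1$ in $K_n$ with weights $nE_e^{\sss(K_n)}$ is, \whp\ in $n$ for each fixed $K$, isomorphic as a weighted graph to the corresponding PWIT neighbourhood. Assuming $T_{k-1}^{\FPP}=T_{k-1}^{\IP}$ for $n$ large on $G_K$: every boundary edge of $T_{k-1}$ has weight at least the IP choice $\mu_{k-1}\ge1/R$, a boundary vertex outside the coupled region needs an edge of weight $\ge\xi_{K+1}>R$ and is never selected by FPP, and among the coupled boundary edges the sum-is-max estimate plus the subleading analysis -- using the pond structure of the IP cluster $T_{k-1}$, which forces interior edges along any route inside $T_{k-1}$ to be no larger than the boundary edges that were available when they were added -- shows that, for $n$ large, FPP selects IP's vertex $v^*$ via IP's edge $u^*v^*$. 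Hence $T_k^{\FPP}=T_k^{\IP}$ with matching weights, and after $m$ steps the FPP smallest-weight tree to the nearest $m$ vertices equals the IP cluster on the PWIT, with rescaled weights $f_n^{-1}(Y_e^{\sss(K_n)})=nE_e^{\sss(K_n)}$ equal to the PWIT weights. Letting $n\to\infty$ and then $K,R\to\infty$ yields the asserted convergence in distribution; the special case $Y_e^{\sss(K_n)}\equalsd E^{s_n}$ of \refthm{IPLocConvForFPP_exp} follows because then $x\frac{d}{dx}\log f_n(x)\equiv s_n$, which satisfies every hypothesis.

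The main obstacle is the inductive matching step. Although to leading order $f_n^{-1}(W_n)$ is a bottleneck, the FPP rule minimises accumulated weight while IP minimises a single edge weight; these agree on the nose only because the pond structure of IP clusters, combined with the $s_n\to\infty$ collapse, reduces the FPP decision to comparing the lightest boundary edges, and making this rigorous -- including ruling out that the optimal route wanders into the region where the PWIT coupling is invalid, for which mild a priori bounds on the depth of the explored tree suffice -- is the delicate part. A secondary, bookkeeping obstacle is the order of limits: the threshold in $n$ past which FPP matches IP depends on the PWIT realisation through $G_K$, so one must first condition on $G_K$, obtain the matching \whp\ in $n$ for fixed $K$, and only then send $K,R\to\infty$; the coupling of \refsect{Coupling} and \refsubsect{RelToIP} is what makes this conditioning legitimate.
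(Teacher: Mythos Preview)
Your plan is correct and rests on the same engine as the paper --- the growth bound $f_n(\beta)/f_n(\alpha)\ge(\beta/\alpha)^{cs_n}$, derived from the two density conditions, collapses each FPP path-weight sum onto its largest term, so that comparing FPP arrival times reduces to comparing the sorted weight vectors along paths, which is invasion percolation. The paper, however, organises the argument more economically in two respects. First, in place of local weak convergence it invokes the \emph{exact} coupling of \refthm{Coupling1Source}, $\SWT^{\sss(1)}=\pi_M(\thinnedBP^{\sss(1)})$, which transfers the whole problem onto the tree $\tree^{\sss(1)}$; this eliminates your $\min_{u\in T_{k-1}}$ over multiple predecessors (on a tree every boundary vertex has a unique parent) and the attendant worry about edges leaving the coupled region, reducing those issues to the single observation that the first $m$ explored vertices are unthinned \whpdot{} Second, rather than inducting on $k$ and treating the shared-bottleneck case by a separate subleading analysis, the paper notes once and for all that IP is equally determined by the lexicographic order on $\mathcal{O}(v)=(X_{(v,1)},\dotsc,X_{(v,\abs{v})})$; on the good event where all relevant weights lie in $[\delta,1/\delta]$ and are pairwise $\delta$-separated, the inequality $f_n(x')/f_n(x)>m$ for $x'-x\ge\delta$ gives $T_v<T_w\iff\mathcal{O}(v)<_{\rm lex}\mathcal{O}(w)$ in one stroke. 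Your pond-structure observation and subleading comparison are exactly what makes the lexicographic rule coincide with the min-edge rule; the paper simply cites this equivalence as an elementary exercise and thereby avoids reproving it inside the induction.
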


\refthm{IPLocConvForFPP_gen} will be formalized in \refthm{CoupIP-PWIT}.

\refthm{IPWeight_Exp} follows from \refthm{IPWeightForFPP}.  \refthm{IPLocConvForFPP_exp} follows from \refthm{IPLocConvForFPP_gen}
because in the case $Y_e^{\sss (K_n)}\equalsd E^{s_n}$, \eqrefPartI{BoundfnSmall} and \eqrefPartI{fn-lb} hold identically with $\eps_1=1 = \eps_0$.
Note that neither \refthm{IPWeightForFPP} nor \refthm{IPLocConvForFPP_gen} requires \refcond{scalingfn}.

\subsection{Regularity: Sufficient condition for the universality class}
\lbsubsect{UniversalityClassExamples}

Theorems~\refPartI{t:IPWeightForFPP} and \refPartI{t:IPLocConvForFPP_gen} apply also to $n$-independent distributions.
The following example collects some edge-weight distributions that satisfy Conditions~\refPartI{cond:scalingfn}--\refPartI{cond:boundfn}:

\begin{example}[Examples of weight distributions]
\lbexample{AllExamples}\hfill
\begin{enumerate}
\item\lbitem{EsnExample}
Let $(s_n)_{n \in \N} \in (0,\infty)^{\N}$ with $s_n \to \infty$.
Take $Y_e^{\sss(K_n)} \equalsd E^{s_n}$, i.e., $\FY(y)=1-\e^{-y^{1/s_n}}$.

\item\lbitem{PowerOfLogExampleFY}
Let $\rho>0, \kappa \in (0,1)$.
Take $Y_e^{\sss(K_n)} \equalsd \exp(-(E/\rho)^{1/\kappa})$, for which $\;\FY(y)=\exp(-\rho(\log(1/y))^\kappa)$, and define $s_n=\frac{(\log n)^{1/\kappa-1}}{\kappa \rho^{1/\kappa}}$.

%\item\lbitem{PowerOfLogExampleg}
%Let $\rho>0, \kappa \in (0,1)$.
%Take $Y_e^{\sss(K_n)} \equalsd \e^{-(\rho^{-1}\log(1/E))^{1/\kappa}}$, i.e., $\FY(y)=1-\exp(-\e^{-\rho(\log 1/y)^\kappa})$, and define $s_n=...$.
%\JG{Actually this example gets too complicated.}

\item \lbitem{PowerOfnExampleFY}
Let $\rho>0, \alpha \in (0,1)$.
Take $Y_e^{\sss(K_n)} \equalsd \exp\left( - \rho \e^{\alpha E}/\alpha \right)$, for which
	\[
	\FY(y)=\exp(-\frac{1}{\alpha} \log(\frac{\alpha}{\rho} \log(1/y))),
	\]
and define $s_n=\rho n^{\alpha}$.

\item \lbitem{PowerOfnExampleg}
Let $\rho>0,\alpha \in (0,1)$.
Take $Y_e^{\sss(K_n)} \equalsd \exp(-\rho E^{-\alpha}/\alpha)$, for which
	\[
	\FY(y)=1-\exp\bigl( -(\frac{\alpha}{\rho} \log(1/y))^{-1/\alpha} \bigr),
	\]
and define $s_n=\rho n^{\alpha}$.
\end{enumerate}

For \refexample{AllExamples}~\refitem{EsnExample} and \refitem{PowerOfnExampleg}, the edge weights distributions are expressed in terms of strictly increasing functions of $E$, namely $g(x)=x^{s_n}$ and  $g(x)=\exp(-\rho x^{-\alpha}/\alpha)$ respectively.
We therefore get $\FY(y)$ by \eqrefPartI{FYandg}.
For \refexample{AllExamples}~\refitem{PowerOfLogExampleFY} and \refitem{PowerOfnExampleFY}, the function is strictly decreasing instead, so we get $\FY(y)$ by the following analogue of \eqrefPartI{FYandg} for the case $Y=h(E)$ and $h$ strictly decreasing:

\begin{equation}\labelPartI{FYandgdecreasing}
	\FY(y)=\e^{-h^{-1}(y)}\quad \text{and}
	\quad
	h(x)=\FY^{-1} \left( \e^{-x} \right)
	.
\end{equation}

Indeed,
	\begin{equation}\labelPartI{FYandgdecreasingexpla}
	\FY(y)= \P(Y_e^{\sss(K_n)} \leq y)
	=\P(E_e^{\sss(K_n)} \geq h^{-1}(y))
	=\e^{-h^{-1}(y)}.
	\end{equation}
For \refexample{AllExamples}~\refitem{PowerOfLogExampleFY}
$h(x)=\exp(-(x/\rho)^{1/\kappa})$ and $h^{-1}(y)=\rho(\log(1/y))^{\kappa}$. By \eqrefPartI{FYandgdecreasing} $\;\FY(y)=\exp(-\rho(\log(1/y))^\kappa)$.
Analogously, for \refexample{AllExamples}~\refitem{PowerOfnExampleFY}
we have $h(x)=\exp\left( - \rho \e^{\alpha x}/\alpha \right)$ and $h^{-1}(y)=\frac{1}{\alpha} \log(\frac{\alpha}{\rho} \log(1/y)))$.
By \eqrefPartI{FYandgdecreasing} we get $\;\FY(y)=\exp(-\frac{1}{\alpha} \log(\frac{\alpha}{\rho} \log(1/y)))$.
 In \refcoro{examplesconditions} below, we derive the values $s_n$ that were stated in \refexample{AllExamples}.
\end{example}

\refexample{AllExamples}~\refitem{PowerOfLogExampleFY}--\refitem{PowerOfnExampleg} are instances of the following family of edge-weight distributions:
Suppose that the edge weights $Y_e^{\sss(K_n)}$ follow an $n$-independent distribution $\FY$ with no atoms and $u\mapsto u(\FY^{-1})'(u)/\FY^{-1}(u)$ is regularly varying with index $-\alpha$ as $u\decreasesto 0$, i.e.,
\begin{equation}
\labelPartI{SlowVarCondFY}
u \frac{d}{du}\log \FY^{-1}(u)=u^{-\alpha}L(1/u) \qquad \text{for all } u \in (0,1),
\end{equation}
where $t \mapsto L(t)$ is slowly varying as $t\to \infty$.
The following proposition shows that edge weights of this type satisfy our conditions with $s_n=n^\alpha L(n)$:

\begin{prop}\lbprop{RegVarImplies}
For an $n$-independent distribution $Y_e^{\sss(K_n)} \equalsd g(E)$ with distribution function $\FY$, the function $u \mapsto u(\FY^{-1})'(u)/\FY^{-1}(u)$ is regularly varying with index $-\alpha$ as $u\decreasesto 0$ if and only if $x \mapsto xg'(x)/g(x)$ is regularly varying with index $-\alpha$ as $x\decreasesto 0$.
If either of these equivalent conditions holds then, writing for all $x>0$, $u \in (0,1)$,
	\begin{equation}\labelPartI{SlowVarCondg}
	u \frac{d}{du} \log \FY^{-1}(u)=u^{-\alpha}L(1/u) \quad  \text{and } \quad  x \frac{d}{dx} \log g(x)=x^{-\alpha}\widetilde{L}(1/x)
	\end{equation}
it holds that
	\begin{equation}%\labelPartI{}
	L(t)\sim\widetilde{L}(t) \qquad \text{as }t\to\infty.
	\end{equation}
Furthermore if either of the asymptotically equivalent sequences
	\begin{equation}%\labelPartI{}
	s_n=n^\alpha L(n) \qquad\text{or}\qquad s_n=n^\alpha \widetilde{L}(n)
	\end{equation}
satisfies $s_n\to\infty$, then Conditions~\refPartI{cond:scalingfn}, \refPartI{cond:LowerBoundfn} and \refPartI{cond:boundfn}~\refitem{boundfnR} hold, while if in addition $s_n/\log\log n\to\infty$ then \refcond{boundfn}~\refitem{boundfnlog} holds as well.
\end{prop}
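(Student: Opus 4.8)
The plan is to reduce all four conditions to a single elementary object. Differentiating \eqrefPartI{fnFromParamEdges} gives the identity
\[
x\frac{d}{dx}\log f_n(x) \;=\; \frac{x}{n}\,\frac{g'(x/n)}{g(x/n)} \;=\; h(x/n), \qquad h(y):=\frac{y\, g'(y)}{g(y)},
\]
so that, under \eqrefPartI{SlowVarCondg}, $x\frac{d}{dx}\log f_n(x) = (x/n)^{-\alpha}\widetilde L(n/x) = n^\alpha x^{-\alpha}\widetilde L(n/x)$, while $s_n = n^\alpha\widetilde L(n)$. Every one of Conditions~\refPartI{cond:scalingfn}--\refPartI{cond:boundfn} thus becomes a statement about the quantity $n^\alpha x^{-\alpha}\widetilde L(n/x)$ relative to $s_n$, to be read off from Karamata's uniform convergence theorem for the slowly varying $\widetilde L$ --- the only subtlety being to control the range of $x$ that occurs.

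For the equivalence and the relation $L\sim\widetilde L$: from \eqrefPartI{FYandg} one has $\FY^{-1}(u)=g(-\log(1-u))$, and differentiating gives $u(\FY^{-1})'(u)/\FY^{-1}(u) = c(u)\,h(-\log(1-u))$ with $c(u)=\frac{u}{(1-u)(-\log(1-u))}\to 1$ and $-\log(1-u)\sim u$ as $u\downarrow 0$. Since regular variation at $0$ is stable under multiplication by a factor tending to $1$ and composition with a function asymptotic to the identity (write $h(y)=y^{-\alpha}\ell(y)$ and apply uniform convergence to $\ell$), the two regular-variation hypotheses are equivalent; matching the slowly varying parts on the two sides and using $-\log(1-u)/u\to 1$ gives $L(1/u)\sim\widetilde L(1/u)$, i.e.\ $L(t)\sim\widetilde L(t)$. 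In particular $n^\alpha L(n)\sim n^\alpha\widetilde L(n)$, so it suffices to work with $s_n=n^\alpha\widetilde L(n)$.

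The conditions on bounded ranges are then immediate. If $x$ ranges over a fixed interval $[1-\deltaCondition,R]$ (with $\deltaCondition\in(0,1)$, $R>1$), then $x^{-\alpha}$ stays in a fixed compact subset of $(0,\infty)$ and $n/x$ stays within a bounded factor of $n$, so $\widetilde L(n/x)/\widetilde L(n)\to 1$ uniformly and $x\frac{d}{dx}\log f_n(x)\asymp s_n$ uniformly; this yields \refcond{LowerBoundfn} (both bounds, for every $\deltaCondition\in(0,1)$) and \refcond{boundfn}~\refitem{boundfnR}. For \refcond{scalingfn} we integrate the identity to get $\log\bigl(f_n(x^{1/s_n})/f_n(1)\bigr) = n^\alpha\int_1^{x^{1/s_n}}t^{-1-\alpha}\widetilde L(n/t)\,dt$; since $x^{1/s_n}\to 1$ the integrand is $\widetilde L(n)(1+o(1))$ uniformly on the (shrinking) interval of integration, so the right-hand side equals $s_n(x^{1/s_n}-1)(1+o(1)) = \log x + o(1)$, which gives \refcond{scalingfn} for all $x>0$ (the case $x=0$ being trivial since $f_n(0)=g(0)=0$, itself a consequence of $\int_0^1 t^{-1-\alpha}\widetilde L(1/t)\,dt=\infty$).

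The one genuinely delicate point is \refcond{boundfn}~\refitem{boundfnlog}, because there $x\ge 1$ is only restricted by $f_n(x)\le C f_n(1)\log n$ and could in principle be large, where $n^\alpha x^{-\alpha}\widetilde L(n/x)$ is much smaller than $s_n$. The resolution --- and the place where $s_n/\log\log n\to\infty$ is used --- is that this restriction forces $x$ near $1$: for fixed $\eta>0$ and $x\ge 1+\eta$, the uniform convergence theorem applied on the compact interval $[1,1+\eta]$ gives $\log\bigl(f_n(x)/f_n(1)\bigr)=n^\alpha\int_1^x t^{-1-\alpha}\widetilde L(n/t)\,dt\ge c(\eta)\,s_n$ for $n$ large, and $c(\eta)s_n>\log C+\log\log n$ eventually, contradicting $f_n(x)\le Cf_n(1)\log n$; hence $x\in[1,1+\eta)$, on which the bounded-range estimate applies verbatim and yields $x\frac{d}{dx}\log f_n(x)\ge \tfrac12(1+\eta)^{-\alpha}s_n\ge\epsilonConditiona\, s_n$. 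I expect this interplay between the logarithmic constraint and the growth of $s_n$ to be the main obstacle; everything else is routine bookkeeping with regular variation.
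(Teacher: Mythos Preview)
Your proposal is correct and follows essentially the same route as the paper: both express $x\frac{d}{dx}\log f_n(x)=n^\alpha x^{-\alpha}\widetilde L(n/x)$, deduce Conditions~\refPartI{cond:LowerBoundfn} and \refPartI{cond:boundfn}~\refitem{boundfnR} from uniform convergence of $\widetilde L$ on bounded ranges, obtain \refcond{scalingfn} by integrating this identity, and handle \refcond{boundfn}~\refitem{boundfnlog} by showing that $s_n/\log\log n\to\infty$ forces the constraint $f_n(x)\le Cf_n(1)\log n$ to confine $x$ to a bounded interval. The only cosmetic differences are that the paper writes out the explicit relation $\widetilde L(1/x)=\e^{-x}\bigl(\tfrac{x}{1-\e^{-x}}\bigr)^{1+\alpha}L\bigl(1/(1-\e^{-x})\bigr)$ rather than arguing abstractly, and for \refcond{scalingfn} substitutes $\xi=u^{1/s_n}$ to work over the fixed interval $[1,x]$ instead of the shrinking interval $[1,x^{1/s_n}]$.
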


Note that replacing $(s_n)_n$ by an asymptotically equivalent sequence makes no difference in Conditions~\refPartI{cond:scalingfn}--\refPartI{cond:boundfn}.
Moreover, every sequence $(s_n)_n$ of the form $s_n=n^{\alpha} L(n)$, for $\alpha \ge 0$ and $L$ slowly varying at infinity, can be obtained from a $n$-independent distribution by taking $\log \FY^{-1}(u)=\int u^{-1-\alpha}L(1/u)du$, i.e., the indefinite integral of the function $u\mapsto u^{-1-\alpha}L(1/u)$.
For a given $n$-independent distribution, \refprop{RegVarImplies} allows us to define the sequence $s_n$ using either $\FY$ or $g$, whichever is more convenient.
See the proof of  \refcoro{examplesconditions} for details.

\begin{proof}[Proof of \refprop{RegVarImplies}]
The equivalence follows by noting that $\widetilde{L}(1/x)=\e^{-x}(\frac{x}{1-\e^{-x}})^{1+\alpha} L(1/(1-\e^{-x}))$ (see \eqrefPartI{FYandg} and \eqrefPartI{SlowVarCondg}), so that $\widetilde{L}(t)$ is slowly varying as $t\to \infty$ if and only if $L(t)$ is.
Conditions~\refPartI{cond:LowerBoundfn} and \refPartI{cond:boundfn}~\refitem{boundfnR} follow from the observation that (recall \eqrefPartI{fnFromParamEdges})
\begin{equation}%\labelPartI{}
\frac{xf_n'(x)}{f_n(x)}=\frac{\frac{x}{n}g'(\frac{x}{n})}{g(\frac{x}{n})}=n^{\alpha} x^{-\alpha} \widetilde{L}(n/x)
\end{equation}
and the definition of slowly varying, while \refcond{scalingfn} follows from the computation
\begin{align}
\log\frac{f_n(x^{1/s_n})}{f_n(1)}
&=
\int_1^{x^{1/s_n}}\frac{f_n'(\xi)}{f_n(\xi)}d\xi
=
\int_1^x \frac{f'_n(u^{1/s_n})}{f_n(u^{1/s_n})} \frac{1}{s_n} u^{1/s_n-1} du
\notag\\
&=
\int_1^x \frac{n^\alpha u^{-\alpha/s_n} \widetilde{L}(n/u^{1/s_n})}{n^\alpha \widetilde{L}(n)}\frac{du}{u}
\to \log x,
\labelPartI{nIndepScaling}
\end{align}
since $\widetilde{L}(n/u^{1/s_n})/\widetilde{L}(n)\to 1$ and $u^{-\alpha/s_n} \to 1$, uniformly over $u$ in a compact subset of $(0,\infty)$.

Finally, assume $s_n/\log\log n\to\infty$ and fix any $C,R \in (1,\infty)$.
By \refcond{boundfn}~\refitem{boundfnR}, there is $\epsilonConditiona=\eps(R)>0$ such that for sufficiently large $n$, $\log(f_n(R)/f_n(1))\ge \epsilonConditiona s_n \int_1^R 1/\tilde{x}\, d\tilde{x}$.
Therefore, for sufficiently large $n$, $f_n(R) \ge f_n(1) R^{\epsilonConditiona s_n} \ge Cf_n(1) \log n$ by Condition \refPartI{cond:boundfn}~\refitem{boundfnR} and the inequality \eqrefPartI{fn-lb} holds for any value $x\geq 1$ such that $f_n(x)\leq Cf_n(1)\log n$, as required by \refcond{boundfn}~\refitem{boundfnlog}.
\end{proof}

\begin{coro}
\lbcoro{examplesconditions}
The edge-weight distributions in \refexample{AllExamples}~\refitem{EsnExample}--\refitem{PowerOfnExampleg}, with the associated sequences $(s_n)_n$, satisfy conditions Conditions~\refPartI{cond:scalingfn}--\refPartI{cond:boundfn}.
\end{coro}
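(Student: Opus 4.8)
The plan is to verify \refcoro{examplesconditions} one family at a time, separating the genuinely $n$-dependent family \refexample{AllExamples}~\refitem{EsnExample} from the three $n$-independent families \refitem{PowerOfLogExampleFY}--\refitem{PowerOfnExampleg}. For \refitem{EsnExample} everything is explicit via \eqrefPartI{fnEsnCase}: since $f_n(x)=f_n(1)x^{s_n}$ one has $f_n(x^{1/s_n})/f_n(1)=x$ identically, giving \refcond{scalingfn}, and $x\frac{d}{dx}\log f_n(x)\equiv s_n$, so Conditions~\refPartI{cond:LowerBoundfn}, \refPartI{cond:boundfn}~\refitem{boundfnR} and \refPartI{cond:boundfn}~\refitem{boundfnlog} hold with $\epsilonCondition=\epsilonConditiona=1$ and any $\deltaCondition\in(0,1)$; this is the observation already recorded after \refthm{IPLocConvForFPP_gen}. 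Note this case cannot be routed through \refprop{RegVarImplies}, which applies only to $n$-independent distributions.

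For \refitem{PowerOfLogExampleFY}--\refitem{PowerOfnExampleg} the plan is to check the hypothesis of \refprop{RegVarImplies}, identify the index $-\alpha$ and the slowly varying factor, confirm that the resulting $n^\alpha L(n)$ agrees (up to asymptotic equivalence) with the $s_n$ announced in \refexample{AllExamples}, and then let the proposition do the rest — including \refcond{boundfn}~\refitem{boundfnlog}, since $s_n/\log\log n\to\infty$ will hold in each case. Since these distributions are $n$-independent (and, being continuous, of the form $g(E)$ with $g$ increasing), \refprop{RegVarImplies} applies, and one may verify its hypothesis on whichever side is explicit. In \refitem{PowerOfnExampleg} the increasing $g(x)=\exp(-\rho x^{-\alpha}/\alpha)$ is given, and $x\frac{d}{dx}\log g(x)=\rho x^{-\alpha}$ is regularly varying at $0$ of index $-\alpha$ with $\widetilde L\equiv\rho$, so $s_n=n^\alpha\widetilde L(n)=\rho n^\alpha$. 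In \refitem{PowerOfnExampleFY} and \refitem{PowerOfLogExampleFY} the law is presented through a decreasing function, so it is convenient to use the equivalent hypothesis on $\FY^{-1}$: inverting the $\FY$ already computed in \refexample{AllExamples} yields $\FY^{-1}(u)=\exp(-\rho u^{-\alpha}/\alpha)$ and $\FY^{-1}(u)=\exp(-\rho^{-1/\kappa}(\log(1/u))^{1/\kappa})$ respectively, so that $u\frac{d}{du}\log\FY^{-1}(u)$ equals $\rho u^{-\alpha}$ in the first case (index $-\alpha$, $L\equiv\rho$, hence $s_n=\rho n^\alpha$) and $\frac{1}{\kappa\rho^{1/\kappa}}(\log(1/u))^{1/\kappa-1}$ in the second (index $0$, $L(t)=\frac{1}{\kappa\rho^{1/\kappa}}(\log t)^{1/\kappa-1}$, hence $s_n=n^0L(n)=\frac{(\log n)^{1/\kappa-1}}{\kappa\rho^{1/\kappa}}$). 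In all three cases the $s_n$ so obtained matches the announced one and satisfies $s_n/\log\log n\to\infty$ (using $\alpha\in(0,1)$ in \refitem{PowerOfnExampleFY}--\refitem{PowerOfnExampleg}, and $1/\kappa-1>0$ in \refitem{PowerOfLogExampleFY}), so \refprop{RegVarImplies} delivers Conditions~\refPartI{cond:scalingfn}--\refPartI{cond:boundfn}.

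I expect no genuine obstacle: the work is the elementary inversion of the given distribution functions, differentiating $\log\FY^{-1}$ or $\log g$, and checking that the result has exactly the regularly varying shape required in \eqrefPartI{SlowVarCondFY}/\eqrefPartI{SlowVarCondg}. The one spot where a little care is needed is \refitem{PowerOfLogExampleFY}, where the index is $\alpha=0$ and the slowly varying factor is the non-constant function $t\mapsto(\log t)^{1/\kappa-1}$: one must recall that a power of the logarithm is slowly varying, and note that $\kappa\in(0,1)$ forces the exponent $1/\kappa-1$ to be positive, which is precisely what gives $s_n\to\infty$ and $s_n/\log\log n\to\infty$. Reconciling multiplicative constants with the $s_n$ stated in \refexample{AllExamples} is just bookkeeping, and once the identifications above are in place the corollary follows immediately from \refprop{RegVarImplies} together with the direct computation for \refitem{EsnExample}.
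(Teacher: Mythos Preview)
Your proposal is correct and follows essentially the same approach as the paper: case~\refitem{EsnExample} is verified directly from \eqrefPartI{fnEsnCase}, and cases~\refitem{PowerOfLogExampleFY}--\refitem{PowerOfnExampleg} are reduced to \refprop{RegVarImplies} by identifying the regularly varying form of $u\frac{d}{du}\log\FY^{-1}(u)$ or $x\frac{d}{dx}\log g(x)$ and checking $s_n/\log\log n\to\infty$. The only cosmetic difference is in~\refitem{PowerOfLogExampleFY}: the paper first rewrites $Y_e\equalsd g(E)$ for an increasing $g$ via the distributional identity $E\equalsd -\log(1-\e^{-E})$ and then computes on the $g$-side (obtaining the slowly varying factor only up to asymptotic equivalence), whereas you invert $\FY$ directly and compute on the $\FY^{-1}$-side, which gives the slowly varying factor exactly; both routes are legitimate uses of the two equivalent hypotheses in \refprop{RegVarImplies}.
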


\begin{proof}
For \refexample{AllExamples}~\refitem{EsnExample} is immediate. For \refexample{AllExamples}~\refitem{PowerOfnExampleg},  \refprop{RegVarImplies} allows us to derive the sequence $s_n$.
In this case it is more convenient to use the second equation in \eqrefPartI{SlowVarCondg} with $g(x)=\exp(-\rho x^{-\alpha}/\alpha)$, thus we get
\begin{equation}\labelPartI{Findsn(d)}
	x \frac{d}{dx} \log g(x)=\rho x^{-\alpha} \;\;\hbox{ and }\;\; \widetilde{L}(1/x)=\rho
	.
	\end{equation}
We conclude that the Conditions~\refPartI{cond:scalingfn}, \refPartI{cond:LowerBoundfn} and \refPartI{cond:boundfn}~\refitem{boundfnR} hold with $s_n=\rho n^\alpha$.
Since $s_n/\log\log n\to\infty$, \refcond{boundfn}~\refitem{boundfnlog} holds as well.

For \refexample{AllExamples}~\refitem{PowerOfnExampleFY} it is more convenient to use the first equation in \eqrefPartI{SlowVarCondg} with
$\FY(y)=\exp(-\frac{1}{\alpha} \log(\frac{\alpha}{\rho} \log(1/y)))$.
Thus $\FY^{-1}(u)=\exp(-\frac{\alpha}{\rho} u^{-\alpha})$,
\begin{equation}\labelPartI{Findsn(c)}
	u \frac{d}{du} \log \FY^{-1}(u)=\rho u^{-\alpha} \;\;\hbox{ and }\;\; {L}(1/u)=\rho
	.
	\end{equation}
By \refprop{RegVarImplies}, we can set $s_n=\rho n^\alpha$, and Conditions~\refPartI{cond:scalingfn}--\refPartI{cond:boundfn} hold since $s_n/\log\log n\to\infty$.

For \refexample{AllExamples}~\refitem{PowerOfLogExampleFY}, we first observe that $U \equalsd 1-U$ and $U \equalsd \e^{-E}$, so $\e^{-E} \equalsd 1-\e^{-E}$ and $E \equalsd - \log(1-\e^{-E})$.
Thus we get
\begin{equation}
Y_e^{\sss(K_n)} \equalsd \exp(-(E/\rho)^{1/\kappa})\equalsd \exp\left[-\left(- \log(1-\e^{-E})/\rho\right)^{1/\kappa}\right].
\end{equation}
In this case is more convenient to use the second equation in \eqrefPartI{SlowVarCondg} with
\begin{equation}
g(x)= \exp\left[-\left(- \log(1-\e^{-x})/\rho\right)^{1/\kappa}\right],
\end{equation}
thus we get
	\begin{equation}\labelPartI{Findsn(b)}
	x \frac{d}{dx} \log g(x)=
	\frac{x}{\kappa \rho^{1/\kappa}} \left(-\log(1-\e^{-x})\right)^{1/\kappa-1}\frac{\e^{-x}}{1-\e^{-x}}
	\sim \frac{1}{\kappa \rho^{1/\kappa}} \left(\log(1/x)\right)^{1/\kappa-1}
	,
	\end{equation}
so that $L(n)\sim \frac{1}{\kappa \rho^{1/\kappa}} \left(\log n\right)^{1/\kappa-1}$.
Asymptotic equivalence does not affect Conditions~\refPartI{cond:scalingfn}--\refPartI{cond:boundfn}, so we can set $s_n=\frac{ (\log n)^{1/\kappa-1}}{\kappa \rho^{1/\kappa}}$.
By \refprop{RegVarImplies} and the observation that $s_n/\log\log n\to\infty$ since $\kappa<1$, Conditions~\refPartI{cond:scalingfn}--\refPartI{cond:boundfn} hold.
\end{proof}

An example of an edge-weight distribution that is $n$-dependent but not $Y_e\equalsd E^{s_n}$ is the following:

\begin{example}
\lbexample{ExZtosn}
Let $(s_n)_n$ be a positive sequence with $s_n \to \infty$, $s_n=o(n^{1/3})$.
Let $U$ be a positive, continuous random variable with distribution function $G$ and
	\begin{equation}\labelPartI{asympExd}
	\lim_{u \downarrow 0} u \frac{d}{du} \log G^{-1}(u) =1.
	\end{equation}
Take $Y_e^{\sss(K_n)} \equalsd U^{s_n}$, i.e., $\FY(y)=G(y^{1/s_n})$.
Examples for $G$ are the uniform distribution on an interval $(0,b)$, for any $b>0$, or the exponential distribution with any parameter.
\end{example}

\begin{lemma}
\lblemma{ExZtosnCond}
The edge weights of \refexample{ExZtosn} satisfy Conditions~\refPartI{cond:LowerBoundfn}--\refPartI{cond:boundfn}, and Condition \refPartI{cond:scalingfn} when $s_n/\log\log{n}\rightarrow \infty$.
\end{lemma}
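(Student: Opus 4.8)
The plan is to reduce all four conditions to one explicit formula for the logarithmic derivative of $f_n$. Since $\FY(y)=G(y^{1/s_n})$ we have $\FY^{-1}(u)=\bigl(G^{-1}(u)\bigr)^{s_n}$, so \eqrefPartI{fnFromParamEdges} gives $f_n(x)=\bigl(G^{-1}(1-\e^{-x/n})\bigr)^{s_n}$. Writing $u=u(x,n):=1-\e^{-x/n}$ (so that $x/n=-\log(1-u)$) and $\phi(u):=u\,\tfrac{d}{du}\log G^{-1}(u)$, differentiation yields the identity
\[
x\frac{d}{dx}\log f_n(x)=s_n\,\phi(u)\cdot\frac{-(1-u)\log(1-u)}{u}.
\]
As $u\downarrow0$ one has $-(1-u)\log(1-u)/u\to1$ and, by hypothesis \eqrefPartI{asympExd}, $\phi(u)\to1$; hence the right-hand side equals $s_n\bigl(1+o(1)\bigr)$ uniformly over any family of $x$'s along which $u(x,n)\to0$. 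Integrating $\phi(u)/u$ also shows that \eqrefPartI{asympExd} forces $G^{-1}$ to be regularly varying of index $1$ at $0$ (equivalently $G^{-1}(v)=v^{1+o(1)}$ as $v\downarrow0$, whence $G^{-1}(0+)=0$); I will use this below.

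Given the identity, \refcond{LowerBoundfn} and \refcond{boundfn}~\refitem{boundfnR} are immediate: for $1-\deltaCondition\le x\le1$ one has $u(x,n)\le 1/n$, and for $1\le x\le R$ one has $u(x,n)\le R/n$, so in both ranges $u(x,n)\to0$ uniformly and the identity pins $x\tfrac{d}{dx}\log f_n(x)$ between $\tfrac12 s_n$ and $2s_n$ for all large $n$; this gives \refcond{LowerBoundfn} with $\epsilonCondition=\tfrac12$ (for every $\deltaCondition\in(0,1)$) and \refcond{boundfn}~\refitem{boundfnR} with $\epsilonConditiona=\tfrac12$. For \refcond{scalingfn}, set $a_n:=1-\e^{-x^{1/s_n}/n}$ and $b_n:=1-\e^{-1/n}$, so that
\begin{align*}
\log\frac{f_n(x^{1/s_n})}{f_n(1)}=s_n\int_{b_n}^{a_n}\frac{\phi(u)}{u}\,du
&=s_n\log\frac{a_n}{b_n}+s_n\int_{b_n}^{a_n}\frac{\phi(u)-1}{u}\,du .
\end{align*}
Using $1-\e^{-t}=t(1+O(t))$ gives $a_n/b_n=x^{1/s_n}(1+O(1/n))$, hence $s_n\log(a_n/b_n)=\log x+O(s_n/n)\to\log x$ (only $s_n=o(n)$, a consequence of $s_n=o(n^{1/3})$, is needed, not the stronger $s_n/\log\log n\to\infty$), while the last term is $O\bigl(|s_n\log(a_n/b_n)|\cdot\sup_{u\le a_n\vee b_n}|\phi(u)-1|\bigr)=o(1)$ since $a_n\vee b_n\to0$; thus $f_n(x^{1/s_n})/f_n(1)\to x$ (the case $x=0$ being trivial as $f_n(0)=0$).

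The step needing a genuine idea, and which I expect to be the main obstacle, is \refcond{boundfn}~\refitem{boundfnlog}: one must show that the weak-looking constraint $f_n(x)\le Cf_n(1)\log n$ already confines $x$ to a region where $u(x,n)$ is uniformly small. The plan is to rewrite $f_n(x)/f_n(1)=\bigl(G^{-1}(1-\e^{-x/n})/G^{-1}(1-\e^{-1/n})\bigr)^{s_n}$, so the constraint becomes $G^{-1}(1-\e^{-x/n})\le(C\log n)^{1/s_n}\,G^{-1}(1-\e^{-1/n})$. Since $s_n\to\infty$, $(C\log n)^{1/s_n}=\exp\bigl((\log C+\log\log n)/s_n\bigr)=\e^{o(\log n)}=n^{o(1)}$, while the regular variation from the first paragraph gives $G^{-1}(1-\e^{-1/n})=n^{-1+o(1)}$; multiplying, the right-hand side is some $\zeta_n=n^{-1+o(1)}\to0$. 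Applying the function $G$ (continuous and increasing near $0$) gives $1-\e^{-x/n}\le G(\zeta_n)=:\eta_n\to G(0+)=0$, so $u(x,n)\le\eta_n\to0$ uniformly over all admissible $x\ge1$; substituting back into the identity of the first paragraph gives $x\tfrac{d}{dx}\log f_n(x)\ge\tfrac12 s_n$ once $n$ is large, which is \refcond{boundfn}~\refitem{boundfnlog} with $\epsilonConditiona=\tfrac12$. With everything else routine, the heart of the proof is this confinement step, resting on combining the regular variation of $G^{-1}$ at $0$ with the crude bound $(C\log n)^{1/s_n}=n^{o(1)}$.
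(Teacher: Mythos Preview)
Your argument is correct and follows the same strategy as the paper: compute $x\frac{d}{dx}\log f_n(x)$ explicitly and observe it equals $s_n(1+o(1))$ once the relevant argument is small. The paper's version is shorter only because it parametrizes $U=h(E)$ with $h$ increasing, giving $f_n(x)=h(x/n)^{s_n}$ and hence $xf_n'(x)/f_n(x)=s_n\cdot(x/n)h'(x/n)/h(x/n)$ without your Jacobian factor $-(1-u)\log(1-u)/u$; your explicit confinement argument for \refcond{boundfn}~\refitem{boundfnlog} (showing the constraint forces $u(x,n)\to 0$) spells out what the paper's ``follow immediately'' leaves implicit, but the underlying idea is the same.
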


\begin{proof}
Write $U \equalsd h(E)$ for an increasing function $h\colon(0,\infty)\to(0,\infty)$, so that $f_n(x)=h(x/n)^{s_n}$ and
\begin{equation}%\labelPartI{}
\frac{xf_n'(x)}{f_n(x)} = s_n \frac{\frac{x}{n}h'(\frac{x}{n})}{h(\tfrac{x}{n})}.
\end{equation}
By \eqrefPartI{FYandg}, the assumption \eqrefPartI{asympExd} on the distribution function of $U$ is equivalent to
\begin{equation}
\lim_{x\decreasesto 0} x \frac{d}{dx} \log h(x) = 1.
\end{equation}
Conditions~\refPartI{cond:LowerBoundfn}--\refPartI{cond:boundfn} follow immediately, and \refcond{scalingfn} follows as in \refprop{RegVarImplies}.
\end{proof}

\section{Fine results on the IP part of FPP}\lbsect{IPpartFPP}

In this section, we argue that the optimal path between two vertices can be divided into two parts:
The local neighbourhoods of the two endpoints that follow IP dynamics by \refthm{IPLocConvForFPP_gen},
and the main part of the path which is characterized in terms of a branching process.
The main results of this paper connect the maximal weight $M^{\sss(1)}$ in IP to the transition time between these two regimes, and give a detailed description of the topology of the neighbourhood contained in the IP part.

\subsection{Coupling FPP to a continuous-time branching process}\lbsubsect{PWITFPPIP}

To understand the random neighbourhood of a vertex in the complete graph, we study the first passage \emph{exploration process}.
Recall from \eqrefPartI{FPPdistance} that $d_{K_n,Y^{\sss(K_n)}}(i,j)$ denotes the total cost of the optimal path $\pi_{i,j}$ between vertex $i$ and $j$.
For a vertex $j\in V(K_n)$, let the \emph{smallest-weight tree} $\SWT_t^{\sss(j)}$ be the connected subgraph of $K_n$ defined by
	\begin{align}
	\begin{split}
	\labelPartI{OneSourceSWT}
	V(\SWT_t^{\sss(j)})
	&=
	\set{i\in V(K_n)\colon d_{K_n,Y^{\sss(K_n)}}(i,j)\leq t} \! ,
	\\
	E(\SWT_t^{\sss(j)})
	&=
	\set{e\in E(K_n)\colon e\in\pi_{j,i}\text{ for some }i\in V(\SWT_t^{\sss(j)})}.
	\end{split}
	\end{align}
Note that $\SWT_t^{\sss(j)}$ is indeed a tree: if two optimal paths $\pi_{j,k},\pi_{j,k'}$ pass through a common vertex $i$, both paths must contain $\pi_{j,i}$ since the minimizers of \eqrefPartI{FPPdistance} are unique.

To visualize the process $(\SWT_t^{\sss(j)})_{t\geq 0}$, think of the edge weight $Y_e^{\sss (K_n)}$ as the time required for fluid to flow across the edge $e$.
Place a source of fluid at $j$ and allow it to spread through the graph.
Then $V(\SWT_t^{\sss(j)})$ is precisely the set of vertices that have been wetted by time $t$, while $E(\SWT_t^{\sss(j)})$ is the set of edges along which, at any time up to $t$, fluid has flowed from a wet vertex to a previously dry vertex.
Equivalently, an edge is added to $\SWT_t^{\sss(j)}$ whenever it becomes completely wet, with the additional rule that an edge is not added if it would create a cycle.

Because fluid begins to flow across an edge only after one of its endpoints has been wetted, the \emph{age} of a vertex -- the length of time that a vertex has been wet -- determines how far fluid has traveled along the adjoining edges.
Given $\SWT_t^{\sss(j)}$, the future of the exploration process will therefore be influenced by the current ages of vertices in $\SWT_t^{\sss(j)}$, and the nature of this effect depends on the probability law of the edge weights $(Y_e^{\sss (K_n)})_e$.
In the sequel, for a subgraph $\graphG=(V(\graphG),E(\graphG))$ of $K_n$, we write $\graphG$ instead of $V(\graphG)$ for the vertex set when there is no risk of ambiguity.

To study the smallest-weight tree from a vertex, say vertex $1$, let us consider the time until the first vertex is added.
By construction, $\min_{i\in [n]\setminus \set{1}} Y_{\set{1,i}}^{\sss(K_n)} \equalsd f_n(\frac{n}{n-1}E)$ (cf.\ \eqrefPartI{minYifn}), where $E$ is an exponential random variable of mean $1$.
We next extend this to describe the distribution of the order statistics of the weights of edges from vertex 1 to \emph{all} other vertices.

Denote by $Y_{(k)}^{\sss(K_n)}$ the $k^\th$ smallest weight from $(Y_{\set{1,i}}^{\sss(K_n)})_{i\in [n]\setminus \set{1}}$.
Then $(Y_{(k)}^{\sss(K_n)})_{k\in [n-1]}\equalsd (f_n(S_{k,n}))_{k\in [n-1]}$, where $S_{k,n}=\sum_{j=1}^k \frac{n}{n-j} E_j$ and $(E_j)_{j\in [n-1]}$ are i.i.d.\ exponential random variables with mean $1$.
The fact that the distribution of $S_{k,n}$ depends on $n$ is awkward, and can be changed by using a \emph{thinned} Poisson point process.
Let $X_1<X_2<\dotsb$ be the points of a Poisson point process with intensity 1, so that $X_k\equalsd\sum_{j=1}^kE_j=\lim_{n\to \infty} S_{k,n}$.
To each $k\in\N$, we associate a \emph{mark} $M_k$ which is chosen uniformly at random from $[n]$, different marks being independent.
We \emph{thin} a point $X_k$ when $M_k=1$ (since 1 is the initial vertex) or when $M_k=M_{k'}$ for some $k'<k$.
Then
\begin{equation}\labelPartI{SingleVertexCoupling}
(Y_{(k)}^{\sss(K_n)})_{k\in [n-1]}\equalsd (f_n(X_k))_{k\in\N, \, X_k\text{~unthinned}}.
\end{equation}
In the next step, we extend this result to the smallest-weight tree $\SWT^{\sss(1)}$ using a relation to FPP on the Poisson-weighted infinite tree.
Before giving the definitions, we recall the Ulam--Harris notation for describing trees.

Define the tree $\tree^{\sss(1)}$ as follows.
The vertices of $\tree^{\sss(1)}$ are given by finite sequences of natural numbers headed by the symbol $\emptyset_1$, which we write as $\emptyset_1 j_1 j_2\dotsb j_k$.
The sequence $\emptyset_1$ denotes the root vertex of $\tree^{\sss(1)}$.
We concatenate sequences $v=\emptyset_1 i_1\dotsb i_k$ and $w=\emptyset_1 j_1\dotsb j_m$ to form the sequence $vw=\emptyset_1 i_1\dotsb i_k j_1\dotsb j_m$ of length $\abs{vw}=\abs{v}+\abs{w}=k+m$.
Identifying a natural number $j$ with the corresponding sequence of length 1, the $j^\th$ child of a vertex $v$ is $vj$, and we say that $v$ is the parent of $vj$.
Write $\parent{v}$ for the (unique) parent of $v\neq\emptyset_1$, and $\ancestor{k}{v}$ for the ancestor $k$ generations before, for $k\leq \abs{v}$.

We can place an edge (which we could consider to be directed) between every $v\neq\emptyset_1$ and its parent; this turns $\tree^{\sss(1)}$ into a tree with root $\emptyset_1$.
With a slight abuse of notation, we will use $\tree^{\sss(1)}$ to mean both the set of vertices and the associated graph, with the edges given implicitly according to the above discussion, and we will extend this convention to any subset $\tau\subset\tree^{\sss(1)}$.
We also write $\boundary \tau=\set{v\notin\tau\colon \parent{v}\in\tau}$ for the set of children one generation away from $\tau$.

The Poisson-weighted infinite tree is an infinite edge-weighted tree in which every vertex has infinitely many (ordered) children.
To describe it formally, we associate weights to the edges of $\tree^{\sss(1)}$.
By construction, we can index these edge weights by non-root vertices, writing the weights as $X=(X_v)_{v\neq\emptyset_1}$, where the weight $X_v$ is associated to the edge between $v$ and its parent $p(v)$.
We make the convention that $X_{v0}=0$.

\begin{defn}[Poisson-weighted infinite tree]\lbdefn{PWITdef}
The \emph{Poisson-weighted infinite tree} (PWIT) is the random tree $(\tree^{\sss(1)},X)$ for which $X_{vk}-X_{v(k-1)}$ is exponentially distributed with mean 1, independently for each
$v\in \tree^{\sss(1)}$ and each $k\in\N$.
Equivalently, the weights $(X_{v1},X_{v2},\dotsc)$ are the (ordered) points of a Poisson point process of intensity 1 on $(0,\infty)$, independently for each $v$.
\end{defn}

Motivated by \eqrefPartI{SingleVertexCoupling}, we study FPP on $\tree^{\sss(1)}$ with edge weights $(f_n(X_v))_v$:

\begin{defn}[First passage percolation on the Poisson-weighted infinite tree]\lbdefn{FPPonPWITdef}
For FPP on $\tree^{\sss(1)}$ with edge weights $(f_n(X_v))_v$, let
the FPP edge weight between $v\in\tree^{\sss(1)}\setminus\set{\emptyset_1}$ and $\parent{v}$ be $f_n(X_v)$. The FPP distance from $\emptyset_1$ to $v\in\tree^{\sss(1)}$ is
\begin{equation}\labelPartI{TvDefinition}
T_v = \sum_{k=0}^{\abs{v}-1} f_n(X_{\ancestor{k}{v}})
\end{equation}
and the FPP exploration process $\BP^{\sss(1)}=(\BP^{\sss(1)}_t)_{t\geq 0}$ on $\tree^{\sss(1)}$ is defined by $\BP^{\sss(1)}_t=\set{v\in\tree^{\sss(1)}\colon T_v\leq t}$.
\end{defn}

Note that the FPP edge weights $(f_n(X_{vk}))_{k\in\N}$ are themselves the points of a Poisson point process on $(0,\infty)$, independently for each $v\in\tree^{\sss(1)}$.
The intensity measure of this Poisson point process, which we denote by $\mu_n$, is the image of Lebesgue measure on $(0,\infty)$ under $f_n$.
Since $f_n$ is strictly increasing by assumption, $\mu_n$ has no atoms and we may abbreviate $\mu_n(\ocinterval{a,b})$ as $\mu_n(a,b)$ for simplicity.
Thus $\mu_n$ is characterized by
\begin{equation}\labelPartI{munCharacterization}
\mu_n(a,b) = f_n^{-1}(b) - f_n^{-1}(a),
\qquad
\int_0^\infty h(y) d\mu_n(y) = \int_0^\infty h(f_n(x)) dx,
\end{equation}
for any measurable function $h\colon \cointerval{0,\infty}\to\cointerval{0,\infty}$.

Clearly, and as suggested by the notation, the FPP exploration process $\BP^{\sss(1)}$ is a continuous-time branching process:

\begin{prop}\lbprop{FPPisCTBP}
The process $\BP^{\sss(1)}$ is a continuous time branching process (CTBP), started from a single individual $\emptyset_1$, where the ages at childbearing of an individual form a Poisson point process with intensity $\mu_n$, independently for each individual.
The time $T_v$ is the birth time $T_v=\inf\set{t\geq 0\colon v\in\BP^{\sss(1)}_t}$ of the individual $v\in\tree^{\sss(1)}$.
\end{prop}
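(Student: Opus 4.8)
The plan is to prove \refprop{FPPisCTBP} by unwinding the definitions, the only nontrivial ingredient being the mapping theorem for Poisson point processes. First I would rewrite the birth times recursively: peeling off the leading term of \eqrefPartI{TvDefinition} gives $T_{\emptyset_1}=0$ together with
\begin{equation*}
T_{vk}=T_v+f_n(X_{vk}),\qquad v\in\tree^{\sss(1)},\ k\in\N,
\end{equation*}
so that the ``age at $k$-th childbearing'' of the individual $v$, i.e.\ $T_{vk}-T_v$, equals $f_n(X_{vk})$, and the offspring of $v$ are produced at the ages $\set{f_n(X_{vk})\colon k\in\N}$.

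Second, I would apply the Poisson mapping theorem. By \refdefn{PWITdef}, for each $v$ the points $(X_{v1},X_{v2},\dotsc)$ are the ordered atoms of a rate-one Poisson point process on $(0,\infty)$, independently over $v$. Since $f_n\colon(0,\infty)\to(0,\infty)$ is strictly increasing, $(f_n(X_{v1}),f_n(X_{v2}),\dotsc)$ are the ordered atoms of a Poisson point process whose intensity is the image of Lebesgue measure under $f_n$, which is precisely the (non-atomic) measure $\mu_n$ of \eqrefPartI{munCharacterization}; as $f_n$ is deterministic, independence over $v$ is preserved. Hence $\bigl((f_n(X_{vk}))_{k\in\N}\bigr)_{v\in\tree^{\sss(1)}}$ is a family of independent Poisson point processes, each with intensity $\mu_n$.

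Third, I would identify this with the definition of a CTBP with independent reproduction. The key bookkeeping point is that $X_v$, the edge weight between $v$ and $\parent{v}$, is an atom of the Poisson process \emph{attached to $\parent{v}$}, not to $v$; therefore $T_v=\sum_{k=0}^{\abs{v}-1}f_n(X_{\ancestor{k}{v}})$ is measurable with respect to the processes attached to the strict ancestors $\parent{v},\ancestor{2}{v},\dotsc,\emptyset_1$ of $v$, and in particular is independent of the offspring process $(f_n(X_{vk}))_{k\in\N}$ of $v$. More generally, since the processes attached to distinct Ulam--Harris vertices are mutually independent, the reproduction of each individual is independent of its birth time and of the reproduction of every other individual. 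Thus $\tree^{\sss(1)}$ equipped with the birth times given by the recursion above is exactly a continuous-time branching process started from $\emptyset_1$ in which each individual reproduces independently according to a Poisson point process of ages with intensity $\mu_n$. Finally, each $X_v$ is a.s.\ finite and $f_n$ takes values in $(0,\infty)$, so each $T_v$ is a.s.\ finite, and since $v\in\BP^{\sss(1)}_t$ if and only if $T_v\le t$ we obtain $T_v=\inf\set{t\ge 0\colon v\in\BP^{\sss(1)}_t}$, as claimed.

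I do not expect an analytic obstacle here: the statement is purely structural. The one place to be careful is the independence bookkeeping in the third step — faithfully translating the PWIT's independence ``over $v$'' of the offspring-weight sequences into the CTBP's requirement that reproduction be independent of the past and of sibling lines. In particular, possible explosion of the branching process plays no role, since the recursion defines the labelled tree and its birth times whether or not the birth times accumulate.
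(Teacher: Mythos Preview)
Your proposal is correct and matches the paper's approach: the paper does not give a formal proof of this proposition but states it as ``clear'' immediately after observing (just above the proposition) that the FPP edge weights $(f_n(X_{vk}))_{k\in\N}$ form a Poisson point process of intensity $\mu_n$ independently for each $v$. Your write-up simply makes explicit the recursion $T_{vk}=T_v+f_n(X_{vk})$ and the independence bookkeeping that the paper leaves to the reader.
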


Similar to the analysis of the weights of the edges containing vertex $1$, we now introduce a thinning procedure.
Define $M_{\emptyset_1}=1$ and to each other $v\in \tree^{\sss(1)}\setminus\set{\emptyset_1}$ associate a mark $M_v$ chosen independently and uniformly from $[n]$.

\begin{defn}[Thinning]\lbdefn{ThinningBP}
The root $\emptyset_1 \in \tree^{\sss(1)}$ is not thinned, i.e.\ \emph{unthinned}.
The vertex $v\in\tree^{\sss(1)}\setminus\set{\emptyset_1}$ is \emph{thinned} if it has an ancestor $v_0=\ancestor{k}{v}$ (possibly $v$ itself) such that $M_{v_0}=M_w$ for some unthinned vertex $w$ with $T_w<T_{v_0}$.
\end{defn}

Note that whether or not a vertex $v$ is thinned can be assessed recursively in terms of earlier-born vertices\footnote{At least up until the time $t=\sup_{x\geq 0} f_n(x)$ when $\BP^{\sss(1)}_t$ ceases to be finite a.s.
However, before time $t$, the root $\emptyset_1$ has had infinitely many children, a.s., so all available marks have been used and all vertices born after time $t$ are thinned, a.s.} and therefore \refdefn{ThinningBP} is not circular.

Write $\thinnedBP_t^{\sss(1)}$ for the subgraph of $\BP_t^{\sss(1)}$ consisting of unthinned vertices.
If a vertex $v\in\tree^{\sss(1)}$ is thinned, then so are all its descendants, and this implies that $\thinnedBP_t^{\sss(1)}$ is a tree for all $t$.

\begin{defn}\lbdefn{InducedGraph}
Given a subset $\tau\subset\tree^{\sss(1)}$ and marks $M=(M_v \colon v \in \tau)$ with $M_v\in[n]$, define $\pi_M(\tau)$ to be the subgraph of $K_n$ induced by the mapping $\tau\to [n]$, $v\mapsto M_v$.
That is, $\pi_M(\tau)$ has vertex set $\set{M_v\colon v\in\tau}$, with an edge between $M_v$ and $M_{\parent{v}}$ whenever $v,\parent{v}\in\tau$.
\end{defn}
Note that if the marks $(M_v)_{v\in\tau}$ are distinct then $\pi_M(\tau)$ and $\tau$ are isomorphic graphs.

The following theorem establishes a close connection between FPP on $K_n$ and FPP on the PWIT with edge weights $(f_n(X_v))_v$:

\begin{theorem}[Coupling to FPP on PWIT]\lbthm{Coupling1Source}
The law of $(\SWT_t^{\sss(1)})_{t\ge 0}$ is the same as the law of $\Bigl( \pi_M\bigl( \thinnedBP_t^{\sss(1)} \bigr) \Bigr)_{t\ge 0}$.
\end{theorem}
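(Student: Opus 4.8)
The plan is to prove \refthm{Coupling1Source} by coupling the two processes through a \emph{simultaneous exploration}, discovering one vertex at a time in increasing order of birth time, and running an induction over the successive jumps. As a reduction, observe that both $(\SWT_t^{\sss(1)})_{t\ge0}$ and $(\pi_M(\thinnedBP_t^{\sss(1)}))_{t\ge0}$, with $\pi_M$ as in \refdefn{InducedGraph}, take values in the countable set of finite subgraphs of $K_n$, are nondecreasing in $t$, and change only at a discrete, a.s.\ locally finite set of times: since $\FY$ has no atoms there are a.s.\ no ties, and after finitely many jumps each process is constant, equal to a spanning tree of $K_n$. Hence the law of each process is determined by the law of its sequence of jumps $(\tau_k,i_k,p_k)_{k\ge1}$, where $\tau_k$ is the $k$-th jump time, $i_k\in[n]$ the newly added vertex, and $p_k\in\set{1,i_1,\dotsc,i_{k-1}}$ the other endpoint of the edge added at time $\tau_k$ (the tree-parent of $i_k$). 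It thus suffices to show that these two marked point processes have the same law, and I do this by induction on $k$, carrying the invariant that after $k$ jumps the two explored subgraphs of $K_n$ coincide (as edge-weighted trees with birth times) and that, on the PWIT side, the marks of the $k+1$ unthinned individuals found so far are exactly $\set{1,i_1,\dotsc,i_k}$. The case $k=0$ is \eqrefPartI{SingleVertexCoupling}.

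For the inductive step, on the $K_n$ side the fact I need is the conditional law of the next jump given the state after $k$ jumps, the latter recording the tree on $\set{1,i_1,\dotsc,i_k}$, the birth times $0=T_1<\dotsb<\tau_k$, and the ages $a_v=\tau_k-T_v$. Conditionally on this state, the not-yet-used weights $(Y^{\sss(K_n)}_{\set{v,i}})_{v\in\SWT,\,i\notin\SWT}$ are independent, with $Y^{\sss(K_n)}_{\set{v,i}}$ distributed as $\FY$ conditioned to exceed $a_v$: up to time $\tau_k$ the exploration has revealed only the tree-edge weights and the events $\set{Y^{\sss(K_n)}_{\set{v,i}}>a_v}$ for $v$ wet and $i$ dry (fluid has flowed from $v$ for time $a_v$ without reaching $i$), and these events concern pairwise distinct edges. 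Writing $Y^{\sss(K_n)}_e=g(E_e^{\sss(K_n)})$ as in \eqrefPartI{EdgesByIncrFunct} and invoking the memoryless property of the exponentials, the residuals $E^{\sss(K_n)}_{\set{v,i}}-g^{-1}(a_v)$ are fresh i.i.d.\ unit exponentials; hence, exactly as in \eqrefPartI{minYifn}--\eqrefPartI{SingleVertexCoupling}, the mechanism by which a fixed wet vertex $v$ fires at the currently dry vertices is, in the reparametrised age variable, a Poisson process of intensity $\mu_n$ of firing ages with each firing sent to a uniformly chosen dry vertex, independently across $v$. Superposing over $v\in\SWT$ and minimising produces an explicit conditional law for $(\tau_{k+1},i_{k+1},p_{k+1})$.

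On the PWIT side, \refdefn{PWITdef} and \refprop{FPPisCTBP}, together with the independence of a Poisson process over disjoint regions, give that, conditionally on the explored part of $\thinnedBP^{\sss(1)}$, every explored individual $u$ of age $a_u$ has future childbearing ages forming a fresh intensity-$\mu_n$ Poisson process on $(a_u,\infty)$, with marks i.i.d.\ uniform on $[n]$ and independent of the past. By \refdefn{ThinningBP} a thinned individual has only thinned descendants, so the next unthinned individual to be born is a child of one of the currently explored ones, and such a child is unthinned exactly when its mark avoids the marks of all earlier unthinned individuals — by the invariant, the set $\set{1,i_1,\dotsc,i_k}$ — which is to say it is a uniformly chosen vertex of $K_n$ conditioned to lie outside the current smallest-weight tree, precisely as on the $K_n$ side; any thinned individuals born in the meantime carry marks inside $\set{1,i_1,\dotsc,i_k}$ and hence do not affect this count. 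Comparing with the previous paragraph, the conditional law of $(\tau_{k+1},M_{u_{k+1}},M_{\parent{u_{k+1}}})$ coincides with that of $(\tau_{k+1},i_{k+1},p_{k+1})$; this closes the induction and propagates the invariant, and summing over $k$ gives the equality of the two jump point processes, hence of the two processes.

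I expect the main obstacle to be not any single computation but the bookkeeping that makes the thinning on the PWIT a faithful copy of the vertex depletion on $K_n$: one must check that throughout the exploration the set of used marks stays in bijection with the current smallest-weight tree vertex set, that the edges of $K_n$ closing a cycle — which are simply never added to $\SWT^{\sss(1)}$ — correspond exactly to thinned individuals, and that ``uniform mark conditioned on being fresh'' on the tree agrees with ``uniform among unexplored vertices'' on $K_n$. One also needs the measure-theoretic points that the stated conditional laws are genuinely Markov in the chosen state, that all point processes involved are a.s.\ locally finite below the explosion time $t^\ast=\sup_{x\ge0}f_n(x)$ (cf.\ the footnote after \refdefn{ThinningBP}), and that the ``reveal only exceedances'' description of the SWT exploration is literally correct, i.e.\ that at each instant the next vertex wetted is determined by the conditionally independent, exceedance-distributed weights and nothing else.
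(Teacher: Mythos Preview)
Your argument is correct and gives a valid proof of equality in law, but it proceeds by a genuinely different route from the paper.  The paper does not compare transition kernels; instead it builds an \emph{explicit} coupling.  It first sets up a general framework of ``exploration processes'' on $\tree^{\sss(1)}$ and uses it to \emph{define} the edge weights on $K_n$ in terms of the PWIT data via \eqrefPartI{EdgeWeightCoupling}: for each ordered pair $(i,i')$ with $i$ appearing first as an unthinned mark, one sets $X_{\{i,i'\}}^{\sss(K_n)}=\tfrac{1}{n}\min\{X_w:\parent{w}=V(i),\,M_w=i'\}$.  Theorem~\refPartI{t:CouplingExpl} shows these are i.i.d.\ exponential, and Proposition~\refPartI{p:MinimalRuleThinning} then shows that, under this coupling, $\SWT^{\sss(1)}_t=\pi_M(\thinnedBP^{\sss(1)}_t)$ holds \emph{almost surely} for all $t$, not just in law.

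What this buys: the paper's construction is reusable.  The same coupling machinery underlies the proof of \refthm{CoupIP-PWIT} (where the exploration is taken to be FPP but the comparison is made to IP on the \emph{same} PWIT) and, in Part~II, the two-source coupling of \refthm{CouplingFPP}.  Your Markovian comparison, while cleaner and more self-contained for this single statement, does not by itself produce a common probability space on which $\SWT^{\sss(1)}$ and $\IP^{\sss(1)}$ both live, so it would need to be supplemented to recover those results.  Conversely, your approach avoids the somewhat heavy induction in the proof of \refthm{CouplingExpl} and makes the role of memorylessness (on $K_n$) and the Poisson restriction property (on the PWIT) completely transparent: the matching of the $(n-k-1)$ residual exponentials from a wet vertex with the mark-thinned Poisson offspring of the corresponding unthinned individual is exactly the identity $g\bigl(g^{-1}(a)+E/(n{-}k{-}1)\bigr)=f_n\bigl(f_n^{-1}(a)+\tfrac{n}{n-k-1}E\bigr)$.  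The bookkeeping you flag (thinned individuals born between steps have only thinned descendants, the filtration can be taken to include all births without changing the conditional law of the next unthinned one) is correct as stated.
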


\refthm{Coupling1Source} is based on an explicit coupling between the edge weights $(Y_e^{\sss(K_n)})_e$ on $K_n$ and $(X_v)_v$ on $\tree^{\sss(1)}$.
A general form of those couplings and the proof of \refthm{Coupling1Source} are given in \refsect{Coupling}.

\subsection{Relation to invasion percolation on the PWIT}\lbsubsect{RelToIP}
Under our scaling assumptions, FPP on the PWIT is closely related to \emph{invasion percolation} (IP) on the PWIT which is defined as follows.
Set $\IP^{\sss(1)}(0)$ to be the subgraph consisting of $\emptyset_1$ only.
For $k\in\N$, form $\IP^{\sss(1)}(k)$ inductively by adjoining to $\IP^{\sss(1)}(k-1)$ the boundary vertex $v\in\boundary\IP^{\sss(1)}(k-1)$ of minimal weight.
We note that, since we consider only the relative ordering of the various edge weights, we can use either the PWIT edge weights $(X_v)_v$ or the FPP edge weights $(f_n(X_v))_v$.

Write $\IP^{\sss(1)}(\infty)=\bigunion_{k=1}^\infty\IP^{\sss(1)}(k)$ for the limiting subgraph.
We remark that $\IP^{\sss(1)}(\infty)$ is a strict subgraph of $\tree^{\sss(1)}$ a.s.\ (in contrast to FPP, which eventually explores every vertex).
Indeed, define
\begin{equation}\labelPartI{MDefinition}
M^{\sss(1)}=\sup\set{X_{v}\colon v\in\IP^{\sss(1)}(\infty)\setminus \set{\emptyset_1}},
\end{equation}
the largest weight of an invaded edge.
Then $\P(M^{\sss(1)}<x)$ is the survival probability of a Poisson Galton--Watson branching process with mean $x$, as in \refthm{IPWeightForFPP}.
Indeed, the event $\set{M^{\sss(1)}<x}$ is the event that, if we remove from $\tree^{\sss(1)}$ all edges of weight $X_{vk}>x$, the component of $\emptyset_1$ in the resulting subgraph is infinite.
We remark that, a.s., the supremum in \eqrefPartI{MDefinition} is attained, and the unique edge of weight $M^{\sss(1)}$ is invaded after a finite number of steps (see \refprop{IPstructure} below).

The value $x=1$ acts as a \emph{critical value} for the PWIT.
Indeed, if we remove all edges of weight $X_{vk}> x$, then the subtree containing the roots is a branching process with $\Poi(x)$ offspring distribution.
Hence for $x\leq 1$ the tree is finite a.s., while for $x>1$ the tree is infinite with positive probability.
As a result, IP on the PWIT will have to accept edges of weight $X_{vk}>1$ infinitely often, and we have $M^{\sss (1)}>1$ a.s.

We next explain the connection between FPP and IP on the PWIT, under our scaling assumptions for the edge weights.
We emphasize that FPP depends on $n$ via the edge weights $(f_n(X_v))_v$, whereas IP is independent of $n$.

By \refcond{scalingfn}, we can approximate
	\begin{equation}\labelPartI{ApproximateWeights}
	f_n(X_v)\approx f_n(1) (X_v)^{s_n}.
	\end{equation}
(Note that for the FPP exploration process, the only effect of multiplying by $f_n(1)$ is to rescale time.)
Since the $\ell^s$ norm converges towards the $\ell^\infty$ norm (recall \eqrefPartI{PowerToMax}), we see that, when $s_n\to\infty$, a small edge weight is almost negligible when added to a larger edge weight.
Therefore, the sequence of edges added under the FPP dynamics with weights $f_n(X_v)\approx f_n(1) (X_v)^{s_n}$ can be well approximated by adding the boundary edge having the smallest weight, that is, by the IP dynamics.
Moreover, the time for the FPP exploration is dominated by the time $f_n(M^{\sss(1)})$ spent exploring the edge of \emph{largest} weight, and until this edge has been explored only a finite number of other edges will be explored.

To formalize this discussion, consider the smallest-weight tree $\SWT^{\sss(1)}$ on $K_n$ started from vertex $1$, as defined in \eqrefPartI{OneSourceSWT}.
Write $\tau_k=\inf\shortset{t\geq 0\colon \shortabs{E(\SWT^{\sss(1)}_t)}=k}$ for the time when the $k^\th$ edge is added to $\SWT^{\sss(1)}$.
Then we have the following local weak convergence result, formalizing \refthm{IPLocConvForFPP_exp} and \refthm{IPLocConvForFPP_gen}:

\begin{theorem}[Coupling to IP on the PWIT]
\lbthm{CoupIP-PWIT}
Suppose $\lim_{n\to\infty} f_n(x+\delta)/f_n(x)=\infty$ for each $x,\delta>0$.
Then the smallest-weight tree $\SWT^{\sss(1)}$ on $K_n$ can be coupled to invasion percolation $\IP^{\sss(1)}$ on one copy of the PWIT such that, for any fixed $m\in\N$,
\begin{equation}
\labelPartI{locWeakConvSWTIP}
\prob \big(\SWT^{\sss(1)}_{\tau_k}=\pi_M(\IP^{\sss(1)}(k)) \text{ for all }k \le m\big)
	=1-o(1).
\end{equation}
\end{theorem}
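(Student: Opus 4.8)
The plan is to use the coupling of \refthm{Coupling1Source}, which identifies $(\SWT^{\sss(1)}_t)_{t\ge 0}$ with $(\pi_M(\thinnedBP^{\sss(1)}_t))_{t\ge 0}$, and to compare the FPP exploration $\BP^{\sss(1)}$ on the PWIT directly with invasion percolation $\IP^{\sss(1)}$ on the \emph{same} realization of the tree $\tree^{\sss(1)}$ and the \emph{same} edge weights $(X_v)_v$. Thinning is a deterministic function of the underlying tree, the weights, and the marks, applied identically in both processes (for IP one adds marks and thins exactly as in \refdefn{ThinningBP}), so it suffices to show that, with probability $1-o(1)$, the first $m$ edges added to $\thinnedBP^{\sss(1)}$ coincide (as a sequence of edges, hence also as unthinned subtrees) with the first $m$ edges $\IP^{\sss(1)}(1),\dots,\IP^{\sss(1)}(m)$. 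Since $\pi_M$ is applied to both sides, \eqrefPartI{locWeakConvSWTIP} follows.

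The key step is a local comparison of the FPP and IP \emph{dynamics} on $\tree^{\sss(1)}$. IP always adjoins the boundary vertex of globally minimal weight; FPP adjoins the boundary vertex $v$ minimizing the birth time $T_v = T_{\parent{v}} + f_n(X_v)$, which is history-dependent through $T_{\parent{v}}$. First I would restrict attention to a finite portion of the tree: fix a large constant $R<\infty$ and consider only vertices reachable by edges of weight $\le R$ from $\emptyset_1$; since $M^{\sss(1)}<\infty$ a.s.\ (by the discussion following \eqrefPartI{MDefinition}) and $\IP^{\sss(1)}(m)$ is determined after finitely many steps using only finitely many edges, for any $\eta>0$ one can choose $R=R(m,\eta)$ so that with probability $\ge 1-\eta$ the first $m$ IP steps, and all boundary weights competing in those steps, lie below $R$ and are separated from each other and from $R$ by some positive gap. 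Working on this good event, there are only finitely many relevant edges, with finitely many distinct weights $w_1<w_2<\dots$ all in $(0,R]$ and all distinct (the PWIT weights have a density). I would then argue by induction on $k\le m$: suppose $\thinnedBP^{\sss(1)}$ and $\IP^{\sss(1)}$ have made the same first $k-1$ choices. The next IP edge is the boundary edge of minimal weight $X_v = w^\star$; any competing boundary edge has weight $\ge w^\star + \gamma$ for a fixed gap $\gamma>0$. Under FPP, the birth time of $v$ is $T_{\parent{v}} + f_n(w^\star)$, while a competitor $v'$ has birth time $T_{\parent{v'}} + f_n(X_{v'})\ge T_{\parent{v'}} + f_n(w^\star+\gamma)$. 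Since all parents were born using edges of weight $\le R$, we have $T_{\parent{v}}, T_{\parent{v'}} \le (k-1)\sup_{x\le R} f_n(x) \le (k-1) f_n(R)$. The hypothesis $f_n(x+\delta)/f_n(x)\to\infty$ for all $x,\delta>0$, applied at $x=w^\star$ (with $\delta=\gamma$) and a fortiori at $x=R$, gives $f_n(w^\star+\gamma)/f_n(R)\to\infty$; hence for $n$ large, $f_n(w^\star+\gamma) > (k-1) f_n(R) + f_n(w^\star) \ge T_{\parent{v}} + f_n(w^\star)$, so $T_{v'} > T_v$ and FPP makes the same choice as IP. (A symmetric argument handles the possibility that a not-yet-explored edge of small weight becomes available only later: its contribution $f_n(\text{small}) $ is negligible against $f_n(R)$, but its parent's birth time is what matters, and that parent is itself reached via edges of weight $\le R$.) This closes the induction, showing $\BP^{\sss(1)}$ agrees with $\IP^{\sss(1)}$ on the first $m$ edges for all large $n$, on the good event.

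The main obstacle is controlling the history-dependence cleanly: one must be sure that \emph{every} vertex that could plausibly be among the first $m$ added by FPP has all of its ancestral edges of bounded weight, so that its accumulated birth time is $O(f_n(R))$ and is therefore dwarfed by the cost $f_n(w^\star+\gamma)$ of any next edge that IP would reject. This requires a preliminary deterministic lemma of the form: on $K_n$ (or on $\tree^{\sss(1)}$), the first $m$ vertices explored by FPP are, for $n$ large, exactly those reachable by a path all of whose edge weights are "small'' relative to the dominant scale — essentially a soft statement that FPP on steeply-growing $f_n$ is strong-disorder-like. Once that containment is in place, the gap argument above is routine. Finally, assembling the pieces: for each $\eta$, the good event has probability $\ge 1-\eta$, and on it the coupling succeeds for all $n\ge n_0(\eta)$; since $\eta$ is arbitrary this yields $\prob(\SWT^{\sss(1)}_{\tau_k}=\pi_M(\IP^{\sss(1)}(k))\text{ for all }k\le m) = 1-o(1)$, which is \eqrefPartI{locWeakConvSWTIP}.
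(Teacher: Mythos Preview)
Your overall strategy is right and matches the paper's: couple via \refthm{Coupling1Source}, observe that the first $m$ explored vertices are unthinned \whp, restrict to a finite set $B_m$ of potentially relevant vertices, and work on a ``good event'' where all relevant PWIT weights lie in a compact interval and are separated by a fixed gap.

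The gap is in your inductive step. You want $T_{\parent{v}}+f_n(w^\star)<T_{\parent{v'}}+f_n(X_{v'})$. You bound $T_{\parent{v}}\le(k-1)f_n(R)$, discard $T_{\parent{v'}}$, and are left needing $f_n(w^\star+\gamma)>(k-1)f_n(R)+f_n(w^\star)$. But the hypothesis gives $f_n(w^\star+\gamma)/f_n(w^\star)\to\infty$, \emph{not} $f_n(w^\star+\gamma)/f_n(R)\to\infty$; when $w^\star+\gamma<R$ (the generic situation, since $R$ is an upper bound on all relevant weights while $w^\star$ can be arbitrarily small), monotonicity of $f_n$ gives $f_n(w^\star+\gamma)<f_n(R)$ and your inequality fails outright. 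Concretely: immediately after IP crosses its heaviest edge (weight close to $R$), the next boundary minimum $w^\star$ can be near $0$, and you are trying to dominate $f_n(R)$ with $f_n(\gamma)$. The vague ``preliminary deterministic lemma'' you invoke does not repair this, because the difficulty is not that FPP might wander outside the IP cluster, but that both candidate parents \emph{are} in the cluster yet have very different accumulated times.

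The paper's fix is to avoid any absolute bound on $T_{\parent{v}}$. It recharacterizes IP via a total order: $v\prec^{\IP} w$ iff the vector $\mathcal{O}(v)$ of edge weights along the root-to-$v$ path, \emph{sorted decreasingly}, is lexicographically smaller than $\mathcal{O}(w)$. (That this yields the same process as ``take the minimum boundary weight'' is an elementary exercise.) On the good event one then shows, for any two candidates $v,w\in B_m$, that $T_v<T_w$ iff $\mathcal{O}(v)<_{\mathrm{lex}}\mathcal{O}(w)$: at the first index $j$ where the sorted vectors differ, $f_n(X_{(w,j)})>m\cdot f_n(X_{(v,j)})$ swallows the at most $m$ remaining smaller terms on the $v$-side, while the first $j-1$ entries (common to both sums) cancel. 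This cancellation of the shared large weights is precisely what absorbs the ``history dependence'' you identified, reducing the comparison to a gap at a single scale where the hypothesis applies directly.
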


\refthm{CoupIP-PWIT} is proved in \refsubsect{ProofCoupIPPWIT}.
The convergence in \refthm{CoupIP-PWIT} is the local weak convergence in the sense of Benjamini and Schramm~\cite{BenSch01} for appropriately chosen metrics.
\refthm{IPLocConvForFPP_gen} follows from \refthm{CoupIP-PWIT} because given $x,\delta>0$ we can apply \refcond{LowerBoundfn} (with $1-\deltaCondition=x$, if $x<1$) or \refcond{boundfn}~\refitem{boundfnR} (with $R=x+\delta$, if $x+\delta>1$) to find a value $\epsilon>0$ such that for sufficiently large $n$,
\begin{equation}
\log f_n(x+\delta) - \log f_n(x) = \int_x^{x + \delta} t \frac{d}{dt} \log f_n(t)\, dt/t \ge \epsilon s_n \log(1+ \delta/x).
\end{equation}
Applying the exponential function on both sides, $f_n(x+\delta)/f_n(x) \to \infty$ follows from $s_n\rightarrow \infty$.

The heuristic comparison between FPP and IP ceases to be valid when a smaller edge weight $f_n(X_v\wedge X_w)$ is no longer negligible when added to a larger edge weight $f_n(X_v \vee X_w)$.
By \eqrefPartI{ApproximateWeights}, this requires $\abs{X_v-X_w}=\Theta(1/s_n)$.
By our discussion of the critical value, only edge weights $X_{v}\approx 1$ will be relevant to the large-scale behavior.
It follows that once edge weights belonging to a \emph{critical window} $[1-\Theta(1/s_n),1+\Theta(1/s_n)]$ become numerous, the heuristic fails and the connection to IP on the PWIT ceases to hold.
Since we are mainly interested in the case that $1/s_n \gg n^{-1/3}$ (see in particular \cite{EckGooHofNar14b}), the critical window observed here is wider than the critical window for the Erd\H{o}s-R\'enyi random graph; cf.\ \cite{Ald97}.

For IP on the PWIT, the weight of the maximal weight edge that IP uses after time $k^2t$ scales as $1+\mathcal{U}_t/k,$ where $(\mathcal{U}_t)_{t\geq 0}$ is a limiting stochastic process (see \cite[Proposition~3.3 and Theorem~1.6]{AGdHS2008} and the remarks following \cite[Theorem 31]{AddGriKan12}).
In particular, these weights become of the order $1+\Theta(1/s_n)$ when the size of the IP cluster is $\Theta(s_n^2)$.
This suggests that the maximal size of the smallest-weight tree that allows its dynamics to be coupled to IP on the PWIT is $o(s_n^2)$.
However, we do not need and will not prove such a strong result.

\subsection{Exploration from two sources}\lbsubsect{explorationfromtwosources}

So far we have studied the smallest-weight tree $\SWT^{\sss(1)}$ from one vertex and its coupling to a suitably thinned version of a CTBP starting from one root.
To study the optimal path between vertices $1$ and $2$,
a standard approach would be to place sources of fluid on both vertices and wait for the two smallest-weight trees to merge.
Appealing to the coupling in \refthm{Coupling1Source}, equally we could study the evolution of two independent CTBPs $\BP^{\sss(1)}$ and $\BP^{\sss(2)}$ with original ancestors $\emptyset_1$ and $\emptyset_2$.
For this method it is important that the two trees grow in a similar fashion.
However, the heuristics in \refsubsect{RelToIP} show that $\BP^{\sss(j)}$ spends a considerable amount of time, namely $f_n(M^{\sss(j)})\gg f_n(1)$, waiting for a single edge to be explored, during which time only finitely many other edges are explored.
Since $M^{\sss(1)}\neq M^{\sss(2)}$ a.s., our scaling assumptions on $f_n$ mean that the times $f_n(M^{\sss(1)}),f_n(M^{\sss(2)})$ will be quite different.

For this reason, we will not grow the two CTBPs at the same speed.
When one of them becomes large enough (what this means precisely will be explained later), it has to wait for the other one to catch up. We call this procedure {\em freezing.}

To formalize this, let $\twoPWITs$ be the disjoint union of two independent copies $(\tree^{\sss(j)},X^{\sss(j)})$, $j \in \set{1,2}$, of the PWIT.
We shall assume that the copies $\tree^{\sss(j)}$ are vertex-disjoint, with roots $\emptyset_j$, so that we can unambiguously write $X_v$ instead of $X^{\sss(j)}_v$ for $v\in \tree^{\sss(j)}$, $v\neq\emptyset_j$.
The notation introduced for $\tree^{\sss(1)}$ is used verbatim on $\tree$.
For example, for any subset $\tau \subseteq \tree$, we write $\boundary \tau=\set{v \not\in \tau\colon \parent{v} \in \tau}$ for the boundary vertices of $\tau$.

The FPP process on $\twoPWITs$ with edge weights $(f_n(X_v))_v$ starting from $\varnothing_1$ and $\varnothing_2$ is equivalent to the union $\BP=\BP^{\sss(1)} \cup \BP^{\sss(2)}$ of two CTBPs.
Let $T_\fr^{\sss(j)}$ be a stopping time with respect to the filtration induced by $\BP^{\sss(j)}$, $j \in\set{1,2}$.
We call $T_\fr^{\sss(1)}$ and $T_\fr^{\sss(2)}$ \emph{freezing times} and run $\BP$ until $T_\fr^{\sss(1)}\wedge T_\fr^{\sss(2)}$, the time when one of the two CTBPs is large enough (see \refdefn{Freezing} for the precise definition of what large enough means).
Then we freeze the larger CTBP and allow the smaller one to evolve normally until it is large enough, at time $T_\fr^{\sss(1)}\vee T_\fr^{\sss(2)}$.
At this time, which we call the \emph{unfreezing} time $T_\unfr=T_\fr^{\sss(1)}\vee T_\fr^{\sss(2)}$, both CTBPs resume their usual evolution.
We denote by $R_j(t)$ the on-off processes describing this behavior: that is, for $j \in \set{1,2}$,
	\begin{equation}
	\labelPartI{OnOff}
	R_j(t)=(t\wedge T_\fr^{\sss(j)}) + ((t-T_\unfr)\vee 0)
  .
	\end{equation}
The version of $\BP=(\BP_t)_{t\ge 0}$ including freezing is then given by
\begin{equation}\labelPartI{clusterDefinition}
\cluster_t=\bigunion_{j=1}^2 \cluster_t^{\sss(j)},  \quad  \cluster_t^{\sss(j)}=\set{v\in\tree^{\sss(j)} \colon T_v \leq R_j(t)} = \BP_{R_j(t)}^{\sss(j)} \quad \text{for all }t\ge 0.
\end{equation}
As with $\tree$, we can consider $\cluster_t$ to be the union of two trees by placing an edge between each non-root vertex $v\notin\set{\emptyset_1,\emptyset_2}$ and its parent.
We denote by $T_v^\cluster=\inf\set{t\ge 0 \colon  v\in\cluster_t}$ the arrival time of the individual $v\in\tree$ in $\cluster=(\cluster_t)_{t\ge 0}$.
Using the left-continuous inverse of $R_j(t)$, defined by
\begin{equation}\labelPartI{OnOffInverse}
R_j^{-1}(y) = \inf\set{t\geq 0\colon R_j(t)\geq y}
=
\begin{cases}
t & \text{if }t\leq T_\fr^{\sss(j)},\\
T_\unfr-T_\fr^{\sss(j)}+t & \text{if }t>T_\fr^{\sss(j)},
\end{cases}
\end{equation}
we obtain $T_v^\cluster=R_j^{-1}(T_v)$ for $v\in\tree^{\sss(j)}$.

\subsection{Freezing a CTBP}
\lbsubsect{FreezingDisc}

For very fine results about the branching process and the freezing times, we strengthen \refcond{boundfn} to the following condition:

\begin{cond}[Density bound for large weights]
\lbcond{boundfnExtended}
There exist $\epsilonConditiona>0$ and $n_1 \in \N$ such that
	\eqn{
	\labelPartI{fn-bound}
	x \frac{d}{dx} \log f_n(x)\geq \epsilonConditiona s_n \qquad\text{for every }x\geq 1, n \ge n_1.
	}
\end{cond}

As explained, the purpose of the freezing is to guarantee a comparable growth of the two CTBPs.
One requirement is therefore that the edge of weight $f_n(M^{\sss(j)})$ is explored before freezing, for each $j=1,2$ (and they are instantaneously unfrozen after the last of these times).
The second requirement is that the two CTBPs exhibit typical branching process dynamics.
To make this precise, recall that a typical branching process grows exponentially where the growth rate is given by its Matlthusian parameter $\lambda_n$:
Writing $\hat{\mu}_n(\lambda)=\int\e^{-\lambda y}d\mu_n(y)$ for the Laplace transform of the intensity measure $\mu_n$,
$\lambda_n>0$ is the unique solution to
	\begin{equation}\labelPartI{lambdanDefn}
	\hat{\mu}_n(\lambda_n)=1.
	\end{equation}
Asymptotically, $\lambda_n$ scales like $1/f_n(1)$:

\begin{lemma}\lblemma{lambdanAsymp}
Suppose Conditions~\refPartI{cond:scalingfn}, \refPartI{cond:LowerBoundfn} and \refPartI{cond:boundfnExtended} hold for a positive sequence $(s_n)_n$ with $s_n \to \infty$. Then $\lambda_n f_n(1) \to \e^{-\gamma}$, where $\gamma$ is Euler's constant.
\end{lemma}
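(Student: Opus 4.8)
The plan is to reduce the statement to the asymptotics of a single Laplace-type integral and then to a monotone-convergence argument. By \eqrefPartI{munCharacterization}, $\hat{\mu}_n(\lambda)=\int_0^\infty \e^{-\lambda f_n(x)}\,dx$, so writing $c=\lambda f_n(1)$ I would introduce
\begin{equation*}
\phi_n(c):=\hat{\mu}_n\bigl(c/f_n(1)\bigr)=\int_0^\infty \e^{-cf_n(x)/f_n(1)}\,dx .
\end{equation*}
Since $f_n>0$ on $(0,\infty)$ and \refcond{boundfnExtended} yields $f_n(x)\ge f_n(1)x^{\epsilonConditiona s_n}$ for $x\ge1$, the function $\phi_n$ is finite, continuous and strictly decreasing from $+\infty$ to $0$; hence by \eqrefPartI{lambdanDefn} the number $c_n:=\lambda_n f_n(1)$ is the \emph{unique} solution of $\phi_n(c_n)=1$, and it suffices to prove $c_n\to\e^{-\gamma}$. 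Substituting $x=t^{1/s_n}$ gives $\phi_n(c)=\tfrac1{s_n}\int_0^\infty \e^{-cg_n(t)}t^{1/s_n-1}\,dt$, where $g_n(t):=f_n(t^{1/s_n})/f_n(1)$ is strictly increasing with $g_n(1)=1$ and, by \refcond{scalingfn}, $g_n(t)\to t$ for every $t\ge0$. I would compare $\phi_n$ with the explicit reference integral
\begin{equation*}
I_n^*(c):=\frac1{s_n}\int_0^\infty \e^{-ct}t^{1/s_n-1}\,dt=c^{-1/s_n}\,\Gamma\!\bigl(1+\tfrac1{s_n}\bigr),
\end{equation*}
noting that $s_n\log I_n^*(c)=-\log c+s_n\log\Gamma(1+1/s_n)\to-\log c-\gamma$ by the expansion $\log\Gamma(1+z)=-\gamma z+O(z^2)$.

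The heart of the proof is to show, for each fixed $c>0$, that $D_n(c):=\int_0^\infty\bigl(\e^{-cg_n(t)}-\e^{-ct}\bigr)t^{1/s_n-1}\,dt\to0$. I would split the integration range into $(0,(1-\deltaCondition)^{s_n})$, $((1-\deltaCondition)^{s_n},\delta)$, $(\delta,1)$ and $(1,\infty)$ for a small fixed $\delta\in(0,1)$. On $(\delta,1)$ and on $(1,\infty)$, dominated convergence applies using $g_n(t)\to t$; on $(1,\infty)$ the inequality $g_n(t)\ge t^{\epsilonConditiona}$ for $t\ge1$ (obtained by integrating \refcond{boundfnExtended}) supplies the $n$-uniform, integrable majorant $\e^{-ct^{\epsilonConditiona}}t^{-1/2}$, so both $\int_1^\infty\e^{-cg_n(t)}t^{1/s_n-1}\,dt$ and $\int_1^\infty\e^{-ct}t^{1/s_n-1}\,dt$ converge to $\int_1^\infty\e^{-ct}t^{-1}\,dt$. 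On $((1-\deltaCondition)^{s_n},\delta)$, integrating the lower bound in \refcond{LowerBoundfn} gives $g_n(t)\le t^{\epsilonCondition}$, so that $\bigl|\e^{-cg_n(t)}-\e^{-ct}\bigr|\le c\bigl(g_n(t)+t\bigr)\le 2c\,t^{\epsilonCondition}$ and the contribution is at most $2c\int_0^\delta t^{\epsilonCondition+1/s_n-1}\,dt\le 2c\,\delta^{\epsilonCondition}/\epsilonCondition$, which is small with $\delta$, uniformly in $n$. Finally, on $(0,(1-\deltaCondition)^{s_n})$, monotonicity and the same lower bound give $g_n(t)\le g_n((1-\deltaCondition)^{s_n})=f_n(1-\deltaCondition)/f_n(1)\le(1-\deltaCondition)^{\epsilonCondition s_n}$, while $t\le(1-\deltaCondition)^{s_n}\le(1-\deltaCondition)^{\epsilonCondition s_n}$, so this piece is at most $2c(1-\deltaCondition)^{\epsilonCondition s_n}\int_0^{(1-\deltaCondition)^{s_n}}t^{1/s_n-1}\,dt=2c\,s_n(1-\deltaCondition)^{1+\epsilonCondition s_n}\to0$. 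Summing the four bounds gives $D_n(c)\to0$.

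With this in hand, I would assemble the conclusion. From $\phi_n(c)=I_n^*(c)+D_n(c)/s_n$ together with $I_n^*(c)\to1$ and $D_n(c)\to0$,
\begin{equation*}
s_n\log\phi_n(c)=s_n\log I_n^*(c)+s_n\log\!\Bigl(1+\tfrac{D_n(c)}{s_n I_n^*(c)}\Bigr)\;\longrightarrow\;-\log c-\gamma
\end{equation*}
for every fixed $c>0$. Each map $c\mapsto s_n\log\phi_n(c)$ is continuous and strictly decreasing, and the limit $c\mapsto-\log c-\gamma$ is continuous and strictly decreasing with unique zero $\e^{-\gamma}$; hence the pointwise convergence is locally uniform, and the unique zero $c_n$ of $c\mapsto s_n\log\phi_n(c)$ (equivalently of $\phi_n-1$) converges to $\e^{-\gamma}$. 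This is precisely $\lambda_n f_n(1)\to\e^{-\gamma}$.

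The step I expect to be hardest is establishing $D_n(c)\to0$, and the reason is structural: $\phi_n(c)\to1$ for \emph{every} fixed $c>0$, so the constant $\e^{-\gamma}$ is visible only in the $1/s_n$-order correction to $\phi_n$; this is why one cannot simply pass to a limit in $\phi_n$ and must instead compare with $I_n^*$ and control $D_n$ at the right order. Within that estimate the truly delicate region is $t<(1-\deltaCondition)^{s_n}$, where \refcond{LowerBoundfn} provides no information on the density and the weight has the divergent integral $\int_0^1 t^{1/s_n-1}\,dt=s_n$; it is exactly the super-exponential smallness $f_n(1-\deltaCondition)/f_n(1)\le(1-\deltaCondition)^{\epsilonCondition s_n}$, again a consequence of \refcond{LowerBoundfn}, that compensates for the factor $s_n$.
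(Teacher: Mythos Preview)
Your proof is correct and follows essentially the same route as the paper: both make the substitution $x=t^{1/s_n}$, compare $\phi_n(c)$ with the explicit reference integral $\tfrac{1}{s_n}\int_0^\infty \e^{-ct}t^{1/s_n-1}\,dt=c^{-1/s_n}\Gamma(1+1/s_n)$, show the difference is $o(1)$ by dominated convergence with the integrable envelopes coming from Conditions~\refPartI{cond:LowerBoundfn} and \refPartI{cond:boundfnExtended}, and conclude by monotonicity in $c$. The only cosmetic differences are that the paper disposes of the region $x<1-\deltaCondition$ by replacing $f_n$ with $f_n\indicator{x\ge 1-\deltaCondition}$ at cost $O(f_n(1-\deltaCondition)/f_n(1))=o(1/s_n)$ (rather than your four-piece split in $t$), and it phrases the final step as ``$a>1$ gives $\hat{\mu}_n(a\e^{-\gamma}/f_n(1))<1$ eventually, hence $\limsup \lambda_n f_n(1)\le a\e^{-\gamma}$'' rather than invoking locally uniform convergence of monotone functions --- but these are the same argument.
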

\reflemma{lambdanAsymp} is proved in \refsect{densityBoundsfn}.
The same reasoning is used to prove a more general statement in \refother{\refthmPartII{ConvRW}} (see \refother{Sections~\refstarPartII{ss:ConvRWThm} and \refstarPartII{ss:ConvRWPf}}) but we include the proof here to avoid circularity.

Hence, we expect $\BP^{\sss(j)}=(\BP_t^{\sss(j)})_{t\ge 0}$ to grow exponentially at rate $\lambda_n$ for large times $t$.
However, initially, $\BP^{\sss(j)}$ does not grow in this way.
The reason is that the exponential growth typical of a branching process arises primarily from rare individuals that have an unusually large number of offspring in a very short time.
Formally, for $v \in \tree^{\sss (1)}$, write $\BP^{\sss(v)}$ for the branching process of descendants of $v$, re-rooted and time-shifted to start at $t=0$.
That is,
\begin{equation}\labelPartI{BPvDefinition}
\BP^{\sss(v)}_t=\set{w\in\tree^{\sss(1)}\colon vw\in\BP^{\sss(1)}_{T_v+t}}.
\end{equation}
For instance, $\BP^{\sss (1)}=\BP^{\sss (\emptyset_1)}$, and  $(\BP^{\sss(v)})_{\parent{v}=\emptyset_1}$ are independent of each other and of $(T_v)_{\parent{v}=\emptyset_1}$.

\begin{defn}\lbdefn{DefRLucky}
Given $R<\infty$, a vertex $v\in \tree^{\sss(j)}$ is \emph{$R$-lucky} if $\bigabs{\BP^{\sss(v)}_{f_n(1)}}\geq Rs_n^2$.
\end{defn}

That is, an $R$-lucky vertex has $Rs_n^2$ descendants by the time it reaches age $f_n(1)$. The following proposition states that this happens with probability at least a constant times $1/s_n$:

\begin{prop}\lbprop{LuckyProb}
Suppose Conditions~\refPartI{cond:scalingfn}, \refPartI{cond:LowerBoundfn} and \refPartI{cond:boundfnExtended} hold for a positive sequence $(s_n)_n$ with $s_n \to \infty$.
Fix $R\in (0,\infty)$.
There exists $\delta>0$ such that for every $r \in \ocinterval{0,R}$ there is some $n_0 \in \N$ such that $\P\left( v\text{ is $r$-lucky} \right)\geq \delta/(s_n \sqrt{r})$ for all $v \in \tree$ and all $n\ge n_0$.
\end{prop}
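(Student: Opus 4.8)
The plan is to work entirely inside the branching process $\BP^{\sss(v)}$, which by construction has the same law for every $v\in\tree$; so it suffices to take $v=\emptyset_1$ and to bound $\P\bigl(\abs{\BP^{\sss(1)}_{f_n(1)}}\ge rs_n^2\bigr)$ from below. Inside the PWIT, let $\tree^{<1}$ be the subtree spanned by those vertices all of whose ancestral edge weights $X$ are strictly less than $1$. By \refdefn{PWITdef}, each vertex of $\tree^{<1}$ has a $\Poi(1)$ number of children, the weights to these children being i.i.d.\ uniform on $(0,1)$; moreover, conditionally on the genealogical shape of $\tree^{<1}$ these ancestral weights are i.i.d.\ uniform on $(0,1)$. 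Thus $\tree^{<1}$ is a critical Galton--Watson tree with offspring variance $1$. The point is that every $v\in\tree^{<1}$ of generation at most $L$ (to be chosen of order $s_n$) with $T_v\le f_n(1)$ belongs to $\BP^{\sss(1)}_{f_n(1)}$, so it is enough to produce at least $rs_n^2$ such vertices with probability at least $\delta/(s_n\sqrt r)$.

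Next I would control the birth times. Using \refcond{LowerBoundfn} and the monotonicity of $f_n$ one obtains $\int_0^1 f_n(x)\,dx\le (C/s_n)f_n(1)$ for a constant $C=C(\epsilonCondition,\deltaCondition)$ and all large $n$: on $[1-\deltaCondition,1]$ one has $f_n(x)/f_n(1)\le x^{\epsilonCondition s_n}$, whose integral is $O(1/s_n)$, and the contribution of $x\le 1-\deltaCondition$ is at most $(1-\deltaCondition)^{\epsilonCondition s_n}$, exponentially small. Fix $\epsilon'=1/4$ and set $L=\floor{\epsilon' s_n/C}$. For $v\in\tree^{<1}$ with $\abs{v}\le L$, conditionally on the shape of $\tree^{<1}$ the ancestral weights are i.i.d.\ uniform on $(0,1)$, hence $\condE{T_v}{\tree^{<1}}=\abs{v}\int_0^1 f_n(x)\,dx\le\epsilon' f_n(1)$ and $\condP{T_v>f_n(1)}{\tree^{<1}}\le\epsilon'$ by Markov. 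Summing over $v$ and applying Markov once more: if $N_L$ is the number of $v\in\tree^{<1}$ with $\abs{v}\le L$, then conditionally on the shape, with probability at least $1-2\epsilon'=1/2$ at least $N_L/2$ of these vertices also satisfy $T_v\le f_n(1)$.

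It remains to produce a large, short tree. By the local limit theorem for critical Galton--Watson trees of finite variance (the $k^{-3/2}$ asymptotics of $\P(\abs{\tree^{<1}}=k)$), one has $\P\bigl(\abs{\tree^{<1}}\in[2rs_n^2,4rs_n^2]\bigr)\ge c_1/(s_n\sqrt r)$ for all large $n$. Conditionally on this event, the height of $\tree^{<1}$ is at most $L$ with probability at least $c_2(R)>0$: this is automatic when $r$ is small, since then the typical height $\Theta(\sqrt r\,s_n)$ is already below $L=\Theta(s_n)$, and for $r$ bounded away from $0$ it costs only a constant factor depending on $R$, by the convergence of conditioned Galton--Watson trees to the Brownian CRT (equivalently, by standard tail bounds on the rescaled height, whose limit law charges every interval $(0,\varepsilon)$). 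On the intersection of these two events no vertex of $\tree^{<1}$ is removed by truncating at level $L$, so $N_L=\abs{\tree^{<1}}\ge 2rs_n^2$; combining with the previous paragraph, $\P\bigl(\abs{\BP^{\sss(1)}_{f_n(1)}}\ge rs_n^2\bigr)\ge\tfrac12\,c_2(R)\,c_1/(s_n\sqrt r)$, which is the assertion with $\delta=\delta(R)=\tfrac12 c_1 c_2(R)$. (The argument uses only \refcond{LowerBoundfn} among the standing hypotheses.)

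The main obstacle is the last step: one must choose the truncation level $L=\Theta(s_n)$ so that \emph{simultaneously} (i) the birth-time budget $f_n(1)$ is typically not exceeded along a path of length $L$, which forces $L\lesssim s_n$ via the density estimate $\int_0^1 f_n/f_n(1)=\Theta(1/s_n)$, and (ii) a critical Galton--Watson tree of size $\Theta(rs_n^2)$ fits below level $L$ at only a constant-factor probability cost, which is possible precisely because $\sqrt{rs_n^2}=\Theta(\sqrt r\,s_n)$ is comparable to $L$. Extracting the exact power $s_n^{-1}$ together with the correct $r^{-1/2}$ dependence then rests on the $k^{-3/2}$ size asymptotics for critical Galton--Watson trees applied uniformly for $k=\Theta(rs_n^2)$, and on a uniform (in $k$) lower bound for the probability that such a conditioned tree is shorter than a prescribed multiple of $\sqrt k$.
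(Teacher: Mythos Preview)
Your proposal is correct and follows essentially the same approach as the paper: build a Poisson Galton--Watson subtree inside the PWIT, use the $k^{-3/2}$ size asymptotics to get probability $\Theta(1/(s_n\sqrt r))$ that its total progeny is of order $rs_n^2$, use CRT-type height bounds to truncate at level $\Theta(s_n)$ at constant cost, and then a Markov argument (the paper packages this as \reflemma{ReverseMarkov}) to show that a positive fraction of those vertices have birth time at most $f_n(1)$. The only noteworthy difference is that the paper works with the \emph{slightly subcritical} tree of edge weights at most $1-K/s_n$ (using the prepared \refprop{PGWtrees}~\refitem{GWTotalProg-LargeConst}--\refitem{GWCut}), whereas you work with the \emph{critical} tree of weights at most $1$; this is inessential, and your version is arguably a touch simpler conceptually, at the cost of quoting the Aldous height-of-conditioned-trees result directly rather than via the paper's lemma.
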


\refprop{LuckyProb} is proved in \refsubsect{VolClusterFr}.

Once an $R$-lucky vertex $v$ is born (for some $R>0$), another $R$-lucky vertex (for some $R>0$) is likely to be born soon thereafter.
Indeed, \refcond{scalingfn} implies that between ages $f_n(1)$ and $2f_n(1)$, the number of new children of $v$ will be Poisson with mean of order $1/s_n$.
The same is true for the order $s_n^2$ initial descendants of $v$, so that a total of order $s_n$ children is expected to be born during this time.
Among these, of order $1$ can be expected to repeat the unlikely event performed by $v$ and thereby perpetuate the growth.

We therefore expect that $\BP^{\sss(j)}$ will exhibit typical branching dynamics, with exponential growth on the time scale $f_n(1)$, only once $\BP^{\sss(j)}$ is large enough so that the intensity of new births, in the time scale $f_n(1)$, is at least $s_n$.

This motivates the following definition of the freezing times:

\begin{defn}[Freezing]
\lbdefn{Freezing}
Define, for $j=1,2$, the \emph{freezing times}
	\begin{equation}\labelPartI{TfrDefn}
	T_\fr^{\sss(j)} = \inf\bigg\{ t\geq 0\colon \sum_{v\in\BP_t^{\sss(j)}} \int_{t-T_v}^\infty \e^{-\lambda_n \left(y-(t-T_v)\right)}
	d\mu_n(y) \geq s_n \bigg\},
	\end{equation}
and the \emph{unfreezing time} $T_\unfr=T_\fr^{\sss(1)} \vee T_\fr^{\sss(2)}$.
The \emph{frozen cluster} is given by
\begin{equation}
\cluster_\fr=\cluster_{T_\unfr}=\cluster_\fr^{\sss(1)}\union\cluster_\fr^{\sss(2)} \quad \text{where} \quad \cluster_\fr^{\sss(j)}=\cluster_{T_\fr^{(j)}}^{\sss(j)}.
\end{equation}
\end{defn}
The random variable $\int_{t-T_v}^\infty \e^{-\lambda_n \left(y-(t-T_v)\right)} d\mu_n(y)$ represents the expected number of future offspring of vertex $v\in\BP_t^{\sss(j)}$, exponentially time-discounted at rate $\lambda_n$.
Recall from \eqrefPartI{OnOff} that $R_j(t)=(t\wedge T_\fr^{\sss(j)}) + ((t-T_\unfr)\vee 0)$.
Thus, each CTBP evolves at rate $1$ until its expected number of future offspring, exponentially time-discounted at rate $\lambda_n$, first exceeds $s_n$.
At that time the configuration is ``frozen'' and ceases to evolve until both sides have been frozen.
The two sides, which are now of a comparably large size, are then simultaneously unfrozen and thereafter evolve at rate 1.
Henceforth we will always assume this choice of $T_\fr^{\sss(1)},T_\fr^{\sss(2)}$.

We next investigate the asymptotics of the freezing times:
\begin{theorem}[Scaling of the freezing times]
\lbthm{TfrScaling}
Suppose Conditions~\refPartI{cond:scalingfn}, \refPartI{cond:LowerBoundfn} and \refPartI{cond:boundfnExtended} hold for a positive sequence $(s_n)_n$ with $s_n \to \infty$.
The freezing times satisfy $f_n^{-1}(T_\fr^{\sss(j)}) \convp M^{\sss (j)}$ for $j=1,2$.
\end{theorem}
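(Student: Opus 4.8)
The plan is to prove $f_n^{-1}(T_\fr^{\sss(1)})\convp M^{\sss(1)}$; the case $j=2$ is identical, and joint convergence follows from the independence of $\BP^{\sss(1)}$ and $\BP^{\sss(2)}$. Write $Z_t=\sum_{v\in\BP^{\sss(1)}_t}w(t-T_v)$ for the left‑hand side of the defining inequality for $T_\fr^{\sss(1)}$ in \refeq{TfrDefn}, where
\[
w(a)=\int_a^\infty\e^{-\lambda_n(y-a)}\,d\mu_n(y)=\int_{f_n^{-1}(a)}^\infty\e^{-\lambda_n(f_n(u)-a)}\,du
\]
by \refeq{munCharacterization}, and note $\int_0^\infty\e^{-\lambda_nf_n(u)}\,du=\hat\mu_n(\lambda_n)=1$ by \refeq{lambdanDefn}. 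Conditioning on the PWIT $\tree^{\sss(1)}$ fixes both the (then deterministic) process $\BP^{\sss(1)}$ and the value $M^{\sss(1)}$ of \refeq{MDefinition}. It suffices to prove, for each $\epsilon>0$: (a) almost surely $T_\fr^{\sss(1)}>f_n(M^{\sss(1)}-\epsilon)$ for all large $n$; and (b) $\P(T_\fr^{\sss(1)}>f_n(M^{\sss(1)}+\epsilon))\to0$. Since $f_n$ is increasing, (a)–(b) give $M^{\sss(1)}-\epsilon\le f_n^{-1}(T_\fr^{\sss(1)})\le M^{\sss(1)}+\epsilon$ with probability tending to $1$, and letting $\epsilon\downarrow0$ concludes the proof.

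\emph{Lower bound (a).} Fix $\epsilon>0$ with $M^{\sss(1)}-\epsilon$ a.s.\ not an edge weight, and let $\mathcal{C}$ be the component of $\emptyset_1$ in $\tree^{\sss(1)}$ after deleting all edges of weight $>M^{\sss(1)}-\epsilon$; since $M^{\sss(1)}-\epsilon<M^{\sss(1)}$, the cluster $\mathcal{C}$ is finite (the first‑pond fact behind \refeq{MDefinition}). Condition \refcond{boundfnExtended} gives $f_n(x+\delta)/f_n(x)\to\infty$ for $x\ge1$, and \refcond{LowerBoundfn} the same for $x<1$; hence for all large $n$ every $v\notin\mathcal{C}$ has $T_v\ge f_n(X^*)>f_n(M^{\sss(1)}-\epsilon)$ (with $X^*>M^{\sss(1)}-\epsilon$ the heaviest edge on its ancestral line), while every $v\in\mathcal{C}$ has $T_v\le|\mathcal{C}|\,f_n(\text{largest edge in }\mathcal{C})=\sop(f_n(M^{\sss(1)}-\epsilon))$, so $\BP^{\sss(1)}_t\subseteq\mathcal{C}$ for all $t\le f_n(M^{\sss(1)}-\epsilon)$. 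For such $t$, $Z_t\le|\mathcal{C}|\,\sup_{0\le a\le f_n(M^{\sss(1)}-\epsilon)}w(a)$. Using the identity for $w$, $\int_0^\infty\e^{-\lambda_nf_n(u)}\,du=1$ and \reflemma{lambdanAsymp} one gets $w(a)\le\e^{\lambda_n f_n(1)}=O(1)$ for $a\le f_n(1)$, while for $a=f_n(x)$ with $x>1$ Condition \refcond{boundfnExtended} gives $f_n(u)-f_n(x)\ge\epsilon_1 s_n f_n(x)\log(u/x)$, hence $w(f_n(x))\le x/(\epsilon_1 s_n\lambda_n f_n(x)-1)\to0$; thus that supremum is $O(1)$. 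Therefore $Z_t=O(1)<s_n$ for all $t\le f_n(M^{\sss(1)}-\epsilon)$ once $n$ is large, i.e.\ $T_\fr^{\sss(1)}>f_n(M^{\sss(1)}-\epsilon)$.

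\emph{Upper bound (b).} The idea is that once $\BP^{\sss(1)}$ crosses the edge of weight $M^{\sss(1)}$ it enters a subtree that grows exponentially, and $Z_t$ reaches $s_n$ shortly after. By \refprop{IPstructure} the maximal‑weight edge is invaded after finitely many IP steps, with child‑endpoint $v^*$, and below $v^*$ lies an infinite subtree $\mathcal{I}$ all of whose edge weights are $<M^{\sss(1)}$; by \refthm{CoupIP-PWIT} the FPP exploration follows IP for those first steps, so whp $\BP^{\sss(1)}$ crosses the $M^{\sss(1)}$‑edge at FPP‑time $T_{v^*}=\sop(f_n(M^{\sss(1)}+\epsilon/2))$ (the $M^{\sss(1)}$‑edge dominates the finite first‑pond path, and $f_n(M^{\sss(1)}+\epsilon/2)/f_n(M^{\sss(1)})\to\infty$ by \refcond{boundfnExtended}). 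Pick $k_n\to\infty$ slowly (e.g.\ $k_n=\lceil3\log s_n/\log M^{\sss(1)}\rceil$). Since $\mathcal{I}$ is an infinite Poisson$(M^{\sss(1)})$‑offspring tree, whp it has at least $\tfrac12(M^{\sss(1)})^{k_n}\ge s_n^{2}$ vertices $u$ at generation $k_n$ below $v^*$, each with $T_u=\sop(f_n(M^{\sss(1)}+\epsilon/2))$ and pairwise disjoint descendant subtrees $\BP^{\sss(u)}$. By \refprop{LuckyProb} each such $u$ is $R$‑lucky (in the sense of \refdefn{DefRLucky}) with probability $\ge\delta/(s_n\sqrt R)$, and — using the disjointness to make these trials independent of one another and of the revealed portion of $\tree^{\sss(1)}$ — whp at least one $u$ is $R$‑lucky. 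For an $R$‑lucky $u$ its $\ge Rs_n^2$ descendants born within age $f_n(1)$ all lie in $\BP^{\sss(1)}_t$ with age $\le f_n(1)$ at $t=T_u+f_n(1)\le f_n(M^{\sss(1)}+\epsilon)$; and a computation using \refcond{scalingfn} and \reflemma{lambdanAsymp} (whence $f_n(1+1/s_n)/f_n(1)\to\e$, so $\lambda_n(f_n(u)-a)=O(1)$ for $u\in[1,1+1/s_n]$ and $a\le f_n(1)$) gives $w(a)\ge c/s_n$ for all $a\le f_n(1)$ and large $n$. Hence $Z_t\ge Rs_n^2\cdot c/s_n\ge s_n$ for $R$ a large enough constant, so $T_\fr^{\sss(1)}\le f_n(M^{\sss(1)}+\epsilon)$.

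\emph{Main obstacle.} The delicate point is the conditioning in (b): $M^{\sss(1)}$, $v^*$ and $\mathcal{I}$ are not measurable with respect to any finite part of $\tree^{\sss(1)}$, so one cannot literally reveal ``the first pond and $\mathcal{I}$ down to generation $k_n$'' and keep the luck‑trials below fresh. The clean way is to reveal $\BP^{\sss(1)}$ forward in birth order, stop at the first time the cluster has $\Theta(s_n^2)$ vertices — a finite reveal — use \refthm{CoupIP-PWIT} and IP on the PWIT to certify whp that the $M^{\sss(1)}$‑edge has by then been crossed at FPP‑time $\sop(f_n(M^{\sss(1)}+\epsilon/2))$, and extract an antichain of $\gg s_n$ recently‑born vertices whose descendant subtrees, conditional on the stopping rule, are i.i.d.\ copies of $\BP^{\sss(1)}$ subject only to a harmless (mild) conditioning on the first post‑stopping edge; \refprop{LuckyProb} then applies after absorbing that conditioning into the constant. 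I expect this reveal/antichain bookkeeping, together with verifying that the recently‑born vertices really are numerous, to be the bulk of the work; the remaining analytic inputs — the two‑sided control $\sup_a w(a)=O(1)$ and $w(a)\ge c/s_n$ for $a\le f_n(1)$, resting on $\hat\mu_n(\lambda_n)=1$, \reflemma{lambdanAsymp} and Conditions \refPartI{cond:scalingfn}, \refPartI{cond:LowerBoundfn}, \refPartI{cond:boundfnExtended}, and the fact that $w(f_n(x))\to0$ for $x>1$ so that only vertices of age $O(f_n(1))$ matter — are routine.
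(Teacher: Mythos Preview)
Your lower bound is essentially the paper's argument: the paper notes that $\BP^{\sss(1)}_{f_n(M^{\sss(1)})^-}$ is contained in the finite, $n$-independent first pond, and then shows that each vertex contributes $O(1)$ to $Z_t$ (exactly your $\sup_a w(a)=O(1)$, carried out in \eqrefPartI{ContributionUniformBound} via \reflemma{munDensityBounded} and \reflemma{lambdanAsymp}), so freezing requires order $s_n$ vertices and hence $T_\fr^{\sss(1)}\ge f_n(M^{\sss(1)})$ \whpdot

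For the upper bound the paper takes a different route.  Rather than descending into a subtree below $v^*$ and waiting for a lucky vertex, it uses the backbone decomposition of \refprop{IPstructure}: \reflemma{FreezingByHeight} shows $T_\fr^{\sss(1)}\le T_{V^{\sss\BB,1}_{\lfloor Ks_n\rfloor}}$ \whp\ by verifying that the off-backbone branches along the first $Ks_n$ backbone steps already produce more than $\abs{\cluster_\fr^{\sss(1)}}$ vertices explored by that time (this is where \refthm{FrozenCluster}\refitem{FrozenVolume}, hence \refprop{LuckyProb}, actually enters), and \reflemma{ExtraTimeToFreeze} then gives $\condE{T_{V^{\sss\BB,1}_{\lfloor Ks_n\rfloor}}}{M_0^{\sss(1)}=m}=O(f_n(m))$ by summing backbone edge weights using the Markov structure of the forward maxima.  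The advantage is that the conditional laws of the off-backbone branches given $(M_k^{\sss(j)})_k$ are explicit subcritical PGW trees, so no survival conditioning enters.

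Your direct route has the gap you flag, and the sketch does not close it.  Given the first pond and $M^{\sss(1)}$, the subtree $\mathcal{I}$ below $v^*$ is a PGW$(M^{\sss(1)})$ tree \emph{conditioned to be infinite}; the descendant subtrees of your antichain vertices $u\in\mathcal{I}$ are therefore not fresh PWITs, and \refprop{LuckyProb} does not apply to them as stated.  Your workaround---reveal $\BP^{\sss(1)}$ forward until it has $\Theta(s_n^2)$ vertices and then harvest a fresh antichain---presupposes that this many vertices appear before time $f_n(M^{\sss(1)}+\epsilon)$, which is precisely the bound you are after (and \refthm{CoupIP-PWIT}, stated for a fixed number $m$ of steps, gives no control at $\Theta(s_n^2)$ steps).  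The standard device for disentangling the survival conditioning is a spine decomposition of the conditioned supercritical tree, and that spine with its attached subcritical bushes is exactly the backbone/off-backbone structure of \refprop{IPstructure}.  So a rigorous version of your heuristic leads back to the paper's Lemmas~\refPartI{l:FreezingByHeight} and \refPartI{l:ExtraTimeToFreeze}.
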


\refthm{TfrScaling}, which is proved in \refsubsect{TfrScalingPf}, confirms that the
two CTBPs $\BP^{\sss(1)}$ and $\BP^{\sss(2)}$ are indeed large enough relatively soon after exploring the edges of weight $f_n(M^{\sss(1)})$ and $f_n(M^{\sss(2)})$, respectively.

The next result states that soon after unfreezing, an $R$-lucky vertex is born.

\begin{lemma}\lblemma{RluckyBornSoon}
Suppose Conditions~\refPartI{cond:scalingfn}, \refPartI{cond:LowerBoundfn} and \refPartI{cond:boundfnExtended} hold for a positive sequence $(s_n)_n$ with $s_n \to \infty$.
Fix $R<\infty$ and let $v_{j,R}$ denote the first $R$-lucky vertex in $\cluster^{\sss(j)}$ born after time $T_\unfr$.
Then $T_{v_{j,R}}^\cluster=T_\unfr+O_\P(f_n(1))$.
\end{lemma}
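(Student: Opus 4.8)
The plan is to use that, after the unfreezing time, $\cluster^{\sss(j)}$ evolves exactly as the continuous‑time branching process $\BP^{\sss(j)}$ run at rate $1$ from the frozen configuration $\cluster_\fr^{\sss(j)}$, and that $\cluster_\fr^{\sss(j)}$ already carries enough ``active mass'' to spawn, in every subsequent time interval of length $\Theta(f_n(1))$, of order $s_n$ fresh individuals; a constant fraction of these will, by \refprop{LuckyProb} and independence, contain an $R$-lucky vertex. Write $Z_t$ for the exponentially time‑discounted expected‑future‑offspring functional from \refdefn{Freezing} (the quantity appearing in \eqrefPartI{TfrDefn}), evaluated along $\cluster^{\sss(j)}$ at time $t\ge T_\unfr$. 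Since this functional increases by exactly one at each birth and otherwise can only decrease, the definition of $T_\fr^{\sss(j)}$ gives $Z_{T_\unfr}\in[s_n,s_n+1)$. A standard computation for continuous‑time branching processes shows that $\bigl(Z_t\,\e^{-\lambda_n(t-T_\unfr)}\bigr)_{t\ge T_\unfr}$ is a nonnegative martingale (so $Z$ is a submartingale with $\E[Z_{T_\unfr+t}\mid\cluster_{T_\unfr}]=\e^{\lambda_n t}Z_{T_\unfr}$), whose only jumps are the positive jumps at births. By \reflemma{lambdanAsymp}, $\lambda_n f_n(1)\to\e^{-\gamma}$, so $1/\lambda_n=\Theta(f_n(1))$.

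Fix a large constant $K$. Conditionally on $\cluster_\fr^{\sss(j)}$, the vertices born directly to members of $\cluster_\fr^{\sss(j)}$ during $(T_\unfr,T_\unfr+Kf_n(1)]$ form a sum of independent Poisson variables with total mean $\sum_{v\in\cluster_\fr^{\sss(j)}}\mu_n(A_v,A_v+Kf_n(1))$, where $A_v$ denotes the age of $v$ at $T_\unfr$. Using $\e^{-\lambda_n(y-a)}\le 1$ and that $a\mapsto\int_a^\infty\e^{-\lambda_n(y-a)}\,d\mu_n(y)$ is non-increasing (a consequence of the density bounds \refcond{LowerBoundfn} and \refcond{boundfnExtended}), one obtains $\mu_n(a,a+Kf_n(1))\ge\bigl(1-\e^{-\lambda_n Kf_n(1)}\bigr)\int_a^\infty\e^{-\lambda_n(y-a)}\,d\mu_n(y)$, whence this total mean is at least $\bigl(1-\e^{-\lambda_n Kf_n(1)}\bigr)Z_{T_\unfr}\ge\tfrac12 s_n$ once $K$ and $n$ are large. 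By Poisson concentration, with probability $1-o(1)$ at least $\tfrac14 s_n$ such children are born in the window; being one generation out from $\cluster_\fr^{\sss(j)}$, their descendant subtrees are independent copies $\BP^{\sss(v)}$ of the branching process, and by \refprop{LuckyProb} with $r=R$ each of them is $R$-lucky independently with probability at least $\delta/(s_n\sqrt R)$. Therefore, with probability at least $1-\bigl(1-\delta/(s_n\sqrt R)\bigr)^{s_n/4}-o(1)\ge 1-\e^{-\delta/(4\sqrt R)}-o(1)=:p_0-o(1)>0$, some $R$-lucky vertex of $\cluster^{\sss(j)}$ is born in that window, so that $T_{v_{j,R}}^\cluster\le T_\unfr+Kf_n(1)$.

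To upgrade this bounded‑below probability to $1-\epsilon$, I would iterate the previous step over the consecutive windows $I_\ell=\bigl(T_\unfr+(\ell-1)Kf_n(1),\,T_\unfr+\ell Kf_n(1)\bigr]$, $\ell=1,\dots,L$. Conditionally on the history up to the start of $I_\ell$, on the event that no $R$-lucky vertex has yet been born and $Z$ is still at least $\tfrac12 s_n$ there, the argument of the previous paragraph applied inside $I_\ell$ produces an $R$-lucky vertex in $I_\ell$ with conditional probability at least $p_0-o(1)$. Granting that the active mass stays above $\tfrac12 s_n$ at the starts of the first $L$ windows with probability $1-o(1)$, this yields $\P\bigl(T_{v_{j,R}}^\cluster>T_\unfr+LKf_n(1)\bigr)\le(1-p_0)^L+o(1)$, which is below any prescribed $\epsilon$ for $L=L(\epsilon)$ fixed and $n$ large; since $T_{v_{j,R}}^\cluster\ge T_\unfr$ trivially, this is exactly the claim $T_{v_{j,R}}^\cluster=T_\unfr+O_\P(f_n(1))$.

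The crux — and the main obstacle — is the persistence of the active mass, namely the estimate $\P\bigl(\exists\,\ell\le L\colon Z_{T_\unfr+(\ell-1)Kf_n(1)}<\tfrac12 s_n\bigr)=o(1)$. Here $Z$ is only a submartingale, and its fluctuations are of the same $\Theta(s_n)$ order as its typical value, so a crude second‑moment bound does not suffice; one must combine the fact that $Z$ can decrease only continuously (at rate at most the total birth rate) with a sharp enough control on the cumulative number of births over time $O(Lf_n(1))$, obtained from the two‑sided comparison between $\mu_n(a,a+h)$ and $\int_a^\infty\e^{-\lambda_n(y-a)}\,d\mu_n(y)$, from $\E[Z_{T_\unfr+t}\mid\cluster_{T_\unfr}]=\e^{\lambda_n t}Z_{T_\unfr}$ together with $Z_{T_\unfr}\le s_n+1$, and from a renewal estimate for the expected descendant count of a single vertex within additional time $O(f_n(1))$. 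A secondary, routine point is the measurability and independence bookkeeping that keeps the luckiness events independent across windows and across vertices — handled by working, in each window, only with the vertices born one generation out from the configuration present at the start of that window, whose descendant subtrees are fresh, independent branching processes.
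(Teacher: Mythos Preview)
Your argument has a genuine gap, which you yourself flag as ``the crux and the main obstacle'': the persistence of the active mass $Z_t\ge\tfrac12 s_n$ across the first $L$ windows. You sketch a route via the martingale property and density comparisons, but this is not carried out, and it is genuinely delicate: the fluctuations of the martingale $Z_t\e^{-\lambda_n(t-T_\unfr)}$ over a time interval of length $O(1/\lambda_n)$ are of the same order $\Theta(s_n)$ as its mean, so there is no a priori reason for $Z_t$ to stay above $\tfrac12 s_n$ with probability $1-o(1)$. A secondary issue is your claimed monotonicity of $a\mapsto\int_a^\infty\e^{-\lambda_n(y-a)}\,d\mu_n(y)$: differentiating gives $\lambda_n\int_a^\infty\e^{-\lambda_n(y-a)}\,d\mu_n(y)-\rho(a)$, where $\rho$ is the density of $\mu_n$, and for large $a$ both terms are $\Theta(1/(s_nf_n(1)))$ by \reflemma{munDensityBounded} and \reflemma{lambdanAsymp}, so the sign is not determined by the density bounds you cite.

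The paper sidesteps the iteration entirely by exploiting the factor $1/\sqrt{r}$ in \refprop{LuckyProb}. Instead of looking directly for an $R$-lucky vertex (which, as you found, gives only a probability $p_0$ bounded away from $0$ and forces you to iterate), it first looks for an $r$-lucky vertex with $r$ taken \emph{small}. Since $\P(\text{$r$-lucky})\ge\delta/(s_n\sqrt{r})$, one can choose $r$ so that the Poisson mean $\tfrac12 s_n\cdot\delta/(s_n\sqrt{r})$ is as large as desired, giving an $r$-lucky vertex in a single window with probability $1-\epsilon$. (The lower bound $\mu_{n,\fr}^{\sss(j)}(0,K/\lambda_n)\ge\tfrac12 s_n$ comes directly from \reflemma{ExpectedUnfrozenChildren} and \reflemma{FrozenVerticesAreYoung}, without any monotonicity assumption.) This $r$-lucky vertex then supplies $rs_n^2$ fresh individuals of known small age, which in a further window of length $f_n(1+K'/s_n)-f_n(1)$ produce a Poisson$(\ge rK's_n)$ number of children; taking $K'$ large makes the expected number of $R$-lucky vertices among these children arbitrarily large. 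This two-step $r$-then-$R$ bootstrap replaces your problematic multi-window iteration by two single-window arguments, each achieving probability $1-\epsilon$ directly.
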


\reflemma{RluckyBornSoon} is proved in \refsubsect{RLuckyBirthProof}.

\subsection{Decomposing FPP into IP and branching dynamics}\lbsubsect{DecomposeFPP}

Similarly to the definition of the CTBPs with freezing $\cluster$, we can study the smallest-weight trees on $K_n$ with freezing.
However, in contrast to the definition of $\cluster$, there is competition between the two clusters and we have to actively forbid their merger.
We do not need or prove any statements about the smallest-weight trees from two sources in this paper, and, therefore, give only the following informal definition. We refer to Part II for extensive results in this direction.

The two smallest-weight trees with freezing on $K_n$ started from vertices $1$ and $2$ are two disjoint trees $\cS^{\sss(j)}=(\cS_t^{\sss(j)})_{t\ge 0}$, $j\in \set{1,2}$, on $K_n$ with root $1$ and $2$, respectively.
Initially $\cS^{\sss(j)}$ contains only vertex $j$ which can be viewed as the source of a fluid that differs from the fluid originating at $j'$ with $\set{j,j'}=\set{1,2}$.
When a vertex becomes wet, it is added to the tree of the corresponding fluid and all adjacent edges between the new vertex and the other tree can no longer transport fluid.
Moreover, fluid originating from $j$ cannot travel between times $T_\fr^{\sss(j)}$ and $T_\unfr$.
We write $\cS_t=\cS_t^{\sss(1)} \cup \cS_t^{\sss(2)}$ for all $t \ge 0$ \and call $\cS_t$ the smallest-weight tree from two sources.

Theorems~\refPartI{t:Coupling1Source} and \refPartI{t:TfrScaling} imply that the smallest-weight trees without freezing, i.e., the choice $R_1(t)=R_2(t)=t$, would behave almost like the asymmetric choices $R_1(t)=t$, $R_2(t)=0$ or $R_1(t)=0$, $R_2(t)=t$.
Write $\cS^{\sss({\rm id})}=(\cS_t^{\sss({\rm id})})_{t\ge 0}$ for the smallest-weight tree starting from vertices $1$ and $2$, constructed as $\cS=(\cS_t)_{t\ge 0}$ but with $R_j$ replaced by the identity map, for $j\in \set{1,2}$ (that is, $\cS^{\sss(\text{id})}$ is the smallest-weight tree from two sources without freezing).
Denote by $\cS^{\sss({\rm id},1)}$ the connected component of $\cS^{\sss({\rm id})}$ containing vertex $1$.
Our results suggest that
\begin{equation}\labelPartI{ImportanceOfFreezing}
\frac{\abs{\cS_{\infty}^{\sss ({\rm id},1)}}}{n} \convd \theta_{{\rm id}} \qquad \text{and } \qquad \frac{\abs{\cS_{\infty}^{\sss (1)}}}{n} \convd \theta_\fr,
\end{equation}
where $\theta_{{\rm id}}$ and $\theta_\fr$ are random variables with $\P(\theta_{{\rm id}}=1)=\P(\theta_{{\rm id}}=0)=1/2$ and $\P(\theta_\fr \in (0,1))=1$. This means that freezing is necessary to guarantee that both trees are asymptotically of comparable size.

Similarly to \refthm{Coupling1Source}, we can couple $\cluster_t$ and $\cS_t$.
To this end, we introduce a thinning procedure for $\cluster=(\cluster_t)_{t\ge 0}$ analogous to \refdefn{ThinningBP}.
Define $M_{\emptyset_j}=j$, for $j=1,2$.
To each other $v\in \twoPWITs\setminus\set{\emptyset_1,\emptyset_2}$, we associate a mark $M_v$ chosen uniformly and independently from $[n]$.

\begin{defn}\lbdefn{Thinningcluster}
The vertex $v\in\tree\setminus\set{\emptyset_1,\emptyset_2}$ is \emph{thinned} if it has an ancestor $v_0=\ancestor{k}{v}$ (possibly $v$ itself) such that $M_{v_0}=M_w$ for some unthinned vertex $w$ with $T^\cluster_w<T^\cluster_{v_0}$.
\end{defn}
As in the remarks below \refdefn{ThinningBP}, this definition is not circular, and we write $\thinnedcluster_t$ for the subgraph of $\cluster_t$ consisting of unthinned vertices.

Recall the subgraph $\pi_M(\tau)$ of $K_n$ introduced in \refdefn{InducedGraph}, which we extend to the case where $\tau\subset\tree$.

\begin{theorem}\lbthm{CouplingFPP}
There exists a coupling such that $\cS_t^{\sss(1)}\cup \cS_t^{\sss(2)} =\pi_M(\thinnedcluster_t)$ for all $t\ge 0$ almost surely.
\end{theorem}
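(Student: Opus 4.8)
The statement is the two-source, freezing analogue of \refthm{Coupling1Source}, and the plan is to prove it with the same device: the explicit coupling of \refsect{Coupling}, now run on the disjoint union $\tree$ of two PWITs. Recall that that coupling is \emph{dynamic}: it reveals the edge weights $(Y_e^{\sss(K_n)})_e$ of $K_n$ and the PWIT data $(X_v,M_v)_{v\in\tree}$ simultaneously, exploring the smallest-weight tree on $K_n$ in lockstep with the thinned continuous-time branching process on $\tree$, and at each step identifying the newly claimed $K_n$-vertex with the newly added unthinned individual via its mark $M_v$. One feeds in two ancestors $\emptyset_1,\emptyset_2$ with $M_{\emptyset_j}=j$ and the remaining $M_v$ i.i.d.\ uniform on $[n]$, and, crucially, one runs the two component CTBPs through the on-off clocks $R_1,R_2$ of \eqrefPartI{OnOff}. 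Because $T_\fr^{\sss(j)}$ is a stopping time for $\BP^{\sss(j)}$, and because freezing only suspends (never rewinds) age clocks and never unwets a vertex, inserting $R_j$ amounts to a time reparametrization of each component that leaves every conditional law intact; the identity $T_v^\cluster=R_j^{-1}(T_v)$ from \eqrefPartI{OnOffInverse} guarantees that $\cluster$-arrival order coincides with the chronological order in which $K_n$-vertices become wet, hence with the order in which the $K_n$-exploration proceeds.

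The verification is an induction over the events ``an unthinned vertex is added to $\cluster$'', taken in order of $\cluster$-arrival; the base case is the two roots, which are never thinned and map to vertices $1$ and $2$. Assume $\cS_t^{\sss(1)}\cup\cS_t^{\sss(2)}=\pi_M(\thinnedcluster_t)$ up to such an event and let the next unthinned vertex be the $k$-th child $vk$ of an already added unthinned $v\in\tree^{\sss(j)}$ with $u:=M_v\in V(\cS_t^{\sss(j)})$. Its mark $M_{vk}$ is uniform on $[n]$ and independent of the revealed history; by \refdefn{Thinningcluster}, $vk$ is unthinned exactly when $M_{vk}$ avoids the marks of all unthinned vertices already present in $\cluster$, i.e.\ when $M_{vk}\notin V(\cS_t^{\sss(1)})\cup V(\cS_t^{\sss(2)})$, and conditionally on that event $M_{vk}$ is uniform over the unclaimed labels -- exactly the $K_n$ rule selecting which dry vertex is wetted next by fluid $j$, along the edge $\{u,M_{vk}\}$, which under the coupling carries weight $f_n(X_{vk})$, the conditional law of the next $K_n$-edge weight out of $u$ by \eqrefPartI{munCharacterization}. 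If instead $vk$ is thinned, then on $K_n$ the edge $\{u,M_{vk}\}$ would either close a cycle in $\cS^{\sss(j)}$ or join $\cS^{\sss(j)}$ to the competing tree, and the competition rule defining $\cS$ forbids it in both cases -- which is consistent with thinned vertices contributing nothing to $\pi_M(\thinnedcluster)$. Since the two PWITs are vertex-disjoint and share the mark pool $[n]$, thinning automatically keeps $\cS^{\sss(1)}$ and $\cS^{\sss(2)}$ disjoint and acyclic, so $\pi_M(\thinnedcluster_t)$ is a disjoint union of two trees on $K_n$, closing the induction and giving the identity for all $t\ge 0$.

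The genuinely new points relative to \refthm{Coupling1Source} are the competition/no-merger rule and the freezing, and I expect their interplay to be the main obstacle. One must check that the thinning of \refdefn{Thinningcluster}, which compares the $\cluster$-arrival times $T_w^\cluster=R_j^{-1}(T_w)$ rather than the raw birth times $T_w$, faithfully encodes the $K_n$ rule that a vertex claimed by one tree is thereafter unavailable to the other, with the correct timing while one side is frozen: a vertex claimed by the competing tree during $[T_\fr^{\sss(j)},T_\unfr]$ should be set against the $\cluster^{\sss(j)}$-vertices only according to the real times at which those vertices became wet, and the reparametrization $R_j^{-1}$ delivers precisely this. One must also confirm that revealing the residual weight of a partially explored $K_n$-edge as a fresh push-forward Poisson point remains valid with two explorations in parallel and one of them paused; this reduces to the single-source statement underlying \refthm{Coupling1Source}, since freezing neither unwets a vertex nor uncovers an already-revealed edge. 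Finally, as in the remark below \refdefn{ThinningBP}, the thinning can be assessed recursively in order of $\cluster$-arrival and is therefore not circular. With these points established, the general coupling of \refsect{Coupling}, applied to $\tree$ with the adapted clocks $R_1,R_2$, yields the asserted pathwise identity.
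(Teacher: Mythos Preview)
The paper does not prove this theorem; it explicitly defers the formalization and proof to the companion paper, stating that ``\refthm{CouplingFPP} is formalized and proved as \refother{\refthmPartII{CouplingFPP}},'' and noting that the result is not used in the present paper. Moreover, the process $\cS$ itself is given only an informal definition here, with the formal definition also deferred to Part~II. So there is no in-paper proof to compare against.

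Your approach---generalizing the proof of \refthm{Coupling1Source} via the minimal-rule machinery of \refsect{Coupling}, now on the two-rooted tree $\tree$ with the on-off clocks $R_1,R_2$---is the natural one and is presumably what Part~II carries out. The key structural points you identify are correct: the exploration of $\cluster$ in order of $T_v^\cluster$ can be cast as a minimal-rule exploration process in the sense of \refdefn{MinimalRule}; the freezing times $T_\fr^{\sss(j)}$ are stopping times, so this order is $(\F_k)_k$-adapted; and the thinning of \refdefn{Thinningcluster} (which uses $T_w^\cluster$, not $T_w$) encodes precisely the no-cycle and no-merger rules of the informal $\cS$.

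A few points of your sketch need more care. First, your ``dynamic'' description of the coupling is not how the paper constructs it: the edge weights are defined all at once via \eqrefPartI{EdgeWeightCoupling}, and the lockstep correspondence you invoke is the \emph{conclusion} of \refprop{MinimalRuleThinning}, not its mechanism---you should route the argument through that proposition rather than re-derive a heuristic version. Second, the sentence ``$M_{vk}$ is uniform over the unclaimed labels---exactly the $K_n$ rule selecting which dry vertex is wetted next'' conflates two separate facts. The uniformity of $M_{vk}$ is not what determines which vertex is next wetted; what matters is that under the coupling the boundary edge weights in $K_n$ are given by \eqrefPartI{BoundaryEdgeWeights}, so the minimizer for $\widetilde{\prec}_k$ on $K_n$ and the minimizer for $\prec_k$ on $\tree$ agree (this is the content of \refprop{MinimalRuleThinning}). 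Third, to check that the order $\prec_k$ is $\F_k$-measurable you must argue that, while one side is frozen and $T_\unfr$ is not yet known, one can still compare $T_v^\cluster$ across the two trees: this works because any not-yet-born $v$ on the frozen side has $T_v^\cluster\geq T_\unfr$, while boundary vertices on the active side have $T_w^\cluster=T_w$ strictly before $T_\unfr$, so the comparison is decidable---but this should be said explicitly. None of these are genuine gaps in the approach; they are places where the sketch needs to be tightened into a proof.
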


\refthm{CouplingFPP} is formalized and proved as \refother{\refthmPartII{CouplingFPP}}.

We will not use the result of \refthm{CouplingFPP} in this paper.
Nevertheless, \refthm{CouplingFPP} motivates our study of $\cluster$ and freezing times because it relates FPP on the complete graph ($n$-independent dynamics run on an $n$-dependent weighted graph) with an exploration defined in terms of a pair of Poisson-weighted infinite trees ($n$-dependent dynamics run on an $n$-independent weighted graph).
By analyzing the dynamics of $\cluster$ when $n$ and $s_n$ are large, we obtain a fruitful \emph{dual picture}:
a \emph{static} approximation by IP, valid when the number of explored vertices is small and independent of $n$; followed by a \emph{dynamic} rescaled branching process approximation, valid when the number of explored vertices is large, that is also essentially independent of $n$.

An important goal of this paper is to understand the dynamics of the IP part which is formalized as the frozen cluster.
Our main results are collected in the following theorem:

\begin{theorem}[Properties of the frozen cluster]\lbthm{FrozenCluster}Suppose Conditions~\refPartI{cond:scalingfn}, \refPartI{cond:LowerBoundfn} and \refPartI{cond:boundfnExtended} hold for a positive sequence $(s_n)_n$ with $s_n \to \infty$.
\begin{enumerate}
\item \lbitem{FrozenVolume}
The volume $\abs{\cluster_\fr}$ of the frozen cluster is $O_\P(s_n^2)$.
\item \lbitem{FrozenDiameter}
The diameter $\max\set{\abs{v}\colon v\in\cluster_\fr}$ of the frozen cluster is $O_\P(s_n)$.
\end{enumerate}
\end{theorem}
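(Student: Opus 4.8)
By the symmetry of the construction of $\cluster$ it suffices to bound the volume and the depth of $\cluster_\fr^{\sss(j)}=\BP_{T_\fr^{\sss(j)}}^{\sss(j)}$ for a single $j$ (note $R_j(T_\fr^{\sss(j)})=T_\fr^{\sss(j)}$, so $\cluster_{T_\fr^{\sss(j)}}^{\sss(j)}=\set{v\colon T_v\le T_\fr^{\sss(j)}}$). Write $\BP=\BP^{\sss(j)}$, $T_\fr=T_\fr^{\sss(j)}$ and $\Phi(t)=\sum_{v\in\BP_t}g(t-T_v)$ with $g(a)=\int_a^\infty\e^{-\lambda_n(y-a)}\,d\mu_n(y)$, so that $T_\fr=\inf\set{t\ge0\colon\Phi(t)\ge s_n}$ by \refdefn{Freezing}. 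The first ingredient is a family of estimates for $g$: from $\hat\mu_n(\lambda_n)=1$ one has $g(0)=1$, $g$ is nonincreasing, and combining Conditions~\refPartI{cond:scalingfn}, \refPartI{cond:LowerBoundfn}, \refPartI{cond:boundfnExtended} with $\lambda_nf_n(1)\to\e^{-\gamma}$ (\reflemma{lambdanAsymp}) one shows $g(a)\ge c_1/s_n$ for every $0\le a\le f_n(1)$ and $n$ large, while $g(f_n(1+c))=o(1/s_n)$ for each fixed $c>0$. Thus $\Phi(t)$ is, up to constants, $s_n^{-1}$ times the number of individuals of $\BP_t$ that are of recent age and whose ancestral edge weights lie within $1+O(1/s_n)$ of the critical value $1$; in particular any $Rs_n^2$ individuals all of age at most $f_n(1)$ already yield $\Phi\ge Rc_1s_n$, forcing freezing once $Rc_1\ge1$.

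For \refitem{FrozenVolume} the mechanism is as follows. The subtree of descendants of a vertex $v$ reached using only edge weights $X\le1$ is a critical $\Poi(1)$ Galton--Watson tree, hence has total size with the classical $k^{-1/2}$ tail, and it is explored within time $\Theta(f_n(1))$ of $T_v$ (a depth-$\ell$ path of weight-$\le1$ edges has FPP length $\Theta(f_n(1))$ already when $\ell\asymp s_n$, the height of a critical tree of $\asymp s_n^2$ vertices). Call $v$ \emph{lucky} if this subtree has at least $Rs_n^2$ vertices, with $R=2/c_1$; by \refprop{LuckyProb} (with the trivial reverse bound) a vertex is lucky with probability $\asymp1/s_n$, and the relevant subtrees of vertices on disjoint branches are independent. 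If $v^\star$ denotes the first lucky vertex to be explored, then $\Phi$ reaches $s_n$ within a further time $\Theta(f_n(1))$ and $T_\fr\le T_{v^\star}+\Theta(f_n(1))$, so $\abs{\cluster_\fr^{\sss(j)}}\le\abs{\BP_{T_{v^\star}+\Theta(f_n(1))}}$. Using \refprop{LuckyProb}, \reflemma{RluckyBornSoon} and \refthm{TfrScaling} (the latter to bound the weights, hence the birth times, of the vertices explored before $v^\star$), one shows that a lucky vertex is reached after accumulating only $\Op(s_n^2)$ vertices: before $v^\star$ one has explored an $\Op(s_n)$-indexed family of critical $\Poi(1)$ subtrees, each of size $<Rs_n^2$ but with the $\tfrac12$-stable tail, whose truncated sum is $\Op(s_n^2)$; the subtree of $v^\star$ adds $\Op(s_n^2)$; and the vertices born in the final $\Theta(f_n(1))$ window add $\Op(s_n^2)$ more, since each already-present individual spawns $\Op(1)$ new children there ($\mu_n(a,a+\Theta(f_n(1)))=O(1)$, and $=O(1/s_n)$ once $a\ge f_n(1)$, by the density bounds), each again carrying an independent critical $\Poi(1)$ subtree.

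For \refitem{FrozenDiameter} one combines the volume bound with the structure of the exploration. On the events above, and after a comparison with invasion percolation on the PWIT (which accepts only $\Op(1)$ edges of weight exceeding any fixed level $1+c$ before it finds an infinite weight-$\le1+c$ component), each ancestral line of $\cluster_\fr^{\sss(j)}$ decomposes into $\Op(1)$ ``large'' edges together with pieces all of whose weights are within $1+O(1/s_n)$ of $1$; these pieces are (sub)critical Galton--Watson trees of total volume $\Op(s_n^2)$, so each has height $\Op(\sqrt{s_n^2})=\Op(s_n)$ (respectively the weight-$\le1$ subtrees grown for time $\Theta(f_n(1))$ have height with tail of order $h^{-1}$, hence $\Op(s_n)$), and summing over the $\Op(1)$ pieces on a line gives $\max\set{\abs{v}\colon v\in\cluster_\fr^{\sss(j)}}=\Op(s_n)$. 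Taking the maximum over $j=1,2$ finishes the proof.

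The main obstacle is the accounting in \refitem{FrozenVolume}: showing that the exploration reaches a lucky vertex while only $\Op(s_n^2)$ vertices have been accumulated, which requires a quantitative description of the near-critical part of the exploration before freezing (how many critical $\Poi(1)$ subtrees, of which sizes and at which near-critical weights, are encountered), together with uniform-in-$n$ control of the resulting $\tfrac12$-stable sums; this is precisely the circle of estimates underlying \refprop{LuckyProb} and \reflemma{RluckyBornSoon}, and the compar
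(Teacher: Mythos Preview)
Your proposal contains a circular dependency that undermines the argument for \refitem{FrozenVolume}. You invoke \reflemma{RluckyBornSoon} and \refthm{TfrScaling} to control the exploration before the first lucky vertex, but in the paper both of these are proved \emph{after} \refthm{FrozenCluster}~\refitem{FrozenVolume} and rely on it: \refthm{TfrScaling} goes through \reflemma{FreezingByHeight}, whose proof begins ``By \refthm{FrozenCluster}~\refitem{FrozenVolume} there exists $K_1$ such that $\P(\abs{\BP_\fr^{\sss(j)}}>K_1s_n^2)<\delta/4$''; and \reflemma{RluckyBornSoon} goes through \reflemma{FrozenVerticesAreYoung}, which likewise invokes \refthm{FrozenCluster}~\refitem{FrozenVolume}. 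So neither result is available at this point.

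The paper's proof of \refitem{FrozenVolume} sidesteps your truncated-sum accounting entirely by splitting vertices according to their \emph{age at the freezing time}. Young vertices (age at most $Kf_n(1)$) each contribute at least $\epsilon_K/s_n$ to $\int\e^{-\lambda_n y}\,d\mu_{n,\fr}^{\sss(j)}(y)$ by \reflemma{ModerateAgeContribution}, and that integral is at most $s_n+1$ by \reflemma{ExpectedUnfrozenChildren}, giving a deterministic bound $O(s_n^2)$ on their number. Old vertices each contribute at least $1/s_n$ to the sum $\sum_v\bigl(f_n^{-1}(T_\lucky^{\sss(j)}-T_v)-1\bigr)^+$, and \reflemma{SumfninverseExponential} shows this sum is exponential with rate $\P(v\text{ lucky})\asymp 1/s_n$ (via \refprop{LuckyProb}), hence $O_\P(s_n)$, giving $O_\P(s_n^2)$ old vertices. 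This two-sided potential-function argument uses no information about $T_\fr^{\sss(j)}$ beyond its definition and no IP comparison at all.

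Your sketch for \refitem{FrozenDiameter} is also too loose. The claim that every ancestral line in $\cluster_\fr^{\sss(j)}$ contains only $O_\P(1)$ edges of weight exceeding $1+c$ is a statement about a maximum over $O_\P(s_n^2)$ lines simultaneously; the local IP comparison (\refthm{CoupIP-PWIT}) says nothing at this scale. The paper instead first pins $\cluster_\fr^{\sss(j)}$ inside $\BP^{\sss(j)}_T$ with $T=T_{V^{\sss\BB,j}_{\floor{Ks_n}}}$ (\reflemma{FreezingByHeight}), uses that the relevant portion of the IP cluster has diameter $O_\P(s_n)$ (\refprop{IPstructure}~\refitem{IPdiameter}), and then controls the off-IP excursions branch by branch along the backbone: \reflemma{ExpExplUninvChildrenBound} shows that for small $k$ no uninvaded boundary edge of $\tau_k$ has a descendant $s_n$ generations away explored by time $T$, and for large $k$ \reflemma{NoFastLongPaths} bounds the probability of any long path with the required near-critical edge profile. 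Your height-from-volume heuristic ($\sqrt{s_n^2}=s_n$) does not address these off-IP excursions, which is where the actual work lies.
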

\refthm{FrozenCluster}~\refitem{FrozenVolume} and \refitem{FrozenDiameter} are proved in Sections~\refPartI{ss:VolClusterFr} and \refPartI{ss:FrozenDiameterPf}, respectively.
\refthm{FrozenCluster} will allow us to ignore the elements coming from the frozen cluster in analysis of the smallest-weight path.
For instance, part \refitem{FrozenDiameter} will be used to show that path lengths within the frozen cluster are negligible compared to overall path lengths. We believe that the diameter $\max\set{\abs{v}\colon v\in\cluster_\fr}$ of the frozen cluster really is of order $s_n$, but we will not need this and therefore also not prove this. Since $s_n\rightarrow\infty$, this intuitively explained `long paths' in our title. This analysis will be carried out in \cite{EckGooHofNar14b} where we are interested in global properties of the FPP process. There also the title will be substantiated, since it will be shown that
$H_n$ is of order $s_n\log(n/s_n^3)$ when $s_n=o(n^{1/3})$.

\subsection{Discussion of our results}
\lbsect{DiscExt}

In this section we briefly discuss our results and state open problems. For a more detailed discussion of the results in this paper and in our companion paper \cite{EckGooHofNar14b}, as well as an extensive discussion of the relations to the literature, we refer to  \refother{\refsectPartII{DiscExt}}.

First passage percolation (FPP) on the complete graph is closely approximated by invasion percolation (IP) on the Poisson-weighted infinite tree (PWIT), studied in \cite{AddGriKan12}, whenever $s_n\to\infty$.
See \refthm{IPLocConvForFPP_gen} and the discussion in \refsubsect{PWITFPPIP}.
However, this relationship is a {\em local} one, and the scaling of $s_n$ relative to $n$ controls whether the two objects are {\em globally} comparable.
\refthm{IPWeightForFPP} shows that the weights are globally comparable provided $s_n/\log\log n\to\infty$.
For the hopcount, the appropriate comparison is to the minimal spanning tree (MST) on the complete graph, obtained from running IP with a simple no-loops constraint.
Path lengths in the MST scale as $n^{1/3}$ (see \cite{AddBroRee08} and \cite{AddBroGolMie12pre}).
We conjecture that, for $s_n^3/n \to \infty$, FPP on the complete graph is in the same universality class as IP.
It would be of great interest to make this connection precise by showing, for example, that $H_n/n^{1/3}$ converges in distribution, and that the scaling limit of $H_n$ is the same as the scaling limit of the graph distance between two uniform vertices in the MST.

The local graph convergence from \refthm{IPLocConvForFPP_gen} and weight convergence from \refthm{IPWeightForFPP} are the first two in a hierarchy of possible comparisons between FPP and the MST.
Strengthening the previous statement about the scaling limit of $H_n$, we can ask whether the optimal path between vertices $i,j\in [n]$ equals (under a suitable coupling) the unique path in the MST from $i$ to $j$; whether the union of the optimal paths\footnote{This union of optimal paths will be the smallest-weight tree $\SWT^{(i)}_t$ from \refsect{IPpartFPP} in the limit $t\to\infty$.} from vertex $i$ to every other vertex $j\neq i$ equals the entire MST; and whether these unions agree simultaneously for every $i\in [n]$.
Assuming hypotheses similar to Conditions~\refPartI{cond:scalingfn}--\refPartI{cond:boundfn}, it would be of interest to know how $s_n$ must grow relative to $n$ in order for each of these events to occur.

\section{\texorpdfstring{Growth and density bounds for $f_n$ and $\mu_n$}{Growth and density bounds for f\_n and mu\_n}}\lbsect{densityBoundsfn}

Throughout this section, we assume Conditions~\refPartI{cond:LowerBoundfn} and \refPartI{cond:boundfnExtended}.
Further assumptions will be stated explicitly.
We will reserve the notation $\epsilonCondition, \deltaCondition$ for some fixed choice of the constants in Conditions~\refPartI{cond:LowerBoundfn} and \refPartI{cond:boundfnExtended}, with $\epsilonCondition$ chosen small enough to satisfy both conditions.

The aim of the section is to explore the key implications of Conditions~\refPartI{cond:LowerBoundfn} and \refPartI{cond:boundfnExtended} on $f_n$ and on the intensity measure $\mu_n$.

\begin{lemma}\lblemma{ExtendedImpliesWeak}
There exists $n_0 \in \N$ such that
\begin{equation}%\labelPartI{}
f_n(x)\leq \left( \frac{x}{x'} \right)^{\epsilonCondition s_n} f_n(x') \qquad\text{whenever }1-\deltaCondition\leq x\leq x', n \ge n_0
.
\end{equation}
\end{lemma}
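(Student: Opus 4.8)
The plan is to obtain the inequality by integrating the pointwise lower bound on $x\frac{d}{dx}\log f_n(x)$ furnished by Conditions~\refPartI{cond:LowerBoundfn} and \refPartI{cond:boundfnExtended}. The first preparatory step is to notice that, since $\epsilonCondition$ has been fixed small enough that both conditions hold with this constant (so in particular $\epsilonCondition\le\epsilonConditiona$), the lower bound in \eqrefPartI{BoundfnSmall} on $[1-\deltaCondition,1]$ and the bound \eqrefPartI{fn-bound} on $[1,\infty)$ merge into the single estimate
\[
x\,\frac{d}{dx}\log f_n(x)\;\ge\;\epsilonCondition s_n\qquad\text{for all }x\ge 1-\deltaCondition\text{ and all }n\ge n_0,
\]
with $n_0$ taken to be the larger of the two thresholds appearing in those conditions.

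Next I would fix $n\ge n_0$ and $1-\deltaCondition\le x\le x'$, so that the entire interval $[x,x']$ lies in the range where the displayed bound is valid, and integrate: writing $\frac{d}{dt}\log f_n(t)=\tfrac1t\big(t\frac{d}{dt}\log f_n(t)\big)$ and using the bound above,
\[
\log\frac{f_n(x')}{f_n(x)}=\int_x^{x'}t\,\frac{d}{dt}\log f_n(t)\,\frac{dt}{t}\;\ge\;\epsilonCondition s_n\int_x^{x'}\frac{dt}{t}=\epsilonCondition s_n\log\frac{x'}{x}.
\]
Exponentiating and rearranging gives $f_n(x)\le (x/x')^{\epsilonCondition s_n}f_n(x')$, which is exactly the claim; the constant $\epsilonCondition$ and the threshold $n_0$ are those already fixed at the start of the section.

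I do not expect a genuine obstacle here. The one point that deserves a remark is that the integration step uses that $\log f_n$ is (locally) absolutely continuous on $[1-\deltaCondition,\infty)$; this is built into the way Conditions~\refPartI{cond:LowerBoundfn} and \refPartI{cond:boundfnExtended} are phrased, since they refer to $\frac{d}{dx}\log f_n$ pointwise, so under the standing assumptions of this section nothing more is needed. Should one wish to dispense with any smoothness hypothesis, the same conclusion would follow by applying the bound to show that $x\mapsto f_n(x)\,x^{-\epsilonCondition s_n}$ is nondecreasing on $[1-\deltaCondition,\infty)$; but that refinement is not required for the present purposes.
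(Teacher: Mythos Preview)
Your proof is correct and follows essentially the same approach as the paper: both combine the lower bounds from Conditions~\refPartI{cond:LowerBoundfn} and \refPartI{cond:boundfnExtended} into a single pointwise bound $x\frac{d}{dx}\log f_n(x)\ge\epsilonCondition s_n$ on $[1-\deltaCondition,\infty)$, divide by $x$, and integrate over $[x,x']$ to obtain the logarithmic inequality. Your write-up is simply more detailed (spelling out the choice of $n_0$ and adding a remark on differentiability), but the argument is the same.
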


\begin{proof}
Divide \eqrefPartI{BoundfnSmall} or \eqrefPartI{fn-bound} by $x$ and integrate between $x$ and $x'$ to obtain $\log{f_n(x')}-\log{f_n(x)}\geq \epsilonCondition s_n \left( \log x' - \log x \right)$ whenever $1-\deltaCondition\leq x\leq x'$, $n \ge n_0$, as claimed.
\end{proof}

We call \refcond{boundfnExtended} a density bound because it implies the following lemma:
\begin{lemma}\lblemma{munDensityBound}
For $n$ sufficiently large, on the interval $(f_n(1-\deltaCondition),\infty)$, the measure $\mu_n$ is absolutely continuous with respect to Lebesgue measure and
\begin{equation}%\labelPartI{}
\indicator{y>f_n(1-\deltaCondition)} d\mu_n(y) \leq \frac{1}{\epsilonCondition s_n} \frac{f_n^{-1}(y)}{y} dy.
\end{equation}
\end{lemma}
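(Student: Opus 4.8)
The plan is to exploit the fact, recorded in \eqrefPartI{munCharacterization}, that $\mu_n$ is the push-forward of Lebesgue measure on $(0,\infty)$ under the strictly increasing map $f_n$; equivalently, on any interval where $f_n^{-1}$ is nice, $\mu_n$ has ``density'' $(f_n^{-1})'$. Conditions~\refPartI{cond:LowerBoundfn} and \refPartI{cond:boundfnExtended} bound $x\frac{d}{dx}\log f_n(x)$ from below by $\epsilonCondition s_n$ on $[1-\deltaCondition,1]$ and on $[1,\infty)$ respectively, which is exactly a \emph{lower} bound on $f_n'$, hence an \emph{upper} bound on $(f_n^{-1})'$ -- and an upper bound is what is needed to control the size of the measure. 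So the strategy is: differentiate, invert, integrate.

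Carrying this out, first I would note that for $n$ large enough that both conditions apply and for every $x\geq 1-\deltaCondition$,
\begin{equation}
f_n'(x)=\frac{f_n(x)}{x}\Bigl(x\tfrac{d}{dx}\log f_n(x)\Bigr)\geq \frac{\epsilonCondition s_n}{x}f_n(x)>0,
\end{equation}
using \eqrefPartI{BoundfnSmall} on $[1-\deltaCondition,1]$ and \eqrefPartI{fn-bound} on $[1,\infty)$ (with $\epsilonCondition$ chosen to serve both, as fixed at the start of \refsect{densityBoundsfn}). Since $f_n$ is strictly increasing and continuous, it is a homeomorphism of $[1-\deltaCondition,\infty)$ onto $[f_n(1-\deltaCondition),y_n^*)$ with $y_n^*=\lim_{x\to\infty}f_n(x)\in(0,\infty]$, so $f_n^{-1}$ is continuous there and, by the inverse-function rule together with $f_n'>0$, differentiable at \emph{every} point of $(f_n(1-\deltaCondition),y_n^*)$ with $(f_n^{-1})'(y)=1/f_n'(f_n^{-1}(y))\leq f_n^{-1}(y)/(\epsilonCondition s_n y)$. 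Then I would integrate this pointwise bound: for $f_n(1-\deltaCondition)<a<b<y_n^*$, the auxiliary function $G(y)=\int_a^y f_n^{-1}(t)/(\epsilonCondition s_n t)\,dt-f_n^{-1}(y)$ has $G'\geq0$ everywhere on $[a,b]$ (the first term is $C^1$ because its integrand is continuous), hence $G$ is non-decreasing and $G(b)\geq G(a)=0$; combined with $\mu_n(a,b)=f_n^{-1}(b)-f_n^{-1}(a)$ from \eqrefPartI{munCharacterization} this yields $\mu_n(a,b)\leq\int_a^b f_n^{-1}(y)/(\epsilonCondition s_n y)\,dy$. Finally, since $\mu_n$ has no atoms, since such intervals $(a,b]$ generate the Borel subsets of $(f_n(1-\deltaCondition),y_n^*)$, and since $\mu_n$ puts no mass on $[y_n^*,\infty)$ because $f_n$ takes values in $(0,y_n^*)$, this promotes to the claimed inequality of measures $\indicator{y>f_n(1-\deltaCondition)}\,d\mu_n(y)\leq\frac{1}{\epsilonCondition s_n}\frac{f_n^{-1}(y)}{y}\,dy$, whose absolute-continuity assertion is then immediate.

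The part requiring care -- and really the only part -- is the regularity bookkeeping: the argument for $G$ needs $f_n^{-1}$ to be differentiable at \emph{every} point of the open interval, not merely almost everywhere, so as to avoid having to invoke the theory of absolutely continuous functions (and to sidestep the fact that the conditions give no \emph{upper} bound on $f_n'$ for $x>1$). This is precisely what the everywhere-positive lower bound $f_n'(x)\geq\epsilonCondition s_n f_n(x)/x$ on $[1-\deltaCondition,\infty)$ delivers -- via the inverse-function rule it makes $f_n^{-1}$ everywhere differentiable on $(f_n(1-\deltaCondition),y_n^*)$ and simultaneously bounds $(f_n^{-1})'$ by the continuous function $f_n^{-1}(y)/(\epsilonCondition s_n y)$ -- so that no hypotheses beyond Conditions~\refPartI{cond:LowerBoundfn} and \refPartI{cond:boundfnExtended} are needed.
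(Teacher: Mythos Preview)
Your proof is correct and follows essentially the same approach as the paper: both use Conditions~\refPartI{cond:LowerBoundfn} and \refPartI{cond:boundfnExtended} to bound $f_n'(x)\geq \epsilonCondition s_n f_n(x)/x$ from below on $[1-\deltaCondition,\infty)$, then invert to bound the density $(f_n^{-1})'(y)$ of $\mu_n$ from above. The paper simply differentiates the identity $y=f_n(\mu_n(0,y))$ and applies the bound directly, whereas you are more explicit about the regularity bookkeeping (inverse-function rule, the auxiliary monotone $G$, the range $y_n^*$); this extra care is sound but not strictly necessary here.
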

\begin{proof}
By Conditions~\refPartI{cond:LowerBoundfn} and \refPartI{cond:boundfnExtended}, $f_n$ is strictly increasing on $(1-\deltaCondition,\infty)$, so $y=f_n(\mu_n(0,y))$ for $y>f_n(1-\deltaCondition)$.
Differentiating and again applying Conditions~\refPartI{cond:LowerBoundfn} and \refPartI{cond:boundfnExtended}, we get
\begin{equation*}%\labelPartI{}
1=f'_n(\mu_n(0,y)) \frac{d}{dy}\mu_n(0,y) \geq \epsilonCondition s_n \frac{f_n(\mu_n(0,y))}{\mu_n(0,y)} \frac{d}{dy}\mu_n(0,y) , \qquad y>f_n(1-\deltaCondition).
\qedhere
\end{equation*}
\end{proof}

\begin{lemma}\lblemma{munDensityBounded}
For $n$ sufficiently large, the density of $\mu_n$ with respect to Lebesgue measure is at most $1/(\epsilonCondition s_n f_n(1))$ on the interval $(f_n(1),\infty)$.
\end{lemma}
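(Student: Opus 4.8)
The plan is to bootstrap from \reflemma{munDensityBound}. That lemma already gives that on $(f_n(1),\infty)\subset(f_n(1-\deltaCondition),\infty)$ the measure $\mu_n$ is absolutely continuous with respect to Lebesgue measure, with density at most $\tfrac{1}{\epsilonCondition s_n}\tfrac{f_n^{-1}(y)}{y}$. Hence it suffices to show that $f_n^{-1}(y)/y\le 1/f_n(1)$ whenever $y>f_n(1)$ and $n$ is large, since substituting this bound into the density estimate of \reflemma{munDensityBound} yields exactly the claimed bound $1/(\epsilonCondition s_n f_n(1))$.

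To prove $f_n^{-1}(y)/y\le 1/f_n(1)$, I would set $x=f_n^{-1}(y)$, which ranges over $(1,\infty)$ as $y$ ranges over $(f_n(1),\infty)$ by strict monotonicity of $f_n$; the inequality then reads $f_n(x)\ge x\,f_n(1)$ for all $x\ge 1$. This is a direct consequence of \reflemma{ExtendedImpliesWeak}: applying it with the pair $1-\deltaCondition\le 1\le x$ gives $f_n(1)\le x^{-\epsilonCondition s_n}f_n(x)$, i.e.\ $f_n(x)\ge x^{\epsilonCondition s_n}f_n(1)$. Since $\epsilonCondition>0$ is a fixed constant and $s_n\to\infty$, one has $\epsilonCondition s_n\ge 1$ for all $n$ large, so that $x^{\epsilonCondition s_n}\ge x$ for every $x\ge 1$, and the desired inequality follows.

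There is no real obstacle here; this is a short deduction combining two earlier lemmas with the elementary inequality $x^{\epsilonCondition s_n}\ge x$ on $[1,\infty)$. The only bookkeeping point is to take ``$n$ sufficiently large'' large enough to simultaneously cover the thresholds in \reflemma{munDensityBound} and \reflemma{ExtendedImpliesWeak} and to ensure $\epsilonCondition s_n\ge 1$.
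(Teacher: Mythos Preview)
Your proof is correct and essentially identical to the paper's: both combine \reflemma{munDensityBound} with \reflemma{ExtendedImpliesWeak} to conclude that $f_n^{-1}(y)/y\le 1/f_n(1)$ for $y>f_n(1)$, using $\epsilonCondition s_n\ge 1$ for large $n$. The only cosmetic difference is that the paper phrases the key inequality as $f_n^{-1}(y)\le (y/f_n(1))^{1/(\epsilonCondition s_n)}\le y/f_n(1)$ directly, whereas you substitute $x=f_n^{-1}(y)$ and verify the equivalent bound $f_n(x)\ge x^{\epsilonCondition s_n}f_n(1)\ge x f_n(1)$.
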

\begin{proof}
From \reflemma{ExtendedImpliesWeak} it follows immediately that $f_n^{-1}(y)\leq (y/f_n(1))^{1/\epsilonCondition s_n}\leq y/f_n(1)$  for all $y> f_n(1)$ and sufficiently large $n$.
The result now follows from \reflemma{munDensityBound}.
\end{proof}

In the lemma below, the notation $\mu(t+dy)$ denotes the translation of the measure $\mu$ by $t$.
\begin{lemma}\lblemma{BoundOnContribution}
Given $\epsilon, \bar{\epsilon}>0$, there exist $n_0 \in \N$ and $K<\infty$ such that, for all $n \ge n_0$ and $t\geq 0$,
\begin{equation}%\labelPartI{}
\int \e^{-\epsilon y/f_n(1)}\indicator{y\geq Kf_n(1)} \mu_n(t+dy)\leq \bar{\epsilon}/s_n.
\end{equation}
\end{lemma}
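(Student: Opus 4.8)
The plan is to strip the translation off the measure, confine the integration to the region where $\mu_n$ carries a bounded density, and let the exponential weight finish the estimate. Unwinding the convention stated just before the lemma, $\int h(y)\,\mu_n(t+dy)=\int h(z-t)\,d\mu_n(z)$, so the quantity to be bounded equals
\begin{equation*}
\int_{z\ge t+Kf_n(1)} \e^{-\epsilon(z-t)/f_n(1)}\,d\mu_n(z).
\end{equation*}
The point to notice is that, because $t\ge 0$, as soon as $K\ge 1$ the whole domain of integration lies inside $(f_n(1),\infty)$ for \emph{every} $t\ge 0$ (the single boundary point $z=f_n(1)$ is irrelevant since $\mu_n$ has no atoms). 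This is precisely the range covered by \reflemma{munDensityBounded}.

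Next I would invoke \reflemma{munDensityBounded}: for $n$ large enough the density of $\mu_n$ on $(f_n(1),\infty)$ is at most $1/(\epsilonCondition s_n f_n(1))$. Bounding $d\mu_n(z)$ by this density and then substituting $u=(z-t)/f_n(1)$ — which is exactly what renders the bound uniform in $t$, since the lower endpoint of the $u$-integral becomes the $t$-free value $K$ — gives
\begin{equation*}
\int_{z\ge t+Kf_n(1)} \e^{-\epsilon(z-t)/f_n(1)}\,d\mu_n(z)
\le \frac{1}{\epsilonCondition s_n f_n(1)}\,f_n(1)\int_K^\infty \e^{-\epsilon u}\,du
=\frac{\e^{-\epsilon K}}{\epsilonCondition\,\epsilon\, s_n}.
\end{equation*}
It then suffices to choose $K=K(\epsilon,\bar{\epsilon})\ge 1$ large enough that $\e^{-\epsilon K}\le \epsilonCondition\,\epsilon\,\bar{\epsilon}$, and $n_0$ at least the threshold appearing in \reflemma{munDensityBounded}; with these choices the displayed bound is $\le\bar{\epsilon}/s_n$ for all $n\ge n_0$ and all $t\ge 0$, which is the claim.

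There is no genuine obstacle here; the only step that needs a moment's care is the uniformity over $t\ge 0$, which is why I would carry out the substitution $u=(z-t)/f_n(1)$ \emph{before} estimating, rather than factoring $\e^{-\epsilon(z-t)/f_n(1)}=\e^{\epsilon t/f_n(1)}\e^{-\epsilon z/f_n(1)}$, whose prefactor is unbounded as $t\to\infty$. As an alternative one may replace \reflemma{munDensityBounded} by \reflemma{munDensityBound} and use $f_n^{-1}(z)\le z/f_n(1)$ for $z>f_n(1)$, which follows from \reflemma{ExtendedImpliesWeak}; this leads to the identical computation.
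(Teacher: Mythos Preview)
Your proof is correct and follows essentially the same approach as the paper: both apply \reflemma{munDensityBounded} on $(f_n(1),\infty)$, reduce via the substitution $u=(z-t)/f_n(1)$ to an exponential tail integral, and arrive at the identical bound $\e^{-\epsilon K}/(\epsilonCondition\,\epsilon\, s_n)$. Your write-up simply makes explicit the translation convention, the uniformity in $t$, and the final choice of $K$, which the paper leaves implicit.
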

\begin{proof}
By \reflemma{munDensityBounded}, for large $n$, the density of $\mu_n$ with respect to Lebesgue measure is bounded from above by $1/(\epsilonCondition s_n f_n(1))$ on $(f_n(1),\infty)$.
Hence, for $K>1$,
\begin{equation*}
\int \e^{-\epsilon y/f_n(1)}\indicator{y\geq Kf_n(1)} \mu_n(t+dy)\le  \int_t^{\infty}  \e^{-\epsilon (y-t) /f_n(1)}\indicator{y-t\geq Kf_n(1)}  \frac{dy}{\epsilonCondition s_n f_n(1)}
= \frac{\e^{-\epsilon K}}{\epsilonCondition s_n \epsilon}.
\qedhere
\end{equation*}
\end{proof}

We are now in the position to prove \reflemma{lambdanAsymp}.
Recall from the discussion around \eqrefPartI{lambdanDefn} that $\hat{\mu}_n(\lambda)=\int\e^{-\lambda y}d\mu_n(y)$ denotes the Laplace transform of $\mu_n$.

\begin{proof}[Proof of \reflemma{lambdanAsymp}]
We begin by proving
\begin{equation}\labelPartI{munhatUpTo1oversn}
\hat{\mu}_n\left(\frac{a\e^{-\gamma}}{f_n(1)}\right)
= 1 - \frac{\log a}{s_n} + o(1/s_n).
\end{equation}
Recalling \eqrefPartI{munCharacterization}, we have
\begin{equation}\labelPartI{munhattau}
\hat{\mu}_n(\lambda)=\int_0^\infty \e^{-\lambda f_n(\tilde{x})}d\tilde{x}.
\end{equation}
Write $\tilde{f}_n(\tilde{x})=f_n(\tilde{x})\indicator{\tilde{x}\geq 1-\delta_0}$.
Then
\begin{equation}%\labelPartI{}
	\hat{\mu}_n(\lambda)=\int_0^{\infty} \e^{-\lambda \tilde{f}_n(\tilde{x})} \, d\tilde{x}
	- \int_0^{1-\delta_0} 	\big(1-\e^{-\lambda f_n(\tilde{x})}\big) \, d\tilde{x}
\end{equation}
and take $\lambda=a\e^{-\gamma}/f_n(1)$ to estimate $\int_0^{1-\delta_0} (1-\e^{-a\e^{-\gamma}f_n(\tilde{x})/f_n(1)})\,d\tilde{x} = O(f_n(1-\delta_0)/f_n(1))=o(1/s_n)$ by \reflemma{ExtendedImpliesWeak}.
Hence, for the purposes of proving \eqrefPartI{munhatUpTo1oversn}, it is no loss of generality to assume that $f_n(x^{1/s_n})\leq f_n(1)x^{\epsilonCondition}$ for all $x\leq 1$.

Inspired by the example $Y_e\equalsd E^{s_n}$ in \eqrefPartI{fnEsnCase}, where
$f_n(\tilde{x})=f_n(1)\tilde{x}^{s_n}$,  we compute
\begin{equation}\labelPartI{PowerOfExpIntegral}
\int_0^\infty \e^{-\lambda f_n(1)\tilde{x}^{s_n}} d\tilde{x}
= \int_0^\infty \frac{1}{s_n} x^{1/s_n - 1} \e^{-\lambda f_n(1) x} dx
= \left( \frac{\Gamma(1+1/s_n)^{s_n}}{\lambda f_n(1)} \right)^{1/s_n}.
\end{equation}
In particular, setting $\lambda=a\Gamma(1+1/s_n)^{s_n}/f_n(1)$ gives $\int_0^\infty \exp\left( -a\Gamma(1+1/s_n)^{s_n}\tilde{x}^{s_n} \right) d\tilde{x}=a^{-1/s_n}$, which is $1-(\log a)/s_n+o(1/s_n)$.
Subtracting this from \eqrefPartI{munhattau}, we can therefore prove \eqrefPartI{munhatUpTo1oversn} if we show that
\begin{equation}%\labelPartI{}
s_n\int_0^\infty \left( \e^{-a \e^{-\gamma} f_n(\tilde{x})/f_n(1)} - \e^{-a \Gamma(1+1/s_n)^{s_n} \tilde{x}^{s_n}} \right)d\tilde{x}
\to 0,
\end{equation}
or equivalently, by the substitution $\tilde{x}=x^{1/s_n}$, if we show that
\begin{equation}\labelPartI{DifferenceOfIntegrals}
\int_0^\infty x^{1/s_n - 1} \left( \e^{-a \e^{-\gamma} f_n(x^{1/s_n})/f_n(1)} - \e^{-a \Gamma(1+1/s_n)^{s_n} x} \right) dx \to 0.
\end{equation}
 Note that $\Gamma(1+1/s)^s\to\e^{-\gamma}$ as $s\to\infty$.
Together with \refcond{scalingfn}, this implies that the integrand in \eqrefPartI{DifferenceOfIntegrals} converges pointwise to $0$.
 For $x\leq 1$, $f_n(x^{1/s_n}) \le f_n(1)x^{\epsilonCondition}$ means that the integrand is bounded by $O(x^{\epsilonCondition-1}+1)$.
 For $x\geq 1$, \reflemma{ExtendedImpliesWeak} implies that the integrand is bounded by $\e^{-\delta x^{\epsilonCondition}}$ for some $\delta>0$.
Dominated convergence therefore completes the proof of \eqrefPartI{munhatUpTo1oversn}.

To conclude \reflemma{lambdanAsymp} from \eqrefPartI{munhatUpTo1oversn}, we use the monotonicity of $\mu_n$. Taking $a>1$ shows that $\hat{\mu}_n\bigl(a\e^{-\gamma}/f_n(1)\bigr)<1$ for all $n$ large enough, implying $\lambda_n< a\e^{-\gamma}/f_n(1)$ and $\limsup_{n\to\infty} \lambda_n f_n(1)\leq \e^{-\gamma}$.
A lower bound holds similarly.
\end{proof}

\begin{lemma}\lblemma{ModerateAgeContribution}
Suppose \refcond{scalingfn} holds.
Given $K<\infty$, there exist $\epsilon_K>0$ and $n_0\in\N$ such that, for $0\leq t\leq K f_n(1)$ and $n\geq n_0$,
\begin{equation}%\labelPartI{}
\int_0^{\infty}\e^{-\lambda_n y} \mu_n(t+dy)\geq \epsilon_K/s_n.
\end{equation}
\end{lemma}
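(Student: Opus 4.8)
The plan is to show that $\int_0^\infty \e^{-\lambda_n y}\,\mu_n(t+dy)$ is bounded below by a constant multiple of $1/s_n$ uniformly for $t$ in the range $[0,Kf_n(1)]$. First I would reduce to the contribution of moderate $y$: using Lemma~\ref{l:BoundOnContribution} (with $\epsilon=\lambda_n f_n(1)$, which is bounded away from $0$ and $\infty$ by Lemma~\ref{l:lambdanAsymp}), choose $K'<\infty$ so that the tail $\int \e^{-\lambda_n y}\indicator{y\geq K' f_n(1)}\mu_n(t+dy)$ is at most, say, $\bar\epsilon/s_n$ for any prescribed small $\bar\epsilon$. It therefore suffices to bound the truncated integral $\int_0^{\infty}\e^{-\lambda_n y}\indicator{y< K'f_n(1)}\,\mu_n(t+dy)$ from below by $2\bar\epsilon/s_n + \epsilon_K/s_n$, i.e.\ it is enough to get a lower bound of the correct order on the \emph{untranslated but restricted} integral $\int \e^{-\lambda_n y}\indicator{y< K' f_n(1)}\mu_n(dy)$ and then see how translation by $t\le Kf_n(1)$ affects it.

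Next I would pass from $\mu_n$ back to Lebesgue measure via \eqref{e:munCharacterization}: for the translation-free version,
\begin{equation}
\int_0^\infty \e^{-\lambda_n y}\indicator{y<K'f_n(1)}\,d\mu_n(y)
= \int_0^\infty \e^{-\lambda_n f_n(\tilde x)}\indicator{f_n(\tilde x)<K'f_n(1)}\,d\tilde x.
\end{equation}
Because $f_n(\tilde x)/f_n(1)\to \tilde x^{?}$ is controlled by Condition~\ref{cond:scalingfn} after the substitution $\tilde x = x^{1/s_n}$ (exactly as in the proof of Lemma~\ref{l:lambdanAsymp}), I would substitute $\tilde x=x^{1/s_n}$ to rewrite the integral as $\frac1{s_n}\int_0^\infty x^{1/s_n-1}\e^{-\lambda_n f_n(x^{1/s_n})}\indicator{f_n(x^{1/s_n})<K'f_n(1)}\,dx$. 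The prefactor $1/s_n$ is exactly the order we want, so it remains to show the remaining integral is bounded below by a positive constant for large $n$. For $x$ in a fixed compact subinterval of $(0,1)$, $x^{1/s_n-1}\to x^{-1}$, $f_n(x^{1/s_n})/f_n(1)\to x$ by Condition~\ref{cond:scalingfn}, and $\lambda_n f_n(1)\to \e^{-\gamma}$ by Lemma~\ref{l:lambdanAsymp}, so the integrand converges to $x^{-1}\e^{-\e^{-\gamma}x}$ on that interval, and the indicator is eventually $1$ there (since $x<1<K'$); Fatou's lemma then gives the required positive lower bound, with $\epsilon_K$ depending only on $K$ (through $K'$ and the uniformity in $t$).

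Finally I would handle the translation by $t$. Writing $\mu_n(t+dy)$ and substituting $y\mapsto y-t$, the truncated integral becomes $\int \e^{-\lambda_n(y-t)}\e^{-\lambda_n t}\indicator{0\le y-t<K'f_n(1)}\cdots$; after factoring $\e^{-\lambda_n t}$, which is bounded below by $\e^{-\lambda_n K f_n(1)}\ge \e^{-2\e^{-\gamma}K}$ for large $n$, one is reduced to a lower bound on $\int \e^{-\lambda_n s}\indicator{0\le s<K'f_n(1)}\,\mu_n(t+ds)$ shifted back to $s=y-t$; but $\mu_n$ is a measure on $(0,\infty)$ whose density on $(f_n(1),\infty)$ is at most $1/(\epsilon_0 s_n f_n(1))$ (Lemma~\ref{l:munDensityBounded}), and the key point is that the mass of $\mu_n$ near the \emph{origin} is large — in fact $\mu_n(0,f_n(1))=f_n^{-1}(f_n(1))-0=1$ — so shifting the window by $t\le Kf_n(1)$ cannot destroy the $\Theta(1/s_n)$ mass, it only moves which part of $\mu_n$ contributes. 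I expect the main obstacle to be exactly this last point: making the argument genuinely uniform over all $t\in[0,Kf_n(1)]$ rather than for $t$ in a compact set, which requires either (i) noting that for $t\le Kf_n(1)$ the relevant part of $\mu_n(t+\cdot)$ still sees the region where $\mu_n$ has its bulk $\Theta(1)$ mass compressed near scales $\lesssim f_n(1)$, and applying the change of variables above to the shifted measure directly; or (ii) crudely bounding $\int\e^{-\lambda_n y}\mu_n(t+dy)\ge \e^{-\lambda_n(t+K'f_n(1))}\mu_n(t,t+K'f_n(1))$ and showing $\mu_n(t,t+K'f_n(1))=f_n^{-1}(t+K'f_n(1))-f_n^{-1}(t)\ge c/s_n$ using the upper density bound from Lemma~\ref{l:munDensityBound} together with the lower bound $f_n^{-1}(f_n(1))=1$ and Condition~\ref{cond:scalingfn}-type control to guarantee $f_n^{-1}$ does not increase too fast beyond scale $f_n(1)$. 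Approach (ii) is cleaner and avoids delicate uniformity in the change of variables, so that is the route I would write up.
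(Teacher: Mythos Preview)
Your approach would ultimately succeed, but it is far more elaborate than necessary, and two of your steps are confused.

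First, the opening reduction via \reflemma{BoundOnContribution} is a red herring. You are proving a \emph{lower} bound on $\int_0^\infty \e^{-\lambda_n y}\mu_n(t+dy)$; since the integrand is nonnegative, restricting to any subset already gives a lower bound, and controlling the tail from above contributes nothing. Second, in your route~(ii) you propose to bound $\mu_n(t,t+K'f_n(1))$ from below ``using the upper density bound from \reflemma{munDensityBound}''. That lemma gives an \emph{upper} bound on the density of $\mu_n$, so it cannot yield the inequality you need. What actually works is \refcond{scalingfn} alone (via $\mu_n(a,b)=f_n^{-1}(b)-f_n^{-1}(a)$), but making this uniform over all $t\in[0,Kf_n(1)]$ requires casework on whether $t\lessgtr f_n(1)$.

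The paper sidesteps all of this with a single observation: since $t\le Kf_n(1)$, the fixed interval $(Kf_n(1),2Kf_n(1))$ lies entirely in $\{y\colon y\ge t\}$, and on it $y-t\le 2Kf_n(1)$. Hence
\[
\int_0^\infty \e^{-\lambda_n y}\mu_n(t+dy)
=\int_0^\infty \e^{-\lambda_n(y-t)}\indicator{y\ge t}\,d\mu_n(y)
\ge \e^{-2\lambda_n K f_n(1)}\,\mu_n\bigl(Kf_n(1),2Kf_n(1)\bigr).
\]
The exponential prefactor is bounded below by a constant since $\lambda_n f_n(1)\to\e^{-\gamma}$ (\reflemma{lambdanAsymp}), and $\mu_n(Kf_n(1),2Kf_n(1))=f_n^{-1}(2Kf_n(1))-f_n^{-1}(Kf_n(1))\sim(\log 2)/s_n$ directly from \refcond{scalingfn} (since $f_n(1+x/s_n)/f_n(1)\to\e^x$). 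Using a fixed interval chosen to the right of the largest possible $t$ is the whole trick: it gives uniformity in $t$ for free and makes the change-of-variables machinery, the tail truncation, and the case analysis on $t$ all unnecessary.
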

\begin{proof}
For any $0\leq t\leq Kf_n(1)$,
\begin{equation}%\labelPartI{}
\int_0^{\infty} \e^{-\lambda_n y} \mu_n(t+dy) = \int_0^{\infty} \e^{-\lambda_n(y-t)} \indicator{y\geq t} d\mu_n(y) \geq \e^{-2\lambda_nKf_n(1)} \mu_n(Kf_n(1), 2Kf_n(1)).
\end{equation}
By \reflemma{lambdanAsymp}, $\lambda_n f_n(1)$ converges to a finite constant.
By \refcond{scalingfn}, $f_n(1+x/s_n)/f_n(1)\to \e^x$, and it follows that $\mu_n(Kf_n(1),2Kf_n(1))=f_n^{-1}(2Kf_n(1))-f_n^{-1}(Kf_n(1)) \sim (\log 2)/s_n$.
\end{proof}

\section{Coupling \texorpdfstring{$K_n$}{the complete graph} and the PWIT}\lbsect{Coupling}

In \refthm{CouplingFPP}, we indicated that two random processes, the first passage exploration processes $\cS$ and $\cluster$ on $K_n$ and $\tree$, respectively, could be coupled.
In \refsubsect{PWITFPPIP} we have intuitively described the coupling between FPP on $K_n$ and on the PWIT.
In this section we explain how this coupling arises as a special case of a general family of couplings between $K_n$, understood as a random edge-weighted graph with i.i.d.\ exponential edge weights, and the PWIT.
In \refsubsect{MinimalRule} we define the minimal rule processes and the thinning in this context.
In \refsubsect{ProofCoupling1Source} and \refsubsect{ProofCoupIPPWIT} we prove Theorems~\refPartI{t:Coupling1Source} and \refPartI{t:CoupIP-PWIT}, respectively.

\subsection{Exploration processes and the definition of the coupling}
\lbsubsect{DefExploration}

As in Sections~\refPartI{ss:PWITFPPIP} and \refPartI{ss:DecomposeFPP}, we define $M_{\emptyset_j}=j$, for $j=1,2$, and to each other $v\in \twoPWITs\setminus\set{\emptyset_1,\emptyset_2}$, we associate a mark $M_v$ chosen uniformly and independently from $[n]$.
We next define what an exploration process is:

\begin{defn}[Exploration process on two PWITs]\lbdefn{Explore}
Let $\F_0$ be a $\sigma$-field containing all null sets, and let $(\tree,X)$ be independent of $\F_0$.
We call a sequence $\explore=(\explore_k)_{k\in \N_0}$ of subsets of $\tree$ an \emph{exploration process} if, with probability 1, $\explore_0=\set{\emptyset_1,\emptyset_2}$ and, for every $k\in\N$, either $\explore_k=\explore_{k-1}$ or else $\explore_k$ is formed by adjoining to $\explore_{k-1}$ a previously unexplored child $v_k\in\boundary\explore_{k-1}$, where the choice of $v_k$ depends only on the weights $X_w$ and marks $M_w$ for vertices $w \in \explore_{k-1}\union\boundary\explore_{k-1}$ and on events in $\F_0$.
\end{defn}
Examples for exploration processes are given by FPP and IP on $\tree$.
For FPP, as defined in \refdefn{FPPonPWITdef}, it is necessary to convert to discrete time by observing the branching process at those moments when a new vertex is added, similar to \refthm{CoupIP-PWIT}.
The standard IP on $\tree$ is defined as follows. Set $\IP(0)=\set{\emptyset_1,\emptyset_2}$. For $k \in \N$, form $\IP(k)$ inductively by adjoining to $\IP(k-1)$ the boundary vertex $v\in \boundary \IP(k-1)$ of minimal weight.
However, an exploration process is also obtained when we specify at each step (in any suitably measurable way) whether to perform an invasion step in $\tree^{\sss(1)}$ or in $\tree^{\sss(2)}$.

For $k \in \N$, let $\F_k$ be the $\sigma$-field generated by $\F_0$ together with the weights $X_w$ and marks $M_w$ for vertices $w \in \explore_{k-1}\union\boundary\explore_{k-1}$.
Note that the requirement on the choice of $v_k$ in \refdefn{Explore} can be expressed as the requirement that $\explore$ is $(\F_k)_k$-adapted.

For $v\in \tree$, define the exploration time of $v$ by
	\begin{equation}%\labelPartI{}
	N_v=\inf\set{k \in \N_0\colon v\in \explore_k} .
	\end{equation}

\begin{defn}[Thinning]\lbdefn{Thinning}
The vertex $v\in\tree\setminus\set{\emptyset_1,\emptyset_2}$ is \emph{thinned} if it has an ancestor $v_0=\ancestor{k}{v}$ (possibly $v$ itself) such that $M_{v_0}=M_w$ for some unthinned vertex $w$ with $N_w<N_{v_0}$.
Write $\thinnedExplore_k$ for the subgraph of $\explore_k$ consisting of unthinned vertices.
\end{defn}

Recall the remark below \refdefn{ThinningBP} that explains that the definition above is not circular.

We define the stopping times
	\begin{equation}%\labelPartI{NofiDefn}
	N(i)=\inf\set{k \in \N_0\colon M_v=i\text{ for some }v\in\thinnedExplore_k}.
	\end{equation}
at which $i\in[n]$ first appears as a mark in the unthinned exploration process.
Note that, on the event $\set{N(i)<\infty}$, $\thinnedExplore_k$ contains a \emph{unique} vertex in $\tree$ whose mark is $i$, for any $k\geq N(i)$; call that vertex $V(i)$.
On this event, we define
	\begin{equation}\labelPartI{XijDefinition}
	X(i,i')=\min_{w\in \tree}\set{X_w \colon M_w=i', \parent{w}=V(i)}.
	\end{equation}

\begin{lemma}\lblemma{IndepOrMeas}
Conditional on $\F_{N(i)}$, and on the event $\set{N(i)<\infty}$, the distribution of $X(i,i')$ is exponential with mean $n$, independently for every $i'$.
Moreover, $X(i,i')$ is $\F_{N(i)+1}$ measurable.
\end{lemma}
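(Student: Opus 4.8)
The plan is to exploit the Poisson structure of the PWIT together with the thinning/mark mechanism. Recall that for each vertex $v\in\tree$, the weights $(X_{vk})_{k\in\N}$ form a rate-$1$ Poisson process on $(0,\infty)$, independently over $v$, and each non-root child $w$ carries an independent uniform mark $M_w\in[n]$. On the event $\set{N(i)<\infty}$, the vertex $V(i)$ is well-defined and $\F_{N(i)}$-measurable (it is the unique unthinned vertex whose mark equals $i$ once $i$ has appeared). The key observation is that, at exploration step $N(i)$, we have only examined the weights $X_w$ and marks $M_w$ for $w\in\explore_{N(i)-1}\union\boundary\explore_{N(i)-1}$. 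In particular, because $V(i)\in\explore_{N(i)}$ but its children have not yet been explored (exploration only ever adds boundary vertices, and at most one per step), the entire Poisson process of weights $(X_{V(i)k})_{k\in\N}$ attached to $V(i)$, together with the associated marks $(M_{V(i)k})_{k}$, is independent of $\F_{N(i)}$ — apart from the single datum that $V(i)$ itself was selected, which concerns only $V(i)$'s own weight relative to its parent, not its children. Hence conditionally on $\F_{N(i)}$ and on $\set{N(i)<\infty}$, the pairs $(X_{V(i)k},M_{V(i)k})_{k\in\N}$ form an i.i.d.-marked rate-$1$ Poisson process.

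First I would make precise that $V(i)$ is $\F_{N(i)}$-measurable and that, on $\set{N(i)<\infty}$, the collection $\set{(X_w,M_w)\colon \parent{w}=V(i)}$ is conditionally independent of $\F_{N(i)}$ with the unconditional law described above; this is a direct consequence of \refdefn{Explore} (the choice of $v_k$ uses only data on $\explore_{k-1}\union\boundary\explore_{k-1}$) and the independence built into the PWIT and the marks. Next, for a fixed $i'$, I would apply the marking/thinning theorem for Poisson processes: splitting a rate-$1$ Poisson process on $(0,\infty)$ according to whether the independent uniform mark equals $i'$ (probability $1/n$) yields a rate-$1/n$ Poisson process of those points $w$ with $\parent{w}=V(i)$ and $M_w=i'$. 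The minimum of the points of a rate-$1/n$ Poisson process on $(0,\infty)$ is exponential with mean $n$, which is exactly $X(i,i')$ as defined in \eqrefPartI{XijDefinition}. Distinct values of $i'$ correspond to disjoint thinning classes, so by the standard independence of thinned Poisson processes the variables $(X(i,i'))_{i'}$ are conditionally independent.

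For the measurability claim, I would argue that $X(i,i')$ is $\F_{N(i)+1}$-measurable. By \refdefn{Explore}, passing from step $N(i)$ to step $N(i)+1$ reveals the weights and marks of all vertices in $\boundary\explore_{N(i)}$, which includes \emph{all} children of $V(i)$ (since $V(i)\in\explore_{N(i)}$, its children lie on the boundary $\boundary\explore_{N(i)}$). Thus the full marked Poisson process attached to $V(i)$ is $\F_{N(i)+1}$-measurable, and $X(i,i')$, being a measurable function of it (a minimum over the thinning class $\set{M_w=i'}$), is $\F_{N(i)+1}$-measurable as well. One should also note that this minimum is a.s.\ attained and finite, since the Poisson process $\set{w\colon\parent{w}=V(i),M_w=i'}$ is a.s.\ nonempty (infinitely many children, each independently marked $i'$ with probability $1/n>0$).

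The main obstacle is the bookkeeping around what exactly is revealed by $\F_{N(i)}$ versus $\F_{N(i)+1}$, and ensuring that conditioning on the event $\set{N(i)<\infty}$ together with the $\sigma$-field does not inadvertently bias the children of $V(i)$: one must check that the event $\set{N(i)<\infty}$ and the identity of $V(i)$ are determined by data not involving $V(i)$'s children (which is true, since being unthinned and carrying mark $i$ is decided by $V(i)$'s own mark and by comparison of exploration times of earlier-explored vertices). Once this is cleanly set up, the Poisson splitting argument is routine.
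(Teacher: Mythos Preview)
Your proposal is correct and follows essentially the same approach as the paper: both arguments hinge on the observation that, on $\set{N(i)=k,\,V(i)=v}$, this event is determined by weights and marks of vertices that are not descendants of $v$, so the children of $v$ retain their unconditional marked Poisson law; the exponential mean-$n$ distribution and independence over $i'$ then follow from Poisson thinning, and $\F_{N(i)+1}$-measurability follows since $V(i)\in\explore_{N(i)}$ makes its children boundary vertices at step $N(i)+1$. The paper compresses the stopping-time bookkeeping by directly decomposing over the events $\set{N(i)=k,\,V(i)=v}$ for fixed $k,v$, which is the cleanest way to formalize the conditional independence you describe.
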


\begin{proof}
The event $\set{N(i)=k,V(i)=v}$ is measurable with respect to the $\sigma$-field generated by $\F_0$ together with all edge weights $X_v,X_w$ and marks $M_v,M_w$ for which $w$ is not a descendant of $v$.
On the other hand, on $\set{V(i)=v}$, $X(i,i')=\min\set{X_w\colon M_w=i',\parent{w}=v}$ depends only on the marks and edge weights of children of $v$.
Therefore, the distribution of $X(i,i')$ and the independence for different $i'$ follow from the thinning property of Poisson point processes.
Since the marks and edge weights of the children of $V(i)$ are measurable with respect to $\F_{N(i)+1}$, $X(i,i')$ is measurable with respect to this $\sigma$-field.
\end{proof}

We define, for an edge $\set{i,i'} \in E(K_n)$,
\begin{equation}\labelPartI{EdgeWeightCoupling}
X_{\set{i,i'}}^{\sss(K_n)}=
\begin{cases}
\tfrac{1}{n} X(i,i') & \text{if } N(i)<N(i'),\\
\tfrac{1}{n} X(i',i) & \text{if } N(i')<N(i),\\
E_{\set{i,i'}} & \text{if } N(i)=N(i')=\infty \text{ or } N(i)=N(i')=0,
\end{cases}
\end{equation}
where $(E_e)_{e\in E(K_n)}$ are exponential variables with mean 1, independent of each other and of $(X_v)_{v}$.
\begin{theorem}\lbthm{CouplingExpl}
If $\explore$ is an exploration process on the union $\tree$ of two PWITs, then the edge weights $X_e^{\sss(K_n)}$ defined in \eqrefPartI{EdgeWeightCoupling} are exponential with mean $1$, independently for each $e \in E(K_n)$.
\end{theorem}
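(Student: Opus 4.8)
The plan is to reveal the edge weights $X_e^{\sss(K_n)}$ in the order in which the exploration process $\explore$ produces them, using \reflemma{IndepOrMeas} as the only probabilistic input. First I would record the combinatorics behind \eqrefPartI{EdgeWeightCoupling}. At every step $k\ge1$ the only vertex that can join $\thinnedExplore_k$ is $v_k$, which carries a single mark, so at most one previously-unseen mark enters the unthinned exploration per step; hence the values $N(i)$, $i\in[n]$, are pairwise distinct apart from $N(1)=N(2)=0$ and the (random) set $\set{i\colon N(i)=\infty}$, and the three cases of \eqrefPartI{EdgeWeightCoupling} are exhaustive and mutually exclusive. Enumerate $\mathcal I:=\set{i\in[n]\colon N(i)<\infty}$ as $a_1=1,a_2=2,a_3,a_4,\dotsc$ in order of appearance, so that $N(a_1)=N(a_2)=0<N(a_3)<\dotsb$. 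Then $N(a_p)$ is an $(\F_k)_k$-stopping time, and $a_p$ together with $J_p:=[n]\setminus\set{a_1,\dotsc,a_p}$ (with $J_1=J_2:=[n]\setminus\set{1,2}$) are $\F_{N(a_p)}$-measurable, being determined by the marks present in $\thinnedExplore_{N(a_p)}$. A short check shows that an edge $\set{a_p,i'}$ with $i'\in J_p$ satisfies $N(a_p)<N(i')$, so falls in the first case of \eqrefPartI{EdgeWeightCoupling}, giving $X_{\set{a_p,i'}}^{\sss(K_n)}=\tfrac1n X(a_p,i')$; and the ``batches'' $D_p:=\set{\set{a_p,i'}\colon i'\in J_p}$, $p\ge1$, together with $\set{1,2}$ and the ``$\infty$--$\infty$'' edges $\set{i,i'}$ with $N(i)=N(i')=\infty$, partition $E(K_n)$.

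Next I would show that, revealed batch by batch, each batch is a conditionally i.i.d.\ $\mathrm{Exp}(1)$ family. Conditionally on $\F_{N(a_p)}$ and on $\set{N(a_p)<\infty}$, \reflemma{IndepOrMeas} applied with $i=a_p$ makes $\bigl(\tfrac1n X(a_p,i')\bigr)_{i'\ne a_p}$ i.i.d.\ $\mathrm{Exp}(1)$; restricting to the $\F_{N(a_p)}$-measurable index set $J_p$ preserves conditional independence, and the resulting family is $\F_{N(a_p)+1}$-measurable. The two roots must be treated together: conditionally on $\F_0$, \reflemma{IndepOrMeas} with $i=1$ and with $i=2$ makes $(\tfrac1n X(1,i'))_{i'\in J_1}$ and $(\tfrac1n X(2,i'))_{i'\in J_2}$ i.i.d.\ $\mathrm{Exp}(1)$ families, and these are independent of one another because the first depends only on the weights and marks of children of $\emptyset_1$, the second only on those of children of $\emptyset_2$, which are independent families in the disjoint union $\tree$. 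Finally, the weights on $\set{1,2}$ and on the $\infty$--$\infty$ edges are the variables $E_e$, i.i.d.\ $\mathrm{Exp}(1)$ and independent of $(X_v)_v$ and all marks, hence i.i.d.\ $\mathrm{Exp}(1)$ conditionally on $\bigvee_k\F_k$ (which is enough to determine which edges are of this type).

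To assemble these inputs I would use a martingale (telescoping) argument rather than conditioning on the batch structure directly. Fix bounded $(g_e)_{e\in E(K_n)}$ and put $\bar g_e:=\int_0^\infty g_e(x)\e^{-x}\,dx$. Reveal the batches $D_1,D_2,\dotsc$ in the adapted order above, then the remaining ($\set{1,2}$ and $\infty$--$\infty$) edges in a final stage, and let $(\mathcal H_s)_s$ be the associated increasing sequence of $\sigma$-fields --- built from the $\F_{N(a_p)}$, $\bigvee_k\F_k$ and $\sigma((E_e)_e)$ --- for which, by the previous paragraph, the set of edges revealed by stage $s$ is $\mathcal H_s$-measurable, the identity of the stage-$s$ batch is $\mathcal H_{s-1}$-measurable, and the stage-$s$ weights are i.i.d.\ $\mathrm{Exp}(1)$ conditionally on $\mathcal H_{s-1}$. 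Then $F_s:=\prod_{e\text{ revealed by }s}g_e(X_e^{\sss(K_n)})\cdot\prod_{e\text{ not revealed by }s}\bar g_e$ is $\mathcal H_s$-measurable with $\E[F_s\mid\mathcal H_{s-1}]=F_{s-1}$, since conditionally on $\mathcal H_{s-1}$ the stage-$s$ batch of edges is fixed and integrating out its i.i.d.\ $\mathrm{Exp}(1)$ weights replaces each factor $g_e$ by $\bar g_e$. As the revelation is finite and exhausts $E(K_n)$, this gives $\E\bigl[\prod_e g_e(X_e^{\sss(K_n)})\bigr]=\E[F_{\mathrm{last}}]=\E[F_0]=\prod_e\bar g_e$, which is exactly the assertion that $(X_e^{\sss(K_n)})_{e\in E(K_n)}$ is i.i.d.\ $\mathrm{Exp}(1)$.

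The entire probabilistic content lies in \reflemma{IndepOrMeas}; the main obstacle is the measurability bookkeeping needed to chain it legitimately: one must check that $N(a_p)$ is a stopping time, that $a_p$, $J_p$, $D_p$ and ``the edges revealed so far'' are adapted to $(\F_k)_k$ at the correct times, handle the coincidence $N(1)=N(2)=0$ separately, and observe that one cannot simply conclude ``the weights are i.i.d.\ conditionally on the batch structure'' --- since the batch structure may itself depend on previously revealed weights --- but must telescope as above so that this dependence is integrated out automatically.
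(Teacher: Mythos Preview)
Your proposal is correct and follows essentially the same strategy as the paper's proof: both peel off the edge weights one newly-discovered mark at a time, invoking \reflemma{IndepOrMeas} at each step, and both handle the two roots via the independence of the Poisson point processes at $\emptyset_1$ and $\emptyset_2$. The paper organises the argument as an explicit sum over all possible orderings $(i_3,\dotsc,i_{n-\ell})$ of the finitely-explored marks and runs a \emph{backward} induction on the number $\ell_0$ of marks already integrated out (so that at each step one conditions on $\F_{N(i_{n-\ell_0})}$ and strips off the edges from $i_{n-\ell_0}$ to the remaining set $S_{\ell_0,\boldsymbol i}$); you organise the same computation as a \emph{forward} martingale $F_s$ along a filtration built from the stopping times $N(a_p)$. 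The paper's explicit enumeration of orderings is precisely its device for handling the fact that $|\mathcal I|$ is random without setting up a filtration with a random number of stages; your telescoping formulation is arguably cleaner once the filtration $(\mathcal H_s)_s$ is properly defined, but requires a little extra care to accommodate the random stopping of the batch process and the final $E_e$ stage. Either presentation works, and your diagnosis that the only real difficulty is the measurability bookkeeping matches the paper's own emphasis.
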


The idea underlying \refthm{CouplingExpl} is that, by \reflemma{IndepOrMeas}, each variable $\tfrac{1}{n}X(i,i')$ is exponentially distributed conditional on the past up to the moment $N(i)$ when it may be used to set the value of $X_{\set{i,i'}}^{\sss(K_n)}$.
However, formalizing this notion requires careful attention to the relative order of the stopping times $N(i)$ and to which $N(i)$ are infinite.

\begin{proof}
Let $(h_e)_{\in E(K_n)}$ be an arbitrary collection of bounded, measurable functions, and abbreviate $\langle h_e\rangle=\E[h_e(E)]$, where $E$ is exponential with mean $1$.
It suffices to prove that
\begin{equation}\labelPartI{CouplingToProve}
\E\Bigl( \prod_{e\in E(K_n)} h_e(X_e^{\sss(K_n)}) \Bigr) = \prod_{e\in E(K_n)} \langle h_e\rangle.
\end{equation}
We proceed by induction.
To begin, we partition \eqrefPartI{CouplingToProve} according to the number $\ell\in\set{0,1,\dotsc,n-2}$ of indices $i\neq 1,2$ for which $N(i)=\infty$, as well as the relative order of the finite values of $N(i)$.
Define $i_1=1$, $i_2=2$ and, given $\ell \in [n-2]$ and $\boldsymbol{i}=(i_3,\ldots,i_{n-\ell})$, abbreviate $S_{\ell,\boldsymbol{i}}=[n]\setminus \set{i_1,\ldots,i_{n-\ell}}$.

Note that, on the event $\set{N(i)=\infty \, \forall i\in S_{\ell,\boldsymbol{i}}}$, we have $X_{\set{i,j}}^{\sss(K_n)}=E_{\set{i,j}}$ for $\set{i,j} \subset S_{\ell, \boldsymbol{i}}$ by \eqrefPartI{EdgeWeightCoupling}.
The $E_{\set{i,j}}$ are exponential, independently from everything else, so we may perform the integration over these variables separately.
We conclude that
\begin{align}
&\E\Bigl(\prod_{e\in E(K_n)} h_e(X_e^{\sss (K_n)})\Bigr)=\sum_{(i_3,\dotsc,i_n)} \E\Bigl(\indicator{N(i_3)<\dotsb<N(i_{n-1})<N(i_n)\le \infty} \prod_{e \in E(K_n)} h_e(X_e^{\sss (K_n)}) \Bigr)
\labelPartI{CouplingBaseCase}\\
&+ \sum_{\ell=2}^{n-2} \sum_{\boldsymbol{i}=(i_3,\ldots ,i_{n-\ell})} \E\Bigl(\indicator{N(i_3) <\ldots< N(i_{n-\ell})<\infty, N(i)=\infty \, \forall i \in S_{\ell,\boldsymbol{i}}}\prod_{\set{i,j}\not\subset S_{\ell,\boldsymbol{i}}} h_{\set{i,j}}(X_{\set{i,j}}^{\sss (K_n)}) \Bigr) \prod_{\set{i,j} \subset S_{\ell,\boldsymbol{i}}} \langle h_{\set{i,j}} \rangle
,
\notag
\end{align}
where the first sum corresponds to $\ell=0$ and $\ell=1$.
The sums are over vectors of distinct indices $i_3,\ldots,i_n \in \set{3,\ldots,n}$ and $i_3,\ldots,i_{n-\ell} \in \set{3,\ldots,n}$, respectively, and the notation ${\set{i,j}}\not\subset S_{\ell,\boldsymbol{i}}$ means that ${\set{i,j}}$ is an edge with at least one endpoint in $[n]\setminus S_{\ell,\boldsymbol{i}}=\set{i_1,\dotsc,i_{n-\ell}}$.

In general, for $\boldsymbol{i}=(i_3,\dotsc,i_{n-\ell})$ given, define the events
\begin{equation}%\labelPartI{}
A_{\ell,\boldsymbol{i}}=\set{N(i_3)<\dotsb<N(i_{n-\ell})<\infty},
\qquad
B_{\ell,\boldsymbol{i}}=\set{N(i)>N(i_{n-\ell}) \, \forall i\in S_{\ell,\boldsymbol{i}}}.
\end{equation}
We claim that, for all $\ell_0 \in [n-2]$,
\begin{align}
&\E\Bigl(\prod_{e \in E(K_n)} h_{e}(X_e^{\sss (K_n)})\Bigr)
\notag\\&\quad
=
\sum_{\boldsymbol{i}=(i_3,\ldots, i_{n-\ell_0})} \E\Bigl(\indicatorofset{A_{\ell_0,\boldsymbol{i}}}\indicatorofset{B_{\ell_0,\boldsymbol{i}}} \prod_{\set{i,j}\not\subset S_{\ell_0,\boldsymbol{i}}} h_{\set{i,j}}(X_{\set{i,j}}^{\sss (K_n)}) \Bigr) \prod_{\set{i,j} \subset S_{\ell_0,\boldsymbol{i}}} \langle h_{\set{i,j}} \rangle
\notag\\&\qquad
+ \sum_{\ell=\ell_0+1}^{n-2} \sum_{\boldsymbol{i}=(i_3,\ldots ,i_{n-\ell})} \E\Bigl(\indicatorofset{A_{\ell,\boldsymbol{i}}}\indicator{N(i)=\infty \, \forall i \in S_{\ell,\boldsymbol{i}}}\prod_{\set{i,j}\not\subset S_{\ell,\boldsymbol{i}}} h_{\set{i,j}}(X_{\set{i,j}}^{\sss (K_n)}) \Bigr) \prod_{\set{i,j} \subset S_{\ell,\boldsymbol{i}}} \langle h_{\set{i,j}} \rangle.
\labelPartI{CouplingInduction}
\end{align}
When $\ell=n-2$, by convention, the second sum vanishes, while in the first sum $\boldsymbol{i}$ is the empty sequence.

The case $\ell_0=1$ reduces to \eqrefPartI{CouplingBaseCase}: then $S_{\ell_0,\boldsymbol{i}}$ contains only one element, which we called $i_n$ but which is in fact uniquely determined by the values $i_3,\dotsc,i_{n-1}$, and the product $\prod_{{\set{i,j}}\subset S_{\ell_0,\boldsymbol{i}}} \langle h_{\set{i,j}} \rangle$ is empty.
This initializes the induction hypothesis.

We remark that in the right-hand sides of \eqrefPartI{CouplingBaseCase} and \eqrefPartI{CouplingInduction}, the indicators already allow us to determine which of the three cases from \eqrefPartI{EdgeWeightCoupling} occurs.
For notational simplicity, we will introduce this information gradually as we proceed.

Now suppose \eqrefPartI{CouplingInduction} has been proved for a given $\ell_0<n-2$.
In the first summand of the right-hand side of \eqrefPartI{CouplingInduction}, we condition on $\F_{N(i_{n-\ell_0})}$.
By \reflemma{IndepOrMeas} and the presence of the indicators, each factor $h_{\set{i,j}}(X_{\set{i,j}}^{\sss(K_n)})$ is equal to a factor $h_{\set{i,j}}(\frac{1}{n}X(i,j))$ (or $h_{\set{i,j}}(\frac{1}{n}X(j,i))$, if $N(j)<N(i)$) that is $\F_{N(i_{n-\ell_0})}$-measurable, with the exception of the factors $h_{\set{i_{n-\ell_0}, j}}(\tfrac{1}{n}X(i_{n-\ell_0},j))$ for $j\in S_{\ell_0,\boldsymbol{i}}$, which are conditionally independent given $\F_{N(i_{n-\ell_0})}$, again by \reflemma{IndepOrMeas}.
Note furthermore that $A_{\ell_0,\boldsymbol{i}},B_{\ell_0,\boldsymbol{i}}\in\F_{N(i_{n-\ell_0})}$,   $A_{\ell_0,\boldsymbol{i}}=A_{\ell_0+1,\boldsymbol{i}}\intersect\set{N(i_{n-\ell_0-1})<N(i_{n-\ell_0})<\infty}$ and $S_{\ell_0,\boldsymbol{i}}\cup \set{i_{n-\ell_0}}=S_{\ell_0+1,\boldsymbol{i}}$.
Thus,
\begin{align}
&\E\Bigl(\indicatorofset{A_{\ell_0,\boldsymbol{i}}}\indicatorofset{B_{\ell_0,\boldsymbol{i}}} \prod_{\set{i,j}\not\subset S_{\ell_0,\boldsymbol{i}}} h_{\set{i,j}}(X_{\set{i,j}}^{\sss (K_n)}) \Bigr)\prod_{\set{i,j} \subset S_{\ell_0,\boldsymbol{i}}} \langle h_{\set{i,j}} \rangle
\notag\\&\quad
=
\E\Bigl(\indicatorofset{A_{\ell_0,\boldsymbol{i}}}\indicator{N(i)>N(i_{n-\ell_0}) \, \forall i \in S_{\ell_0,\boldsymbol{i}}}
\prod_{\set{i,j}\not\subset S_{\ell_0,\boldsymbol{i}} \union \shortset{i_{n-\ell_0}}} h_{\set{i,j}}(X_{\set{i,j}}^{\sss (K_n)}) \Bigr)\prod_{\set{i,j} \subset S_{\ell_0,\boldsymbol{i}}\union \shortset{i_{n-\ell_0}}} \langle h_{\set{i,j}} \rangle
\notag\\&\quad
=
\E\Bigl(\indicatorofset{A_{\ell_0+1,\boldsymbol{i}}}\indicator{N(i_{n-\ell_0-1})<N(i_{n-\ell_0})<N(i) \, \forall i \in S_{\ell_0,\boldsymbol{i}}}
\prod_{\set{i,j}\not\subset S_{\ell_0+1,\boldsymbol{i}}} h_{\set{i,j}}(X_{\set{i,j}}^{\sss (K_n)}) \Bigr)\prod_{\set{i,j} \subset S_{\ell_0+1,\boldsymbol{i}}} \langle h_{\set{i,j}} \rangle.
\labelPartI{Integratedell0}
\end{align}
Leaving $i_3,\dotsc, i_{n-\ell_0-1}$ fixed, we now sum \eqrefPartI{Integratedell0} over all $i_{n-\ell_0} \in [n]\setminus \set{i_1,\dotsc,i_{n-\ell_0-1}}$.
This rewrites the first sum in \eqrefPartI{CouplingInduction} as
\begin{equation}\labelPartI{Summedell0}
\sum_{\boldsymbol{i}=(i_3,\dotsc,i_{n-\ell_0-1})}\E\Bigl(\indicatorofset{A_{\ell_0+1,\boldsymbol{i}}}\indicatorofset{B_{\ell_0+1,\boldsymbol{i}}} \indicator{N(i)<\infty\text{ for some }i \in S_{\ell_0+1,\boldsymbol{i}}}
\prod_{\set{i,j}\not\subset S_{\ell_0+1,\boldsymbol{i}}} h_{\set{i,j}}(X_{\set{i,j}}^{\sss (K_n)})
 \Bigr)\prod_{\set{i,j} \subset S_{\ell_0+1,\boldsymbol{i}}} \langle h_{\set{i,j}} \rangle.
\end{equation}
However, \eqrefPartI{Summedell0} combines with the summand $\ell=\ell_0+1$ from the second sum in \eqrefPartI{CouplingInduction} (since we can rewrite $\indicatorofset{A_{\ell_0+1,\boldsymbol{i}}}\indicator{N(i)=\infty \, \forall i \in S_{\ell_0+1,\boldsymbol{i}}}$ as $\indicatorofset{A_{\ell_0+1,\boldsymbol{i}}}\indicatorofset{B_{\ell_0+1,\boldsymbol{i}}}\indicator{N(i)=\infty \, \forall i \in S_{\ell_0+1,\boldsymbol{i}}}$) to produce the first sum in \eqrefPartI{CouplingInduction} for $\ell_0$ replaced by $\ell_0+1$.
This advances the induction hypothesis, and thus completes the proof of \eqrefPartI{CouplingInduction} for all $\ell_0\leq n-2$.

We therefore conclude that \eqrefPartI{CouplingInduction} holds when $\ell_0=n-2$.
In this case the second sum vanishes, while in the first sum $\boldsymbol{i}$ is the empty sequence, $S_{n-2,\boldsymbol{i}}=[n]\setminus \set{1,2}$ and the events $A_{n-2,\boldsymbol{i}},B_{n-2,\boldsymbol{i}}$ always occur (note that $N(i_2)=0$).
Therefore
\begin{align}
\E\Bigl(\prod_{e \in E(K_n)} h_{e}(X_e^{\sss (K_n)})\Bigr)
&=
\E\Bigl(\prod_{\set{i,j}\cap \set{1,2} \not=\emptyset} h_{\set{i,j}}(X_{\set{i,j}}^{\sss (K_n)}) \Bigr)\prod_{\set{i,j} \cap\set{1,2}=\emptyset} \langle h_{\set{i,j}} \rangle
\\&
=
\E\Bigl(h_{\set{1,2}}(E_{\set{1,2}}) \prod_{i=3}^n h_{\set{1,i}}(\tfrac{1}{n} X(1,i))h_{\set{2,i}}(\tfrac{1}{n} X(2,i))\Bigr)\prod_{\set{i,j} \cap\set{1,2}=\emptyset} \langle h_{\set{i,j}} \rangle
.
\notag
\end{align}
By \reflemma{IndepOrMeas}, $(\tfrac{1}{n}X(1,i))_{i\geq 3}$ and $(\tfrac{1}{n}X(2,i))_{i\geq 3}$ are each families of independent exponential random variables with mean $1$.
Moreover they are mutually independent, since they are determined from the independent Poisson point processes of edge weights corresponding to $\emptyset_1$ and $\emptyset_2$, respectively.
Since furthermore $E_{\set{1,2}}$ is independent of everything, we conclude that \eqrefPartI{CouplingToProve} holds.
\end{proof}

\subsection{Minimal-rule exploration processes}
\lbsubsect{MinimalRule}

An important class of exploration processes, which includes both FPP and IP, are those exploration processes determined by a minimal rule in the following sense:

\begin{defn}\lbdefn{MinimalRule}
A \emph{minimal rule} for an exploration process $\explore$ on $\tree$ is an $(\F_k)_k$-adapted sequence $(\makebox{$S_k,\prec_k$})_{k=1}^\infty$, where $S_k\subset\boundary\explore_{k-1}$ is a (possibly empty) subset of the boundary vertices of $\explore_{k-1}$ and $\prec_k$ is a strict total ordering of the elements of $S_k$ (if any) such that the implication
\begin{equation}\labelPartI{MinimalImplication}
w\in S_k, \parent{v}=\parent{w}, M_v=M_w, X_v<X_w \quad\implies\quad v\in S_k, v \prec_k w
\end{equation}
holds.
An exploration process is \emph{determined by the minimal rule} $(S_k,\prec_k)_{k=1}^\infty$ if $\explore_k=\explore_{k-1}$ whenever $S_k=\emptyset$ and otherwise $\explore_k$ is formed by adjoining to $\explore_{k-1}$ the unique vertex $v_k\in S_k$ that is minimal with respect to $\prec_k$.
\end{defn}
In words, in every step $k$ there is a set of boundary vertices $S_k$ from which we can select for the next exploration step.
The content of \eqrefPartI{MinimalImplication} is that, whenever a vertex $w\in S_k$ is available for selection, then all siblings of $w$ with the same mark but smaller weight are also available for selection and are preferred over $w$.

For FPP without freezing on $\tree$ with edge weights $f_n(X_v)$, we take $v \prec_k w$ if and only if $T_v < T_w$ (recall \eqrefPartI{TvDefinition}) and take $S_k=\boundary\explore_{k-1}$.
For IP on $\tree$, we have $v \prec_k w$ if and only if $X_v<X_w$; the choice of subset $S_k$ can be used to enforce, for instance, whether the $k^\th$ step is taken in $\tree^{\sss(1)}$ or $\tree^{\sss(2)}$.

Recall the subtree $\thinnedExplore_k$ of unthinned vertices from \refdefn{Thinning} and the subgraph $\pi_M(\thinnedExplore_k)$ from \refdefn{InducedGraph}.
That is, $\pi_M(\thinnedExplore_k)$ is the union of two trees with roots $1$ and $2$, respectively, and for $v\in \thinnedExplore_k \setminus\set{\emptyset_1,\emptyset_2}$, $\pi_M(\thinnedExplore_k)$ contains vertices $M_v$ and $M_{\parent{v}}$ and the edge $\set{M_v, M_{\parent{v}}}$.

For any $i\in [n]$ for which $N(i)<\infty$, recall that $V(i)$ is the unique vertex of $\thinnedExplore_k$ ($k\geq N(i)$) for which $M_{V(i)}=i$.
Define $V(i,i')$ to be the first child of $V(i)$ with mark $i'$.

Recalling \eqrefPartI{XijDefinition}, an equivalent characterization of $V(i,i')$ is
\begin{equation}\labelPartI{XijVij}
X(i,i')=X_{V(i,i')}.
\end{equation}
The following lemma shows that, for an exploration process determined by a minimal rule, unthinned vertices must have the form $V(i,i')$:

\begin{lemma}\lblemma{NextNotThinned}
Suppose $\explore$ is an exploration process determined by a minimal rule $(S_k,\prec_k)_{k=1}^\infty$ and $k\in \N$ is such that $\thinnedExplore_k\neq \thinnedExplore_{k-1}$. Let $i_k=M_{\parent{v_k}}$ and $i_k'=M_{v_k}$. Then $v_k=V(i_k,i_k')$.
\end{lemma}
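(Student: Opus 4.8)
The plan is to argue by contradiction, the only real input being the defining implication \eqrefPartI{MinimalImplication} of a minimal rule. First I would collect the structural facts. Since $\thinnedExplore_k\neq\thinnedExplore_{k-1}$ and $\explore_k$ differs from $\explore_{k-1}$ only by the adjoined vertex $v_k$, the vertex $v_k$ must be unthinned, and $N_{v_k}=k$ while $N_{\parent{v_k}}\le k-1$. By \refdefn{Thinning}, if $\parent{v_k}$ (or any ancestor of $v_k$) were thinned, then the thinning witness for that ancestor would also be a thinning witness for $v_k$, so $v_k$ would be thinned; hence $\parent{v_k}$ is unthinned. Because $v_k\in\boundary\explore_{k-1}$ we have $\parent{v_k}\in\explore_{k-1}$, so $\parent{v_k}\in\thinnedExplore_{k-1}$ with mark $M_{\parent{v_k}}=i_k$; by the uniqueness of the unthinned vertex carrying a given mark (the remark preceding \eqrefPartI{XijDefinition}), $\parent{v_k}=V(i_k)$. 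Thus $v_k$ is a child of $V(i_k)$ carrying the mark $i_k'$.

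It then remains to show that $v_k$ is the \emph{first} such child, i.e.\ $v_k=V(i_k,i_k')$. Suppose not. Since in the PWIT the children of a vertex are ordered by increasing edge weight, the child $w:=V(i_k,i_k')$ of $V(i_k)$ satisfies $M_w=i_k'=M_{v_k}$ and $X_w<X_{v_k}$. Now apply \eqrefPartI{MinimalImplication} with $v_k$ in the role of the ``larger'' vertex and $w$ in the role of the ``smaller'' one: we have $v_k\in S_k$ (it is the $\prec_k$-minimal element of $S_k$, hence in particular a member), $\parent{w}=\parent{v_k}=V(i_k)$, $M_w=M_{v_k}$ and $X_w<X_{v_k}$, so the implication yields $w\in S_k$ and $w\prec_k v_k$. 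This contradicts the minimality of $v_k$ in $(S_k,\prec_k)$, and the proof is complete.

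The step requiring the most care is this last one, namely getting the bookkeeping right: one must invoke \eqrefPartI{MinimalImplication} with $v_k$ playing the role of the vertex called $w$ in that implication, and observe that it is precisely this implication that guarantees the competitor $w=V(i_k,i_k')$ actually belongs to $S_k$, so no separate verification that $w$ is an eligible boundary vertex is needed. The only other point to handle cleanly is the propagation of thinning to ancestors, which is what lets us identify $\parent{v_k}$ with $V(i_k)$; this is immediate from \refdefn{Thinning}. I do not expect any genuine obstacle beyond these routine checks.
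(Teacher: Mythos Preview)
Your proposal is correct and follows essentially the same route as the paper's proof: identify $\parent{v_k}=V(i_k)$, then suppose $v_k\neq V(i_k,i_k')$ and derive a contradiction from \eqrefPartI{MinimalImplication}. You spell out in more detail why $\parent{v_k}$ is unthinned (the paper just says ``by construction''), but the core contradiction argument is identical.
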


\begin{proof}
By construction, $\parent{v_k}\in\thinnedExplore_{k-1}$ and $M_{\parent{v_k}}=i_k$, so $V(i_k)=\parent{v_k}$ by definition.
Moreover, $V(i_k)=\parent{V(i_k,i_k')}$ and $M_{V(i_k,i_k')} =i_k' = M_{v_k}$.
Suppose to the contrary that $V(i_k,i_k')\neq v_k$.
By the definition of $V(i_k,i_k')$, it follows that $X_{V(i_k,i_k')}<X_{v_k}$ and \eqrefPartI{MinimalImplication} yields $V(i_k,i_k')\in S_k$ and $V(i_k,i_k')\prec_k v_k$, a contradiction since $v_k$ must be minimal for $\prec_k$.
\end{proof}

If $\explore$ is an exploration process determined by a minimal rule, then we define
\begin{equation}\labelPartI{SelectableInKn}
S_k^{\sss (K_n)}=\set{\set{i,i'} \in E(K_n) \colon i \in \pi_M(\thinnedExplore_{k-1}), i' \notin \pi_M(\thinnedExplore_{k-1}), V(i,i') \in S_k}
\end{equation}
and
\begin{equation}\labelPartI{OrderInKn}
e_1 \;\widetilde{\prec}_k\; e_2 \quad\iff\quad V(i_1,i_1') \prec_k V(i_2,i_2'), \qquad e_1,e_2\in S_k^{\sss(K_n)},
\end{equation}
where $e_j=\set{i_j,i'_j}$ and $i_j \in \pi_M(\thinnedExplore_{k-1}), i'_j \notin \pi_M(\thinnedExplore_{k-1})$ as in \eqrefPartI{SelectableInKn}.
\begin{prop}[Thinned minimal rule]\lbprop{MinimalRuleThinning}
Suppose $\explore$ is an exploration process determined by a minimal rule $(S_k,\prec_k)_{k=1}^\infty$.
Then, under the edge-weight coupling \eqrefPartI{EdgeWeightCoupling}, the edge weights of $\pi_M(\thinnedExplore_k)$ are determined by
\begin{equation}\labelPartI{InternalEdgeWeights}
X_{\shortset{M_v,M_{\parent{v}}}}^{\sss(K_n)} = \tfrac{1}{n} X_v
\quad
\text{for any }v\in \union_{k=1}^\infty \thinnedExplore_k\setminus\set{\emptyset_1,\emptyset_2}
\end{equation}
and generally
\begin{equation}\labelPartI{BoundaryEdgeWeights}
X_{\set{i,i'}}^{\sss(K_n)} = \tfrac{1}{n} X_{V(i,i')}
\quad
\text{whenever}
\quad
i\in \pi_M(\thinnedExplore_{k-1}), i'\notin \pi_M(\thinnedExplore_{k-1})\text{ for some }k\in\N.
\end{equation}
Moreover, for any $k\in\N$ for which $\thinnedExplore_k\neq \thinnedExplore_{k-1}$, $\pi_M(\thinnedExplore_k)$ is formed by adjoining to $\pi_M(\thinnedExplore_{k-1})$ the unique edge $e_k\in S_k^{\sss (K_n)}$ that is minimal with respect to $\widetilde{\prec}_k$.
\end{prop}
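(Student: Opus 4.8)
The plan is to first establish the two edge-weight identities \eqrefPartI{InternalEdgeWeights}--\eqrefPartI{BoundaryEdgeWeights} by unwinding the coupling \eqrefPartI{EdgeWeightCoupling}, and then to extract the minimal-rule statement from \reflemma{NextNotThinned}. I would begin with \eqrefPartI{BoundaryEdgeWeights}. Suppose $i\in\pi_M(\thinnedExplore_{k-1})$ and $i'\notin\pi_M(\thinnedExplore_{k-1})$. Then $i$ occurs as the mark of an unthinned vertex already present at step $k-1$, so $N(i)\le k-1<\infty$, whereas $i'$ does not, so $N(i')\ge k$; in particular $i\ne i'$ and $N(i)<N(i')$. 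Hence the first branch of \eqrefPartI{EdgeWeightCoupling} applies, giving $X_{\set{i,i'}}^{\sss(K_n)}=\tfrac1n X(i,i')$, and since the children of $V(i)$ are enumerated in increasing order of weight, the minimum defining $X(i,i')$ in \eqrefPartI{XijDefinition} is realised by the first child of $V(i)$ carrying mark $i'$, i.e.\ $X(i,i')=X_{V(i,i')}$ as in \eqrefPartI{XijVij}; this is \eqrefPartI{BoundaryEdgeWeights}. The diagonal branch of \eqrefPartI{EdgeWeightCoupling} plays no role here, since both $1$ and $2$ already lie in $\pi_M(\thinnedExplore_0)$ and so $\set{1,2}$ is never a boundary edge.

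Next I would derive \eqrefPartI{InternalEdgeWeights}. Let $v\in\thinnedExplore_k\setminus\set{\emptyset_1,\emptyset_2}$ and put $m=N_v$, so that $v=v_m$ and $\thinnedExplore_m\ne\thinnedExplore_{m-1}$; write $i=M_{\parent{v_m}}$ and $i'=M_{v_m}$. Since descendants of thinned vertices are thinned, $\parent{v_m}$ is unthinned, and it lies in $\explore_{m-1}$ by \refdefn{Explore}, so $i\in\pi_M(\thinnedExplore_{m-1})$. If $i'$ had already appeared as a mark in $\thinnedExplore_{m-1}$, then taking $v_m$ itself as the ancestor $v_0$ in \refdefn{Thinning} would make $v_m$ thinned, contradicting $\thinnedExplore_m\ne\thinnedExplore_{m-1}$; hence $i'\notin\pi_M(\thinnedExplore_{m-1})$. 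Applying \eqrefPartI{BoundaryEdgeWeights} with $k=m$, together with \reflemma{NextNotThinned}, which gives $V(i,i')=v_m=v$, yields $X_{\set{i,i'}}^{\sss(K_n)}=\tfrac1n X_v$, which is \eqrefPartI{InternalEdgeWeights}.

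For the minimal-rule conclusion, fix $k$ with $\thinnedExplore_k\ne\thinnedExplore_{k-1}$ and set $i_k=M_{\parent{v_k}}$ and $i_k'=M_{v_k}$. Exactly as in the previous paragraph, $i_k\in\pi_M(\thinnedExplore_{k-1})$ and $i_k'\notin\pi_M(\thinnedExplore_{k-1})$, so $\pi_M(\thinnedExplore_k)$ is obtained from $\pi_M(\thinnedExplore_{k-1})$ precisely by adjoining the single edge $e_k=\set{i_k,i_k'}$; moreover $v_k=V(i_k,i_k')$ by \reflemma{NextNotThinned} and $v_k\in S_k$ (it is the $\prec_k$-minimal element of $S_k$ by \refdefn{MinimalRule}), so \eqrefPartI{SelectableInKn} gives $e_k\in S_k^{\sss(K_n)}$. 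It remains to show that $e_k$ is the unique $\widetilde{\prec}_k$-minimal element of $S_k^{\sss(K_n)}$. First, the map $e\mapsto V(i,i')$ is injective on $S_k^{\sss(K_n)}$, since its value determines $i'=M_{V(i,i')}$ and $i=M_{\parent{V(i,i')}}$; as it takes values in $S_k$ and $\prec_k$ is a strict total order there, the pullback relation $\widetilde{\prec}_k$ defined in \eqrefPartI{OrderInKn} is a strict total order on $S_k^{\sss(K_n)}$. Second, for any $e=\set{i,i'}\in S_k^{\sss(K_n)}$ we have $V(i,i')\in S_k$, so, $v_k$ being $\prec_k$-minimal in $S_k$, either $V(i,i')=v_k$, which forces $e=e_k$, or $v_k\prec_k V(i,i')$, which by \eqrefPartI{OrderInKn} reads $e_k\,\widetilde{\prec}_k\,e$. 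Hence $e_k$ is the unique $\widetilde{\prec}_k$-minimal element of $S_k^{\sss(K_n)}$, which completes the argument.

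The manipulations behind \eqrefPartI{InternalEdgeWeights}--\eqrefPartI{BoundaryEdgeWeights} are essentially bookkeeping; the only point needing care is confirming that the relevant $K_n$-edge always falls into the first branch of \eqrefPartI{EdgeWeightCoupling} (equivalently, by symmetry, into the second), never the diagonal one, which is precisely why the relative order of $N(i)$ and $N(i')$ must be pinned down. The substantive step is the last one: matching the order $\prec_k$ on the selectable boundary vertices of $\explore$ with the order $\widetilde{\prec}_k$ on the selectable boundary edges of $\pi_M(\thinnedExplore)$ in $K_n$. What makes this work is the combination of the injectivity of $e\mapsto V(i,i')$ with \reflemma{NextNotThinned}: the latter ensures that the vertex selected by the minimal rule is always of the form $V(i,i')$, hence corresponds to a genuine selectable $K_n$-edge, so that the two minimisation problems coincide under the correspondence $e\leftrightarrow V(i,i')$.
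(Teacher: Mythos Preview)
Your proof is correct and follows essentially the same approach as the paper: first establish \eqrefPartI{BoundaryEdgeWeights} directly from the coupling \eqrefPartI{EdgeWeightCoupling}, then derive \eqrefPartI{InternalEdgeWeights} as a special case via \reflemma{NextNotThinned}, and finally deduce the minimal-rule statement. Your version is in fact more careful than the paper's on the last step---the paper simply asserts that minimality of $e_k$ for $\widetilde{\prec}_k$ ``follows from the definitions'', whereas you spell out the injectivity of $e\mapsto V(i,i')$ and hence the strict-total-order property of $\widetilde{\prec}_k$ needed for uniqueness.
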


\refprop{MinimalRuleThinning} asserts that the subgraph $\pi_M(\thinnedExplore_k)$ of $K_n$, equipped with the edge weights $(X_e^{\sss(K_n)})_{e\in E(\pi_M(\thinnedExplore_k))}$, is isomorphic as an edge-weighted graph to the subgraph $\thinnedExplore_k$ of $\tree$, equipped with the rescaled edge weights $(\tfrac{1}{n} X_v)_{v\in\thinnedExplore_k\setminus\set{\emptyset_1,\emptyset_2}}$.
Furthermore, the subgraphs $\pi_M(\thinnedExplore_k)$ can be grown by an inductive rule.
Thus the induced subgraphs $(\pi_M(\thinnedExplore_k))_{k=0}^\infty$ themselves form a minimal-rule exploration process on $K_n$, with a minimal rule derived from that of $\explore$, with the caveat that $\widetilde{\prec}_k$ may depend on edge weights from $\explore_{k-1}\setminus\thinnedExplore_{k-1}$ as well as from $\pi_M(\thinnedExplore_{k-1})$.
\begin{proof}[Proof of \refprop{MinimalRuleThinning}]
We first prove \eqrefPartI{BoundaryEdgeWeights}.
By assumption, $N(i)\leq k-1<N(i')$, so \eqrefPartI{BoundaryEdgeWeights} is simply the first case in \eqrefPartI{EdgeWeightCoupling} (see also \eqrefPartI{XijVij}).

Take $v\in\union_{k=1}^\infty\thinnedExplore_k\setminus\set{\emptyset_1,\emptyset_2}$, and assume that $v=v_k$, i.e., set $k=N_v\geq 1$.
Set $i_k=M_{\parent{v_k}}$ and $i'_k=M_{v_k}$.
By construction, $i_k\in\pi_M(\thinnedExplore_{k-1})$ but $i'_k\notin\pi_M(\thinnedExplore_{k-1})$, and according to \reflemma{NextNotThinned}, $v_k=V(i_k,i'_k)$.
So \eqrefPartI{InternalEdgeWeights} is a special case of \eqrefPartI{BoundaryEdgeWeights}.

By construction, $\pi_M(\thinnedExplore_k)$ is formed by adjoining to $\pi_M(\thinnedExplore_{k-1})$ the vertex $i_k=M_{v_k}\in [n]$ via the edge $e_k=\set{i_k,i_k'}$.
By \reflemma{NextNotThinned}, $v_k=V(i_k,i_k')$, and by the definition of minimal rule, the vertex $v_k$ belongs to $S_k$ and is minimal for $\prec_k$.
It follows from the definitions \eqrefPartI{SelectableInKn}--\eqrefPartI{OrderInKn} that $e_k\in S_k^{\sss (K_n)}$ is minimal for $\widetilde{\prec}_k$.
\end{proof}

\subsection{Coupling \texorpdfstring{$\SWT^{\sss(1)}$ and $\BP^{\sss(1)}$}{SWT and BP}: Proof of \refthm{Coupling1Source}}
\lbsubsect{ProofCoupling1Source}

In this section, we prove \refthm{Coupling1Source}: that is, we couple the smallest-weight tree $\SWT^{\sss(1)}$ on $K_n$ to a single branching process $\BP^{\sss(1)}$ on $\tree^{\sss(1)}$.
Since this statement is concerned with processes starting from only one source, we use exploration processes on $\tree^{\sss (1)}$ instead of $\tree$.
All results from Sections~\refPartI{ss:DefExploration} and \refPartI{ss:MinimalRule} carry over up to obvious changes (indeed, the results hold for any finite number of copies of the PWIT).

\begin{proof}[Proof of \refthm{Coupling1Source}]
Let the exploration process $\explore^{\sss(\FPP,1)}$ on $\tree^{\sss (1)}$ be determined by the minimal rule where $S_k=\boundary\explore^{\sss(\FPP,1)}_{k-1}$ and $v \prec_k^\FPP w$ if and only if $T_v < T_w$.
In words, $\explore^{\sss(\FPP,1)}$ performs FPP steps on $\tree^{\sss(1)}$ in discrete time, and it is easy to verify that $\explore^{\sss(\FPP,1)}_k=\BP^{\sss(1)}_{T_{v_k}}$, where $v_k$ was defined in \refdefn{MinimalRule}.
Throughout this proof, we choose $\explore=\explore^{\sss(\FPP,1)}$.

The smallest-weight tree $\SWT^{\sss(1)}$ evolves in discrete time as follows. At time $0$, $\SWT_0^{\sss(1)}$ contains only vertex $1$ and no edges. Let $\tau_{k'-1}$ be the time that the $(k'-1)^\st$ vertex, not including vertex $1$, is added. Write $\tau_0=0$. Starting from time $\tau_{k'-1}$, the next edge added is the minimizer $e'_{k'}$ of $d_{K_n,Y^{(K_n)}}(1,i)+Y_e^{\sss(K_n)}$ over the set of boundary edges $e=\set{i,j}$ with $i\in\SWT_{\tau_{k'-1}}^{\sss(1)}$, $j\notin\SWT_{\tau_{k'-1}}^{\sss(1)}$, and $e_{k'}=\set{i'_k,j'_k}$ is added at time $\tau_{k'}=d_{K_n,Y^{(K_n)}}(1,i'_{k'})+Y_{e_{k'}}^{\sss(K_n)}$.
It is easy to verify by induction that for any $i\in\SWT^{\sss(1)}_{\tau_{k'-1}}$, $d_{K_n,Y^{(K_n)}}(1,i)$ equals the sum of edge weights $Y_{e}^{\sss(K_n)}$ for $e$ belonging to the unique path in $\SWT^{\sss(1)}_{\tau_{k'-1}}$ from $1$ to $i$.

Define the edge weights on $K_n$ according to \eqrefPartI{EdgeWeightCoupling} using the exploration process $\explore=\explore^{\sss(\FPP,1)}$.
By \refthm{CouplingExpl}, the edge weights $X_e^{\sss(K_n)}$ are i.i.d.\ exponential with mean 1.
Hence, as discussed in \refsubsect{univ-class}, the edge weights $Y_e^{\sss(K_n)}=g(X_e^{\sss(K_n)})$ have the distribution function $F_Y$, so that $\SWT^{\sss(1)}$ has the correct law under this coupling.

Both $\thinnedBP^{\sss(1)}$ and $\SWT^{\sss(1)}$ are increasing jump processes and $\pi_M(\thinnedBP^{\sss(1)}_0)=\SWT_{0}^{\sss(1)}$.
By an inductive argument, it suffices to show that if $k$, $k'$ are such that $\thinnedExplore_k\neq\thinnedExplore_{k-1}$ and $\pi_M(\thinnedExplore_{k-1})=\SWT^{\sss(1)}_{\tau_{k'-1}}$ then (a)~the edge $e'_{k'}$ next added to $\SWT_{\tau_{k'-1}}^{\sss(1)}$ is the same as the edge $e_k=\set{i_k,i_k'}$ that is minimal with respect to $\widetilde{\prec}_k$ and therefore next added to $\pi_M(\thinnedExplore_{k-1})$; and (b)~$\tau_{k'}=T_{v_k}$.
(We remark that $k>k'$ is possible.)

Let $i\in V(\pi_M(\thinnedExplore_{k-1}))$.
The unique path in $\SWT^{\sss(1)}_{\tau_{k'-1}} = \pi_M(\thinnedExplore_{k-1})$ from $i$ to $1$ is the image of the unique path in $\thinnedExplore_{k-1}$ from $V(i)$ to $\emptyset_1$ under the mapping $v\mapsto M_v$ (recall \refdefn{InducedGraph}).
According to \eqrefPartI{InternalEdgeWeights}, \eqrefPartI{EdgesByIncrFunct} and \eqrefPartI{fnFromParamEdges}, the edge weights along this path are $Y^{\sss(K_n)}_{\shortset{M_{\ancestor{m-1}{V(i)}},M_{\ancestor{m}{V(i)}}}}=g(X^{\sss(K_n)}_{\shortset{M_{\ancestor{m-1}{V(i)}},M_{\ancestor{m}{V(i)}}}})=g(\tfrac{1}{n}X_{\ancestor{m-1}{V(i)}})$ and $f_n(X_{\ancestor{m-1}{V(i)}})$, for $m=1,\dotsc,\abs{V(i)}$.
Summing gives $d_{K_n,Y^{(K_n)}}(1,i)=T_{V(i)}$.

In addition, let $i'\notin V(\pi_M(\thinnedExplore_{k-1}))$ and write $e=\set{i,i'}$.
By \eqrefPartI{BoundaryEdgeWeights}, $X_e^{\sss(K_n)}=\tfrac{1}{n}X_{V(i,i')}$, so that $Y_e^{\sss(K_n)}=f_n(X_{V(i,i')})$.
Thus $e'_{k'}$ is the minimizer of $d_{K_n,Y^{(K_n)}}(1,i)+Y_e^{\sss(K_n)}=T_{V(i)}+f_n(X_{V(i,i')})=T_{V(i,i')}$.
By \refprop{MinimalRuleThinning}, so is $e_k$; thus (a) follows since the minimizer is unique.
Moreover $\tau_{k'}$ is the corresponding minimum value, namely $\tau_{k'}=T_{V(i_k,i_k')}$.
According to \reflemma{NextNotThinned}, $v_k=V(i_k,i_k')$ and (b) follows.
\end{proof}

\subsection{Comparing FPP and IP: Proof of \refthm{CoupIP-PWIT}}\lbsubsect{ProofCoupIPPWIT}

In this section, we prove \refthm{CoupIP-PWIT} by comparing the FPP and IP dynamics on the PWIT.

\begin{proof}[Proof of \refthm{CoupIP-PWIT}]
It is easy to see that $\IP^{\sss(1)}$ is an exploration process determined by a minimal rule.
For instance, we may take $S_k=\boundary\IP^{\sss(1)}_{k-1}$ and $v \prec_k w$ if and only if $X_v<X_w$.

In fact, it will be more convenient to use a different characterization of $\IP^{\sss(1)}$.
For $v\in\tree^{\sss(1)}$, write $\mathcal{O}(v)=(X_{(v,1)},\ldots,X_{(v,|v|)})$ for the vector of edge weights $X_{v'}$ along the path from $\emptyset_1$ to $v$, ordered from largest to smallest.
Set $v \prec_k^\IP w$ if and only if $\mathcal{O}(v)$ is lexicographically smaller than $\mathcal{O}(w)$.
It is an elementary exercise that this minimal rule $(S_k,\prec_k^\IP)_{k=1}^\infty$ also determines $\IP^{\sss(1)}$.

Couple the edge weights on $K_n$ according to \eqrefPartI{EdgeWeightCoupling}, where the exploration process is $\explore^{\sss(\FPP,1)}$ from the proof of \refthm{Coupling1Source}.
Fix $m\in\N$.
With high probability, none of the first $m$ vertices explored by $\explore^{\sss(\FPP,1)}$ is thinned.
By \refthm{Coupling1Source}, it therefore suffices to show that $(\explore_k^{\sss(\FPP,1)})_{k=1}^m = (\IP^{\sss(1)}(k))_{k=1}^m$ \whpdot

Let $\epsilon>0$ be given.
Write $B_m$ for the collection of all vertices of the form $\emptyset_1 j_1\dotsb j_r$ with $1\leq r\leq m$ and $j_1,\dotsc, j_r\leq m$.
(That is, $B_m$ consists of all vertices in $\tree^{\sss(1)}$ within $m$ generations for which each ancestor is at most the $m^\th$ child of its parent.)
Note that the first $m$ explored vertices $v_1,\ldots,v_m$ necessarily belong to $B_m$, for both $\explore^{\sss(\FPP,1)}$ and $\IP^{\sss(1)}$.
Let $\delta>0$ and write $A_\delta$ for the event that $\inf\set{X_v\colon v\in B_m}\geq \delta$, $\sup\set{X_v\colon v\in B_m}\leq 1/\delta$, and $\inf\set{\abs{X_v-X_w}\colon v,w\in B_m, v\neq w}\geq\delta$.
We may choose $\delta>0$ sufficiently small that $\P(A_\delta)\geq 1-\epsilon$.

Choose $x_0<x_1<\dotsb<x_N$ such that $x_0=\delta$, $x_N=1/\delta$ and $x_j-x_{j-1} \le \delta/2$ for all $j\in [N]$.
By assumption, there is an $n_0 \in \N$ such that $f_n(x_j)/f_n(x_{j-1}) > m$ for all $j \in [N]$ and $n\ge n_0$.
Hence, for any $x,x'\in[\delta, 1/\delta]$ with $x' \ge x+ \delta$, the monotonicity of $f_n$ implies
\begin{equation}%\labelPartI{}
\frac{f_n(x')}{f_n(x)} \ge \frac{f_n(x_j)}{f_n(x_{j-1})}> m,
\end{equation}
since we may choose $j$ such that $[x_{j-1},x_j]\subset [x,x']$.
From now on assume $n\geq n_0$.

Consider any $v,w\in B_m$ such that $v\neq w$ and neither $v$ nor $w$ is an ancestor of the other.
Then there is a smallest index $j$ with $X_{(v,j)} \neq X_{(w,j)}$.
If $X_{(v,j)}<X_{(w,j)}$ then, on $A_\delta$,
\begin{equation}%\labelPartI{}
\sum_{i=j}^{|v|} f_n(X_{(v,i)}) \le m f_n(X_{(v,j)}) < f_n(X_{(w,j)}) \le \sum_{i=j}^{|w|} f_n(X_{(w,i)}),
\end{equation}
and similarly if $X_{(v,j)} >X_{(w,j)}$.
Hence $v\prec_k^\FPP w$ if and only if $\mathcal{O}(v)$ is lexicographically smaller than $\mathcal{O}(w)$, i.e.\ $v\prec_k^\IP w$, for any of the vertices $v,w$ that may be relevant to $(\explore^{\sss(\FPP,1)}_k)_{k=1}^m$ or $(\IP^{\sss(1)}(k))_{k=1}^m$.
Since $\explore_0^{\sss(\FPP,1)}=\IP^{\sss(1)}(0)$, it follows that, on $A_\delta$ for $n$ sufficiently large, we have $(\explore_k^{\sss(\FPP,1)})_{k=1}^m = (\IP^{\sss(1)}(k))_{k=1}^m$, and since $\P(A_\delta)\geq 1-\epsilon$ with $\epsilon>0$ arbitrary, this completes the proof.
\end{proof}

\section{Poisson Galton-Watson trees, lucky vertices and the emergence of the frozen cluster}\lbsect{FrozenGeometry}

In this section, we prove \refprop{LuckyProb}, \reflemma{RluckyBornSoon} and \refthm{TfrScaling}.
We begin with preliminary results on Poisson Galton--Watson trees.

Throughout this section, we assume Conditions~\refPartI{cond:scalingfn}, \refPartI{cond:LowerBoundfn} and \refPartI{cond:boundfnExtended}.
We will reserve the notation $\epsilonCondition, \deltaCondition$ for some fixed choice of the constants in Conditions~\refPartI{cond:LowerBoundfn} and \refPartI{cond:boundfnExtended}, with $\epsilonCondition$ chosen small enough to satisfy both conditions.

\subsection{Properties of Poisson Galton-Watson trees}

\begin{prop}\lbprop{PGWtrees}
Let $(\tau;\P_m)$ be a Poisson Galton--Watson tree with offspring mean $m>0$ and write $\theta(m)=\P_m(\abs{\tau}=\infty)$ for its survival probability.
For $d\in (0,\infty)$, we denote by $(\tau^{\sss(\le d)};\P_m)$ and $(\tau^{\sss(\ge d)};\P_m)$ the subgraph of $\tau$ consisting of the vertices within distance $d$ and with distance at least $d$, respectively, from the root. In addition, let $(s_n)_n$ be a positive sequence with $s_n \to \infty$.
\begin{enumerate}
\item \lbitem{PGWSurvivalProb}
$\theta \colon (0,\infty) \to [0,1]$ is non-decreasing with $\theta(m)=0$ for $m\leq 1$, $\theta(m)>0$ for $m>1$, and
\begin{equation}\labelPartI{thetaRecursion}
1-\theta(m)=\e^{-m\theta(m)}\qquad \text{for all }m \in (0,\infty).
\end{equation}

\item \lbitem{PGWSurvivalAsymp} As $m\downarrow 1$,
\begin{equation}
\theta(m)\sim 2(m-1) \qquad \text{and}\qquad 1-m(1-\theta(m)) \sim m-1,\labelPartI{thetaAsymptotics}
\end{equation}
and, uniformly over $m\geq 1$, $\theta(m)=O(m-1)$ and $1-m(1-\theta(m))=O(m-1)$.

\item \lbitem{PGWdual} For $m>1$, $\P_m\condparentheses{\big.\abs{\tau}\in \cdot}{ \abs{\tau}<\infty}=\P_{\hat{m}}(\tau \in \cdot)$, where $\hat{m}=m (1-\theta(m))<1$ and
\begin{equation}\labelPartI{hatmandm}
1-\hat{m} \sim m-1 \qquad \text{as } m\downarrow 1.
\end{equation}
\item \lbitem{GWTotalProg-gen} Uniformly over $m \in (0,\infty)$,
\begin{equation}\labelPartI{PGWTotalProgExact}
\P_m(\abs{\tau}=n)=\frac{1}{m \sqrt{2\pi n^3} }\e^{-(m-1-\log m) n} (1+o(1)).
\end{equation}
\item \lbitem{GWTotalProg-LargeConst}
Let $K, R\in (0,\infty)$.
There exists $c\in (0,\infty)$ such that for every $r\in \ocinterval{0,R}$ there is $n_0 \in \N$ such that for all $n\ge n_0$,
\begin{equation}
\P_{1-K/s_n}(rs_n^2 \le |\tau|\le 2rs_n^2) \ge \frac{c}{s_n \sqrt{r}}.
\end{equation}
\item \lbitem{GWCut}
Let $K,R \in (0,\infty)$.
There exist $n_0 \in \N$ and $\delta_1>0$ such that for all $r\in \ocinterval{0,R}$ and $n\ge n_0$,
\begin{equation}
\P_{1-K/s_n}(|\tau^{\sss(\le s_n)}|\ge rs_n^2) \ge \delta_1 \P_{1-K/s_n}(rs_n^2 \le |\tau|\le 2rs_n^2) .
\end{equation}

\item \lbitem{PGWSurvivalTime} Uniformly in $m\in (1,\infty)$,
\begin{equation}
\P_m\left(\abs{\tau^{\sss(\ge s_n)}} \ge 1\right) \le  \theta(m)+ O(1/s_n).
\end{equation}
\end{enumerate}
\end{prop}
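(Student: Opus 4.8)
The plan is to split the event $\set{\abs{\tau^{\sss(\ge s_n)}}\ge 1}$ --- which is exactly the event that $\tau$ reaches generation $\lceil s_n\rceil$ --- according to whether $\tau$ is finite. Since an infinite Poisson Galton--Watson tree has infinite height a.s.\ (each generation being a.s.\ finite), we have $\set{\abs{\tau}=\infty}\subseteq\set{\abs{\tau^{\sss(\ge s_n)}}\ge 1}$ up to a null set, so
\begin{equation}
\P_m\bigl(\abs{\tau^{\sss(\ge s_n)}}\ge 1\bigr)=\theta(m)+\P_m\bigl(\abs{\tau^{\sss(\ge s_n)}}\ge 1,\ \abs{\tau}<\infty\bigr),
\end{equation}
and it suffices to bound the last term by $O(1/s_n)$ uniformly in $m>1$. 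First I would invoke the duality of part~\refitem{PGWdual}: conditionally on $\set{\abs{\tau}<\infty}$, $\tau$ is a Poisson Galton--Watson tree with offspring mean $\hat m=m(1-\theta(m))<1$. Hence
\begin{equation}
\P_m\bigl(\abs{\tau^{\sss(\ge s_n)}}\ge 1,\ \abs{\tau}<\infty\bigr)=(1-\theta(m))\,\P_{\hat m}\bigl(\abs{\tau^{\sss(\ge s_n)}}\ge 1\bigr)\le \P_{\hat m}\bigl(\abs{\tau^{\sss(\ge s_n)}}\ge 1\bigr).
\end{equation}

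Next I would reduce the subcritical estimate to the critical one. A $\mathrm{Poi}(\hat m)$ variable with $\hat m\le 1$ is stochastically dominated by a $\mathrm{Poi}(1)$ variable, so a standard thinning construction realizes a Poisson Galton--Watson tree of mean $\hat m$ as a subtree of one of mean $1$; consequently $\P_{\hat m}(\abs{\tau^{\sss(\ge s_n)}}\ge 1)\le \P_1(\abs{\tau^{\sss(\ge s_n)}}\ge 1)$ for every $\hat m\le 1$. It then remains to prove the Kolmogorov-type bound $q_k\le 3/k$, where $q_k=\P_1(Z_k\ge 1)$ and $Z_k$ is the size of generation $k$ for the critical Poisson tree. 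The Poisson extinction recursion gives $q_0=1$ and $q_k=1-\e^{-q_{k-1}}$, so $(q_k)_k$ decreases to $0$, and using the elementary inequalities $x-1+\e^{-x}\ge x^2/3$ and $x(1-\e^{-x})\le x^2$ on $(0,1]$,
\begin{equation}
\frac{1}{q_k}-\frac{1}{q_{k-1}}=\frac{q_{k-1}-1+\e^{-q_{k-1}}}{q_{k-1}(1-\e^{-q_{k-1}})}\ge \frac13,
\end{equation}
so $1/q_k\ge 1+k/3$ and $q_k\le 3/k$. Taking $k=\lceil s_n\rceil$ and combining the three displays yields $\P_m(\abs{\tau^{\sss(\ge s_n)}}\ge 1)\le \theta(m)+3/\lceil s_n\rceil=\theta(m)+O(1/s_n)$ for all $m>1$.

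I do not expect a serious obstacle; the only delicate point is the near-critical regime $m\downarrow 1$, where $\theta(m)$ and the target error $O(1/s_n)$ are of comparable order, so the bound on $\P_m(\abs{\tau^{\sss(\ge s_n)}}\ge 1,\ \abs{\tau}<\infty)$ must be genuinely uniform. Routing through the dual parameter $\hat m\le 1$ and then dominating by the critical tree makes this uniformity automatic, since the estimate $q_k\le 3/k$ carries no dependence on $\hat m$. The only remaining items to check are the two scalar inequalities on $(0,1]$ (immediate from the Taylor expansion of $\e^{-x}$) and the a.s.\ infinitude of the height of an infinite Poisson Galton--Watson tree; both are routine, and the argument relies only on the duality of part~\refitem{PGWdual} together with standard facts about critical Galton--Watson processes.
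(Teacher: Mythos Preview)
Your argument for part~\refitem{PGWSurvivalTime} is correct and follows exactly the paper's route: split on $\{\abs{\tau}<\infty\}$, use the duality of part~\refitem{PGWdual} to pass to the subcritical mean $\hat m$, dominate by the critical tree, and invoke the Kolmogorov $O(1/k)$ bound. The only difference is that the paper simply cites the critical survival estimate (e.g.\ \cite[Lemma~I.10.1]{Har63}), whereas you supply a self-contained proof via the recursion $q_k=1-\e^{-q_{k-1}}$ and the telescoping bound $1/q_k-1/q_{k-1}\ge 1/3$.
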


\begin{proof}
\refitem{PGWSurvivalProb} Identity \eqrefPartI{thetaRecursion} is obtained by considering the individuals in the first generation.
\\
\refitem{PGWSurvivalAsymp} From \eqrefPartI{thetaRecursion}, we obtain \eqrefPartI{thetaAsymptotics}. Since $\theta(m) \le 1$, the uniform bounds follow.\\
\refitem{PGWdual} See for example Theorem~3.15 in \cite{Hof14pre}.
Asymptotic \eqrefPartI{hatmandm} follows from \eqrefPartI{thetaAsymptotics}.\\
\refitem{GWTotalProg-gen} It is well-known (see for example the first display on page 951 in \cite{AddGriKan12}) that
\begin{equation}
\P_m(\abs{\tau}=n)=\frac{\e^{-m n} (m n)^{n-1}}{n!}.
\end{equation}
Stirling's formula yields the claim.\\
\refitem{GWTotalProg-LargeConst} Choose $n_0$ so large that $1+o(1)$ in \eqrefPartI{PGWTotalProgExact} is bounded from below by $1/2$ and that $-K/s_n-\log(1-K/s_n) \le K^2/s_n^2$ and $\lceil rs_n^2\rceil \le \lfloor 2rs_n^2\rfloor$ for $n \ge n_0$.
According to \refitem{GWTotalProg-gen},
\begin{align}
\P_{1-K/s_n}(rs_n^2 \le |\tau|\le 2rs_n^2)
&\ge \sum_{i=\lceil rs_n^2\rceil}^{\lfloor 2rs_n^2\rfloor}
\frac{1}{(1-K/s_n) \sqrt{2\pi i^3}} \e^{-K^2 i/s_n^2} \frac{1}{2}
\notag \\
&\ge \frac{1}{2\sqrt{2^4 \pi r^3s_n^6}} \frac{s_n^2}{K^2} \left(\e^{-K^2\lceil rs_n^2\rceil/s_n^2} -\e^{-K^2\lfloor 2 rs_n^2\rfloor/s_n^2}\right).
\end{align}
\refitem{GWCut} Denote by $(\mathscr{G}_i,\P)$ the uniform labelled rooted tree on $i$ nodes after the labels of the children have been discarded and by $h(\mathscr{G}_i)$ the height of $\mathscr{G}_i$.
Given $d\in (0,\infty)$ and $\eps>0$, we write $\mathscr{G}_i^{\sss(\le d)}$ for the subtree of $\mathscr{G}_i$ consisting of vertices within distance $d$ from the root and $\mathscr{G}_i\eps$ for $\mathscr{G}_i$ with distances rescaled by $\eps$.
For all $m>0$ and $i \in \N$, the Poisson Galton--Watson tree with offspring mean $m$ conditioned on having $i$ nodes has the same distribution as $\mathscr{G}_i$.
Moreover, $h(\mathscr{G}_i)/\sqrt{i}$ converges, as $i\to \infty$, to the maximum of $2 B$ where $B=(B_t)_{t\in [0,1]}$ is a standard Brownian excursion \cite{Ald91,Ald93}.
We deduce that
\begin{align}
\P_{1-K/s_n}(\abs{\tau^{\sss(\le s_n)}}\ge rs_n^2)&= \sum_{i=\lceil rs_n^2 \rceil}^{\infty} \P(\abs{\mathscr{G}_i^{\sss(\le s_n)}}\ge rs_n^2)\P_{1-K/s_n}(\abs{\tau}=i)
\notag \\
& \ge \min_{rs_n^2 \le i \le 2rs_n^2} \P\left(h(\mathscr{G}_i) \le s_n\right) \P_{1-K/s_n}(rs_n^2 \le \abs{\tau} \le 2rs_n^2).\labelPartI{eq:heightCutGW}
\end{align}
The first factor on the right-hand side of \eqrefPartI{eq:heightCutGW}  can be estimated, for sufficiently large $n$, by
\begin{align}
\min_{rs_n^2 \le i \le 2rs_n^2} \P\left(h(\mathscr{G}_i)/\sqrt{i} \le s_n/\sqrt{i}\right)& \ge \min_{rs_n^2 \le i \le 2rs_n^2} \P\left(h(\mathscr{G}_i)/\sqrt{i} \le 1/\sqrt{2r}\right)
\notag \\
&\ge \frac{1}{2} \P\left(\max_{t\in [0,1]}2B_t \le 1/\sqrt{2R}\right)=\delta_1.
\end{align}
\refitem{PGWSurvivalTime} Using \refitem{PGWdual}, we obtain
\begin{align}
\P_m\left(\abs{\tau^{\sss(\ge s_n)}} \ge 1\right)
&=
\P_m(\abs{\tau}=\infty) +\P_m(\abs{\tau}<\infty) \P_m\condparentheses{\abs{\tau^{\sss(\ge s_n)}} \ge 1}{\abs{\tau}<\infty}
\notag\\&
=
\theta(m)+(1-\theta(m)) \P_{\hat{m}}(\abs{\tau^{\sss(\ge s_n)}} \ge 1)
\notag\\&
\le
\theta(m)+\P_1(\abs{\tau^{\sss(\ge s_n)}} \ge 1).
\end{align}
The claim now follows from a standard result on critical Galton--Watson processes (see for example \cite[Lemma~I.10.1]{Har63}).
\end{proof}

\subsection{The probability of luckiness: proof of \refprop{LuckyProb}}\lbsubsect{LuckyProbProof}

In the proof of \refprop{LuckyProb} we use the following estimates:
\begin{lemma}\lblemma{ReverseMarkov}
Suppose $0\leq X\leq Y$ are non-negative random variables such that $\condE{X}{Y}\geq pY$ a.s.
Then $\P(X\geq m)\geq \tfrac{1}{2}p\P(Y\geq 2m/p)$ for any $m\in \cointerval{0,\infty}$.
\end{lemma}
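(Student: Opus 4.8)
The statement is a conditional reverse Markov inequality (in the spirit of Paley--Zygmund), and the plan is to prove it by first establishing the bound conditionally on $Y$ and then integrating over the event $\set{Y\ge 2m/p}$. Throughout one may assume $m>0$, the case $m=0$ being trivial: the left-hand side then equals $1$, and since $0\le X\le Y$ we necessarily have $p\le1$, so the right-hand side is at most $\tfrac12$.

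First I would record the elementary one-variable fact behind the estimate: if $0\le Z\le b$ with $\E Z=\mu$, then splitting according to whether $Z<\mu/2$ gives
\[
\mu=\E\bigl[Z\,\indicator{Z<\mu/2}\bigr]+\E\bigl[Z\,\indicator{Z\ge\mu/2}\bigr]\le \frac\mu2+b\,\P(Z\ge\mu/2),
\]
so that $\P(Z\ge\mu/2)\ge \mu/(2b)$. I would then apply this conditionally given $Y$, taking $Z=X$, the ($\sigma(Y)$-measurable) bound $b=Y$, and the conditional mean $\mu=\E[X\mid Y]$. Since $\mu$ and $Y$ are $\sigma(Y)$-measurable they pull out of the relevant conditional expectations, and since $0\le X\le Y$ the truncation step goes through verbatim; using the hypothesis $\E[X\mid Y]\ge pY$ this gives
\[
\P\bigl(X\ge\tfrac12\E[X\mid Y]\bigm| Y\bigr)\ \ge\ \frac{\E[X\mid Y]}{2Y}\ \ge\ \frac p2\qquad\text{a.s.\ on }\set{Y>0}.
\]

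Next I would restrict to the event $\set{Y\ge 2m/p}$, on which $\tfrac12\E[X\mid Y]\ge\tfrac12 pY\ge m$, so that $\set{X\ge\tfrac12\E[X\mid Y]}\subseteq\set{X\ge m}$ and hence $\P(X\ge m\mid Y)\ge p/2$ almost surely there. Taking expectations,
\[
\P(X\ge m)\ \ge\ \E\bigl[\indicator{Y\ge 2m/p}\,\P(X\ge m\mid Y)\bigr]\ \ge\ \frac p2\,\P(Y\ge 2m/p),
\]
which is the claim. There is no real obstacle in this argument; the only point that needs a little care is the measurability bookkeeping in the truncation step --- it is cleanest to carry it out with conditional expectations throughout rather than via a regular conditional distribution of $X$ given $Y$ --- together with the harmless degenerate cases ($m=0$, or $Y$ a.s.\ zero), which contribute nothing.
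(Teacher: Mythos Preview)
Your proof is correct and follows essentially the same two-step architecture as the paper: first establish the conditional bound $\condP{X\ge \tfrac12 pY}{Y}\ge \tfrac12 p$, then integrate over $\{Y\ge 2m/p\}$. The only difference is cosmetic: the paper obtains the conditional bound by applying Markov's inequality to $Y-X$ (giving $\condP{Y-X>(1-\tfrac12 p)Y}{Y}\le (1-p)/(1-\tfrac12 p)$ and taking complements), whereas you obtain it via a Paley--Zygmund split of $\E[X\mid Y]$ at the threshold $\tfrac12\E[X\mid Y]$. Both are one-line first-moment arguments leading to the same conclusion.
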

\begin{proof}
Markov's inequality applied to $Y-X$ gives $\condP{Y-X > (1-\tfrac{1}{2}p)Y}{Y} \leq (1-p)/(1-\tfrac{1}{2}p)$, so $\condP{X\geq\tfrac{1}{2}pY}{Y} \geq \frac{1}{2}p/(1-\tfrac{1}{2}p)\geq\tfrac{1}{2}p$ and the result follows.
\end{proof}
\begin{lemma}\lblemma{Integralfn}
There exists $n_0 \in \N$ such that for any $m\in (0,\infty)$, $a \in (0,1]$ and $n\ge n_0$,
\begin{equation}%\labelPartI{}
\int_0^1 f_n(amx)dx \leq \frac{f_n(1) (1-\deltaCondition)^{\epsilonCondition s_n}}{am}+\frac{f_n(m) a^{\epsilonCondition s_n}}{\epsilonCondition s_n}
\end{equation}
\end{lemma}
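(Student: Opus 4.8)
The plan is to change variables and then apply \reflemma{ExtendedImpliesWeak} twice. Substituting $u=amx$ rewrites the left-hand side as $\frac{1}{am}\int_0^{am} f_n(u)\,du$. I would then split the range of integration at $1-\deltaCondition$, writing
\[
\int_0^{am} f_n(u)\,du = \int_0^{(1-\deltaCondition)\wedge am} f_n(u)\,du + \int_{(1-\deltaCondition)\wedge am}^{am} f_n(u)\,du,
\]
where the second integral is vacuous (hence $0$) when $am\leq 1-\deltaCondition$, so the decomposition is valid for every $m\in(0,\infty)$. The threshold $n_0$ will be the one from \reflemma{ExtendedImpliesWeak}.

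For the first piece, monotonicity of $f_n$ gives $f_n(u)\leq f_n(1-\deltaCondition)$ on the range, and \reflemma{ExtendedImpliesWeak} with $x=1-\deltaCondition$, $x'=1$ gives $f_n(1-\deltaCondition)\leq (1-\deltaCondition)^{\epsilonCondition s_n}f_n(1)$. Hence the first integral is at most $(1-\deltaCondition)(1-\deltaCondition)^{\epsilonCondition s_n}f_n(1)\leq (1-\deltaCondition)^{\epsilonCondition s_n}f_n(1)$, using $\deltaCondition\in\ocinterval{0,1}$; dividing by $am$ yields the first term on the right-hand side.

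For the second piece, every $u$ in the range $[(1-\deltaCondition)\wedge am,\,am]$ satisfies $1-\deltaCondition\leq u\leq am\leq m$, where the hypothesis $a\leq 1$ is used for the last inequality. Thus \reflemma{ExtendedImpliesWeak} applies with $x=u$, $x'=m$ and gives $f_n(u)\leq (u/m)^{\epsilonCondition s_n}f_n(m)$, so that
\[
\int_{(1-\deltaCondition)\wedge am}^{am} f_n(u)\,du \leq \frac{f_n(m)}{m^{\epsilonCondition s_n}}\int_0^{am} u^{\epsilonCondition s_n}\,du = \frac{f_n(m)\,a^{\epsilonCondition s_n+1}\,m}{\epsilonCondition s_n+1}.
\]
Dividing by $am$ and bounding $a^{\epsilonCondition s_n+1}\leq a^{\epsilonCondition s_n}$ and $1/(\epsilonCondition s_n+1)\leq 1/(\epsilonCondition s_n)$ produces the second term. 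Adding the two bounds gives the claim.

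\textbf{Main difficulty.} There is no substantial obstacle here; the computation is elementary once the split at $1-\deltaCondition$ is made. The only point requiring a little care is keeping the arguments inside the region $[1-\deltaCondition,\infty)$ on which \reflemma{ExtendedImpliesWeak} is valid — in particular the inequality $u\leq m$, which is exactly where the hypothesis $a\leq 1$ enters — and recording that the $\min$ in the decomposition makes the argument work uniformly in $m$, whether or not $am$ exceeds $1-\deltaCondition$.
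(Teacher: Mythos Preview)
Your proof is correct and follows essentially the same approach as the paper: substitute $u=amx$, split the integral at $1-\deltaCondition$, bound the low part by monotonicity and \reflemma{ExtendedImpliesWeak} with $x'=1$, and bound the high part by \reflemma{ExtendedImpliesWeak} with $x'=m$ before integrating the resulting power. Your version is in fact slightly more careful than the paper's in handling the case $am\leq 1-\deltaCondition$ via the $\min$ in the split.
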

\begin{proof}
By \reflemma{ExtendedImpliesWeak}, for sufficiently large $n$,
\begin{align}
\int_0^1 f_n(amx)dx
&\leq
\frac{f_n(1-\deltaCondition)}{am}+\int_{1-\deltaCondition}^{am} f_n(y)\frac{dy}{am}
\notag\\&
\leq
\frac{f_n(1) (1-\deltaCondition)^{\epsilonCondition s_n}}{am}+\int_{0}^{am} \left(\frac{y}{m}\right)^{\epsilonCondition s_n} f_n(m)\frac{dy}{am}
.
\qedhere
\end{align}
\end{proof}
\begin{proof}[Proof of \refprop{LuckyProb}]
Applying \reflemma{Integralfn} with $m=1$ and $a=1-K/s_n$, we find that $K$ can be chosen such that for sufficiently large $n$,
\begin{equation}\labelPartI{SumOfsnEdgeWeights}
\int_0^1 f_n((1-K/s_n)x)\,dx \leq \frac{f_n(1)}{2s_n}.
\end{equation}

Let $\tau$ be the subtree of $\tree$ consisting of $v$ and those descendants joined to $v$ by a path whose edge weights $X_w$ all satisfy $X_w\leq 1-K/s_n$.
We consider $\tau$ as a rooted labelled tree equipped with edge weights, but with the vertex labels from the PWIT forgotten\footnote{Formally, we should consider instead of $\tau$ the subset $\tilde{\tau}$ where we replace each vertex $w\in\tau\setminus\set{v}$ by an arbitrary label $\ell(w)$ drawn independently from some continuous distribution.
By a slight abuse of notation, we will refer to $\tau$ and $X_w$, $w\in\tau\setminus\set{v}$ instead of $\tilde{\tau}$ and $X_{\ell^{-1}(w)}$, $w\in\tilde{\tau}$.
This procedure avoids the complication, implicit in our Ulam--Harris notation, that the vertex $w=\emptyset_j k_1 k_2\dotsc k_r\in\tree^{\sss(j)}$ automatically gives information about the number of its siblings with smaller edge weights.}.
Then, ignoring weights, $\tau$ is equal in distribution to a Poisson Galton--Watson tree with offspring mean $1-K/s_n$ and by \refprop{PGWtrees}~\refitem{GWTotalProg-LargeConst} there is some $c>0$ independent of $r \in (0,R]$ such that for every $r\in (0,R]$ there is $n_0 \in \N$ with $\P(4r s_n^2\leq \abs{\tau} \leq 8r s_n^2) \geq \frac{c}{s_n\sqrt{r} }$ for all $n\ge n_0$.
Write $\tau'$ for the subtree of $\tau$ consisting of vertices within distance $s_n$ from the root.
By \refprop{PGWtrees}~\refitem{GWCut},
there is some $\delta_1>0$ independent of $r \in (0,R]$ such that with $\delta=\delta_1 c/4$, $\P(\abs{\tau'}\geq 4rs_n^2) \geq \frac{4\delta}{s_n\sqrt{r}}$ for $n$ sufficiently large.
Conditional on $\tau'$, the PWIT edge weights $X_{wk}$, $wk\in\tau'$, are uniformly distributed on $[0,1-K/s_n]$, and therefore the first passage edge weights $Y_{wk}=f_n(X_{wk})$ satisfy
\begin{equation}\labelPartI{SingleEdgeWeightConditionedPWIT}
\condE{Y_{wk}}{wk\in\tau'} = \int_0^1 f_n((1-K/s_n)x)dx.
\end{equation}
By the definition of $\tau'$, the graph distance between $u\in \tau'$ and $v$ is at most $s_n$.
Hence, \eqrefPartI{SingleEdgeWeightConditionedPWIT} and \eqrefPartI{SumOfsnEdgeWeights} give $\condE{T_u-T_v}{u\in\tau'}\leq s_n\frac{f_n(1)}{2s_n}$.
By Markov's inequality $\condP{T_u-T_v>f_n(1)}{u\in\tau'}\leq \frac{1}{2}$.

Let $\tau''$ denote the subtree $\set{u\in\tau'\colon T_u-T_v\leq f_n(1)}$, so that $\condP{u\in\tau''}{u \in \tau'}\geq\frac{1}{2}$ and $\condE{\abs{\tau''}}{\abs{\tau'}}\geq\frac{1}{2}\abs{\tau'}$.
By \reflemma{ReverseMarkov} with $X=\abs{\tau''}$, $Y=\abs{\tau'}$, we obtain
\begin{equation*}
\P\left(\bigabs{\BP_{f_n(1)}^{\sss(v)}}\geq r s_n^2\right)\ge \P\left(\abs{\tau''}\geq rs_n^2\right)\geq\frac{1}{4}\P\left(\abs{\tau'}\geq 4rs_n^2\right)\ge \frac{\delta}{s_n\sqrt{r}}.
\qedhere
\end{equation*}
\end{proof}

In the proof of \refthm{FrozenCluster}~\refitem{FrozenVolume}, we will bound the number of vertices of large age in the frozen cluster.
To do so, we will argue that for each vertex of large age, there is an independent chance to have an $R$-lucky child $v$, where $R$ is chosen large enough that $v$ and its descendants are on their own sufficient to bring about the freezing time.
This motivates the following definition:

\begin{defn}\lbdefn{Lucky}
Let $\epsilon_1$ denote the constant from \reflemma{ModerateAgeContribution} for $K=1$.
Call a vertex $v \in\tree^{\sss (j)}$ \emph{lucky} if it is $(1/\eps_1)$-lucky, and set
\begin{equation}\labelPartI{LuckyTimeDefn}
T^{\sss(j)}_\lucky=\inf\set{T_v\colon v\in\tree^{\sss(j)}\setminus\set{\emptyset_j}\text{ is lucky and }T_v> T_{\parent{v}}+f_n(1)},
\end{equation}
the first time that a lucky vertex is born to a parent of age greater than $f_n(1)$.
\end{defn}
In view of \refdefn{Freezing} and \reflemma{ModerateAgeContribution}, we have
\begin{equation}\labelPartI{FreezingFromLucky}
v\in\tree^{\sss(j)}\text{ is lucky} \quad \implies \quad T_\fr^{\sss(j)}\leq T_v+f_n(1).
\end{equation}
In other words, a lucky vertex has enough descendants in time $f_n(1)$ that the integral in the definition \eqrefPartI{TfrDefn} of the freezing time must be at least $s_n$.

\begin{lemma}\lblemma{SumfninverseExponential}
The distribution of
\begin{equation}\labelPartI{Sumfninverse}
\sum_{v\in\BP^{\sss(j)}_{T^{\sss(j)}_\lucky}} \left( f_n^{-1}\left( T^{\sss(j)}_\lucky-T_v \right)-1 \right)^+
\end{equation}
is exponential with rate $\P(v\text{ is lucky})$.
\end{lemma}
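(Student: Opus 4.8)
The plan is to view the birth of lucky vertices as a thinning of the branching process $\BP^{\sss(j)}$, and to exploit the memoryless structure of the PWIT together with the characterisation of $T^{\sss(j)}_\lucky$ in \eqrefPartI{LuckyTimeDefn}. First I would observe that whether a vertex $v$ is lucky is determined entirely by the subtree $\BP^{\sss(v)}$ of descendants of $v$, observed over the time window $[T_v, T_v+f_n(1)]$; in particular, luckiness is independent of the birth time $T_v$ and of the portion of $\BP^{\sss(j)}$ that does not lie below $v$. The quantity in \eqrefPartI{Sumfninverse} should be recognised as follows: for each $v$ born by time $T^{\sss(j)}_\lucky$, the term $(f_n^{-1}(T^{\sss(j)}_\lucky - T_v) - 1)^+$ is the Lebesgue measure of the set of $x > 1$ with $f_n(x) \le T^{\sss(j)}_\lucky - T_v$, i.e. $\mu_n\bigl((f_n(1), T^{\sss(j)}_\lucky - T_v]\bigr)$ by \eqrefPartI{munCharacterization}. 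Summed over $v \in \BP^{\sss(j)}_{T^{\sss(j)}_\lucky}$, this is exactly the $\mu_n$-expected number of children, born to already-present vertices, whose associated edge weight exceeds $f_n(1)$, that would appear in the remaining life of the process were it to continue past $T^{\sss(j)}_\lucky$. This is a "compensator"-type expression, which is the natural clock for a thinning argument.

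**The main argument.**

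Here is the core idea I would push through. Run the exploration of $\BP^{\sss(j)}$ in the order in which vertices with edge weight $X_v > 1$ are born (equivalently, with $f_n(X_v) > f_n(1)$), and for each such vertex decide whether it is lucky. Because the PWIT edge weights emanating from each vertex form independent Poisson processes, and because luckiness of $v$ depends only on $\BP^{\sss(v)}$ which is conditionally independent of the exploration history up to $T_v$, each newly born vertex of weight $>1$ is lucky with probability $\P(v \text{ is lucky})$, independently of the past. Now $T^{\sss(j)}_\lucky$ is, up to the age-$f_n(1)$ offset built into \eqrefPartI{LuckyTimeDefn}, the first time such a lucky vertex appears. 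The sum in \eqrefPartI{Sumfninverse} plays the role of the accumulated intensity at that stopping time: a thinned Poisson process whose points are the births of weight-$>1$ vertices, each kept with probability $p := \P(v \text{ is lucky})$, run until the first kept point, produces an accumulated intensity that is $\mathrm{Exponential}(p)$. To make this precise I would introduce the random measure $N = \sum_{v: X_v > 1, \, vw \notin \text{nothing}} \delta_{(A_v, L_v)}$ where $A_v = \sum_{u \in \BP^{\sss(j)}_{T_v}} \mu_n\bigl((f_n(1), T_v - T_u]\bigr)$ is the accumulated weight-$>1$ intensity at the birth of $v$ and $L_v \in \{0,1\}$ indicates luckiness; one shows, via the Poisson structure and the optional-stopping/strong-Markov property of $\BP^{\sss(j)}$ along the natural filtration, that $\{A_v\}_v$ are the points of a rate-$1$ Poisson process on $(0,\infty)$ and $\{L_v\}_v$ are i.i.d.\ $\mathrm{Bernoulli}(p)$ independent of $\{A_v\}_v$. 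Then the first $v$ with $L_v = 1$ has $A_v \sim \mathrm{Exponential}(p)$, and one checks that this $A_v$ equals the sum in \eqrefPartI{Sumfninverse} evaluated at $T^{\sss(j)}_\lucky$ — the key point being that at time $T^{\sss(j)}_\lucky$ no weight-$>1$ child beyond that of the triggering lucky vertex has yet been born, so the accumulated intensity is precisely $A_{v_\lucky}$.

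**The delicate point.**

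The step I expect to be the main obstacle is reconciling the age-$f_n(1)$ offset in the definition \eqrefPartI{LuckyTimeDefn} — the requirement $T_v > T_{\parent{v}} + f_n(1)$ — with the clean "first kept point of a thinned Poisson process" picture. A vertex $v$ with $X_v > 1$ satisfies $T_v = T_{\parent{v}} + f_n(X_v) > T_{\parent{v}} + f_n(1)$ automatically, since $f_n$ is strictly increasing; so the offset condition is equivalent to $X_v > 1$, which is exactly the restriction already built into the intensity $\mu_n$ restricted to $(f_n(1),\infty)$ and into the $(\cdot - 1)^+$ in \eqrefPartI{Sumfninverse}. Thus the offset is not a genuine obstruction but requires a careful bookkeeping remark. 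The more substantive care is needed in two places: (i) arguing that the branching-process filtration at the birth time of a weight-$>1$ vertex is rich enough that luckiness is conditionally independent of the accumulated-intensity process — this uses that $\BP^{\sss(v)}_{f_n(1)}$ is measurable with respect to weights strictly below $v$ born within a bounded age window, together with the product structure of the PWIT (and one must handle the a.s.\ finiteness caveat noted in the footnote after \refdefn{ThinningBP}, since $\BP^{\sss(j)}_t$ is a.s.\ finite for all finite $t$ here, so this is benign); and (ii) verifying that the accumulated-intensity process $t \mapsto \sum_{u \in \BP^{\sss(j)}_t} \mu_n((f_n(1), t - T_u])$, evaluated at successive weight-$>1$ birth times, indeed has i.i.d.\ $\mathrm{Exponential}(1)$ increments — this is the standard fact that compensating a Poisson process by its own (predictable, continuous, strictly increasing) compensator yields a rate-$1$ Poisson process, applied to the superposition of the weight-$>1$ portions of the offspring processes. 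Once these are in place, the exponential law with rate $p = \P(v \text{ is lucky})$ follows immediately.
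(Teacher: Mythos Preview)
Your proposal is correct and takes essentially the same approach as the paper: both recognise the sum as the cumulative intensity, at its first point, of a Cox process of lucky weight-$>1$ births, exploit that luckiness of $v$ is determined solely by its weight-$\le 1$ descendant subtree and hence independent of the birth-time structure, and finish via the elementary fact that the compensator of a Cox process with continuous cumulative intensity, evaluated at its first point, is exponential with mean $1$. The paper formalises your delicate point (i) through an explicit rearranged point measure $\cR$ that decomposes the PWIT along chains of weight-$>1$ ancestors together with their attached weight-$\le 1$ subtrees---exactly the ``product structure of the PWIT'' you invoke---and this careful conditioning (rather than the natural filtration) is precisely what makes the independence of the luckiness marks from the compensated birth times rigorous.
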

Since $T_v-T_{\parent{v}}=f_n(X_v)$, the condition $T_v>T_{\parent{v}}+f_n(1)$ in the definition of $T^{\sss(j)}_\lucky$ is equivalent to $X_v>1$.
On the other hand, the event $\set{v\text{ is lucky}}$ depends only on the evolution of $\BP^{\sss(v)}$ until time $f_n(1)$ and is therefore determined by those descendants $v'$ of $v$ for which $X_{v'}\leq 1$.
Because these two conditions on edge weights are mutually exclusive, it will follow that $T^{\sss(j)}_\lucky$ is the first arrival time of a certain Cox process.
We now formalize this intuition, which requires some care.

\begin{proof}[Proof of \reflemma{SumfninverseExponential}]
To avoid complications arising from our Ulam--Harris notation, we consider instead of a vertex $v=\emptyset_j k_1 k_2\dotsc k_r$ the modified vertex $w(v)=\emptyset_j X_{k_1} X_{k_1 k_2} \dotsc X_{k_1 k_2 \dotsc k_r}$ formed out of the edge weights along the path from $\emptyset_j$ to $v$.
We can extend our usual notation for parents, length, concatenation, edge weight, and birth times to vertices of the form $w=\emptyset_j x_1 \dotsb x_r$, $x_i\in(0,\infty)$: for instance, $\abs{w}=r$, $X_w=x_r$ and $T_w=f_n(x_1)+\dotsb+f_n(x_r)$.

Form the point measure $\cM=\sum_{v\in\tree} \delta_{w(v)}$ on $\union_{r=0}^\infty \set{\emptyset_1,\emptyset_2}\times (0,\infty)^r$.
Given $\cM$, we can recover the PWIT $(\tree,X)$: for instance, $X_{\emptyset_j k_1}=\inf\set{x>0\colon \cM(\set{\emptyset_j}\times \ocinterval{0,x})\geq k_1}$ and $X_{\emptyset_j k_1 k_2}=\inf\set{x>0\colon \cM(\set{\emptyset_j}\times \shortset{X_{\emptyset_j k_1}}\times\ocinterval{0,x})\geq k_2}$.
The point measure $\cM$ has the advantage that a value such as $\cM(\set{\emptyset_j}\times(a,b))$ (the number of children of $\emptyset_j$ with edge weights in the interval $(a,b)$) does not reveal information about the number of sibling edges of smaller edge weight.

The Poisson property of the PWIT can be expressed in terms of $\cM$ by saying that, conditional on the restriction $\cM\big\vert_{\set{\emptyset_1,\emptyset_2}\times(0,\infty)^r}$ to the first $r$ generations, the $(r+1)^\st$ generation $\cM\big\vert_{\set{\emptyset_1,\emptyset_2}\times(0,\infty)^{r+1}}$ is formed as a Cox process with intensity $\cM\big\vert_{\set{\emptyset_1,\emptyset_2}\times(0,\infty)^r}\otimes \indicator{x>0} dx$, where $\indicator{x>0}dx$ denotes Lebesgue measure on $(0,\infty)$.

We next rearrange the information contained in $\cM$.
Given $w=\emptyset_j x_1 \dots x_r$, let $\cM^{\sss(w)}=\sum_{w'\colon \cM(\set{ww'})=1}\delta_{w'}$ denote the point measure corresponding to all descendants of $w$ (thus $\cM^{\sss(w)}=0$ if $\cM(\set{w})=0$, and the point measures $\cM^{\sss(w(v))}$, $v\in\tree$, are identically distributed and non-zero)\footnote{In this instance, we assume that $w'$ does not have a $\varnothing_j$ symbol for convenience of notation.}.
Further, write $\cM^{\sss(w)}_{\sss\leq}$ for the restriction of $\cM^{\sss(w)}$ to $\union_{k=0}^\infty \ocinterval{0,1}^k$.
Since luckiness only depends on descendants explored within time $f_n(1)$, it follows that whether or not $v$ is lucky can be determined solely in terms of $\cM^{\sss(w(v))}_{\sss\leq}$.
Indeed, call a point measure $\mathfrak{m}$ on $\union_{k=0}^\infty \ocinterval{0,1}^k$ \emph{lucky} if $\mathfrak{m}(\set{w'\colon T_{w'}\leq f_n(1)}) \geq s_n^2/\epsilon_1$; then $v$ is lucky if and only if $\cM^{\sss(w(v))}_{\sss\leq}$ is lucky.

Define $A^{\sss(j)}$ to be the collection of vertices $w(v)$, $v\in\tree^{\sss(j)}$, that are born before time $T_\lucky^{\sss(j)}$.
That is, for any non-root ancestor $v'$ of $v$, including $v$ itself, it is \emph{not} the case that $T_{v'}>T_{\parent{v'}}+f_n(1)$ and $v'$ is lucky.
To study $A^{\sss(j)}$, we decompose the PWIT according to vertices $v'$ that are born late (i.e., $T_{v'}>T_{\parent{v'}}+f_n(1)$) and keep track of their early explored descendants (i.e., $\cM_{\le}^{\sss(w(v'))}$).

For $w=\emptyset_j x_1\dots x_r$, let $i_1<\dots<i_k$ denote those indices (if any) for which $x_i>1$, and write $w_\ell=\emptyset_j x_1\dots x_{i_\ell}$, $\ell=1,\dotsc,k$.
Set $q(w,\cM)$ to be the sequence $\cM^{\sss(\emptyset_j)}_{\sss\leq} \cM^{\sss(w_1)}_{\sss\leq} \dots \cM^{\sss(w_k)}_{\sss\leq}$.
Define the rearranged point measure
\begin{equation}%\labelPartI{}
\cR =
\int_{\set{\emptyset_1,\emptyset_2}\union\bigunion_{k=1}^\infty \set{\emptyset_1,\emptyset_2}\times(0,\infty)^{k-1}\times(1,\infty)} \delta_{(w,q(w,\cM))} d\cM(w).
\end{equation}
(That is, $\cR$ is a point measure on pairs $(w,q)$ such that $w$ satisfies $w\in\set{\emptyset_1,\emptyset_2}$ or $X_w>1$, and $q$ is a sequence of measures on $\union_{r=0}^\infty \ocinterval{0,1}^r$.
Considering $\cR$ instead of $\cM$ corresponds to partitioning vertices according to their most recent ancestor (if any) having edge weight greater than 1.)

The Poisson property of the PWIT implies that, conditional on the restriction $\cR\big\vert_{\set{(w,q)\colon \abs{w}\leq r}}$, the restriction $\cR\big\vert_{\set{(w,q)\colon \abs{w}=r+1}}$ forms a Cox process with intensity measure
\begin{equation}\labelPartI{cRintensity}
\int_{\set{(w,q)\colon \abs{w}\leq r}}  d\cR(w,q) \int_{\set{w'\colon \abs{ww'}=r}} d\cM_{\le}^{\sss(w)}(w') \left[ \delta_{ww'}\otimes\indicator{x>1}dx \right] \otimes \left[ \delta_q \otimes d\P(\cM_\le^{\sss(\emptyset_j)}\in\cdot) \right],
\end{equation}
where $\delta_{ww'}\otimes\indicator{x>1}dx$ means the image of Lebesgue measure on $(1,\infty)$ under the concatenation mapping $x\mapsto ww'x$.
The formula \eqrefPartI{cRintensity} expresses the fact that every vertex in the $(r+1)^\st$ generation has a parent uniquely written as $ww'$, with $(w,q(w,\cM))$ corresponding to a point mass in $\cR$, $w'$ corresponding to a point mass in the last entry $\cM_{\le}^{\sss(w)}$ of the sequence $q(w,\cM)$, and $r=\abs{w}+\abs{w'}$.

Now, it is easy to verify that $A^{\sss(j)}$ is measurable with respect to the restriction of $\cR$ to pairs $(w,q)$ such that $q=\mathfrak{m}_0\dots \mathfrak{m}_k$ with $\mathfrak{m}_\ell$ not lucky for each $\ell\neq 0$.

Because of the Poisson property of the PWIT, as expressed via $\cR$ in \eqrefPartI{cRintensity}, it follows that, conditional on $A^{\sss(j)}$, the point measure $L=\sum_{w''\notin A^{\sss(j)},\parent{w''}\in A^{\sss(j)}} \delta_{T_{w''}}$ forms a Cox process on $(0,\infty)$.
Furthermore, the first point of $L$ is precisely $T^{\sss(j)}_\lucky$.
To determine the intensity measure of $L$, we note that for a vertex $ww' \in A^{\sss(j)}$, $w''=ww'x$ satisfies $ w'' \not\in A^{\sss(j)}$ if and only if $X_{w''}>1$ and $\cM_\le^{\sss(w'')}$ is lucky.
Furthermore, the condition $T_{w''}=T_{ww'}+f_n(X_{w''})\leq t$ is equivalent to $X_{w''}\leq f_n^{-1}(t-T_{ww'})$.
Using \eqrefPartI{cRintensity}, it follows that the cumulative intensity measure of $L$ on $\ocinterval{0,t}$ is given by
\begin{equation}\labelPartI{SumfninversecR}
\int_{\set{\substack{(w,q)\colon q=\mathfrak{m}_0\dots\mathfrak{m}_k,\\ \mathfrak{m}_\ell\text{ not lucky for any }\ell\neq 0}}} d\cR(w,q) \int_{\set{w'\colon T_{w w'}\leq t}} d\cM_\le^{\sss(w)}(w') \left( f_n^{-1}(t-T_{w w'})-1 \right)^+ \cdot \P(\cM_{\le}^{\sss(\emptyset_j)}\text{ is lucky}).
\end{equation}
The vertices $w w'$ from the integral in \eqrefPartI{SumfninversecR} are in one-to-one correspondence with the vertices $u\in A^{\sss(j)}$ satisfying $T_u\leq t$.
Consequently we may re-write the cumulative intensity as
\begin{equation}\labelPartI{SumfninverseA}
\sum_{u\in A^{\sss(j)}\colon T_u\leq t} \left( f_n^{-1}(t-T_u)-1 \right)^+ \cdot \P(u\text{ is lucky}).
\end{equation}
Finally we note that $\P(v\text{ is lucky})$ times the sum in \eqrefPartI{Sumfninverse} is exactly the sum in \eqrefPartI{SumfninverseA} evaluated at $t=T^{\sss(j)}_\lucky$.
(The vertex $v$ for which $T_v=T^{\sss(j)}_\lucky$ does not contribute to \eqrefPartI{Sumfninverse}.)
The cumulative intensity in \eqrefPartI{SumfninverseA} is a.s.\ continuous as a function of $t$ (since $f_n^{-1}$ is continuous and the jumps at the times $T_u$ are zero).
But for any Cox process with continuous cumulative intensity function and infinite total intensity, it is elementary to verify that when the cumulative intensity is evaluated at the first point of the Cox process, the result is exponential with mean $1$.
This completes the proof.
\end{proof}

\subsection{Volume of the frozen cluster: proof of \refthm{FrozenCluster}~\refitem{FrozenVolume}}
\lbsubsect{VolClusterFr}

To study the frozen cluster, we introduce the frozen intensity measures

\begin{equation}\labelPartI{FrozenIntensity}
d\mu_{n,\fr}^{\sss(j)}(y) = \sum_{v\in\cluster_\fr^{\sss(j)}} \indicator{y\geq 0} \mu_n\bigl(T_\fr^{\sss(j)}-T_v + dy\bigr).
\end{equation}
Recall from \refsect{densityBoundsfn} that the notation $\mu(t_0+dy)$ denotes the translation of the measure $\mu$ by $t_0$; thus \eqrefPartI{FrozenIntensity} means that, for a test function $h\geq 0$,
\begin{equation}\labelPartI{FrozenIntensityTestFunction}
\int h(y) d\mu_{n,\fr}^{\sss(j)}(y) = \sum_{v\in\cluster_\fr^{\sss(j)}} \int_{T_\fr^{\sss(j)}-T_v}^\infty h\left( y-(T_\fr^{\sss(j)}-T_v) \right) d\mu_n(y).
\end{equation}
\begin{lemma}\lblemma{ExpectedUnfrozenChildren}
Almost surely, for $j=1,2$,
\begin{equation}%\labelPartI{ExpectedUnfrozenChildren}
s_n\leq
\int \e^{-\lambda_n y} d\mu_{n,\fr}^{\sss(j)}(y)
\leq s_n+1.
\end{equation}
\end{lemma}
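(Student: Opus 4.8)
The plan is to identify the integral in the statement with precisely the functional whose first upcrossing of the level $s_n$ defines $T_\fr^{\sss(j)}$, and then to read off both inequalities from the elementary jump structure of that functional. Write $g_n(a)=\int_a^\infty \e^{-\lambda_n(y-a)}\,d\mu_n(y)$, so that $g_n(0)=\hat\mu_n(\lambda_n)=1$ by \eqrefPartI{lambdanDefn}, and set $\Phi^{\sss(j)}(t)=\sum_{v\in\BP_t^{\sss(j)}}g_n(t-T_v)$. This $\Phi^{\sss(j)}(t)$ is exactly the quantity inside the braces in \eqrefPartI{TfrDefn}, so $T_\fr^{\sss(j)}=\inf\{t\ge 0\colon \Phi^{\sss(j)}(t)\ge s_n\}$. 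On the other hand, applying \eqrefPartI{FrozenIntensityTestFunction} with $h(y)=\e^{-\lambda_n y}$ and using that $\cluster_\fr^{\sss(j)}=\BP_{T_\fr^{\sss(j)}}^{\sss(j)}$ --- which holds since $R_j(T_\fr^{\sss(j)})=T_\fr^{\sss(j)}$, as $T_\unfr=T_\fr^{\sss(1)}\vee T_\fr^{\sss(2)}\ge T_\fr^{\sss(j)}$ (cf.\ \eqrefPartI{OnOff}) --- yields $\int \e^{-\lambda_n y}\,d\mu_{n,\fr}^{\sss(j)}(y)=\Phi^{\sss(j)}(T_\fr^{\sss(j)})$. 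So it suffices to prove $s_n\le\Phi^{\sss(j)}(T_\fr^{\sss(j)})\le s_n+1$ almost surely.

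I would next record the sample-path regularity of $t\mapsto\Phi^{\sss(j)}(t)$. By \reflemma{ExtendedImpliesWeak}, $f_n(x)\to\infty$ as $x\to\infty$, hence $\sup_x f_n(x)=\infty$ and $\BP_t^{\sss(j)}$ is a.s.\ a finite tree for every finite $t$; therefore only finitely many vertices are born in any bounded time interval, and (since $\mu_n$ is atomless) their birth times are a.s.\ pairwise distinct. Since $\mu_n$ is atomless, $a\mapsto g_n(a)=\e^{\lambda_n a}\int_{(a,\infty)}\e^{-\lambda_n y}\,d\mu_n(y)$ is continuous; thus away from birth times every summand, and hence $\Phi^{\sss(j)}$, varies continuously, while at a birth time $T_w$ the newborn $w$ contributes the fresh summand $g_n(0)=1$ and every pre-existing summand $g_n(\cdot-T_v)$ remains continuous. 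Consequently $\Phi^{\sss(j)}$ is right-continuous with left limits, and each of its jumps is an upward jump of size exactly $1$.

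The one substantive point is that $T_\fr^{\sss(j)}<\infty$ almost surely, i.e.\ that $\Phi^{\sss(j)}$ really does reach the level $s_n$. Here I would use that $M_t:=\e^{-\lambda_n t}\Phi^{\sss(j)}(t)$ is a nonnegative martingale (the intrinsic martingale of the CTBP $\BP^{\sss(j)}$): a direct computation shows that the pathwise drift of $\Phi^{\sss(j)}$ between births cancels, in expectation, against the rate at which fresh unit-valued summands $g_n(0)=1$ appear, so that $M_0=\Phi^{\sss(j)}(0)=g_n(0)=1$ and $M_t$ is a martingale. A standard second-moment estimate, using $\hat\mu_n(2\lambda_n)<\hat\mu_n(\lambda_n)=1$, shows $M_t$ is $L^2$-bounded, hence uniformly integrable, so $M_t\to M_\infty$ a.s.\ with $\E M_\infty=1$. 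Since $\mu_n(0,\infty)=\infty$, every vertex a.s.\ has infinitely many children, the process never dies out, and the branching decomposition $M_\infty=\sum_{|v|=1}\e^{-\lambda_n T_v}M_\infty^{\sss(v)}$ (with the $M_\infty^{\sss(v)}$ i.i.d.\ copies of $M_\infty$) forces $\P(M_\infty=0)\in\{0,1\}$; together with $\E M_\infty=1$ this gives $M_\infty>0$ a.s. Hence $\Phi^{\sss(j)}(t)=\e^{\lambda_n t}M_t\to\infty$ a.s., and in particular $T_\fr^{\sss(j)}<\infty$ a.s.

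Finally, on the full-probability event on which $T_\fr^{\sss(j)}<\infty$ and $\Phi^{\sss(j)}$ is right-continuous with unit jumps, the bounds follow immediately. By definition of the infimum, $\Phi^{\sss(j)}(s)<s_n$ for every $s<T_\fr^{\sss(j)}$, so the left limit obeys $\Phi^{\sss(j)}(T_\fr^{\sss(j)}-)\le s_n$; since the jump at $T_\fr^{\sss(j)}$ is at most $1$, we get $\Phi^{\sss(j)}(T_\fr^{\sss(j)})\le s_n+1$. For the lower bound, right-continuity gives $\Phi^{\sss(j)}(T_\fr^{\sss(j)})=\lim_{k}\Phi^{\sss(j)}(t_k)\ge s_n$ along any sequence $t_k\downarrow T_\fr^{\sss(j)}$ with $\Phi^{\sss(j)}(t_k)\ge s_n$, and such a sequence exists precisely because $T_\fr^{\sss(j)}$ is the infimum of the times at which $\Phi^{\sss(j)}\ge s_n$. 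The argument is identical for $j=1$ and $j=2$. The main obstacle is the non-degeneracy of the intrinsic martingale needed for $T_\fr^{\sss(j)}<\infty$; everything else is bookkeeping with the jump structure of $\Phi^{\sss(j)}$.
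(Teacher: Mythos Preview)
Your proof is correct and follows essentially the same route as the paper's: you identify $\int \e^{-\lambda_n y}\,d\mu_{n,\fr}^{\sss(j)}(y)$ with the functional $\Phi^{\sss(j)}$ evaluated at $T_\fr^{\sss(j)}$, observe that $\Phi^{\sss(j)}$ is continuous except for unit upward jumps at birth times (since $\mu_n$ is atomless and $\hat\mu_n(\lambda_n)=1$), and read off both inequalities from the definition of $T_\fr^{\sss(j)}$ as a first-crossing time. The paper's proof is the same argument compressed into three sentences.

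The one place you go further is in explicitly verifying $T_\fr^{\sss(j)}<\infty$ a.s., which the paper's proof tacitly assumes. Your intrinsic-martingale route is valid in spirit, though the ``standard second-moment estimate'' for $L^2$-boundedness is asserted rather than carried out, and establishing it carefully for this particular Nerman-type martingale takes some work. A shorter path, already available at this point in the paper, is to invoke \eqrefPartI{FreezingFromLucky} and \refprop{LuckyProb}: since each vertex is lucky with positive probability independently, some vertex is lucky a.s., whence $T_\fr^{\sss(j)}<\infty$ a.s.
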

\begin{proof}
By \eqrefPartI{FrozenIntensityTestFunction},
\begin{equation}\labelPartI{FrozenIntensityExponential}
\int \e^{-\lambda_n y} d\mu_{n,\fr}^{\sss(j)}(y)=\sum_{v\in\cluster_\fr^{\sss(j)}} \int_{T_\fr^{\sss(j)}-T_v}^\infty \e^{-\lambda_n\left(y-(T_\fr^{\sss(j)}-T_v)\right)} d\mu_n(y).
\end{equation}
The expression in \eqrefPartI{FrozenIntensityExponential} is the value of the process $\sum_{v\in\BP_t^{\sss(j)}}\int_{t-T_v}^\infty \e^{-\lambda_n(y-(t-T_v))} d\mu_n(y)$ from \refdefn{Freezing}, stopped at $t=T_\fr^{\sss(j)}$ (recall that $\cluster_\fr^{\sss(j)}=\BP_{T_\fr^{\sss(j)}}^{\sss(j)}$).
Since $\mu_n$ has no atoms, this process is continuous in $t$ except for jumps at the birth times, and since the birth times are distinct a.s., the corresponding jump has size $\int_0^\infty \e^{-\lambda_ny}d\mu_n(y)=1$.
By definition, $T_\fr^{\sss(j)}$ is the first time the process in \eqrefPartI{FrozenIntensityExponential} exceeds $s_n$, so it can have value at most $s_n+1$ at that time.
\end{proof}

To prove \refthm{FrozenCluster}~\refitem{FrozenVolume}, we will separate vertices $v\in\cluster_\fr^{\sss(j)}$ according to whether their age at freezing, $T_\fr^{\sss(j)}-T_v$, is large or small.
We then use \reflemma{SumfninverseExponential} and \reflemma{ExpectedUnfrozenChildren}, respectively, to bound the number of such vertices.

\begin{proof}[Proof of \refthm{FrozenCluster}~\refitem{FrozenVolume}]
\reflemma{SumfninverseExponential} and \refprop{LuckyProb} imply that the sum in \eqrefPartI{Sumfninverse} is $O_\P(s_n)$.
By \eqrefPartI{FreezingFromLucky}, any vertex $v$ with $T_{\fr}^{\sss (j)}-T_v \ge f_n(1)+f_n(1+1/s_n)$ satisfies $T^{\sss(j)}_{\mathrm{lucky}}-T_v\ge f_n(1+1/s_n)$ and therefore must contribute at least $1/s_n$ to the sum in \eqrefPartI{Sumfninverse}.
Consequently there can be at most $O_\P(s_n^2)$ vertices of age at least $f_n(1)+f_n(1+1/s_n)$ in $\cluster^{\sss(j)}_\fr$.

For the vertices of small age, recall the definition of $\mu_{n,\fr}^{\sss(j)}$ from \eqrefPartI{FrozenIntensity} and that $\int \e^{-\lambda_n y} d\mu_{n,\fr}^{\sss(j)}(y)\leq s_n+1$ by \reflemma{ExpectedUnfrozenChildren}.
Apply \reflemma{ModerateAgeContribution} with $K$ chosen large enough that $Kf_n(1)\geq f_n(1+1/s_n)+f_n(1)$ for each $n$ (such a $K$ exists since $\lim_{n \to \infty} f_n(1+1/s_n)/f_n(1)=\e$), to see that summands corresponding to vertices of age at most $K f_n(1)$ in $\cluster_\fr^{\sss(j)}$ contribute at least $\eps_K/s_n$ to the integral in \reflemma{ExpectedUnfrozenChildren}.
Hence, there can be at most $s_n(s_n+1)/\epsilon_K$ such vertices.
\end{proof}

For future reference, we now state a lemma showing that most of the mass of the frozen intensity measures $\mu_{n,\fr}^{\sss(j)}$ comes from small times:

\begin{lemma}\lblemma{FrozenVerticesAreYoung}
Let $\delta,\delta'>0$ be given.
Then there exists $K<\infty$ and $n_0 \in \N$ such that, for all $n\ge n_0$,
\begin{equation}%\labelPartI{}
\P\left(
\int \e^{-\lambda_n y} \indicator{\lambda_ny\geq K} d\mu_{n,\fr}^{\sss(j)}(y)
>\delta s_n
\right)\leq \delta'.
\end{equation}
\end{lemma}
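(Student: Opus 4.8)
The plan is to bound the restricted integral by a sum over the frozen vertices, estimate each summand uniformly in the vertex's age, and then use that $\cluster_\fr^{\sss(j)}$ contains only $O_\P(s_n^2)$ vertices. Note that by \reflemma{ExpectedUnfrozenChildren} the full integral $\int\e^{-\lambda_n y}\,d\mu_{n,\fr}^{\sss(j)}(y)$ lies in $[s_n,s_n+1]$, so the lemma says that only a $\delta$-fraction of this mass comes from $y$ with $\lambda_n y\geq K$. Unwinding the translation in the definition \eqrefPartI{FrozenIntensity} of $\mu_{n,\fr}^{\sss(j)}$,
\[
\int \e^{-\lambda_n y}\indicator{\lambda_n y\geq K}\,d\mu_{n,\fr}^{\sss(j)}(y)
= \sum_{v\in\cluster_\fr^{\sss(j)}} \int_0^\infty \e^{-\lambda_n w}\indicator{\lambda_n w\geq K}\,\mu_n\bigl((T_\fr^{\sss(j)}-T_v)+dw\bigr),
\]
so it suffices to control each summand uniformly over the age $t=T_\fr^{\sss(j)}-T_v\geq 0$ and then multiply by $\bigabs{\cluster_\fr^{\sss(j)}}$.

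First I would fix the order in which the constants are chosen. Given $\delta,\delta'>0$, use $\bigabs{\cluster_\fr^{\sss(j)}}\leq\bigabs{\cluster_\fr}=O_\P(s_n^2)$ from \refthm{FrozenCluster}~\refitem{FrozenVolume} to pick $C<\infty$, depending only on $\delta'$, with $\P\bigl(\bigabs{\cluster_\fr^{\sss(j)}}>Cs_n^2\bigr)\leq\delta'$ for every $n$. Then set $\bar\epsilon=\delta/C$ and $\epsilon=\tfrac12\e^{-\gamma}$. By \reflemma{lambdanAsymp}, $\lambda_n f_n(1)\to\e^{-\gamma}$, so for $n$ large we have $\epsilon\leq\lambda_n f_n(1)\leq 1$; this lets us replace $\e^{-\lambda_n w}$ by $\e^{-\epsilon w/f_n(1)}$ and $\indicator{\lambda_n w\geq K}$ by $\indicator{w\geq Kf_n(1)}$, each replacement only enlarging the integrand. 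Now \reflemma{BoundOnContribution}, applied with this $\epsilon$ and $\bar\epsilon$, furnishes a constant $K<\infty$ and an $n_0$ such that for all $n\geq n_0$ and all $t\geq 0$ the shifted integral $\int\e^{-\epsilon w/f_n(1)}\indicator{w\geq Kf_n(1)}\,\mu_n(t+dw)$ is at most $\bar\epsilon/s_n$ (enlarging $K$ does no harm).

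Summing this per-vertex bound over $v\in\cluster_\fr^{\sss(j)}$ shows the full restricted integral is at most $\bigabs{\cluster_\fr^{\sss(j)}}\,\bar\epsilon/s_n$, which on the event $\{\bigabs{\cluster_\fr^{\sss(j)}}\leq Cs_n^2\}$ is at most $C\bar\epsilon s_n=\delta s_n$. Since that event has probability at least $1-\delta'$ for all $n$, the conclusion follows.

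There is no deep obstacle here; the only care needed is bookkeeping. One must choose $C$ first (from $\delta'$), then $\bar\epsilon=\delta/C$, and only then $K$ from \reflemma{BoundOnContribution}; and one must exploit that the estimate in \reflemma{BoundOnContribution} is \emph{uniform} over the shift $t\geq 0$, so that it applies simultaneously to every vertex of $\cluster_\fr^{\sss(j)}$ regardless of its age at freezing. The only analytic input beyond the two cited results is the conversion of the normalization $\lambda_n$ to $1/f_n(1)$ via \reflemma{lambdanAsymp}.
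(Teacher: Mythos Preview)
Your proof is correct and follows essentially the same approach as the paper: expand the frozen intensity as a sum over $v\in\cluster_\fr^{\sss(j)}$, use \reflemma{lambdanAsymp} to replace $\lambda_n$ by $\epsilon/f_n(1)$ with $\epsilon=\tfrac12\e^{-\gamma}$, apply \reflemma{BoundOnContribution} uniformly in the shift, and conclude via $\bigabs{\cluster_\fr^{\sss(j)}}=O_\P(s_n^2)$ from \refthm{FrozenCluster}~\refitem{FrozenVolume}. Your explicit ordering of the constants ($C$ from $\delta'$, then $\bar\epsilon=\delta/C$, then $K$) is a welcome clarification of what the paper leaves implicit.
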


\begin{proof}
Let $\epsilon= \e^{-\gamma}/2$, where $\gamma$ denotes Euler's constant.
Using the definition of $\mu_{n,\fr}^{\sss(j)}$ from \eqrefPartI{FrozenIntensity}, and \reflemma{lambdanAsymp}, we obtain $n_0 \in \N$ such that for all $K<\infty$, and $n\ge n_0$,
\begin{equation}\labelPartI{MassUpperTailFrozenIntensity}
\int \e^{-\lambda_n y} \indicator{\lambda_n y\geq K} d\mu_{n,\fr}^{\sss(j)}(y) \le
\sum_{v \in \cluster_\fr^{\sss(j)}} \int \e^{-\epsilon y/f_n(1)} \indicator{y\ge Kf_n(1)} \mu_n(T_\fr^{\sss(j)}-T_v+dy).
\end{equation}
According to \reflemma{BoundOnContribution}, for any $\epsilon'>0$ we can choose some $K<\infty$ such that, after possibly increasing $n_0$, the right-hand side of \eqrefPartI{MassUpperTailFrozenIntensity} is bounded from above by $\abs{\cluster_\fr^{\sss(j)}} \epsilon'/s_n$. Since $\abs{\cluster_\fr^{\sss(j)}}=O_\P(s_n^2)$ by \refthm{FrozenCluster}~\refitem{FrozenVolume}, the proof is complete.
\end{proof}

\subsection{Emergence of \texorpdfstring{$R$}{R}-lucky vertices: Proof of \reflemma{RluckyBornSoon}}
\lbsubsect{RLuckyBirthProof}

In this section we show how to express $\cluster_t\setminus\cluster_\fr$, $t\geq T_\unfr$, as a suitable union of branching processes.
This representation will be useful in the proof of \reflemma{RluckyBornSoon}.

Consider the immediate children $v\in\boundary\cluster_\fr$ of individuals in the frozen cluster $\cluster_\fr$.
Then, for $t'\geq 0$,
\begin{equation}\labelPartI{UnfrozenUnion}
\cluster_{T_\unfr+t'}\setminus\cluster_\fr = \bigunion_{v\in\boundary\cluster_\fr\colon T_v^\cluster\leq T_\unfr+t'} \set{vw\colon w\in\BP^{\sss(v)}_{t'+T_\unfr-T_v^\cluster}},
\end{equation}
where $\BP^{\sss(v)}$ denotes the branching process of descendants of $v$, re-rooted and time-shifted as in \eqrefPartI{BPvDefinition}.
Furthermore, conditional on $\cluster_\fr$, the children $v\in\boundary\cluster_\fr$ appear according to a Cox process.
Formally, the point measures
\begin{equation}\labelPartI{UnfrozenChildrenCox}
\cP_{n,\unfr}^{\sss (j)}=\sum_{v\in\boundary\cluster_\fr^{(j)}} \delta_{(T_v^\cluster-T_\unfr,\BP^{(v)})}
\end{equation}
form Cox processes with intensities $d\mu_{n,\fr}^{\sss(j)} \otimes d\P(\BP^{\sss(1)}\in\cdot)$, $j=1,2$, where the frozen intensity measures $\mu_{n,\fr}^{\sss (j)}$ were introduced in \eqrefPartI{FrozenIntensity}.

\begin{proof}[Proof of \reflemma{RluckyBornSoon}]
Let $\epsilon>0$ be given.
We first claim that there exists $r>0$ and $C<\infty$ such that $T_{v_{j,r}}^\cluster\leq T_\unfr+Cf_n(1)$ with probability at least $1-\epsilon$ for $n$ sufficiently large.
To prove this, apply \reflemma{ExpectedUnfrozenChildren} and \reflemma{FrozenVerticesAreYoung} (with $\delta=\tfrac{1}{2}$, $\delta'=\eps/2$) to find $K<\infty$ such that
\begin{equation}\labelPartI{munFrozenNotSmall}
\mu_{n,\fr}^{\sss(j)}(0, K/\lambda_n)
\geq
\int\e^{-\lambda_ny}\indicator{\lambda_ny\leq K}d\mu_{n,\fr}^{\sss(j)}(y)
\geq
s_n - \int\e^{-\lambda_ny}\indicator{\lambda_ny> K}d\mu_{n,\fr}^{\sss(j)}(y)
\geq
\tfrac{1}{2}s_n
\end{equation}
with probability at least $1-\tfrac{1}{2}\epsilon$ for $n$ sufficiently large.
Use \reflemma{lambdanAsymp} to choose $C<\infty$ such that $K/\lambda_n\leq Cf_n(1)$ for $n$ large enough.
Conditional on $\cluster_\fr$, each vertex $v\in\boundary\cluster_\fr^{\sss(j)}$ has an independent chance of being $r$-lucky, so the number of $r$-lucky vertices $v\in\boundary\cluster_\fr^{\sss(j)}$ with $T_v\in(T_\unfr, T_\unfr+Cf_n(1))$ is Poisson with mean $\mu_{n,\fr}^{\sss(j)}(0,Cf_n(1))\P(v\text{ is $r$-lucky})$.
By taking $r$ sufficiently small, \refprop{LuckyProb} allows us to make $(\tfrac{1}{2}s_n)\P(v\text{ is $r$-lucky})$ large enough (uniformly in $n$ sufficiently large) so that, by \eqrefPartI{munFrozenNotSmall}, the number of such vertices $v$ will be positive with probability at least $1-\epsilon$.
This proves the claim.

It now suffices to show that $T_{v_{j,R}}^\cluster-T_{v_{j,r}}^\cluster=O_\P(f_n(1))$.
The argument is very similar to the previous one, except that now we have access to at least $r s_n^2$ vertices whose ages are known to be small.

Let $\cL_r^{\sss(j)}$ denote the collection of descendants $v$ of $v_{j,r}$ such that $T_v^\cluster-T_{v_{j,r}}^\cluster\leq f_n(1)$, so that $\abs{\cL_r^{\sss(j)}}\geq rs_n^2$ by definition.
Conditional on $\cL_r^{\sss(j)}$, for every $K' \in (0,\infty)$, the number of children $w$ of vertices $v\in\cL_r^{\sss(j)}$ such that $T_w^\cluster-T_v^\cluster\in(f_n(1),f_n(1+K'/s_n))$ is Poisson with mean at least $(rs_n^2)(K'/s_n)$.
Given $\epsilon>0$, \refprop{LuckyProb} allows us to choose $K'$ large enough, so that at least one such vertex will be $R$-lucky with probability at least $1-\epsilon-o(1)$.
By \refcond{scalingfn}, $f_n(1+K'/s_n)=O(f_n(1))$, and this completes the proof.
\end{proof}

\section{IP and the geometry of the frozen cluster}\lbsubsect{FrozenVsIP}

In this section, we compare the frozen cluster to the IP cluster $\IP^{\sss(j)}(\infty)$ -- the set of all vertices ever invaded in the IP process.
The structure of the IP cluster is encoded in a single infinite backbone and an associated process of maximum weights, with off-backbone branches expressed in terms of Poisson Galton-Watson trees.
See \refprop{IPstructure} below.

The proofs in this section rely on detailed comparisons between the frozen cluster and the part of the IP cluster within distance of order $s_n$ from the root.
Specifically, we show that (a) the freezing time can be effectively bounded by the time $T_{V^{\sss\BB,j}_{\floor{K s_n}}}$ when the first passage exploration process first explores to distance $Ks_n$ along the IP backbone (\reflemma{FreezingByHeight}); (b) the time to complete this exploration is comparable to the largest first passage weight along the path (\reflemma{ExtraTimeToFreeze}); and (c) the likelihood of exploring very long paths that do not belong to the IP cluster is moderate.
Assertions (a) and (b) will allow us to prove \refthm{TfrScaling}, and assertion (c) will be made precise in \reflemma{ExpExplUninvChildrenBound} and \reflemma{NoFastLongPaths} and the proof of \refthm{FrozenCluster}~\refitem{FrozenDiameter}.

For the proof of Theorems~\refPartI{t:TfrScaling} and \refPartI{t:FrozenCluster}~\refitem{FrozenDiameter}, we will obtain bounds on $\BP_T^{\sss(j)}$ for certain random times $T$ satisfying $T\geq T_\fr^{\sss(j)}$ with large probability.
Since $\cluster^{\sss(j)}$ and $\BP^{\sss(j)}$ evolve in the same way until the time $T_\fr^{\sss(j)}$, such bounds will apply \emph{a fortiori} to $\cluster_\fr^{\sss(j)}$, and we write $\BP_\fr^{\sss(j)}=\cluster_\fr^{\sss(j)}=\cluster_{T_\fr^{\sss(j)}}^{\sss(j)}=\BP^{\sss(j)}_{T_\fr^{\sss(j)}}$.

Throughout this section, we assume Conditions~\refPartI{cond:scalingfn}, \refPartI{cond:LowerBoundfn} and \refPartI{cond:boundfnExtended}.
We will reserve the notation $\epsilonCondition, \deltaCondition$ for some fixed choice of the constants in Conditions~\refPartI{cond:LowerBoundfn} and \refPartI{cond:boundfnExtended}, with $\epsilonCondition$ chosen small enough to satisfy both conditions.

\subsection{Structure and scaling of the IP cluster}\lbsubsect{IPstructure}

Our description of $\IP^{\sss(j)}(\infty)$ is based on \cite{AddGriKan12}, which examines the structure of the IP cluster on the PWIT, and the scaling limit results in \cite{AGdHS2008}, which proves similar results for regular trees.
As remarked in \cite{AddGriKan12}, the scaling limit results of \cite{AGdHS2008} can be transferred to the PWIT without difficulty.

To describe the structure of the IP cluster, we first define the {\em backbone:}

\begin{defn}\lbdefn{Backbone}
The \emph{backbone} of the IP cluster $\IP^{\sss(j)}(\infty)$ is the unique infinite oriented path in $\IP^{\sss(j)}(\infty)$ starting at the root.
That is, the backbone is the unique (random) sequence of vertices $V^{\sss\BB,j}_0,V^{\sss\BB,j}_1,\dotsc\in\IP^{\sss(j)}(\infty)$ with $V^{\sss\BB,j}_0=\emptyset_j$ and $\parent{V^{\sss\BB,j}_k}=V^{\sss\BB,j}_{k-1}$ for all $k\in\N$.

The PWIT edge weight between $V^{\sss\BB,j}_{k-1}$ and $V^{\sss\BB,j}_k$ is denoted $X^{\sss\BB,j}_k$.
We define the forward maximum $M_k^{\sss(j)}$ by
\begin{equation}\labelPartI{MkDefinition}
M^{\sss(j)}_k = \sup_{i>k} X_i^{\sss\BB,j}.
\end{equation}

The off-backbone branch at height $k$ means the subtree of $\IP^{\sss(j)}(\infty)$ consisting of vertices that are descendants of $V^{\sss\BB,j}_k$ but not descendants of $V^{\sss\BB,j}_{k+1}$, and is denoted by $\tau_k$.
We consider $\tau_k$ as a rooted labelled tree, but with the edge weights and vertex labels from the PWIT forgotten\footnote{As in the proof of \refprop{LuckyProb}, we should consider instead of $\tau_k$ the set $\tilde{\tau}_k$ where we replace each vertex $v\in\tau_k\setminus\set{V^{\sss\BB,j}_k}$ by an arbitrary label $\ell(v)$ drawn independently from some continuous distribution.
By a slight abuse of notation, we will refer to $\tau$ and $X_v$, $v\in\tau_k\setminus\set{V^{\sss\BB,j}_k}$ instead of $\tilde{\tau}_k$ and $X_{\ell^{-1}(v)}$, $v\in\tilde{\tau}_k$.
This procedure avoids the complication, implicit in our Ulam--Harris notation, that the vertex $v=\emptyset_j k_1 k_2\dotsc k_r\in\tree^{\sss(j)}$ automatically gives information about the number of its siblings with smaller edge weights.}.
\end{defn}

In the notation of \refdefn{Backbone}, the maximum invaded edge weight $M^{\sss(j)}$ from \eqrefPartI{MDefinition} is now $M^{\sss(j)}_0$.
(This amounts to the observation, elementary to verify, that the largest edge weight $M^{\sss(j)}$ must occur as one of the backbone edge weights $X^{\sss\BB,j}_k$.)

\begin{prop}[\cite{AddGriKan12,AGdHS2008}]\lbprop{IPstructure}
The backbone is well-defined, and $M^{\sss(j)}_k>1$ for each $k$, a.s.
Furthermore:
\begin{enumerate}
\item\lbitem{BBmaxAttained}
The maximum in \eqrefPartI{MkDefinition} is attained uniquely, for each $k$, a.s.
Writing $I_k$ for the random height at which the maximum in \eqrefPartI{MkDefinition} is attained, it holds that $I_k=O_\P(k)$ for each $k\geq 1$.
\item\lbitem{BBMarkov}
The sequence $(M^{\sss(j)}_k)_{k=0}^\infty$ is non-increasing and forms a Markov chain with initial distribution $\P(M^{\sss(j)}_0\leq m)=\theta(m)$ and transition mechanism
\begin{equation}\labelPartI{MarkovForwardMax}
\begin{aligned}
\condP{M^{\sss(j)}_{k+1}=m}{M^{\sss(j)}_k=m} &= m(1-\theta(m)), && m>1, \\
\condP{M^{\sss(j)}_{k+1} < m'}{M^{\sss(j)}_k=m} &= \frac{\theta(m')}{\theta(m)}(1-m(1-\theta(m))), && 1\leq m'\leq m.
\end{aligned}
\end{equation}
\item\lbitem{BBscaling}
$M^{\sss(j)}_k=1+\Theta_\P(1/k)$ and indeed $k(M^{\sss(j)}_k-1)$ converges weakly to an exponential distribution with mean $1$ as $k\to\infty$.
\item\lbitem{BBdual}
Conditional on $(M^{\sss(j)}_k)_{k=0}^\infty$, the off-backbone branches $(\tau_k)_{k=0}^\infty$ are distributed as subcritical Poisson Galton--Watson trees with means $M^{\sss(j)}_k(1-\theta(M^{\sss(j)}_k))$, conditionally independent (but not identically distributed) for each $k$.
\item\lbitem{BBweights}
Conditional on $(M^{\sss(j)}_k)_{k=0}^\infty$, the PWIT edge weight $X^{\sss\BB,j}_k$ either (i) equals $M^{\sss(j)}_{k-1}$, if $M^{\sss(j)}_k<M^{\sss(j)}_{k-1}$; or (ii) has the Uniform$[0,M^{\sss(j)}_k]$ distribution, if $M^{\sss(j)}_k=M^{\sss(j)}_{k-1}$.
Furthermore the weights are conditionally independent for each $k$.
\item\lbitem{OffBBweights}
Conditional on $(M^{\sss(j)}_k)_{k=0}^\infty$ and $(\tau_k)_{k=0}^\infty$, the PWIT edge weight of an edge between two vertices of $\tau_k$ has the Uniform$[0,M^{\sss(j)}_k]$ distribution, conditionally independent over the choice of edge and of $k$.
\item\lbitem{OffBBboundaryWeights}
Conditional on $(M^{\sss(j)}_k)_{k=0}^\infty$ and $(\tau_k)_{k=0}^\infty$, the collection of PWIT edge weights between a vertex $v\in\tau_k$ and all child vertices $vi$ for which $vi\notin\tau_k$, forms a Poisson point process of intensity $1$ on the interval $(M^{\sss(j)}_k,\infty)$.
Moreover these Poisson point processes are conditionally independent for every $k$ and every $v\in\tau_k$.
\item\lbitem{IPdiameter}
The part of $\IP^{\sss(j)}(\infty)$ not descended from $V^{\sss\BB,j}_k$ has diameter $O_\P(k)$.
\end{enumerate}
\end{prop}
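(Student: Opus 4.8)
The plan is to derive \refprop{IPstructure} from the structural description of invasion percolation on the PWIT in \cite{AddGriKan12}, together with the near-critical scaling results of \cite{AGdHS2008} (proved there for regular trees, but transferring to the PWIT without change, as noted in \cite{AddGriKan12}), reading off each assertion from that description with the aid of \refprop{PGWtrees}. The first step is to recall the \emph{pond decomposition}: IP on $\tree^{\sss(j)}$ fills a nested sequence of ponds, each ending in an outlet edge; the outlet weights are strictly decreasing, coincide with the successive distinct values of the forward maximum $(M^{\sss(j)}_k)_k$, and are exactly the record values of the backbone weight sequence $(X^{\sss\BB,j}_k)_k$. Inside a pond at level $m$ every invaded edge has weight at most $m$, and the pond is filled by invasion of a PGW$(m)$ tree conditioned to be finite, whose edge weights are Uniform$[0,m]$; this yields \refitem{OffBBweights}, \refitem{OffBBboundaryWeights} and case (ii) of \refitem{BBweights}, while case (i) of \refitem{BBweights} records that an outlet edge carries exactly the previous pond's level. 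The Markov property of $(M^{\sss(j)}_k)_k$ and the kernel \eqrefPartI{MarkovForwardMax} follow from the first-generation decomposition of a PGW$(m)$ tree, just as \refprop{PGWtrees}~\refitem{PGWSurvivalProb}--\refitem{PGWdual} is obtained, and \refitem{BBdual} is the statement that the off-backbone branch at height $k$ is the $\hat m$-dual (in the sense of \refprop{PGWtrees}~\refitem{PGWdual}) of a PGW$(M^{\sss(j)}_k)$ tree; the initial law in \refitem{BBMarkov} and the monotonicity of $(M^{\sss(j)}_k)_k$ are immediate, since $\set{M^{\sss(j)}_0\le m}$ is, up to a null set, the event that the root's component of $\tree^{\sss(j)}$ after deleting all edges of weight exceeding $m$ is infinite.

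Given this Markov description, \refitem{BBscaling} and the bound $I_k=O_\P(k)$ in \refitem{BBmaxAttained} will follow by a short analysis of the chain $(M^{\sss(j)}_k)_k$. Writing $\eps_k=M^{\sss(j)}_k-1$, \refprop{PGWtrees}~\refitem{PGWSurvivalAsymp} turns \eqrefPartI{MarkovForwardMax} into the statement that, uniformly as the current excess $\eps$ tends to $0$, the chain holds for a geometric number of steps of mean $(1+o(1))/\eps$ and then jumps to a new excess distributed as $(1+o(1))$ times a Uniform$[0,\eps]$ variable; this is the embedded jump chain of the pure-jump limit process analysed in \cite[Proposition~3.3 and Theorem~1.6]{AGdHS2008}, which gives that $k\eps_k$ converges weakly to an exponential variable of mean $1$, hence \refitem{BBscaling}, and that the holding time of $(M^{\sss(j)}_k)_k$ at level $M^{\sss(j)}_k$ is geometric with mean of order $1/\eps_k=\Theta_\P(k)$; since this holding time equals $I_k-k$, we obtain $I_k=O_\P(k)$. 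The uniqueness in \refitem{BBmaxAttained} is then elementary from \refitem{BBMarkov} and \refitem{BBweights}: monotonicity of $(M^{\sss(j)}_k)_k$ gives $X^{\sss\BB,j}_i\le M^{\sss(j)}_{i-1}\le M^{\sss(j)}_k$ for every $i>k$, so the supremum in \eqrefPartI{MkDefinition} is at most $M^{\sss(j)}_k$ and, because $M^{\sss(j)}_k>1$ while $M^{\sss(j)}_i\to1$, is attained at the last index $\ell>k$ with $M^{\sss(j)}_{\ell-1}=M^{\sss(j)}_k$, where $X^{\sss\BB,j}_\ell=M^{\sss(j)}_{\ell-1}=M^{\sss(j)}_k$ by case (i) of \refitem{BBweights}; every other $i>k$ yields a strictly smaller weight almost surely, being a Uniform$[0,M^{\sss(j)}_k]$ value if $k<i<\ell$ and at most $M^{\sss(j)}_{i-1}<M^{\sss(j)}_k$ if $i>\ell$.

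For \refitem{IPdiameter}, the part of $\IP^{\sss(j)}(\infty)$ not descended from $V^{\sss\BB,j}_k$ is the union of the off-backbone branches $\tau_0,\dots,\tau_{k-1}$ together with the backbone segment of length $k-1$ joining their roots, so its diameter is at most $k+2\max_{0\le i<k}\mathrm{height}(\tau_i)$; by \refitem{BBdual} each $\tau_i$ is, conditionally on $(M^{\sss(j)}_i)_i$, a subcritical PGW tree, so its height is stochastically dominated by that of a critical Poisson$(1)$ tree, for which $\P(\mathrm{height}\ge h)=O(1/h)$ by Kolmogorov's estimate; a union bound over $i<k$ then gives $\P\big(\max_{0\le i<k}\mathrm{height}(\tau_i)\ge Ak\big)=O(1/A)$ and hence $\max_{0\le i<k}\mathrm{height}(\tau_i)=O_\P(k)$, proving \refitem{IPdiameter}. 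I expect the only genuinely delicate part of the argument to be the transcription in the first step --- matching the pond-decomposition output of \cite{AddGriKan12} and the scaling limit of \cite{AGdHS2008} to the precise conditional statements in parts \refitem{BBMarkov} and \refitem{BBdual}--\refitem{OffBBboundaryWeights} and to the scaling limit \refitem{BBscaling} --- which is conceptually routine but requires care with the conditioning structure; the remaining arguments, for \refitem{BBmaxAttained} and \refitem{IPdiameter}, are short.
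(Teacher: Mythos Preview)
Your proposal is correct and follows essentially the same approach as the paper: both derive the proposition by transcription from \cite{AddGriKan12} (for the structural parts \refitem{BBmaxAttained}, \refitem{BBMarkov}, \refitem{BBdual}, \refitem{BBweights}) and \cite{AGdHS2008} (for the scaling \refitem{BBscaling}), with short direct arguments for \refitem{OffBBweights}, \refitem{OffBBboundaryWeights} from Poisson point process properties and for \refitem{IPdiameter} via Kolmogorov's survival estimate and a union bound --- your argument for \refitem{IPdiameter} is essentially verbatim the paper's. The only noticeable difference is that for \refitem{BBmaxAttained} the paper simply cites Theorems~21 and~30 of \cite{AddGriKan12}, whereas you spell out how uniqueness and $I_k=O_\P(k)$ follow from \refitem{BBMarkov}, \refitem{BBweights} and \refitem{BBscaling}; this is a minor expository addition, not a different route.
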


\begin{proof}
The backbone is well-defined by Corollary~22 in \cite{AddGriKan12}.
The same paper proves \refitem{BBmaxAttained} in Theorems~21 and 30, \refitem{BBMarkov} in Section~3.3, \refitem{BBdual} in Theorem~31, and \refitem{BBweights} in Theorem~3.
It has been observed on the top of page 954 in \cite{AddGriKan12} that the methodology of \cite{AGdHS2008} can be applied to show that \cite[Proposition~3.3]{AGdHS2008} holds for the PWIT, proving \refitem{BBscaling}.

For parts~\refitem{OffBBweights} and \refitem{OffBBboundaryWeights}, notice that the event that $\tau_k$ equals a particular finite tree $\tau$ requires that the children of $V^{\sss\BB,j}_k$ should consist of (i) the child $V^{\sss\BB,j}_{k+1}$ with edge weight consistent with the process $M^{\sss(j)}$; and (ii) other children, and their descendants, joined to $V^{\sss\BB,j}_k$ by edges of weight less than $M^{\sss(j)}_k$, in numbers corresponding to the structure specified by $\tau$.
However, conditioning on $(M^{\sss(j)}_k)_{k=0}^\infty$ and $(\tau_k)_{k=0}^\infty$ does not impose any constraint on the precise value of the edge weights less than $M^{\sss(j)}_k$, nor on the uninvaded edge weights that exceed $M^{\sss(j)}_k$.
Parts~\refitem{OffBBweights} and \refitem{OffBBboundaryWeights} therefore follow from properties of Poisson point processes.

For \refitem{IPdiameter} it suffices to notice that \refitem{BBdual} implies that the
diameter of $\bigcup_{j=0}^{k-1}\tau_j$ is stochastically dominated by $k$ plus the maximum of $k$ extinction times from $k$ independent critical Poisson Galton--Watson branching processes. Since the probability that a critical Poisson Galton--Watson branching process lives to generation $\ell$ is $O(1/\ell)$, the claim follows.
\end{proof}

We next give two lemmas that we will use in the following section to bound expectations of functions of the backbone edge weights $X^{\sss\BB,j}_k$:

\begin{lemma}\lblemma{BackboneWeightsBound}
There is a constant $K<\infty$ such that, for any non-negative measurable function $h$ and any $k\in\N$,
\begin{equation}
\condE{h(X^{\sss\BB,j}_k)}{M^{\sss(j)}_0,\dotsc,M^{\sss(j)}_{k-1}}
\leq
K(M^{\sss(j)}_{k-1} - 1) h(M^{\sss(j)}_{k-1}) + \int_0^1 h(M^{\sss(j)}_{k-1} x)dx.
\end{equation}
\end{lemma}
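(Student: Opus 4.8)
The plan is to condition one step further, on the value of the forward maximum $M^{\sss(j)}_k$, and then to read off the conditional law of $X^{\sss\BB,j}_k$ directly from \refprop{IPstructure}~\refitem{BBweights}. Fix $k\in\N$ and work on the event $\set{M^{\sss(j)}_{k-1}=m}$, recalling that $m>1$ a.s.\ by \refprop{IPstructure}. By the tower property,
\begin{equation}
\condE{h(X^{\sss\BB,j}_k)}{M^{\sss(j)}_0,\dotsc,M^{\sss(j)}_{k-1}}
=
\condE{\condE{h(X^{\sss\BB,j}_k)}{M^{\sss(j)}_0,\dotsc,M^{\sss(j)}_{k}}}{M^{\sss(j)}_0,\dotsc,M^{\sss(j)}_{k-1}}.
\end{equation}
Part~\refitem{BBweights} then says that, on $\set{M^{\sss(j)}_k=m}$, the weight $X^{\sss\BB,j}_k$ has the $\mathrm{Uniform}[0,m]$ law, so the inner expectation equals $\int_0^1 h(mx)\,dx$; while on $\set{M^{\sss(j)}_k<m}$ one has $X^{\sss\BB,j}_k=m$ deterministically, so the inner expectation equals $h(m)$.

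It then remains to average these two cases using the Markov transition in \refprop{IPstructure}~\refitem{BBMarkov}, which gives that the left-hand side equals
\begin{equation}
m\bigl(1-\theta(m)\bigr)\int_0^1 h(mx)\,dx
+\bigl(1-m(1-\theta(m))\bigr)\,h(m).
\end{equation}
The first coefficient $m(1-\theta(m))$ is a probability, hence at most $1$ (equivalently, it equals $\hat m<1$ by \refprop{PGWtrees}~\refitem{PGWdual}), and since $h\geq 0$ we may bound it by $1$. For the second coefficient I would invoke the uniform estimate $1-m(1-\theta(m))=O(m-1)$ over $m\geq 1$ from \refprop{PGWtrees}~\refitem{PGWSurvivalAsymp} to obtain $1-m(1-\theta(m))\leq K(m-1)$ for a finite constant $K$ not depending on $k$, $m$, or $h$. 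Substituting $m=M^{\sss(j)}_{k-1}$ yields the stated inequality.

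I do not anticipate a genuine obstacle here: the lemma is essentially bookkeeping layered on top of the structural description of the invasion cluster. The two points that require a little care are the order of conditioning — one must first condition on $(M^{\sss(j)}_0,\dotsc,M^{\sss(j)}_k)$ in order to apply \refitem{BBweights}, and only afterwards integrate out $M^{\sss(j)}_k$ via \refitem{BBMarkov} — and the fact that the constant implicit in $1-m(1-\theta(m))=O(m-1)$ is uniform over all $m\geq 1$ and in particular does not degrade as $m\downarrow 1$, which is precisely the content of the uniform bounds recorded in \refprop{PGWtrees}~\refitem{PGWSurvivalAsymp}.
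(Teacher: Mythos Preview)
Your proof is correct and follows the same approach as the paper: condition on $M^{\sss(j)}_k$, apply \refprop{IPstructure}~\refitem{BBweights} to split into the two cases, then average using \eqrefPartI{MarkovForwardMax} together with the uniform bound $1-m(1-\theta(m))=O(m-1)$ from \refprop{PGWtrees}~\refitem{PGWSurvivalAsymp}. The paper states this in a single sentence; your write-up simply unpacks the same steps.
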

\begin{proof}
This is immediate from \refprop{IPstructure}~\refitem{BBweights} and the bound $\condP{M^{\sss(j)}_k<M^{\sss(j)}_{k-1}}{M^{\sss(j)}_{k-1}}=1-M^{\sss(j)}_{k-1}(1-\theta(M^{\sss(j)}_{k-1})) = O(M^{\sss(j)}_{k-1}-1)$ from \eqrefPartI{MarkovForwardMax} and \refprop{PGWtrees}~\refitem{PGWSurvivalAsymp}.
\end{proof}
\begin{lemma}\lblemma{MStochasticBound}
Given $m_0 \in (1,\infty)$, there is a constant $K<\infty$ such that, for any $k,k_0 \in \N_0$ with $k>k_0$ and for all $m\in \ocinterval{1,m_0}$, the law of $M^{\sss(j)}_k$ conditional on $M^{\sss(j)}_{k_0}=m$ is stochastically dominated by $m \land (1+\frac{K}{k-k_0}E)$, where $E$ is an exponential random variable with mean $1$.
\end{lemma}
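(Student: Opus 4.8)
The plan is to prove the stochastic domination by bounding the conditional survival function of $M^{\sss(j)}_k$. Since the candidate dominating variable $m\wedge(1+\tfrac{K}{k-k_0}E)$ satisfies $\P\bigl(m\wedge(1+\tfrac{K}{k-k_0}E)>t\bigr)=1$ for $t\le1$, $=\e^{-(t-1)(k-k_0)/K}$ for $1\le t<m$, and $=0$ for $t\ge m$, it suffices to show
\[
\condP{M^{\sss(j)}_k>t}{M^{\sss(j)}_{k_0}=m}\ \le\ \e^{-(t-1)(k-k_0)/K}\qquad\text{for }1<t<m\le m_0,
\]
the case $t\ge m$ being automatic because $(M^{\sss(j)}_l)_l$ is non-increasing (\refprop{IPstructure}~\refitem{BBMarkov}), so $M^{\sss(j)}_k\le M^{\sss(j)}_{k_0}=m$, and the case $t\le1$ being trivial since $M^{\sss(j)}_k>1$ a.s.

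The heart of the argument is a one-step estimate that is uniform in the current state: I claim there is $\delta=\delta(m_0)>0$ with
\[
\condP{M^{\sss(j)}_{l+1}\le t}{M^{\sss(j)}_l=m'}\ \ge\ \delta\,(t-1)\qquad\text{whenever }1<t<m'\le m_0 .
\]
By \refprop{IPstructure}~\refitem{BBMarkov}, the left-hand side is at least $\condP{M^{\sss(j)}_{l+1}<t}{M^{\sss(j)}_l=m'}=\tfrac{\theta(t)}{\theta(m')}\bigl(1-m'(1-\theta(m'))\bigr)$. Now \refprop{PGWtrees}~\refitem{PGWSurvivalAsymp} gives $\theta(s)\sim2(s-1)$ and $1-s(1-\theta(s))\sim s-1$ as $s\downarrow1$, while $\theta$ is continuous and both quantities are strictly positive on $(1,m_0]$ (the second by \refprop{PGWtrees}~\refitem{PGWdual}, as $1-s(1-\theta(s))=1-\hat s>0$); hence there are constants $0<c_1,c_2\le C_2<\infty$, depending only on $m_0$, with $\theta(s)\ge c_2(s-1)$, $\theta(s)\le C_2(s-1)$ and $1-s(1-\theta(s))\ge c_1(s-1)$ for all $s\in(1,m_0]$. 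Substituting, the factor $(m'-1)$ cancels and we obtain $\tfrac{\theta(t)}{\theta(m')}\bigl(1-m'(1-\theta(m'))\bigr)\ge\tfrac{c_1c_2}{C_2}(t-1)$, so $\delta=c_1c_2/C_2$ works (and $\delta(t-1)\le1$ automatically, being a lower bound for a probability).

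To finish I would iterate. Fix $1<t<m\le m_0$, set $K=1/\delta$, and note that on $\{M^{\sss(j)}_l>t\}$ one has $t<M^{\sss(j)}_l\le m_0$, so the one-step estimate applies and gives $\condP{M^{\sss(j)}_{l+1}>t}{M^{\sss(j)}_l}\le 1-\delta(t-1)$ there. Conditioning successively on $M^{\sss(j)}_{k-1},\dots,M^{\sss(j)}_{k_0}$ and using the Markov property,
\[
\condP{M^{\sss(j)}_k>t}{M^{\sss(j)}_{k_0}=m}\ \le\ \bigl(1-\delta(t-1)\bigr)^{k-k_0}\ \le\ \e^{-\delta(t-1)(k-k_0)}\ =\ \e^{-(t-1)(k-k_0)/K},
\]
as required. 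The step I expect to be the main obstacle is precisely the uniform-in-$m'$ one-step estimate: a naive bound on $\condP{M^{\sss(j)}_{l+1}\le t}{M^{\sss(j)}_l=m'}$ degenerates as $m'\downarrow1$, and what saves the day is the exact cancellation of $(m'-1)$ between the denominator $\theta(m')$ and the linear-in-$(m'-1)$ ``downward-jump probability'' $1-m'(1-\theta(m'))$, visible only through the precise transition kernel of \refprop{IPstructure}~\refitem{BBMarkov} together with the sharp asymptotics of \refprop{PGWtrees}~\refitem{PGWSurvivalAsymp}; everything else is routine.
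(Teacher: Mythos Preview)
Your proof is correct and rests on the same one-step estimate as the paper's: both establish that $\condP{M^{\sss(j)}_{l+1}\le t}{M^{\sss(j)}_l=m'}\ge\delta(t-1)$ uniformly in $1<t<m'\le m_0$, via the transition kernel in \refprop{IPstructure}~\refitem{BBMarkov} and the asymptotics of \refprop{PGWtrees}~\refitem{PGWSurvivalAsymp}. The difference lies only in the iteration. The paper couples $M^{\sss(j)}_{l+1}$ to $M^{\sss(j)}_l\wedge(1+KE')$ at each step and then uses the identity $\min\bigl(\tfrac{1}{k-k_0}E'',E'\bigr)\overset{d}{=}\tfrac{1}{k+1-k_0}E$ for independent exponentials to advance the induction. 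You instead bound the survival function directly, obtaining $\condP{M^{\sss(j)}_k>t}{M^{\sss(j)}_{k_0}=m}\le(1-\delta(t-1))^{k-k_0}\le\e^{-(t-1)(k-k_0)/K}$ by repeated conditioning. Your route is slightly more elementary, avoiding the coupling and the exponential-minimum trick; the paper's route is perhaps more conceptual, building the dominating variable step by step. Either way the content is the same.
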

\begin{proof}
Use \eqrefPartI{MarkovForwardMax} and \eqrefPartI{thetaAsymptotics} to obtain $K \in (0,\infty)$ such that, for all $k\in \N_0$, $1<m'\le m\le m_0$,
\begin{equation}%\labelPartI{}
\condP{M^{\sss(j)}_{k+1} < m'}{M^{\sss(j)}_{k}=m} = \theta(m') \frac{1-m(1-\theta(m))}{\theta(m)} \geq \frac{m'-1}{K}.
\end{equation}
Moreover, for $1\le m'\le m\le m_0$,
\begin{equation}%\labelPartI{}
\P(m\land (1+KE) < m') = 1-\e^{-(m'-1)/K'} \leq \frac{m'-1}{K}.
\end{equation}
Hence, for all $k \in \N_0$, $1\le m'\le m\le m_0$,
\begin{equation}\labelPartI{eq:MStochBdBase}
\condP{M^{\sss(j)}_{k+1} < m'}{M^{\sss(j)}_{k}=m}\ge \P\left(m\land (1+KE) < m'\right)
\end{equation}
and the statement of the lemma is proved for $k=k_0+1$.
To establish the result for general $k>k_0$, we use an induction over $k$.
According to \eqrefPartI{eq:MStochBdBase}, we can couple $M_{k+1}^{\sss (j)}$ given $M_k^{\sss (j)}$ to an exponential random variable $E'$ of mean $1$ that is independent of $M_k^{\sss (j)}$ and satisfies $M_{k+1}^{\sss (j)} \le M_k^{\sss (j)} \land (1+K E')$ on $\set{M_k^{\sss (j)} \le m_0}$.
Using the Markov property of $(M_k^{\sss (j)})_k$ from \refprop{IPstructure}~\refitem{BBMarkov},
\begin{align}
\condP{M^{\sss(j)}_{k+1} < m'}{M^{\sss(j)}_{k_0}=m} \ge \E\Big( \condP{M^{\sss(j)}_{k} \land (1+K E') < m'}{M^{\sss(j)}_{k}}\Big|M_{k_0}^{\sss (j)} =m\Big).
\end{align}
By the induction hypothesis, the distribution of $M_k^{\sss (j)}$ conditional on $M_{k_0}^{\sss(j)}=m$ is stochastically dominated by $m \land (1+\frac{K}{k-k_0} E'')$ for an exponential random variable $E''$ of mean $1$ which can be chosen independent of $E'$.
Since $x \mapsto \P(x \land (1+K E') < m')$ is non-increasing, we obtain
\begin{align}
\condP{M^{\sss(j)}_{k+1} < m'}{M^{\sss(j)}_{k_0}=m} \ge \P\Big( m \land \Big(1+ \frac{K}{k-k_0} E''\Big) \land (1+K E') < m' \Big).
\end{align}
Since $E'$ and $E''$ are independent, $\big(\frac{1}{k-k_0} E''\big) \land E'$ is equal in distribution to $\frac{1}{k+1-k_0} E$ for an exponential random variable $E$ of mean $1$ and the proof is complete.
\end{proof}

\subsection{First passage times and the IP cluster: Proof of \refthm{TfrScaling}}\lbsubsect{TfrScalingPf}

For notational convenience, given a constant $K<\infty$ to be fixed below, we will abbreviate
\begin{equation}\labelPartI{Tabbreviation}
T=T_{V^{\sss\BB,j}_{\floor{Ks_n}}}.
\end{equation}
for the remainder of this section.
We begin with the following preliminary lemma:
\begin{lemma}\lblemma{SumofUniforms}
Let $(U_i)_{i \in \N}$ be an i.i.d.\ sequence of uniform random variables on $(0,1)$.
There exist $\delta>0$ and $n_0 \in \N$ such that
\begin{equation}%\labelPartI{}
\P\Big(\sum_{i=1}^{\floor{s_n}} f_n(mU_i)\leq f_n(m)\Big)\geq \delta \qquad \text{for all }m \in \cointerval{1,\infty}, n \ge n_0.
\end{equation}
\end{lemma}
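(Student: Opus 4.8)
The plan is to reduce the statement to a probability estimate for a sum of powers of uniform variables, and then prove that estimate by conditioning on the maximal uniform being slightly bounded away from $1$. Abbreviate $N=\floor{s_n}$ and split the sum according to whether the argument is small, writing $\sum_{i=1}^{N}f_n(mU_i)=(\mathrm{I})+(\mathrm{II})$, where $(\mathrm{I})$ collects the indices $i$ with $mU_i<1-\deltaCondition$ and $(\mathrm{II})$ the remaining ones. For $(\mathrm{I})$: since $f_n$ is increasing, each term is at most $f_n(1-\deltaCondition)\leq (1-\deltaCondition)^{\epsilonCondition s_n}f_n(1)$ by \reflemma{ExtendedImpliesWeak} (used with $x=1-\deltaCondition\le x'=1$), and as there are at most $N\le s_n$ terms and $s_n(1-\deltaCondition)^{\epsilonCondition s_n}\to 0$, we obtain $(\mathrm{I})\le \tfrac12 f_n(1)\le \tfrac12 f_n(m)$ for every $m\ge 1$ once $n$ is large. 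For $(\mathrm{II})$: each such index satisfies $1-\deltaCondition\le mU_i\le m$, so \reflemma{ExtendedImpliesWeak} (with $x=mU_i\le x'=m$) gives $f_n(mU_i)\le (mU_i/m)^{\epsilonCondition s_n}f_n(m)=U_i^{\epsilonCondition s_n}f_n(m)$, hence $(\mathrm{II})\le f_n(m)\sum_{i=1}^{N}U_i^{\epsilonCondition s_n}$. Therefore it suffices to produce a fixed $\delta>0$ with $\P\big(\sum_{i=1}^{N}U_i^{\epsilonCondition s_n}\le \tfrac12\big)\ge\delta$ for all large $n$; we may moreover assume $\epsilonCondition\le 1$, since the inequality in \reflemma{ExtendedImpliesWeak} is only weakened if $\epsilonCondition$ is replaced by $\min(\epsilonCondition,1)$ (the base $x/x'$ there is at most $1$).

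The key observation is that applying Markov's inequality directly to $\sum_{i=1}^N U_i^{\epsilonCondition s_n}$ fails when $\epsilonCondition<1$, because its mean is $N/(\epsilonCondition s_n+1)\to 1/\epsilonCondition>1$. Instead I would condition on $\{U^\ast\le t_n\}$, where $U^\ast=\max_{i\le N}U_i$ and $t_n=(\epsilonCondition/4)^{1/(\epsilonCondition s_n)}\in(0,1)$. Conditionally on this event the $U_i$ are i.i.d.\ uniform on $(0,t_n)$, so writing $U_i=t_nW_i$ with $W_i$ i.i.d.\ uniform on $(0,1)$ we get $\condE{\sum_{i=1}^{N}U_i^{\epsilonCondition s_n}}{U^\ast\le t_n}=t_n^{\epsilonCondition s_n}\,N/(\epsilonCondition s_n+1)\le (\epsilonCondition/4)\cdot(1/\epsilonCondition)=\tfrac14$, whence $\condP{\sum_{i=1}^{N}U_i^{\epsilonCondition s_n}>\tfrac12}{U^\ast\le t_n}\le\tfrac12$ by Markov. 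Since $\P(U^\ast\le t_n)=t_n^{N}=(\epsilonCondition/4)^{N/(\epsilonCondition s_n)}\to(\epsilonCondition/4)^{1/\epsilonCondition}>0$ (using $N/s_n\to 1$), this yields $\P\big(\sum_{i=1}^{N}U_i^{\epsilonCondition s_n}\le\tfrac12\big)\ge\tfrac14(\epsilonCondition/4)^{1/\epsilonCondition}$ for all large $n$, so we may take $\delta=\tfrac18(\epsilonCondition/4)^{1/\epsilonCondition}$. Combining with the first paragraph: on the event $\{\sum_{i=1}^N U_i^{\epsilonCondition s_n}\le\tfrac12\}$ one has $\sum_{i=1}^{N}f_n(mU_i)\le \tfrac12 f_n(m)+\tfrac12 f_n(m)=f_n(m)$, uniformly in $m\ge 1$, which gives the lemma.

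The only genuinely non-routine point is this conditioning step: one cannot bound the sum of powers by its mean outright, and must spend a probability bounded away from $0$ to push $U^\ast$ a distance of order $1/s_n$ below $1$, which simultaneously shrinks the conditional mean to a constant below $\tfrac12$ and keeps the conditioning event of non-vanishing probability. The remaining ingredients — the two applications of \reflemma{ExtendedImpliesWeak} and the elementary limit $s_n(1-\deltaCondition)^{\epsilonCondition s_n}\to 0$ — are routine.
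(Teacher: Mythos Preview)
Your proof is correct and follows essentially the same strategy as the paper: condition on the event that all $U_i$ lie below a threshold of the form $c^{1/s_n}$ (your $t_n=(\epsilonCondition/4)^{1/(\epsilonCondition s_n)}$ corresponds exactly to the paper's $u^{1/s_n}$ with $u^{\epsilonCondition}\le\tfrac14\epsilonCondition$), then apply Markov's inequality to the conditionally shrunk mean. The only organizational difference is that you perform the small-argument/large-argument split $(\mathrm{I})/(\mathrm{II})$ explicitly and reduce to a sum of pure powers $\sum U_i^{\epsilonCondition s_n}$ before conditioning, whereas the paper conditions first and absorbs the splitting into the integral bound of \reflemma{Integralfn}; both routes amount to the same estimate.
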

\begin{proof}
Choose $u\in(0,1)$ such that $u^{\epsilonCondition}\leq\tfrac{1}{4}\epsilonCondition$, where $\epsilonCondition$ was fixed below \refcond{boundfnExtended}.
Using the inequality $\P(U_i\leq u^{1/s_n}\forall i \le \floor{s_n})\ge u$, we can bound
$\P(\sum_{i=1}^{\floor{s_n}} f_n(mU_i)\leq f_n(m))\geq u\condP{\sum_{i=1}^{\floor{s_n}} f_n(mU_i)\leq f_n(m)}{U_i\leq u^{1/s_n}\forall i \le \floor{s_n}}$.
By \reflemma{Integralfn}, for sufficiently large $n$ and for all $m\ge 1$,
\begin{align}
&\condE{\textstyle{\sum_{i=1}^{\floor{s_n}}} f_n(mU_i) }{U_i\leq u^{1/s_n}\forall i \le \floor{s_n}}
\notag\\
&\quad\leq
s_n \int_0^1 f_n(u^{1/s_n}mx)dx
\leq f_n(m) \Big( \frac{u^{\epsilon_0}}{\epsilon_0} + \frac{f_n(1)}{f_n(m)} \frac{(1-\delta_0)^{\epsilon_0 s_n}s_n}{u^{1/s_n} m}\Big)
\leq
(\tfrac{1}{4}+o(1))f_n(m).
\end{align}
By Markov's inequality, $\condP{\sum_{i=1}^{\floor{s_n}} f_n(mU_i)> f_n(m)}{U_i\leq u^{1/s_n}\forall i\le \floor{s_n}}\leq (\tfrac{1}{4}+o(1))\leq \tfrac{1}{2}$, so that $\condP{\sum_{i=1}^{\floor{s_n}} f_n(mU_i)\leq f_n(m)}{U_i\leq u^{1/s_n}\forall i\le \floor{s_n}}\geq\tfrac{1}{2}$ for all $n$ sufficiently large.
We may therefore take $\delta=u/2$.
\end{proof}

\begin{lemma}\lblemma{FreezingByHeight}
Given $\delta>0$, there exist $K<\infty$ and $n_0 \in \N$ such that $T_\fr^{\sss(j)}\leq T_{V^{\sss\BB,j}_{\floor{K s_n}}}$ with probability at least $1-\delta$ for all $n \ge n_0$.
\end{lemma}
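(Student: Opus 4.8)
The plan is to verify the freezing criterion \eqrefPartI{TfrDefn} directly at the time $T=T_{V^{\sss\BB,j}_{\floor{Ks_n}}}$, by showing that the exponentially discounted expected number of future offspring accumulated by $\BP^{\sss(j)}_T$ already exceeds $s_n$ once $K$ is large. Writing $\bar\mu_n(a)=\int_a^\infty\e^{-\lambda_n(y-a)}d\mu_n(y)$ for the contribution of a vertex of age $a$, the sum appearing in \eqrefPartI{TfrDefn} at time $T$ equals $\sum_{v\in\BP^{\sss(j)}_T}\bar\mu_n(T-T_v)$, and it suffices to bound this below by $s_n$ with probability $1-\delta$. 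I would retain only the vertices lying in the off-backbone branches $\tau_k$ of $\IP^{\sss(j)}(\infty)$ (\refdefn{Backbone}) for heights $k$ in a range $\mathcal K$ chosen so that the age $a_k:=T-T_{V^{\sss\BB,j}_k}$ of the backbone vertex $V^{\sss\BB,j}_k$ satisfies $C_\ast f_n(1)\le a_k\le Kf_n(1)$, with $k$ of order at least $s_n$. A vertex $v\in\tau_k$ at FPP-distance at most $a_k$ from $V^{\sss\BB,j}_k$ then lies in $\BP^{\sss(j)}_T$ and has age at most $a_k$, hence contributes at least $\bar\mu_n(a_k)$ to the freezing sum.

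The next step is to condition on a high-probability event controlling the backbone. Using \refprop{IPstructure}~\refitem{BBscaling}, the monotonicity of $(M^{\sss(j)}_k)_k$ and \refitem{BBMarkov}, one arranges that $M^{\sss(j)}_k-1\in[\eta/(Ks_n),\,C/s_n]$ for all $k\in\mathcal K$; then $f_n(M^{\sss(j)}_k)\le C_\ast f_n(1)$ for large $n$ by \refcond{scalingfn}, and the off-branch offspring means $M^{\sss(j)}_k(1-\theta(M^{\sss(j)}_k))$ lie in $[1-C'/s_n,\,1-\eta'/(Ks_n)]$ by \refprop{PGWtrees}~\refitem{PGWSurvivalAsymp}. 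One also checks, since $a_k=\sum_{i>k}^{\floor{Ks_n}}f_n(X^{\sss\BB,j}_i)$ is a sum of increments that are bounded on the late part of the backbone (the backbone weights that exceed $1+\epsilon$ being $\Op(1)$ in number), that $\abs{\mathcal K}$ is of order $Ks_n$. Conditionally on $(M^{\sss(j)}_k)_k$ and the backbone edge weights, the branches $\tau_k$ together with their uniform internal edge weights (\refprop{IPstructure}~\refitem{BBdual},~\refitem{OffBBweights}) are independent across $k\in\mathcal K$. For each such $k$ I would apply \reflemma{SumofUniforms} — a path of $\floor{s_n}$ branch edges has FPP-length at most $f_n(M^{\sss(j)}_k)\le C_\ast f_n(1)$ with probability at least a fixed $\delta_\ast>0$ — to get that the conditional expectation $\E[X_k]$ of the number $X_k$ of vertices of $\tau_k$ that lie within graph distance $s_n$ and FPP-distance $C_\ast f_n(1)$ of $V^{\sss\BB,j}_k$ is of the same order as $\E\abs{\tau_k^{\sss(\le s_n)}}$, namely of order $s_n$. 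Combining \reflemma{ReverseMarkov} with \refprop{PGWtrees}~\refitem{GWTotalProg-LargeConst}--\refitem{GWCut} (and the stochastic monotonicity of $\abs{\tau^{\sss(\le s_n)}}$ in the offspring mean), I would obtain, uniformly over $k\in\mathcal K$ on the conditioning event, $\E[X_k]=\Theta(s_n)$ and $\E[X_k^2]=O(s_n^3)$.

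The conclusion then follows from a first- and second-moment estimate for $Z=\sum_{k\in\mathcal K}\bar\mu_n(a_k)X_k$, a sum of conditionally independent terms. Splitting $\mathcal K$ by dyadic age scales $a_k\asymp 2^\ell C_\ast f_n(1)$, there are of order $2^\ell C_\ast s_n$ heights at scale $\ell$, each with $\bar\mu_n(a_k)\asymp 1/(2^\ell C_\ast s_n)$, so $\sum_{k\in\mathcal K}\bar\mu_n(a_k)\asymp\log(K/C_\ast)$ and hence $\E Z\gtrsim\log(K/C_\ast)\,s_n$, while $\sum_{k\in\mathcal K}\bar\mu_n(a_k)^2\asymp 1/(C_\ast s_n)$ gives $\Var(Z)=O(s_n^2/C_\ast)$. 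Chebyshev's inequality then yields $Z\ge\tfrac12\E Z\ge s_n$ (for $K$ large, since $\log K\to\infty$) with conditional probability $1-O(1/\log^2K)$; together with the probability of the good backbone event this produces $T^{\sss(j)}_\fr\le T$ with probability $1-\delta$ once $K$ is large. The main obstacle is precisely the estimate $\bar\mu_n(af_n(1))\asymp 1/(as_n)$ for large constants $a$ that makes the dyadic sum diverge: \reflemma{ModerateAgeContribution} only supplies the much weaker lower bound $\bar\mu_n(af_n(1))\ge\epsilon_a/s_n$ with $\epsilon_a$ decaying exponentially in $a$, and with that bound $\E Z$ would stay of order $s_n$ instead of growing in $K$. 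Obtaining the correct polynomial decay requires complementing the density upper bound of \reflemma{munDensityBounded} with a matching lower bound on the density of $\mu_n$ just above $f_n(1)$, which in turn rests on an upper bound for $x\tfrac{d}{dx}\log f_n(x)$ when $x$ is slightly larger than $1$ of the kind that \refcond{scalingfn} provides. A secondary difficulty is the uniform bookkeeping, over the $\Theta(Ks_n)$ heights in $\mathcal K$, relating the backbone ages $a_k$, the FPP-distances inside the branches, and the random branch means.
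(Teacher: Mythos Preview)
Your approach is workable but takes a substantially harder path than the paper's. The paper does not verify the freezing criterion directly at $T=T_{V^{\sss\BB,j}_{\floor{Ks_n}}}$. Instead it leverages the already-proven volume bound \refthm{FrozenCluster}~\refitem{FrozenVolume}: choose $K_1$ so that $\abs{\cluster_\fr^{\sss(j)}}\le K_1 s_n^2$ with probability $1-\delta/4$, and then show that $\abs{\BP_T^{\sss(j)}}>K_1 s_n^2$ with high probability, which forces $T_\fr^{\sss(j)}\le T$. The second step is a one-success argument rather than a second-moment one: for each $k$ in a window of order $s_n$, the branch $\tau_k$ contains $\Theta(s_n^2)$ vertices within FPP-distance $f_n(M^{\sss(j)}_k)$ of $V^{\sss\BB,j}_k$ with probability $\gtrsim 1/s_n$ (via \refprop{PGWtrees}~\refitem{GWTotalProg-LargeConst}--\refitem{GWCut} and \reflemma{SumofUniforms}); these events are conditionally independent across $k$; a further high-probability event guarantees the maximal remaining backbone weight is traversed after the window, so that any successful branch is entirely contained in $\BP_T^{\sss(j)}$. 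This completely bypasses the polynomial decay of $\bar\mu_n$, the dyadic age decomposition, and the variance estimates. What your route buys is independence from \refthm{FrozenCluster}~\refitem{FrozenVolume}; what the paper's route buys is a dramatically shorter argument.

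On your self-identified obstacles and two smaller points. The polynomial lower bound $\bar\mu_n(af_n(1))\gtrsim 1/(as_n)$ does follow from \refcond{scalingfn}: the integrated statement $s_n\,\mu_n\bigl(af_n(1),(a{+}1)f_n(1)\bigr)\to\log(1{+}1/a)$ together with $\lambda_nf_n(1)\to\e^{-\gamma}$ is all that is needed, so that obstacle is not fatal. However, the step ``age at most $a_k$, hence contributes at least $\bar\mu_n(a_k)$'' tacitly assumes $\bar\mu_n$ is non-increasing, which is not granted; you should instead apply the polynomial scaling directly on the interval $[a_k-C_\ast f_n(1),a_k]$. Also, $\E[X_k]=\Theta(s_n)$ and $\E[X_k^2]=O(s_n^3)$ are correct but do not come from \refprop{PGWtrees}~\refitem{GWTotalProg-LargeConst}--\refitem{GWCut} plus \reflemma{ReverseMarkov}; they follow from the generation-sum identities for $\abs{\tau^{\sss(\le s_n)}}$ in a Poisson Galton--Watson tree with mean $\le 1$, and it is precisely the truncation at height $s_n$ that keeps the second moment uniformly $O(s_n^3)$ regardless of how close $\hat m_k$ is to $1$.
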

\begin{proof}
By \refthm{FrozenCluster}~\refitem{FrozenVolume} there exists $K_1$ such that $\P(\abs{\BP_\fr^{\sss(j)}} > K_1 s_n^2) <\delta/4$ for large $n$.
Recall from \eqrefPartI{Tabbreviation} that we write $T=T_{V^{\sss\BB,j}_{\floor{Ks_n}}}$.
We now show that there is a $K<\infty$ such that $\BP_T^{\sss (j)}$ contains more than $K_1 s_n^2$ vertices with probability at least $1-3\delta/4$ for large $n$.
This will then prove the lemma.
Set
\begin{equation}%\labelPartI{}
\mathcal{A}=\bigl\{M^{\sss(j)}_{\floor{s_n}} \leq 1+K_2/s_n\bigr\},
\end{equation}
where $K_2<\infty$ is chosen using \refprop{IPstructure}~\refitem{BBscaling} so that $\P(\mathcal{A}) > 1- \delta/4$ for large $n$.
By monotonicity, on $\mathcal{A}$, $M_k^{\sss (j)} \le 1+K_2/s_n$ for all $k \ge \floor{s_n}$.

Let $k \ge \floor{s_n}$ and write $\tau'_k\subset\tau_k$ for those vertices in the $k^\th$ off-backbone branch that lie within distance $s_n$ of $V^{\sss\BB,j}_k$.
By \refprop{IPstructure}~\refitem{BBdual}, given $M_k^{\sss(j)}$, $\tau_k$ is a Poisson Galton--Watson tree with mean $\hat{M}_k^{\sss(j)}=M_k^{\sss(j)}(1-\theta(M_k^{\sss(j)}))$.
When $M_k^{\sss(j)} \in \ocinterval{1,1+K_2/s_n}$, then \eqrefPartI{hatmandm} yields $K_3$ such that $\hat{M}_k^{\sss(j)} \in \cointerval{1-K_3/s_n,1}$ for sufficiently large $n$.
Hence, \refprop{PGWtrees}~\refitem{GWTotalProg-LargeConst} and \refitem{GWCut} yield $\delta_1>0$ such that $\P\condparenthesesreversed{\abs{\tau'_k}\geq s_n^2}{M^{\sss(j)}_k\leq 1+K_1/s_n} \geq\delta_1/s_n$ for sufficiently large $n$.

Consider $v\in\tau'_k$.
By \refprop{IPstructure}~\refitem{OffBBweights}, conditional on $M^{\sss(j)}_k$, the path from $V^{\sss\BB,j}_k$ to $v$ consists of at most $s_n$ edges whose PWIT edge weights $X_{wj}$ are independent and uniformly distributed on $[0,M^{\sss(j)}_k]$.
Write $\tau''_k$ for the collection of vertices $v\in\tau'_k$ such that $T_v-T_{V^{\sss\BB,j}_k} \leq f_n(M^{\sss(j)}_k)$.
By \reflemma{SumofUniforms}, there exists $\delta_2>0$ such that $\condP{v\in\tau''_k}{v\in\tau'_k, M_k^{\sss (j)}}\geq\delta_2$, so that \reflemma{ReverseMarkov} implies that $\condP{\abs{\tau''_k}\geq \tfrac{1}{2}\delta_2 s_n^2}{M^{\sss(j)}_k} \geq (\tfrac{1}{2}\delta_2)(\delta_1/s_n)$ on $\mathcal{A}$.

By \refprop{IPstructure}~\refitem{BBdual} and \refitem{OffBBweights}, the off-backbone branches $\tau_k,\tau'_k,\tau''_k$ are conditionally independent for different $k$ given $(M^{\sss(j)}_k)_{k=0}^\infty$.
Therefore, conditional on $(M^{\sss(j)}_k)_{k=0}^\infty$ and the event $\mathcal{A}$, we have for each $k\geq \floor{s_n}$ an independent chance, bounded below by $\tfrac{1}{2}\delta_2\delta_1/s_n$, that $\abs{\tau''_k}\geq \tfrac{1}{2}\delta_2 s_n^2$.
It follows that, by taking $K_2$ sufficiently large, we have $\condP{\sum_{k=\floor{s_n}+1}^{\floor{K_2 s_n}} \abs{\tau''_k} > K_1 s_n^2}{\mathcal{A}} > 1-\delta/4$.

Finally, let $\mathcal{A}'$ be the event that the unique edge with weight $M^{\sss(j)}_{\floor{K_2 s_n}}$ occurs at height at most $Ks_n$.
By \refprop{IPstructure}~\refitem{BBmaxAttained}, we may choose $K>K_2$ large enough that $\P(\mathcal{A}')> 1-\delta/4$.

Whenever $\mathcal{A}'$ occurs, it follows that $T-T_{V^{\sss\BB,j}_k} \geq f_n(M^{\sss(j)}_k)$ for each $k\leq K_2 s_n$, since the collection of first passage weights $f_n(X^{\sss\BB,j}_{k+1}),\dotsc,f_n(X^{\sss\BB,j}_{\floor{Ks_n}})$ along the path from $V^{\sss\BB,j}_k$ to $V^{\sss\BB,j}_{\floor{Ks_n}}$ must contain the edge weight $f_n(M^{\sss(j)}_k)$.
In particular, for any $v\in\tau''_k$ with $k\leq K_2 s_n$, we have $T_v\leq f_n(M^{\sss(j)}_k)+T_{V^{\sss\BB,j}_k} \leq T$ and therefore $v\in\BP^{\sss(j)}_T$.
Thus
\begin{align}
\P(T < T_\fr^{\sss(j)})
&\leq
\P\left( \abs{\BP_\fr^{\sss(j)}} > K_1 s_n^2 \right) + \P\left( T < T_\fr^{\sss(j)}\text{ and}\abs{\BP_\fr^{\sss(j)}} \leq K_1 s_n^2 \right)
\notag\\&
<
\delta/4 + \P\left( \abs{\BP_T^{\sss(j)}} \leq K_1 s_n^2 \right)
\notag\\&
\leq
\delta/4 + \P(\mathcal{A}^c)+\P((\mathcal{A}')^c)+\condP{\mathcal{A}'\cap \big\{\abs{\BP_T^{\sss(j)}} \leq K_1 s_n^2\big\}}{\mathcal{A}}
\notag\\&
<
\delta/4+\delta/4+\delta/4+\condP{\textstyle{\sum_{k=\floor{s_n}+1}^{\floor{K_2 s_n}}} \abs{\tau''_k} \le K_1 s_n^2}{\mathcal{A}}
<\delta
.\qedhere
\end{align}
\end{proof}
\begin{lemma}\lblemma{ExtraTimeToFreeze}
Given $m_0 \in (1,\infty)$ and $K<\infty$, there is a constant $K''<\infty$ such that
\begin{equation}%\labelPartI{}
\condE{T_{V^{\sss\BB,j}_{\floor{Ks_n}}}-T_{V^{\sss\BB,j}_{k_0}}}{M^{\sss(j)}_{k_0}=m}\leq K''f_n(m)
\end{equation}
for all $k_0\leq \floor{Ks_n}$ and all $1\leq m\leq m_0$.
\end{lemma}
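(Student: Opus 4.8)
The plan is to bound $T_{V^{\sss\BB,j}_{\floor{Ks_n}}}-T_{V^{\sss\BB,j}_{k_0}}$ by summing the first passage edge weights $f_n(X^{\sss\BB,j}_k)$ along the backbone from height $k_0+1$ to $\floor{Ks_n}$, and to control each term using the explicit law of the backbone weights from \refprop{IPstructure}~\refitem{BBweights}. Indeed, by \eqrefPartI{TvDefinition} and the structure of the backbone,
\begin{equation}
T_{V^{\sss\BB,j}_{\floor{Ks_n}}}-T_{V^{\sss\BB,j}_{k_0}}=\sum_{k=k_0+1}^{\floor{Ks_n}} f_n\bigl(X^{\sss\BB,j}_k\bigr),
\end{equation}
so it suffices to estimate $\condE{f_n(X^{\sss\BB,j}_k)}{M^{\sss(j)}_{k_0}=m}$ for each $k$ with $k_0<k\le\floor{Ks_n}$ and sum over $k$.

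First I would condition on $M^{\sss(j)}_{k-1}$ and apply \reflemma{BackboneWeightsBound} with $h=f_n$, giving
\begin{equation}
\condE{f_n(X^{\sss\BB,j}_k)}{M^{\sss(j)}_0,\dotsc,M^{\sss(j)}_{k-1}}\le K\bigl(M^{\sss(j)}_{k-1}-1\bigr)f_n\bigl(M^{\sss(j)}_{k-1}\bigr)+\int_0^1 f_n\bigl(M^{\sss(j)}_{k-1}x\bigr)\,dx.
\end{equation}
The integral is handled by \reflemma{Integralfn} (with $a=1$ and $m$ there equal to $M^{\sss(j)}_{k-1}$), which bounds it by $f_n(1)(1-\deltaCondition)^{\epsilonCondition s_n}/M^{\sss(j)}_{k-1}+f_n(M^{\sss(j)}_{k-1})/(\epsilonCondition s_n)$; since $(1-\deltaCondition)^{\epsilonCondition s_n}\to 0$ and $M^{\sss(j)}_{k-1}\ge 1$, for large $n$ both terms are at most a constant times $f_n(M^{\sss(j)}_{k-1})/s_n$ plus a negligible $o(f_n(1)/s_n)$ correction. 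Together with the first term, we get $\condE{f_n(X^{\sss\BB,j}_k)}{M^{\sss(j)}_{k-1}}\le C\bigl[(M^{\sss(j)}_{k-1}-1)+1/s_n\bigr]f_n\bigl(M^{\sss(j)}_{k-1}\bigr)$ for a constant $C$. Summing over $k$ from $k_0+1$ to $\floor{Ks_n}$ and taking expectations conditional on $M^{\sss(j)}_{k_0}=m$, it remains to show
\begin{equation}
\sum_{k=k_0+1}^{\floor{Ks_n}}\condE{\Bigl[(M^{\sss(j)}_{k-1}-1)+\tfrac{1}{s_n}\Bigr]f_n\bigl(M^{\sss(j)}_{k-1}\bigr)}{M^{\sss(j)}_{k_0}=m}\le K''f_n(m)/C.
\end{equation}

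Here I would use the stochastic domination from \reflemma{MStochasticBound}: conditional on $M^{\sss(j)}_{k_0}=m\le m_0$, the variable $M^{\sss(j)}_{k-1}$ is stochastically dominated by $m\wedge(1+\tfrac{K}{k-1-k_0}E)$. Since $M\mapsto f_n(M)$ and $M\mapsto (M-1)f_n(M)$ are nondecreasing on $[1,m_0]$, each summand is bounded by the corresponding expectation for the dominating variable. Using \reflemma{ExtendedImpliesWeak}, $f_n(M)\le (M/1)^{\text{(power)}}f_n(1)\le (M)^{s_n/\epsilonCondition}f_n(1)$... more usefully, $f_n(1+t)\le f_n(m)$ once $1+t\le m$, and for $1+t\in[1,m]$ with $t=O(1/s_n)$ we have $f_n(1+t)/f_n(1)$ bounded; combined with \refcond{scalingfn}-type control ($f_n(1+x/s_n)/f_n(1)\to e^x$) this shows $\condE{f_n(m\wedge(1+\tfrac{K}{j}E))}\le C' f_n(m)$ uniformly, with an extra exponentially-small-in-$j/s_n$ gain coming from $\P(1+\tfrac{K}{j}E>1+c)$ for $c$ of order $1$. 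The factor $(M^{\sss(j)}_{k-1}-1)+1/s_n$ contributes $O(1/\max(j,s_n))$ in expectation after the domination (since $\E[E/j]=1/j$), and the sum $\sum_{j=1}^{\floor{Ks_n}}\bigl(\tfrac{1}{j}+\tfrac{1}{s_n}\bigr)$ is $O(\log s_n)+O(K)=O(s_n)$ — but this only gives $O(s_n)f_n(m)$, which is too weak. The key refinement, and the main obstacle, is that the product structure forces us to track $f_n(M^{\sss(j)}_{k-1})$ and $(M^{\sss(j)}_{k-1}-1)$ \emph{jointly}: when $M^{\sss(j)}_{k-1}$ is close to $1$ (which happens for $k-1-k_0\gg s_n$, contributing most terms), the factor $f_n(M^{\sss(j)}_{k-1})$ is close to $f_n(1)$ while $(M^{\sss(j)}_{k-1}-1)$ is of order $1/(k-1-k_0)$, so the summand is $O(f_n(1)/(k-1-k_0))$ and these sum to $O(f_n(1)\log(Ks_n))$; one then needs $f_n(1)\log s_n=O(f_n(m))$, which holds trivially since $f_n(m)\ge f_n(1)$ only up to the $\log s_n$, so in fact one must exploit that $f_n$ grows at least like a large power of $s_n$ near $1$ via \reflemma{ExtendedImpliesWeak}/\refcond{boundfnExtended}, giving $f_n(1+c/s_n)\ge c' f_n(1)$ and more importantly ensuring the contribution from small $j$ (where $M^{\sss(j)}_{k-1}$ can be as large as $m$) is $\sum_{j=1}^{O(s_n)}\tfrac{1}{j}\condE{(M-1)f_n(M)}=O(1)\cdot f_n(m)$ after the $E/j$ averaging kills the sum. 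I expect the careful bookkeeping to combine to $K''f_n(m)$, with the crucial point being that the number of "large $M$" terms is $O(s_n)$ but each carries a $1/j$ weight, so their total is $O(\log s_n)f_n(m_0)$-scale — and one absorbs the $\log s_n$ into $f_n(m)/f_n(1)\ge$ a large power of $s_n$ when $m>1$, while for $m$ within $O(1/s_n)$ of $1$ the bound $T_{V^{\sss\BB,j}_{\floor{Ks_n}}}-T_{V^{\sss\BB,j}_{k_0}}=O_\P(s_n f_n(1))$ is anyway consistent with $f_n(m)\asymp f_n(1)$ provided one checks the constant; this last reconciliation is the delicate step and should be done by splitting the sum at $j=s_n$ and estimating the two ranges separately as above.
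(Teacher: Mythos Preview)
Your approach matches the paper's exactly through the first half: decompose $T_{V^{\sss\BB,j}_{\floor{Ks_n}}}-T_{V^{\sss\BB,j}_{k_0}}=\sum_k f_n(X^{\sss\BB,j}_k)$, apply \reflemma{BackboneWeightsBound} and \reflemma{Integralfn}, and reduce to bounding
\[
\sum_{k=k_0+1}^{\floor{Ks_n}}\condE{\Bigl[(M^{\sss(j)}_{k-1}-1)+\tfrac{1}{s_n}\Bigr]f_n\bigl(M^{\sss(j)}_{k-1}\bigr)}{M^{\sss(j)}_{k_0}=m}.
\]
The $1/s_n$ piece is harmless (at most $Ks_n$ terms, each $\le f_n(m)/s_n$). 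Your difficulty is with the $(M^{\sss(j)}_{k-1}-1)f_n(M^{\sss(j)}_{k-1})$ piece, and your proposed fix has a genuine gap.

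Bounding $f_n(M^{\sss(j)}_{k-1})\le f_n(m)$ and using $\E[M^{\sss(j)}_{k-1}-1]=O(1/(k-1-k_0))$ from \reflemma{MStochasticBound} indeed gives only $O(\log s_n)f_n(m)$. Your attempt to absorb the $\log s_n$ into $f_n(m)/f_n(1)$ cannot work, because the lemma demands uniformity over $m\in[1,m_0]$; for $m=1+O(1/s_n)$ one has $f_n(m)\asymp f_n(1)$ and there is nothing to absorb. Your fallback claim that the total is ``$O_\P(s_nf_n(1))$ anyway consistent with $f_n(m)$'' is false: $s_nf_n(1)$ is not $O(f_n(m))$ when $m\approx 1$.

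The missing idea (and what the paper does) is to keep the \emph{precise ratio} from \reflemma{ExtendedImpliesWeak}: write $f_n(M^{\sss(j)}_{k-1})\le f_n(m)\bigl(M^{\sss(j)}_{k-1}/m\bigr)^{\epsilonCondition s_n}$ rather than discarding the power. Then split on $\{M^{\sss(j)}_{k-1}\le 1+1/s_n\}$ versus its complement. On the first event the summand is at most $f_n(m)/s_n$, summing to $O(f_n(m))$. On the complement, after applying \reflemma{MStochasticBound} and summing the resulting geometric series $\sum_i i e^{-iy/K'}$ exactly, one is left with
\[
O(f_n(m))\int_{1/s_n}^{m-1} s_n\Bigl(\frac{1+y}{m}\Bigr)^{\epsilonCondition s_n}dy,
\]
and this integral is $O(1)$ \emph{uniformly in $m\in[1,m_0]$}, since $\int_0^{m-1}((1+y)/m)^{\epsilonCondition s_n}dy\le m/(\epsilonCondition s_n+1)$. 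The power $(\,\cdot\,/m)^{\epsilonCondition s_n}$ is exactly what converts the logarithmic divergence into a bounded quantity; without it the argument does not close.
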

\begin{proof}
We have $T_{V^{\sss\BB,j}_{\floor{Ks_n}}}
-T_{V^{\sss\BB,j}_{k_0}}=\sum_{k=k_0+1}^{\floor{Ks_n}} f_n(X^{\sss\BB,j}_k)$.
Using first \reflemma{BackboneWeightsBound} to find a constant $K_1<\infty$ and then \reflemma{Integralfn} and \reflemma{ExtendedImpliesWeak} and $M_{k-1}^{\sss (j)} \le M_{k_0}^{\sss (j)}$, we estimate uniformly in $m$
\begin{align}
&\sum_{k=k_0+1}^{\floor{Ks_n}}\condE{f_n(X^{\sss\BB,j}_k)}{M^{\sss(j)}_{k_0}=m}
\notag\\&\quad
\leq
\sum_{k=k_0+1}^{\floor{Ks_n}} \condE{K_1\left(M^{\sss(j)}_{k-1}-1 \right)f_n(M^{\sss(j)}_{k-1})+\int_0^1 f_n\big(M^{\sss(j)}_{k-1} x\big)dx}{M^{\sss(j)}_{k_0}=m}
\notag\\ & \quad
\leq
\sum_{k=k_0+1}^{\floor{Ks_n}} \condE{K_1\left( M^{\sss(j)}_{k-1}-1 \right) f_n(M^{\sss(j)}_{k-1})+\frac{f_n(M^{\sss(j)}_{k-1})+o(f_n(1))}{\epsilonCondition s_n}}{M^{\sss(j)}_{k_0}=m}
\notag\\&\quad
\leq
O(f_n(m))\sum_{k=k_0+1}^{\floor{Ks_n}} \condE{ \left(M^{\sss(j)}_{k-1}-1 \right) \left( \frac{M^{\sss(j)}_{k-1}}{m} \right)^{\epsilonCondition s_n} }{M^{\sss(j)}_{k_0}=m}
+O(f_n(m))+o(f_n(1))
.
\labelPartI{BackboneSumBound}
\end{align}
In the sum on the right-hand side of \eqrefPartI{BackboneSumBound}, the term $k=k_0+1$ contributes at most $m_0-1$ since $m \le m_0$.
The sum of contributions from the events $\set{M^{\sss(j)}_{k-1}\leq 1+1/s_n}$, $k=k_0+2,\dotsc,\floor{Ks_n}$, is $O(1)$ since $M_{k-1}^{\sss (j)}\le M_{k_0}^{\sss (j)}$.
We conclude that
\begin{align}
\labelPartI{BackboneSumBound2}
&\sum_{k=k_0+1}^{\floor{Ks_n}}\condE{f_n(X^{\sss\BB,j}_k)}{M^{\sss(j)}_{k_0}=m}
\\&\quad
\leq
O(f_n(m))+O(f_n(m))\sum_{k=k_0+2}^{\floor{Ks_n}} \condE{ \indicator{M^{\sss(j)}_{k-1}\geq 1+1/s_n} \left(M^{\sss(j)}_{k-1}-1 \right) \left( \frac{M^{\sss(j)}_{k-1}}{m} \right)^{\epsilonCondition s_n} }{M^{\sss(j)}_{k_0}=m}
\notag
.
\end{align}
In the expectation on the right-hand side of \eqrefPartI{BackboneSumBound2}, the integrand is increasing in $M^{\sss(j)}_{k-1}$, so as an upper bound we can use \reflemma{MStochasticBound} to replace $M^{\sss(j)}_{k-1}$ by $m \wedge (1+\frac{K'}{k-1-k_0}E)$ for some $K'<\infty$.
Letting $i=k-1-k_0$,
\begin{align}
&\sum_{k=k_0+1}^{\floor{Ks_n}}\condE{f_n(X^{\sss\BB,j}_k)}{M^{\sss(j)}_{k_0}=m} - O(f_n(m))
\notag\\&\quad
\leq
O(f_n(m))\sum_{i=1}^\infty \left[ (m-1) \P\left(\frac{K' E}{i} > m-1\right) + \E\left( \indicator{1/s_n \leq \frac{K' E}{i} \leq m-1} \frac{K' E}{i} \Big(\frac{1+K' E/i}{m}\Big)^{\epsilonCondition s_n} \right) \right]
\notag\\&\quad
\leq
O(f_n(m))\sum_{i=1}^\infty \left( (m-1)\e^{-(m-1)i/K'}+\int_{1/s_n}^{m-1} y \left( \frac{1+y}{m} \right)^{\epsilonCondition s_n} \frac{i}{K'} \e^{-iy/K'}dy \right)
\notag\\&\quad
\leq
O(f_n(m))\left( \frac{m-1}{1-\e^{-(m-1)/K'}} + \int_{1/s_n}^{m-1} \frac{y/K'}{(1-\e^{-y/K'})^2} \left( \frac{1+y}{m} \right)^{\epsilonCondition s_n} dy \right)
.
\labelPartI{BackboneSumBound3}
\end{align}
The ratio $y/(1-\e^{-y/K'})$ is uniformly bounded for $y\leq m_0-1$ and $1/(1-\e^{-y/K'}) \le 1/(1-\e^{-1/(K's_n)}) \sim K' s_n$ for $y \ge1/s_n$.
We conclude that
\begin{align}
&\sum_{k=k_0+1}^{\floor{Ks_n}}\condE{f_n(X^{\sss\BB,j}_k)}{M^{\sss(j)}_{k_0}=m}
\leq
O(f_n(m))+O(f_n(m))\int_{1/s_n}^{m-1} s_n \left( \frac{1+y}{m} \right)^{\epsilonCondition s_n} dy
,
\end{align}
and this upper bound is $O(f_n(m))$ as required.
\end{proof}

\reflemma{FreezingByHeight} and \reflemma{ExtraTimeToFreeze} now allow us to prove \refthm{TfrScaling}:

\begin{proof}[Proof of \refthm{TfrScaling}]
We need to show that $f_n^{-1}(T_\fr^{\sss(j)})\convp M^{\sss(j)}_0$.
We begin by arguing that $f_n^{-1}(T_\fr^{\sss(j)}) \geq M^{\sss(j)}_0$ {\whpdot}  Indeed, the subgraph $\BP^{\sss(j)}_{f_n(M^{\sss(j)}_0)^-}$ explored strictly before time $f_n(M^{\sss(j)}_0)$ is a subgraph of those vertices connected to the root by paths that use only edges of PWIT edge weight strictly less than $M^{\sss(j)}_0$.
This latter subgraph is a random subgraph of $\tree^{\sss(j)}$ that is finite a.s.\ (by \refprop{IPstructure}~\refitem{BBmaxAttained} and \refitem{BBdual}) and independent of $n$.
In particular, the subgraph $\BP^{\sss(j)}_{f_n(M^{\sss(j)}_0)^-}$ has size $O_\P(1)$.
To show that $T_\fr^{\sss(j)}\geq f_n(M^{\sss(j)}_0)$ \whp, it therefore suffices to show that $\abs{\cluster^{\sss(j)}_\fr}\convp\infty$.

Indeed, we shall show that $\cluster_\fr^{\sss(j)}$ must contain at least of order $s_n$ vertices.
To this end, it suffices by \reflemma{ExpectedUnfrozenChildren} to show that any vertex $v\in\cluster_\fr^{\sss(j)}$ can contribute at most $O(1)$ to $\int \e^{-\lambda_ny}d\mu_{n,\fr}^{\sss(j)}(y)$.
We use \reflemma{munDensityBounded} to compute
\begin{equation}\labelPartI{ContributionUniformBound}
\int \e^{-\lambda_ny} \mu_n(t-T_v+dy) \leq 1 + \int_{f_n(1)}^\infty \e^{-\lambda_ny} \frac{dy}{\epsilonCondition s_n f_n(1)}
\end{equation}
for $n$ large enough, uniformly in the age $t-T_v$, and the upper bound in \eqrefPartI{ContributionUniformBound} is $O(1)$ by \reflemma{lambdanAsymp}.
We have therefore shown that $f_n^{-1}(T_\fr^{\sss(j)})\geq M^{\sss(j)}_0$ {\whpdot}

For the corresponding upper bound, let $\epsilon>0$ be given and choose $m_0<\infty$ large enough that $\P(M^{\sss(j)}_0>m_0)<\epsilon$.
By \reflemma{FreezingByHeight} it is enough to show that for any fixed $K<\infty$, $T_{V^{\sss\BB,j}_{\floor{K s_n}}}\leq f_n(M^{\sss(j)}_0+\epsilon)$ \whp\ on $\set{M^{\sss(j)}_0\leq m_0}$.
Use \reflemma{ExtraTimeToFreeze} with $k_0=0$ together with Markov's inequality to find a constant $K'<\infty$ such that
\begin{equation}%\labelPartI{}
\P\left( T_{V^{\sss\BB,j}_{\floor{K s_n}}} \ge f_n(M^{\sss(j)}_0+\epsilon), M^{\sss(j)}_0\leq m_0\right)   \le K' \E\left( \frac{f_n(M_0^{\sss (j)})}{f_n(M_0^{\sss (j)}+\epsilon)} \indicator{M^{\sss(j)}_0\leq m_0} \right).
\end{equation}
By \reflemma{ExtendedImpliesWeak}, $f_n(M^{\sss(j)}_0+\epsilon)/f_n(M^{\sss(j)}_0) \convas\infty$, and the dominated convergence theorem completes the proof.
\end{proof}

\subsection{Diameter of the frozen cluster: Proof of \refthm{FrozenCluster}~\refitem{FrozenDiameter}}\lbsubsect{FrozenDiameterPf}

We next give an upper bound on the number of boundary edges of the IP cluster for which a distant descendant is explored before the freezing time.
Here we say that a vertex $v$ is explored by time $t$ if $v \in \BP_t$.
Vertices that belong to the IP cluster $\IP(\infty)$ are called {\em invaded.}
Write $N(k,K)$ for the number of vertices of $\boundary\tau_k$ for which some descendant at least $\ceiling{s_n}$ generations away belongs to $\BP_T^{\sss(j)}$.
(Recall from \eqrefPartI{Tabbreviation} the abbreviation $T=T_{V_{\floor{Ks_n}}^{\sss \BB,j}}$, and recall also that $\tau_k$ denotes the tree consisting of vertices that are descendants of $V^{\sss\BB,j}_k$ but not descendants of $V^{\sss\BB,j}_{k+1}$.)

\begin{lemma}\lblemma{ExpExplUninvChildrenBound}
Given $\delta>0$, $K<\infty$ and $m_0 \in (1,\infty)$, there is a constant $K'<\infty$ such that for all $k\in \N_0$
\begin{equation}\labelPartI{NkKbound}
\E\condparenthesesreversed{N(k,K)}{1+\delta/s_n\leq M^{\sss(j)}_k \leq m_0}\leq K'/s_n.
\end{equation}
\end{lemma}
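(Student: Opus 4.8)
The plan is to condition on the IP cluster and the backbone edge weights, reduce $\E\bigl(N(k,K)\mid\cdot\bigr)$ via the Mecke (Campbell) formula to an integral over the Poisson process of off‑backbone boundary edges, estimate the integrand by a Galton--Watson comparison for the probability that FPP on a fresh PWIT descends $\ceiling{s_n}$ generations within a given time, and finally extract the factor $1/s_n$ using the sharp density bound on $\mu_n$ together with \reflemma{ExtraTimeToFreeze} and the size estimate for the off‑backbone branches.

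\textbf{Step 1 (reduction).} Abbreviate $T=T_{V^{\sss\BB,j}_{\floor{Ks_n}}}$ and $\Delta=T-T_{V^{\sss\BB,j}_k}=\sum_{i=k+1}^{\floor{Ks_n}}f_n(X^{\sss\BB,j}_i)$; since $N(k,K)=0$ when $k>\floor{Ks_n}$ we may assume $k\le\floor{Ks_n}$, and by \refprop{IPstructure}~\refitem{BBmaxAttained} we may, outside an event of arbitrarily small probability, assume $\Delta\ge f_n(M^{\sss(j)}_k)$. Conditionally on $(M^{\sss(j)}_\ell)_\ell,(\tau_\ell)_\ell$ and the backbone weights $(X^{\sss\BB,j}_i)_i$, by \refprop{IPstructure}~\refitem{OffBBboundaryWeights} the pairs $(X_w,\BP^{\sss(w)})$ for $w\in\boundary\tau_k$ (the boundary edge weight, and the re‑rooted, time‑shifted branching process of descendants of $w$) form a Poisson process with intensity $|\tau_k|$ times Lebesgue measure on $(M^{\sss(j)}_k,\infty)$ times the law of $\BP^{\sss(1)}$, the subtrees below distinct boundary vertices being independent fresh PWITs. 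Such a $w$ is counted by $N(k,K)$ only if $\BP^{\sss(w)}$ has a vertex at generation $\ge\ceiling{s_n}$ by time $T-T_w\le\Delta-f_n(X_w)$. Hence, writing $q_n(b)=\P(\BP^{\sss(1)}_b\text{ reaches generation }\ge\ceiling{s_n})$, the Mecke formula gives
\[
\E\bigl(N(k,K)\mid (M^{\sss(j)}_\ell)_\ell,(\tau_\ell)_\ell,(X^{\sss\BB,j}_i)_i\bigr)\ \le\ |\tau_k|\int_{M^{\sss(j)}_k}^\infty q_n\bigl((\Delta-f_n(x))^+\bigr)\,dx .
\]

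\textbf{Step 2 ($q_n$).} If FPP on the PWIT reaches generation $\ceiling{s_n}$ within time $b$, every edge along that path has PWIT‑weight $\le f_n^{-1}(b)$, so the path lies inside the subtree of edges of weight $\le f_n^{-1}(b)$, which is a Poisson Galton--Watson tree of offspring mean $\beta=f_n^{-1}(b)$. Thus $q_n(b)\le\P_\beta(|\tau^{\sss(\ge s_n)}|\ge1)\le\theta(\beta\vee1)+C/s_n$ for large $n$, uniformly in $b$: for $\beta>1$ this is \refprop{PGWtrees}~\refitem{PGWSurvivalTime}, and for $\beta\le1$ it is monotonicity in the offspring mean together with the critical estimate $\P_1(|\tau^{\sss(\ge s_n)}|\ge1)=O(1/s_n)$. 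By \refprop{PGWtrees}~\refitem{PGWSurvivalAsymp}, $\theta(m)=O(m-1)$ uniformly in $m\ge1$, so $q_n(b)\le C\bigl((f_n^{-1}(b)-1)^++1/s_n\bigr)$.

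\textbf{Step 3 (extracting $1/s_n$) and the main obstacle.} Substituting $y=f_n(x)$ (so $dx=d\mu_n(y)$) and applying \reflemma{munDensityBound} in the sharp form $d\mu_n(y)\le(\epsilon_0 s_n)^{-1}y^{-1}f_n^{-1}(y)\,dy$, and noting $f_n^{-1}(y)/y\le 2m_0/f_n(M^{\sss(j)}_k)$ on the relevant range (since $f_n^{-1}(\Delta)\le 2m_0$ for large $n$ by \refcond{boundfnExtended}), the integral is $\le\frac{2m_0}{\epsilon_0 s_n f_n(M^{\sss(j)}_k)}\int_0^{\Delta-f_n(M^{\sss(j)}_k)}q_n(u)\,du$; an integration by parts using $q_n(u)\le C(f_n^{-1}(u)-1)^++C/s_n$ and $f_n'(x)\ge\epsilon_1 s_n f_n(x)/x$ bounds this last integral by $\frac{C}{s_n}(\Delta-f_n(M^{\sss(j)}_k))\bigl[(f_n^{-1}(\Delta)-1)+1/s_n\bigr]$, with $f_n^{-1}(\Delta)-1\le (M^{\sss(j)}_k-1)+O\bigl(\log(1+\Delta/f_n(M^{\sss(j)}_k))/s_n\bigr)$. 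Taking expectations given $M^{\sss(j)}_k=m$, and using that $|\tau_k|$ is conditionally independent of the backbone weights with $\E(|\tau_k|\mid M^{\sss(j)}_k=m)=1/(1-\hat m)$, where $\hat m=m(1-\theta(m))$ satisfies $1-\hat m=\Theta(m-1)$ (\refprop{IPstructure}~\refitem{BBdual}, \refprop{PGWtrees}~\refitem{PGWSurvivalAsymp}), together with $\E(\Delta\mid M^{\sss(j)}_k=m)\le K''f_n(m)$ (\reflemma{ExtraTimeToFreeze}), the bound collapses: the factor $1/(1-\hat m)=\Theta(1/(m-1))$ cancels the $(m-1)$ produced by $\theta$, and the leftover $1/s_n$‑terms are absorbed using $m-1\ge\delta/s_n$, giving $\E(N(k,K)\mid M^{\sss(j)}_k=m)\le K'/s_n$ for $m\in[1+\delta/s_n,m_0]$, hence \eqrefPartI{NkKbound}. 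The delicate point — and the step I expect to require the real work — is that a first‑moment control of $\Delta$ alone (with the crude bound $\Delta\le Ks_n f_n(M^{\sss(j)}_k)$) leaves a spurious $\log s_n$ factor in the large‑$\Delta$ regime; to remove it one needs $\Delta/f_n(M^{\sss(j)}_k)$ to have a uniformly (in $n$) bounded second (or exponential) moment conditional on $M^{\sss(j)}_k$. This is true because only the $O_{\P}(1)$ backbone edges whose PWIT‑weight lies within $O(1/s_n)$ of the forward maximum contribute non‑negligibly to $\Delta$, and it can be established by a second‑moment variant of the argument proving \reflemma{ExtraTimeToFreeze}, using \reflemma{MStochasticBound} and \reflemma{BackboneWeightsBound}.
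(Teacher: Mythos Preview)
Your Steps~1--2 are essentially the paper's reduction: condition on the IP data, use the Poisson structure of uninvaded boundary edges, and bound the chance that FPP from a fresh vertex reaches depth $\ceiling{s_n}$ by a Poisson Galton--Watson survival probability $O((f_n^{-1}(\Delta)-1)^+ + 1/s_n)$, together with $\E(|\tau_k|\mid M^{\sss(j)}_k=m)=O(1/(m-1))$.

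Where your argument goes astray is Step~3. The ``obstacle'' you identify is not real, and the second-moment variant of \reflemma{ExtraTimeToFreeze} you propose is unnecessary. The paper works entirely in the variable $\tilde X = f_n^{-1}(\Delta)$, writing the conditional bound as
\[
\E\bigl(N(k,K)\mid M^{\sss(j)}_k=m,\tilde X\bigr)
\ \le\ O(1)\,\frac{(\tilde X-m)(\tilde X-1)}{m-1}\indicator{\tilde X\ge m}
\ =\ O(1)\Bigl((\tilde X-m)+\frac{(\tilde X-m)^2}{m-1}\Bigr)\indicator{\tilde X\ge m}.
\]
The key point you miss is that the \emph{first}-moment bound $\E(\Delta\mid M^{\sss(j)}_k=m)\le K'' f_n(m)$ from \reflemma{ExtraTimeToFreeze}, combined with Markov's inequality and the growth estimate $f_n(m+x)/f_n(m)\ge(1+x/m)^{\epsilonCondition s_n}$ from \reflemma{ExtendedImpliesWeak}, already gives a \emph{tail} bound
\[
\condP{\tilde X-m\ge x}{M^{\sss(j)}_k=m}
\ =\ \condP{\Delta\ge f_n(m+x)}{M^{\sss(j)}_k=m}
\ \le\ \frac{K''}{(1+x/m)^{\epsilonCondition s_n}}.
\]
Integrating this tail yields $\E[(\tilde X-m)^p\indicator{\tilde X\ge m}\mid M^{\sss(j)}_k=m]=O(s_n^{-p})$ for $p=1,2$ directly---no second moment of $\Delta$ is ever computed. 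Since $1/(m-1)\le s_n/\delta$ on the conditioning event, the two terms above contribute $O(1/s_n)$ and $O(s_n)\cdot O(1/s_n^2)=O(1/s_n)$, and the lemma follows. Your change of variables $y=f_n(x)$ and subsequent integration-by-parts obscures this structure; switching back to $\tilde X$ and using Markov-plus-growth closes the gap cleanly.
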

\begin{proof}
Consider $v\in\tau_k$ and its children, which we write as $vi$, $i\in\N$.
Recall from \refprop{IPstructure}~\refitem{OffBBboundaryWeights} that, conditional on the invasion cluster, the PWIT weights $\set{X_{vi}\colon vi\notin\tau_k}$ (i.e., the weights corresponding to children that were \emph{not} invaded) form a Poisson point process on $(M^{\sss(j)}_k, \infty)$ with intensity 1.

A child $vi$ of $v$ will be explored by the FPP process by time $T$ if and only if $f_n(X_{vi})\leq T-T_v$.
Abbreviate $\tilde{X}=f_n^{-1}(T-T_{V^{\sss\BB,j}_k})$.
Since $T_v\geq T_{V^{\sss\BB,j}_k}$, we have as an upper bound that $vi$ can only be explored if $X_{vi}\leq \tilde{X}$.
In particular, the conditional expectation of the number of uninvaded children of $v$ that are explored by the FPP process by time $T$ obeys the bound
\begin{equation}\labelPartI{ExpExplUninvChildrenPerVertex}
\condE{\bigg.\abs{\set{\Big.i\in\N\colon vi\notin\tau_k, vi\in \BP_T^{\sss(j)}}}}{M^{\sss(j)}_k =m, \tilde{X}} \leq (\tilde{X}-m)\indicator{\tilde{X}\geq m}.
\end{equation}

For each such child $vi$, we may bound the probability that a descendant of $vi$ at least $\ceiling{s_n}$ generations away is explored by time $T$ %$T_{V^{\sss\BB,j}_{\floor{Ks_n}}}$
by the probability that a Poisson Galton--Watson branching process with mean $\tilde{X}$ survives for at least $s_n$ generations.
Because of \eqrefPartI{ExpExplUninvChildrenPerVertex} and since in \eqrefPartI{NkKbound} $M_k^{\sss(j)}\ge 1+\delta/s_n$, we may assume that $\tilde{X}\geq m\geq 1+\delta/s_n$.
By \refprop{PGWtrees}~\refitem{PGWSurvivalTime} and \refitem{PGWSurvivalAsymp}, this probability is $O(\tilde{X}-1)$.

On the other hand, conditional on $M^{\sss(j)}_k=m$, the expected size of $\tau_k$ is $O(1/(m-1))$, since, by \refprop{IPstructure}~\refitem{BBdual} and \eqrefPartI{hatmandm}, $\tau_k$ is distributed as a Poisson Galton--Watson tree with subcritical parameter $\hat{m}<1$ and $1-\hat{m}\sim m-1$.
We conclude that uniformly in $m \in \cointerval{1+\delta/s_n,\infty}$,
\begin{align}\labelPartI{ExpExplUninvChildren}
\E \condparenthesesreversed{ N(k,K) }{M^{\sss(j)}_k=m, \tilde{X}}
&\leq
O(1)\frac{(\tilde{X}-1)(\tilde{X}-m)}{m-1}\indicator{\tilde{X}\geq m}
\notag\\&
=O(1)\Big( \tilde{X}-m +\frac{(\tilde{X}-m)^2}{m-1} \Big) \indicator{\tilde{X}\geq m}.
\end{align}
In order to bound the right-hand side of \eqrefPartI{ExpExplUninvChildren}, we first
use \reflemma{ExtendedImpliesWeak} and \reflemma{ExtraTimeToFreeze} and Markov's inequality to obtain a constant $K'<\infty$ such that for all $m \in [1,m_0]$ and $x \ge 0$,
\begin{align}
&\condP{ \tilde{X}-m \geq x }{M^{\sss(j)}_k=m}
=
\condP{ T
-T_{V^{\sss\BB,j}_k} \geq f_n\left( m+x \right) }{M^{\sss(j)}_k=m}
\notag\\&\quad
\leq
\condP{ T
-T_{V^{\sss\BB,j}_k} \geq \left( 1+\frac{x}{m} \right)^{\epsilonCondition s_n} f_n(m) }{M^{\sss(j)}_k=m}
\leq
\frac{K'}{(1+x/m)^{\epsilonCondition s_n}}.
\end{align}
Thus, for $1 \le m\leq m_0$ and $p\in \set{1,2}$,
\begin{align}
&\condE{\left( \tilde{X}-m \right)^p\indicator{\tilde{X}\geq m}}{M^{\sss(j)}_k=m}
=
\int_0^\infty p x^{p-1}\condP{\tilde{X}-m\geq x}{M^{\sss(j)}_k=m} dx
\notag\\&\quad
\leq
K'\int_0^\infty \frac{p x^{p-1} dx}{(1+x/m)^{\epsilonCondition s_n}}
=
O(1)\int_1^\infty \frac{(t-1)^{p-1} dt}{t^{\epsilonCondition s_n}}
=
\begin{cases}
O(1/s_n), & p=1,\\
O(1/s_n^2), & p=2.
\end{cases}
\end{align}
For $m \ge 1+\delta/s_n$, we have $1/(m-1) \leq O(s_n)$ and we obtain that uniformly for $m\in [1+\delta/s_n,m_0]$,
	\begin{align}
	&\condE{\left( \tilde{X}-m +\frac{(\tilde{X}-m)^2}{m-1} \right)
	\indicator{\tilde{X}\geq m}}{M^{\sss(j)}_k=m}
	%\notag\\&\quad
	=
	%\condE{\left[ O(s_n)(\tilde{X}-m)^2+(\tilde{X}-m) \right]
	%\indicator{\tilde{X}\geq m}}{M^{\sss(j)}_k=m}
	O(1/s_n)+O(s_n)O(1/s_n^2)
	=
	O(1/s_n)
	.
	\labelPartI{hatXbound}
	\end{align}
Combining \eqrefPartI{ExpExplUninvChildren} with \eqrefPartI{hatXbound} completes the proof.
\end{proof}

\reflemma{ExpExplUninvChildrenBound} will allow us to control the effect of branches $\tau_k$ for small $k$.
For larger $k$, the following estimate will allow us to bound the probability that a very long path is explored between time $T_{V^{\sss\BB,j}_k}$ and the freezing time. Let $v\in \tau_k$.
Write $D_{v,r,C,\ell}$ for the collection of descendants of $v$ connected to $v$ by exactly $r$ edges of weight at most $1+C/s_n$, at most $\ell$ of which lie between $1-1/s_n$ and $1+C/s_n$.

\begin{lemma}\lblemma{NoFastLongPaths}
Given $C,\ell<\infty$, there is $C'<\infty$ such that $\P(D_{v,r,C,\ell}\neq \emptyset)\leq C'/r$ for all $r\in\N$ , $v \in \tree$ and $s_n\geq 1$.
\end{lemma}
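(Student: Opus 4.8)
The plan is to reduce the claim to a height estimate for a two-type branching process and then to invoke the classical $\Theta(1/r)$ survival asymptotics for critical Galton--Watson trees. First I would observe, using \refdefn{PWITdef}, that the set of descendants of $v$ joined to it by edges of PWIT weight at most $1+C/s_n$ is a Poisson$(1+C/s_n)$ Galton--Watson tree in which each edge is, independently, \emph{green} if its weight is at most $1-1/s_n$ and \emph{red} if its weight lies in $(1-1/s_n,1+C/s_n]$; equivalently, every vertex independently has Poisson$(\mu)$ green children and Poisson$(\nu)$ red children with $\mu=1-1/s_n$ and $\nu=(C+1)/s_n$. In this language $D_{v,r,C,\ell}\neq\emptyset$ is exactly the event that this marked tree has a vertex at graph distance $r$ from $v$ whose ancestral path carries at most $\ell$ red edges.

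Next I would decompose the marked tree into \emph{green trees} linked by red edges and organised into levels: the level-$0$ green tree is the green subtree of $v$, and each red child of a vertex of a level-$m$ green tree is the root of a fresh, independent level-$(m+1)$ green tree (independence because distinct subtrees of the PWIT are independent, and green trees and red children use disjoint weight ranges). Any path realising $\{D_{v,r,C,\ell}\neq\emptyset\}$ uses some $j\le\ell$ red edges and hence splits into $j+1$ green segments of total length $r-j\ge r-\ell$, each lying inside a green tree at level at most $\ell$; consequently some such green tree has height at least $r':=(r-\ell)/(\ell+1)$. A union bound then gives
\[
\P\big(D_{v,r,C,\ell}\neq\emptyset\big)\;\le\;\E\Big[\sum_{\mathcal{G}'}\indicator{h(\mathcal{G}')\ge r'}\Big],
\]
where the sum runs over all green trees $\mathcal{G}'$ at levels $0,\dots,\ell$. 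Since, conditionally on the number $N_m$ of green trees at level $m$, these are i.i.d.\ Poisson$(\mu)$ Galton--Watson trees, and since a Poisson$(1-1/s_n)$ Galton--Watson tree has mean size $s_n$, so that the mean number of red children hanging off it is $\nu s_n=C+1$, one gets $\E[N_{m+1}]=(C+1)\E[N_m]$ and hence $\sum_{m=0}^{\ell}\E[N_m]=\sum_{m=0}^{\ell}(C+1)^m=:A(C,\ell)$, a constant depending only on $C$ and $\ell$. Therefore the right-hand side above equals $A(C,\ell)\,\P\big(h(\mathcal{G}_\mu)\ge r'\big)$ for a single Poisson$(\mu)$ Galton--Watson tree $\mathcal{G}_\mu$.

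To finish, I would use thinning: for $\mu\le 1$ a Poisson$(\mu)$ Galton--Watson tree is stochastically dominated by a critical Poisson$(1)$ Galton--Watson tree, so $\P(h(\mathcal{G}_\mu)\ge r')\le\P(h(\mathcal{G}_1)\ge r')=O(1/r')$ by the standard bound for critical Galton--Watson processes (\cite[Lemma~I.10.1]{Har63}, as used in the proof of \refprop{PGWtrees}~\refitem{PGWSurvivalTime}). Choosing $C'$ large handles $r\le 2\ell$ trivially, while for $r\ge 2\ell+1$ one has $r'\ge r/(2(\ell+1))$ and thus $\P(D_{v,r,C,\ell}\neq\emptyset)\le C'/r$, uniformly in $v$ and in $s_n\ge 1$. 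The step I expect to be the crux is obtaining the $1/r$ rate rather than a mere $O(1)$ bound: a direct first-moment count of distance-$r$ vertices fails, because the relevant green offspring mean $1-1/s_n$ is nearly critical and expected generation sizes do not decay; the near-criticality must instead be routed through the green-tree decomposition into Kolmogorov's survival estimate, and one must check carefully that the green trees are conditionally i.i.d.\ across levels so that the displayed expectation factorises as claimed.
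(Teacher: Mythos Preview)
Your proof is correct and takes a genuinely different route from the paper's.

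The paper's argument is a one-line recolouring trick: on $\{D_{v,r,C,\ell}\neq\emptyset\}$, pick $W$ uniformly from $D_{v,r,C,\ell}$ and let $L$ be the set of red edges on the path from $v$ to $W$. Conditionally on the colouring (which determines $D_{v,r,C,\ell}$, $W$ and $L$), the weights on $L$ are i.i.d.\ uniform on $[1-1/s_n,1+C/s_n]$, so with probability at least $(C+1)^{-\ell}$ they all land in $[1-1/s_n,1]$. On that event the whole path lies in the weight-$\le 1$ subtree, a Poisson$(1)$ Galton--Watson tree, which therefore survives to generation $r$. Hence $O(1/r)\ge (C+1)^{-\ell}\,\P(D_{v,r,C,\ell}\neq\emptyset)$ directly.

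Your approach instead decomposes the marked tree into green subtrees stratified by red-edge level, pigeonholes to force one green tree of height $\ge r/(2(\ell+1))$, and controls the expected number of green trees up to level $\ell$ via the Wald-type identity $\E[N_{m+1}]=(C+1)\E[N_m]$ (using that a Poisson$(1-1/s_n)$ tree has mean size $s_n$, so the mean number of red children per green tree is exactly $C+1$, independent of $s_n$). This is longer but more structural: it makes explicit that the $1/r$ really comes from the height of \emph{one} near-critical tree among boundedly many in expectation, and the method would adapt readily to variants where the recolouring symmetry is unavailable (e.g.\ non-uniform weight laws within the red band). The paper's trick, by contrast, is shorter and avoids the level decomposition entirely, but relies on the exchangeability of the conditional law of red weights.
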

\begin{proof}
On the event $\set{D_{v,r,C,\ell}\neq\emptyset}$, let $W$ be chosen uniformly from $D_{v,r,C,\ell}$, and let $L$ denote the collection of vertices $wi$ along the path from $v$ to $W$ for which $1-1/s_n\leq X_{wi} \leq 1+C/s_n$.
By the Poisson point process property, conditional on the occurence of $\set{D_{v,r,C,\ell}\neq \emptyset}$ and the values of $W$ and $L$, the edge weights $X_{wi}$, $wi\in L$, are uniformly distributed on $[1-1/s_n,1+C/s_n]$.
In particular, $1-1/s_n\leq X_{wi}\leq 1$ for each $wi\in L$ with conditional probability $(1/(C+1))^{\abs{L}}$, which is at least $(1/(C+1))^\ell$ by construction.
If this occurs then $v$ is connected by edges of weight at most 1 to a descendant at generation $r$.
Using the notation from \refprop{PGWtrees} and appealing to standard properties of critical Galton--Watson processes (see for example Lemma~I.1.10 in \cite{Har63}),
\begin{equation}
O(1/r) = \P_1(\abs{\tau^{\sss(\ge r)}}\ge 1)
\geq
\P(D_{v,r,C,\ell}\neq\emptyset) \cdot (1/(C+1))^\ell
,
\end{equation}
completing the proof.
\end{proof}
\begin{proof}[Proof of \refthm{FrozenCluster}~\refitem{FrozenDiameter}]
We have to show that the diameter of $\cluster_\fr^{\sss(j)}$ is $O_\P(s_n)$.
Let $\epsilon>0$ be given.
Use \reflemma{FreezingByHeight} to choose $K<\infty$ such that $T_\fr^{\sss(j)}\leq T$
with probability at least $1-\epsilon$ for large $n$.
Use \refprop{IPstructure}~\refitem{BBscaling} to choose $\delta>0$ and $m_0<\infty$ so that $1+\delta/s_n\leq M_{\floor{Ks_n}}^{\sss(j)} \leq M_0^{\sss(j)}\leq m_0$ with probability at least $1-\epsilon$.
By \reflemma{ExpExplUninvChildrenBound}, we may choose $\eta\in(0,1)$ sufficiently small that $\E\left( \indicator{1+\delta/s_n\leq M_{\ceiling{s_n}}^{\sss(j)} \leq M_0^{\sss(j)}\leq m_0} \sum_{k=0}^{\floor{\eta s_n}} N(k,K) \right) <\epsilon$, so that $N(k,K)=0$ for each $k\leq \eta s_n$ with probability at least $1-2\epsilon$ for large $n$.

Use \refprop{IPstructure}~\refitem{BBscaling} to choose $C'<\infty$ such that $M_{\ceiling{\eta s_n}}^{\sss(j)}\leq 1+C'/s_n$ with probability at least $1-\epsilon$.
Assuming that this event occurs, \reflemma{ExtraTimeToFreeze} and Markov's inequality give $C''<\infty$ such that $T-T_{V^{\sss\BB,j}_{\ceiling{\eta s_n}}}\leq C'' f_n(1+C'/s_n)$ with conditional probability at least $1-\epsilon$.
By \refcond{scalingfn}, it follows that we may choose $C,\ell<\infty$ such that
\begin{equation}\labelPartI{ExtraTimeSpecific}
T-T_{V^{\sss\BB,j}_{\ceiling{\eta s_n}}}\leq f_n(1+C/s_n)\leq \ell f_n(1-1/s_n)
\end{equation}
with probability at least $1-\epsilon$ for all sufficiently large $n$.

With these preliminaries, we can now complete the proof.
Assume that all the above events occur, making an error of at most $6\epsilon$.
By \refprop{IPstructure}~\refitem{IPdiameter}, the part of the IP cluster that does not descend from $V^{\sss \BB,j}_{\floor{Ks_n}}$ has diameter $O_\P(s_n)$.
Hence it suffices to show that vertices of $\BP_T^{\sss(j)}$ are within distance $O_\P(s_n)$ from the IP cluster.
The assumption $N(k,K)=0$, $k\leq \eta s_n$, verifies this for the beginning of the backbone.

For $k\geq \eta s_n$, \eqrefPartI{ExpExplUninvChildrenPerVertex}, \eqrefPartI{ExtraTimeSpecific} and the bound $\E\condparenthesesreversed{\abs{\tau_k}}{M_k^{\sss(j)}}=O(1/(M_k^{\sss(j)}-1))=O(s_n)$ imply that the (conditionally) expected number of vertices $vi\in\BP_T^{\sss(j)}\setminus\tau_k$ for which $v\in\tau_k$, for some $\eta s_n\leq k\leq Ks_n$, is at most $(C/s_n)\cdot O(s_n) \cdot (Ks_n) = O(s_n)$.
If such a vertex $vi$ has a descendant $w\in\BP_T^{\sss(j)}$ at distance $r$, then by \eqrefPartI{ExtraTimeSpecific} the path from $vi$ to $w$ can contain no edges of PWIT weight greater than $1+C/s_n$ and at most $\ell$ edges of PWIT weight greater than $1-1/s_n$.
In other words, necessarily $D_{vi,r,C,\ell}\neq\emptyset$.
This event is conditionally independent of the possible $vi$, so by \reflemma{NoFastLongPaths} the conditional probability of any $D_{vi,r,C,\ell}$ occuring is at most $O(s_n/r)$.
This can be made smaller than $\epsilon$ by taking $r=C''' s_n$ for $C'''$ large enough, and this completes the proof.
\end{proof}

\section{A strong disorder result: Proof of \refthm{IPWeightForFPP}}\lbsect{IPWeightProof}

\begin{proof}[Proof of \refthm{IPWeightForFPP}]
The proof of \eqrefPartI{WeightRescalesToIP_gen} proceeds via stochastic upper and lower bounds on $W_n$ based on couplings with IP where we use two different exploration processes.
For the lower bound, consider the minimal-rule exploration process $\explore=(\explore_k)_{k\in \N_0}$ given by IP on $\tree$ that alternates between an invasion step in $\tree^{\sss(1)}$ and an invasion step in $\tree^{\sss(2)}$ as explained below \refdefn{MinimalRule}.
By \refthm{CouplingExpl} the edge-weights $Y_e^{\sss(K_n)}=f_n(X_e^{\sss(K_n)})$ derived from this exploration process as in \eqrefPartI{EdgeWeightCoupling} are i.i.d.\ with distribution function $\FY$ and it suffices to consider the FPP problem on $K_n$ with these edge weights.
Let $\fstpond^{\sss(j)}$ denote the set of vertices in the invasion cluster on $\tree^{\sss(j)}$ explored before the edge of weight $M_0^{\sss(j)}$ is invaded.
(In the language of \cite{AddGriKan12,DamSap11,Good12,SteNew95}, $\fstpond^{(j)}$ is the first pond, not including the first outlet.)
Let $\fstpond^{\sss(j)}_+$ consist of $\fstpond^{\sss(j)}$ together with all adjacent vertices connected by an edge of weight at most $M_0^{\sss(1)}\vee M_0^{\sss(2)}$.

Let $\cA_n$ be the event that none of the vertices in $\fstpond^{\sss(1)}_+\union \fstpond^{\sss(2)}_+$ are thinned, and that the exponential variable $X_{\set{1,2}}^{\sss(K_n)}=E_{\set{1,2}}$ from \eqrefPartI{EdgeWeightCoupling} satisfies $X_{\set{1,2}}^{\sss(K_n)}\geq \tfrac{1}{n}(M_0^{\sss(1)}\vee M_0^{\sss(2)})$.
Since $\fstpond^{\sss(j)}_+$ is finite and the vertices in $\fstpond^{\sss(j)}$ are explored after a finite number of steps (not depending on $n$), $\cA_n$ holds with high probability.

On $\cA_n$, let $\mathscr{W}^{\sss(j)}$ denote the image in $[n]$ of $\fstpond^{\sss(j)}$ under the mapping $\pi_M\colon  v\mapsto M_v$ from \refdefn{InducedGraph}.
Then every edge $e$ between $\mathscr{W}^{\sss(j)}$ and $[n]\setminus(\mathscr{W}^{\sss(1)}\union \mathscr{W}^{\sss(2)})$ satisfies $Y_e^{\sss(K_n)}\geq f_n(M_0^{\sss(j)})$, and every edge $e$ between $\mathscr{W}^{\sss(1)}$ and $\mathscr{W}^{\sss(2)}$ satisfies $Y_{e}^{\sss(K_n)}\geq f_n(M_0^{\sss(1)}\vee M_0^{\sss(2)})$.
Since every path between vertices $1$ and $2$ has to leave $\mathscr{W}^{\sss(1)}$ and $\mathscr{W}^{\sss(2)}$, this therefore proves that $W_n\ge f_n(M_0^{\sss(1)} \vee M_0^{\sss(2)})$ on $\cA_n$, i.e., whp.

For the upper bound, let $\epsilon>0$ and let $N, N_1\in\N$ denote constants to be chosen later.
Modify the minimal-rule exploration process above by stopping after $N$ steps in each subtree $\tree^{\sss(j)}$, i.e., set $\explore'_k=\explore_{k\wedge 2N}$, and couple the edge weights according to \eqrefPartI{EdgeWeightCoupling}.
Denote by $X_{\max}^{\sss(j)}$ the largest edge weight in $\tree^{\sss(j)}$ so invaded, so that $X_{\max}^{\sss(j)}\leq M_0^{\sss(j)}$ by definition.

Let $\mathscr{U}^{\sss(j)}=\set{v\in\boundary \explore_{2N_1}\intersect\tree^{\sss(j)}\colon X_{\max}^{\sss(j)}<X_v < X_{\max}^{\sss(j)}+\epsilon}$ denote the collection of boundary vertices joined to invaded vertices by an edge of weight at most $X_{\max}^{\sss(j)}+\epsilon$.
Conditional on $X_{\max}^{\sss(1)},X_{\max}^{\sss(2)}$ and $\explore'$, the number $\abs{\mathscr{U}^{\sss(j)}}$ of such boundary vertices is Poisson with mean $\epsilon N$, independently for $j\in\set{1,2}$.
(This holds because the event that the exploration process $\explore'$ explores a given sequence of vertices $v_1,\dotsc,v_{2N_1}$ can be expressed solely in terms of the numbers $\abs{\set{vw\in\boundary \explore'_k\colon X_{vw}<X_{v_i}}}$ of boundary edges of \emph{smaller} weight, over all $k,i=1,\dotsc,2N$.)

Let $\mathcal{A}'_n$ denote the event that none of the vertices in $\explore'_{2N}\union\mathscr{U}^{\sss(1)}\union\mathscr{U}^{\sss(2)}$ have the same mark.
Condition on the occurrence of $\mathcal{A}'_n$ and the disjoint vertex sets $\pi_M(\explore'_{2N}), \pi_M(\mathscr{U}^{\sss(1)}), \pi_M(\mathscr{U}^{\sss(2)})$.
Consider the induced subgraph $K'_{n-2N-2}$ of $K_n$ obtained by excluding the $2N+2$ explored vertices in $\pi_M(\explore'_{2N})$.
Since no other vertices are explored, the edge weights in this induced subgraph are the independent exponential random variables $E_e$ from \eqrefPartI{EdgeWeightCoupling}.

The random subgraph $G'_{n-2N-2}=\set{e\in E(K'_{n-2N-2})\colon E_e\leq \tfrac{1}{n}(1+\epsilon)}$ has the (conditional) law of the Erd\H{o}s-R\'enyi random graph $G(n-2N-2,p)$ with $p=\P(E_e\leq \tfrac{1}{n}(1+\epsilon))\sim \tfrac{1}{n}(1+\epsilon)$ as $n\to\infty$.
As is well known, in the supercritical regime, the giant component has diameter $O_\P(\log n)$ and contains a positive asymptotic fraction of vertices (see e.g., \cite{FerRam04}).
Suppose $U^{\sss(1)},U^{\sss(2)}$ are two disjoint subsets of vertices in $G'_{n-2N-2}$ (possibly random but independent of the randomness in $G'_{n-2N-2}$).
If $U^{\sss(1)}$ and $U^{\sss(2)}$ are sufficiently large, each of them is likely to contain at least one vertex from the giant component.
Hence we may choose $N_1\in\N$ such that, given the event $\set{\abs{U^{\sss(1)}},\abs{U^{\sss(2)}}\geq N_1}$, there will exist a pair of vertices $u_1\in U^{\sss(1)},u_2\in U^{\sss(2)}$ connected by a path in $G'_{n-2N-2}$ of length at most $N_1 \log n$, with conditional probability at least $1-\epsilon$ for $n$ sufficiently large.

Since the sizes $\abs{\mathscr{U}^{\sss(j)}}$ are independent Poisson random variables with mean $\epsilon N$, we may choose $N$ large enough that $\abs{\mathscr{U}^{\sss(j)}}\geq N_1$ with probability at least $1-\epsilon$.
Moreover, since $\explore'_{2N},\mathscr{U}^{\sss(1)},\mathscr{U}^{\sss(2)}$ are finite and do not depend on $n$, it follows that $\mathcal{A}_n$ occurs with high probability and we can choose $N_2$ large enough that the diameters of $\explore'_{2N},\mathscr{U}^{\sss(1)},\mathscr{U}^{\sss(2)}$ are at most $N_2$ with probability at least $1-\epsilon$.

Because of conditional independence, we may choose the vertex sets $U^{\sss(j)}=\pi_M(\mathscr{U}^{\sss(j)})$ in the preceding discussion.
Taking the intersection of all the events above, it follows that, with probability at least $1-2\epsilon-o(1)$, there is a path in $K_n$ between vertices 1 and 2 consisting of at most $N_2$ edges of FPP weight at most $f_n(X_{\max}^{\sss(1)})$; a single edge of weight at most $f_n(X_{\max}^{\sss(1)}+\epsilon)$; at most $N_1\log n$ edges of weight at most $g(\tfrac{1}{n}(1+\epsilon))=f_n(1+\epsilon)$; a single edge of weight at most $f_n(X_{\max}^{\sss(2)}+\epsilon)$; and at most $N_2$ edges of weight at most $f_n(X_{\max}^{\sss(2)})$.
Therefore, with probability at least $1-2\epsilon-o(1)$,
\begin{align}
W_n &\leq (2N_2+2+N_1 \log n) f_n\left( (X_{\max}^{\sss(1)} \vee X_{\max}^{\sss(2)} \vee 1) + \epsilon \right)
\notag\\&
\leq (2N_2+2+N_1 \log n) f_n\left( (M_0^{\sss(1)} \vee M_0^{\sss(2)})+\epsilon \right)
.
\labelPartI{WnStochUpperBound}
\end{align}

To complete the proof, it suffices to show that the right-hand side of \eqrefPartI{WnStochUpperBound} is at most $f_n((M_0^{\sss(1)} \vee M_0^{\sss(2)})+2\epsilon )$ with high probability.
Since $M_0^{\sss(1)}$ and $M_0^{\sss(2)}$ do not depend on $n$ and satisfy $M_0^{\sss(j)}\geq 1$ a.s., it suffices to show that, for each fixed $x\geq 1$, we have $(2N_2+2+N_1 \log n)f_n(x) \leq f_n(x+\epsilon)$ for $n$ sufficiently large.
Taking $R=x+\epsilon$ in \refcond{boundfn}~\refitem{boundfnR}, we can mimic the proof of \reflemma{ExtendedImpliesWeak} to find that
\begin{align}
\frac{f_n(x+\epsilon)}{(2N_2+2+N_1 \log n)f_n(x)}
&\geq \frac{1}{(2N_2+2+N_1 \log n)} \left( \frac{x+\epsilon}{x} \right)^{\epsilonConditiona s_n}
\notag\\&
=\exp\left( \epsilonConditiona s_n\log(1+\epsilon/x)-\log\log n - O(1) \right).
\labelPartI{fnRatioLowerBound}
\end{align}
Since $s_n/\log\log n\to\infty$, the right-hand side of \eqrefPartI{fnRatioLowerBound} diverges to infinity and therefore exceeds 1 for $n$ sufficiently large.
\end{proof}
\vskip0.5cm

\noindent
{\bf{Acknowledgements:}} A substantial part of this work has been done at Eurandom and Eindhoven University of Technology.
ME and JG are grateful to both institutions for their hospitality.\\
The work of JG was carried out in part while at Leiden University (supported in part by the European Research Council grant VARIS 267356), the Technion, and the University of Auckland (supported by the Marsden Fund, administered by the Royal Society of New Zealand) and JG thanks his hosts at those institutions for their hospitality.
The work of RvdH is supported by the Netherlands
Organisation for Scientific Research (NWO) through VICI grant 639.033.806
and the Gravitation {\sc Networks} grant 024.002.003.

\vskip1cm

\labelPartI{notation}
\paragraph{A short guide to notation:}
\begin{itemize}
\item $\SWT^{\sss(v)}_t$ is SWT from vertex $v$.
We abbreviate $\SWT^{\sss(\emptyset_j)}=\SWT^{\sss(j)}$, $j\in \set{1,2}$.
\item$\cS^{\sss(j)}_t$ is the SWT from vertex $j\in \set{1,2}$ such that $\cS^{\sss(1)}$ and $\cS^{\sss(2)}$ cannot merge and with an appropriate freezing procedure.
\item $\cS_t=\cS_t^{\sss(1)}\cup \cS_t^{\sss(2)}$
\item $\BP^{\sss(j)}$ is branching process copy number $j$ where $j\in \set{1,2}$, without freezing.
\item $\cluster^{\sss(j)}$ is branching process copy number $j$ where $j\in \set{1,2}$, with freezing.
\item $\cluster_t$ is the union of 2 CTBPs with the appropriate freezing of one of them.
\item $\thinnedcluster_t$ is the union of 2 CTBPs with the appropriate freezing of one of them, and the resulting thinning.
Thus, $\thinnedcluster_t$ has the same law as the frozen $\cS_t$.
\item $f_n$ is the function with $Y_e^{\sss(K_n)}\overset{d}{=} f_n(nE)$, where $E$ is exponential with mean $1$.
\item $\mu_n$ is the image of the Lebesgue measure on $(0,\infty)$ under $f_n$
\item $\lambda_n$ is the exponential growth rate of the CTBP, cf.
\eqrefPartI{lambdanDefn}.
\end{itemize}

{\small \bibliographystyle{plain}
\bibliography{FPP}}

\end{document}